\numberwithin{equation}{section}
\newtheorem{theorem}{Theorem}[section]
\newtheorem{lemma}[theorem]{Lemma} 
\newtheorem{proposition}[theorem]{Proposition} 
\newtheorem{corollary}[theorem]{Corollary} 
\theoremstyle{definition}
\newtheorem{definition}[theorem]{Definition} 
\newtheorem{notation}[theorem]{Notation} 
\newtheorem{remark}[theorem]{Remark} 
\newtheorem{example}[theorem]{Example}
\newcommand{\C}{\mathbb{C}}
\newcommand{\Z}{\mathbb{Z}} 
\newcommand{\N}{\mathbb{N}}
\newcommand{\Q}{\mathbb{Q}} 
\newcommand{\R}{\mathbb{R}}
\newcommand{\G}{\mathbb{G}}
\newcommand{\T}{\mathbb{T}} 
\newcommand{\OO}{\mathbb{O}}
\newcommand{\PP}{\mathbb{P}} 
\newcommand{\LL}{\mathbb{L}} 
\newcommand{\K}{\mathbb{K}} 
\newcommand{\X}{\mathbb{X}}
\newcommand{\cO}{\mathcal{O}} 
\newcommand{\cU}{\mathcal{U}} 
\newcommand{\cE}{\mathcal{E}} 
\newcommand{\tcE}{\widetilde{\cE}}
\newcommand{\hcE}{\widehat{\cE}} 
\newcommand{\cL}{\mathcal{L}} 
\newcommand{\cH}{\mathcal{H}} 
\newcommand{\cD}{\mathcal{D}} 
\newcommand{\hcD}{{\widehat{\cD}}}
\newcommand{\cM}{\mathcal{M}}
\newcommand{\hcM}{\widehat{\cM}} 
\newcommand{\cP}{\mathcal{P}}
\newcommand{\cR}{\mathcal{R}} 
\newcommand{\cY}{\mathcal{Y}} 
\newcommand{\hcY}{\widehat{\cY}}
\newcommand{\hxi}{{\hat{\xi}}}
\newcommand{\heta}{\hat{\eta}}
\newcommand{\hA}{{\widehat{A}}}
\newcommand{\ha}{{\hat{a}}}
\newcommand{\tC}{\widetilde{C}} 
\newcommand{\tbeta}{\tilde{\beta}} 
\newcommand{\txi}{\tilde{\xi}} 
\newcommand{\tv}{\tilde{v}} 
\newcommand{\Qbar}{\overline{Q}} 
\newcommand{\ybar}{\overline{y}} 
\newcommand{\varthetabar}{\overline{\vartheta}} 
\newcommand{\partialbar}{\overline{\partial}}
\newcommand{\bN}{\mathbf{N}}
\newcommand{\hbN}{\widehat{\bN}} 
\newcommand{\bM}{\mathbf{M}}
\newcommand{\bk}{\mathbf{k}} 
\newcommand{\tbk}{\tilde{\bk}}
\newcommand{\bl}{{\boldsymbol{\ell}}}
\newcommand{\bt}{\mathbf{t}}
\newcommand{\be}{\mathbf{e}}
\newcommand{\bSigma}{\mathbf{\Sigma}} 
\newcommand{\Laa}{\boldsymbol{\Lambda}} 
\newcommand{\bR}{\mathbf{R}} 
\newcommand{\bU}{\mathbf{U}}
\newcommand{\fry}{\mathfrak{y}}
\newcommand{\frI}{\mathfrak{I}} 
\newcommand{\frX}{\mathfrak{X}} 
\newcommand{\frT}{\mathfrak{T}} 
\newcommand{\frakm}{\mathfrak{m}} 
\newcommand{\tfrakm}{\tilde{\frakm}}
\newcommand{\st}{{\rm st}}
\newcommand{\cl}{{\rm cl}}
\newcommand{\qu}{{\rm qu}}
\newcommand{\Spec}{\operatorname{Spec}} 
\newcommand{\Spf}{\operatorname{Spf}}
\newcommand{\ev}{\operatorname{ev}} 
\newcommand{\pt}{\operatorname{pt}} 
\newcommand{\Frac}{\operatorname{Frac}} 
\newcommand{\Lie}{\operatorname{Lie}} 
\newcommand{\Cone}{\operatorname{Cone}} 
\newcommand{\Hom}{\operatorname{Hom}} 
\newcommand{\End}{\operatorname{End}} 
\newcommand{\rank}{\operatorname{rank}}
\newcommand{\Sym}{\operatorname{Sym}} 
\newcommand{\Ker}{\operatorname{Ker}} 
\newcommand{\Cok}{\operatorname{Cok}} 
\newcommand{\Image}{\operatorname{Im}} 
\newcommand{\Pic}{\operatorname{Pic}} 
\newcommand{\Aut}{\operatorname{Aut}}
\newcommand{\Res}{\operatorname{Res}} 
\newcommand{\age}{\operatorname{age}}  
\newcommand{\diag}{\operatorname{diag}} 
\newcommand{\Bx}{\operatorname{Box}} 
\newcommand{\CR}{\operatorname{CR}} 
\newcommand{\inv}{\operatorname{inv}} 
\newcommand{\GM}{\operatorname{GM}}
\newcommand{\Loc}{\operatorname{Loc}} 
\newcommand{\hbigoplus}{\mathop{\widehat\bigoplus}} 
\newcommand{\Jac}{\operatorname{Jac}} 
\newcommand{\Crit}{\operatorname{Crit}} 
\newcommand{\Asym}{\operatorname{Asym}} 
\newcommand{\Gal}{\operatorname{Gal}} 
\newcommand{\ad}{\operatorname{ad}}
\DeclareMathOperator{\NE}{NE}
\DeclareMathOperator{\OEf}{OE}
\newcommand{\Gr}{\operatorname{Gr}}
\newcommand{\tGr}{\widetilde{\Gr}}
\newcommand{\gr}{\operatorname{gr}} 
\newcommand{\Mat}{\operatorname{Mat}} 
\newcommand{\GL}{\operatorname{GL}}
\newcommand{\LGL}{\operatorname{LGL}}
\newcommand{\iu}{\sqrt{-1}} 
\newcommand{\unit}{\boldsymbol{1}} 
\newcommand{\bunit}{\mathbbm{1}} 
\newcommand{\bunitbar}{\overline{\bunit}} 
\newcommand{\hbunit}{\hat{\bunit}}
\def\corr#1{\left\langle#1 \right\rangle} 
\def\parfrac#1#2{\frac{\partial #1}{\partial #2}} 
\def\floor#1{\lfloor #1 \rfloor}
\def\ceil#1{\lceil #1 \rceil}
\def\fract#1{\langle #1 \rangle} 
\begin{document} 

\title{Hodge-Theoretic Mirror Symmetry for Toric Stacks}

\author{Tom Coates}
\email{t.coates@imperial.ac.uk}
\address{Department of Mathematics\\ Imperial College London\\ 
180 Queen's Gate\\ London SW7 2AZ\\ United Kingdom}

\author{Alessio Corti}
\email{a.corti@imperial.ac.uk}
\address{Department of Mathematics\\ Imperial College London\\ 
180 Queen's Gate\\ London SW7 2AZ\\ United Kingdom}

\author{Hiroshi Iritani}
\email{iritani@math.kyoto-u.ac.jp}
\address{Department of Mathematics\\ Graduate School of Science\\ 
Kyoto University\\ Kitashirakawa-Oiwake-cho\\ Sakyo-ku\\ 
Kyoto\\ 606-8502\\ Japan}

\author{Hsian-Hua Tseng}
\email{hhtseng@math.ohio-state.edu}
\address{Department of Mathematics\\ Ohio State University\\ 
100 Math Tower, 231 West 18th Ave. \\ Columbus \\ OH 43210\\ USA}

\begin{abstract} 
Using the mirror theorem \cite{CCIT:mirrorthm}, 
we give a Landau--Ginzburg mirror description for the big equivariant 
quantum cohomology of toric Deligne--Mumford stacks. 
More precisely, we prove that the big equivariant quantum $D$-module 
of a toric Deligne--Mumford stack 
is isomorphic to the Saito structure associated to 
the mirror Landau--Ginzburg potential. 
We give a GKZ-style presentation of the 
quantum $D$-module, and a combinatorial description of quantum cohomology as a quantum Stanley--Reisner ring. We establish the convergence of the 
mirror isomorphism and of quantum cohomology in the 
big and equivariant setting. 
\end{abstract} 

\maketitle

\let\oldtocsection=\tocsection
\let\oldtocsubsection=\tocsubsection

\renewcommand{\tocsection}[2]{\hspace{0em}\oldtocsection{#1}{#2}}
\renewcommand{\tocsubsection}[2]{\hspace{1em}\oldtocsubsection{#1}{#2}}
{\tableofcontents } 

\section{Introduction} 
This paper is the last in a series of papers \cite{CCIT:mirrorthm, 
CCIT:applications} that study the genus-zero Gromov--Witten theory 
of toric Deligne--Mumford stacks.  Let $\frX$ be a toric Deligne--Mumford stack, or toric stack for short, that satisfies a mild semi-projectivity hypothesis (spelled out below). 
In \cite{CCIT:mirrorthm} we proved a mirror theorem that says 
that a certain hypergeometric function, called the $I$-function, lies on the 
Givental cone for $\frX$. This determines all
genus-zero Gromov--Witten invariants of~$\frX$. 
The present paper builds on this mirror theorem to establish Hodge-theoretic 
mirror symmetry for toric stacks in a very general setting -- without assuming that $\frX$ is compact, or imposing any positivity condition on $c_1(\frX)$.
We prove that the big and equivariant quantum cohomology $D$-module 
of $\frX$ can be described as the Saito structure of the 
Landau--Ginzburg model mirror to $\frX$. 

It has been proposed by Givental \cite{Givental:ICM} (see also \cite{Hori-Vafa}) 
that the mirror of a toric manifold $X$ is a Landau--Ginzburg model, 
or more precisely, a
Laurent polynomial function $F=F(x_1,\dots,x_n)$ with Newton polytope 
equal to the fan polytope of $X$. 
In particular, Givental~\cite{Givental:toric_mirrorthm} showed that, for weak Fano toric manifolds $X$,
oscillatory integrals $\int e^{F/z} \frac{dx_1\cdots dx_n}{x_1\cdots x_n}$ 
give solutions of the small quantum cohomology $D$-module of $X$.
His result also implies that the quantum cohomology ring of $X$ 
is isomorphic to the Jacobian ring of $F$, via an isomorphism which matches  
 the Poincar\'e pairing with the residue pairing. 
Givental-style mirror symmetry has been extended to 
big quantum cohomology by Barannikov, Douai--Sabbah, and Mann 
\cite{Barannikov:projective, Douai-Sabbah:II, Mann:wP}; 
this compares the Frobenius manifold structure 
\cite{Dubrovin:2DTFT}
defined by the big quantum cohomology of $X$ 
with K.~Saito's flat structure \cite{SaitoK:primitiveform, SaitoK:higherresidue} 
associated to a miniversal unfolding of $F$. 

Let us briefly review our main construction. 
Let $\frX$ be a toric Deligne--Mumford stack with 
semi-projective 
coarse moduli space.   (This means that the coarse moduli space is projective over affine and contains a torus-fixed point.)
We introduce an unfolding $F(x;y)$ of Givental's Landau--Ginzburg potential 
by choosing a finite subset $G$ in the fan lattice $\bN$: 
\[
F(x;y) = \sum_{i=1}^m y_i Q^{\lambda(b_i)} x^{b_i} + 
\sum_{\bk\in G} y_\bk Q^{\lambda(\bk)} x^{\bk} 
\]
where $b_1,\dots,b_m$ are generators of one-dimensional cones 
of the stacky fan of $\frX$, $Q$ is the Novikov variable, 
 $\lambda(b_i)$ and $\lambda(\bk)\in H_2(X,\Q)$ are certain curve classes, $x \in \Hom(\bN,\C^\times)$ is a torus co-ordinate, and 
$y_i$,~$y_\bk$ are deformation parameters. See \S \ref{subsec:unfolding} for details.
Generalizing the construction in \cite{Iritani:shift_mirror} to stacks, 
we introduce a \emph{formal} and \emph{logarithmic}  
Landau--Ginzburg model (see \S\ref{subsec:GM}) 
\[
\xymatrix@C=40pt{
\hcY \ar[r]^{F(x;y)} \ar[d] & \C \\
\hcM 
}
\]
where $\hcY \to \hcM$ is a degenerating family of affine toric 
varieties over the base $\hcM= \Spf \C[\![\Laa_+]\!] \times \Spf \C[\![y]\!]$ 
with $\C[\![\Laa_+]\!]$ the Novikov ring 
(i.e.~the completed semigroup ring of the monoid 
$\Laa_+\subset H_2(X,\Q)$ of effective curves). 
The spaces $\hcY$ and $\hcM$ have natural log structures 
defined by their toric boundaries. 
We then consider the logarithmic twisted de Rham complex
\[
\left( \Omega^\bullet_{\hcY/\hcM}\{z\}, z d + d F \wedge \right)
\]
and define the Gauss--Manin system $\GM(F)$ to be the top cohomology 
of this complex. 
In the equivariant case, we consider the potential 
$F_\chi = F - \sum_{i=1}^n \chi_i \log x_i$ in place of $F$,
where $\chi_i$ are torus-equivariant parameters. 
The equivariant Gauss--Manin system $\GM(F_\chi)$ is equipped with 
the Gauss--Manin connection $\nabla$, the grading operator $\Gr^{\rm B}$ 
and the higher 
residue pairing $P \colon \GM(F_\chi) \times \GM(F_\chi) 
\to S_\T[z][\![\Laa_+]\!][\![y]\!]$. We call the quadruple 
\begin{equation}
\label{eq:Saito} 
\left( \GM(F_\chi), \nabla, \Gr^{\rm B}, P \right) 
\end{equation} 
the \emph{Saito structure} associated with the Landau--Ginzburg model. 
\begin{theorem}[see Theorems \ref{thm:mirror_isom}, 
\ref{thm:pairings_match} for the details]
\label{thm:mirror_isom_introd} 
The Saito structure \eqref{eq:Saito} is isomorphic to 
the big and equivariant quantum connection of 
the toric stack $\frX$ together with the Poincar\'e pairing, 
under the identification of the base spaces given by a mirror map. 
\end{theorem}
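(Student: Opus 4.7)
The plan is to leverage the mirror theorem of \cite{CCIT:mirrorthm}, which identifies the hypergeometric $I$-function as a point on Givental's Lagrangian cone $\cL_\frX$, and to show that precisely this $I$-function arises as a distinguished solution of $\GM(F_\chi)$ at the small locus $y=0$. First, I would give a GKZ-style presentation of $\GM(F_\chi)$: combining the logarithmic twisted differential $zd + dF_\chi\wedge$ with the toric structure of the fibration $\hcY \to \hcM$ produces relations on cohomology classes that correspond, under Mellin-type duality, to the box operators and Euler operators that govern the $I$-function. In particular, the logarithmic volume form $\omega_0 = \frac{dx_1\cdots dx_n}{x_1\cdots x_n}$ should be a cyclic generator for the $D$-module, and its orbit under the Gauss--Manin connection in the base directions should exhaust $\GM(F_\chi)$ after a suitable completion.

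The second step is to compute the flat section generated by $\omega_0$ on the small locus $y=0$ and to identify it with the $I$-function. This is a stationary-phase/GKZ computation paralleling Givental's original toric mirror theorem and its equivariant refinement: the Gauss--Manin-flat extension of $\omega_0$ is built from $\Gamma$-factors and $Q$-monomials indexed by curve classes, whose sum is precisely the $I$-function of $\frX$ (with equivariant parameters $\chi$ entering through $F_\chi$). Combined with the mirror theorem, the $I$-function equals $J_\frX$ pulled back by a mirror map $\tau(Q)$, so pullback along $\tau$ identifies the small quantum connection with $\GM(F_\chi)|_{y=0}$ via $\omega_0 \mapsto \unit$.

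Third, I would extend to the big unfolding. The set $G\subset \bN$ is chosen so that the classes $Q^{\lambda(\bk)} x^{\bk}\,\omega_0$ for $\bk\in G$, together with $x^{b_i}\omega_0$, descend to a basis of the fibre of $\GM(F_\chi)$ at the base point of $\hcM$. Miniversality of the unfolding, together with a Hertling--Manin-type reconstruction of a Frobenius-type structure from its germ along the small locus, implies that the isomorphism at $y=0$ extends uniquely along the $y$-directions to a formal isomorphism of meromorphic flat connections over $\hcM$; this step simultaneously produces the mirror map in the $y$-variables. Matching of the Poincar\'e and higher residue pairings (Theorem \ref{thm:pairings_match}) then reduces, by flatness of both pairings and their compatibility with the grading operator $\Gr^{\rm B}$, to a single limit computation at the large-radius point, where the residue pairing against $\omega_0$ evaluates to the classical Chen--Ruan intersection pairing on $H^*_{\CR,\T}(\frX)$.

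The principal obstacle is the control of $\GM(F_\chi)$ as a module over $S_\T[z][\![\Laa_+]\!][\![y]\!]$: one must prove that it is free of rank $\dim H^*_{\CR,\T}(\frX)$ and that its formal structure is compatible with the log structures on $\hcY$ and $\hcM$, so that a Hertling--Manin-style reconstruction is applicable. In the non-compact, non-Fano, equivariant setting this is not automatic; the logarithmic degeneration of $\hcY \to \hcM$ at the large-radius boundary is essential, and matching the log tangent directions on $\hcM$ with the Novikov and bulk deformation directions on the quantum side is the technical heart of the proof. The accompanying convergence statements then demand separate estimates on the hypergeometric coefficients of the $I$-function that are uniform in the equivariant parameters $\chi$.
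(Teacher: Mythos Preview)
Your outline has the right ingredients (the mirror theorem, a hypergeometric solution of $\GM(F_\chi)$, a reconstruction step, and a pairing check), but the mechanism you propose diverges from the paper's in ways that create genuine gaps.

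\textbf{The core construction is Birkhoff factorization, not Hertling--Manin reconstruction.} The paper does not first match at the small locus $y=0$ and then extend via miniversality. Instead it builds an explicit \emph{localization map} $\Loc\colon \GM(F_\chi)\to H^*_{\CR,\T}(\frX)\otimes R_\T(\!(z^{-1})\!)[\![\Laa_+]\!][\![y]\!]$ sending each $w_\bk\omega$ to a hypergeometric series, proves directly that $\Loc$ is an $R_\T[z][\![\Laa_+]\!][\![y]\!]$-linear solution of the Gauss--Manin system, and shows that every $\Loc(w_\bk\omega)$ is a tangent vector to the Givental cone $\overline{\cL}_\frX$ at $z\Loc(\omega)$. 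The structure of the cone (Proposition~\ref{pro:tangentsp_cone}) then gives $\Loc(\Omega_i)=M(\tau(y),z)\,\Upsilon_i(y,z)$, i.e.\ a Birkhoff factorization of the full matrix $(\Loc(\Omega_1),\dots,\Loc(\Omega_N))$, and $\Theta(\Omega_i):=\Upsilon_i$ \emph{is} the mirror isomorphism. The mirror map and the big isomorphism come out simultaneously; there is no separate small-to-big extension step.

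\textbf{Your ``$I=J\circ\tau$'' is only the weak-Fano case.} In the general (non-Fano, non-compact) setting treated here, the $I$-function is a point on the cone but not of the form $J(\tau,z)$; the relation involves a nontrivial positive $z$-factor. This is exactly what Birkhoff factorization provides, and your plan omits it. Relatedly, $\omega$ is not in general a cyclic generator of the $D$-module for toric \emph{stacks} unless $G$ is sufficiently large (cf.\ Remark~\ref{rem:GKZ}); your GKZ step should account for the multi-component (better-behaved GKZ) structure indexed by $\Bx$.

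\textbf{Freeness.} You correctly flag this as the main obstacle, but the paper's resolution is concrete: the restriction $\Loc^{(0)}$ of $\Loc$ to $Q=0$, $y=y^*$ is computed explicitly and shown to be an isomorphism onto $H^*_{\CR,\T}(\frX)\otimes R_\T[z]$ (Proposition~\ref{pro:Loc0}); freeness over $R_\T[z][\![\Laa_+]\!][\![y]\!]$ then follows by completeness and the linear independence of $\Loc(\Omega_i)$.

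\textbf{Pairings.} The higher residue pairing here is \emph{defined} via formal asymptotic expansion at the $N$ non-degenerate critical points over $\overline{S}_\T[\![\Laa_+]\!][\![y]\!]$, and the match with Poincar\'e pairing is not a limit check: one proves an identity $e^{F^\qu_\chi(p)/z}\Asym_p(e^{F_\chi/z}\Omega)=\sum_{v\in\Bx(\sigma)}\Loc(\Omega)|_{(\sigma,v)}\,c_v\,\Delta_{(\sigma,v)}(z)$ via an explicit $\Gamma$-function computation (Proposition~\ref{pro:Asymp_is_Loc}), and then sums over fixed points using character orthogonality and Atiyah--Bott. Your ``single limit at large radius'' would need to be justified by flatness plus a uniqueness argument, and even then you would need to compute the higher residue pairing at $Q=0$, which in this formal/equivariant setting is precisely the content of that $\Gamma$-function identity.
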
 

One of the important aspects in our construction is that the 
Landau--Ginzburg model is partially compactified across the large radius limit point $Q=0$; 
the Gauss--Manin connection then has logarithmic singularities at $Q=0$. 
The choice of a partial compactification is subtle 
when $\frX$ is a toric stack, rather than a toric manifold, because the family 
$\hcY \to \hcM$ then carries an additional \emph{Galois symmetry} 
of $\Pic^\st(\frX) := 
\Pic(\frX)/\Pic(X)$, where $X$ is the coarse moduli space of $\frX$. 
The Galois symmetry has stabilizers along the compactifying divisor, 
and the quotient family $\hcY/\Pic^\st(\frX) \to \hcM/\Pic^\st(\frX)$ 
gives a partial compactification of the traditional mirror family\footnote
{The correct mirror family should be thought of as a formal stack 
$[\hcY/\Pic^\st(\frX)] \to [\hcM/\Pic^\st(\frX)]$.}. 
Our construction gives a generalization of the work 
of de Gregorio--Mann \cite{deGregorio-Mann} who studied 
the Jacobian ring at the limit $Q=0$ for mirrors of 
weighted projective spaces 
(see also \cite{DKK:symplectomorphism} for a related 
partial compactification). 
The new ingredient for us is the 
\emph{refined fan sequence} \eqref{eq:refined_fanseq} 
for stacky fans. 

Our results yield a combinatorial description 
for the quantum $D$-module of toric stacks 
which is closely related to the better-behaved 
GKZ system of Borisov--Horja \cite{Borisov-Horja:bbGKZ}. 
We introduce a \emph{fan $D$-module} for a stacky fan 
(Definition \ref{def:fanD-mod}) 
and show that the quantum $D$-module of the corresponding toric stack $\frX$
is isomorphic to the $(Q,y)$-adic completion of the fan $D$-module
 (see Theorem \ref{thm:quantum_D-mod_presentation}). 
By taking the semiclassical limit $z\to 0$ of the $D$-module, 
we obtain a quantum Stanley--Reisner description 
of the big and equivariant quantum cohomology of $\frX$ as follows, generalizing 
the previous works \cite{Givental:fixedpoint_toric, Batyrev:qcoh_toric, 
Givental:toric_mirrorthm, McDuff-Tolman, Iritani:coLef, FOOO:toricI, 
BCS, Iritani:integral, Gonzalez-Woodward:tmmp}: 
\begin{theorem}[see Theorem \ref{thm:qcoh_presentation} for the details] 
The big and equivariant quantum cohomology of $\frX$ is isomorphic to the 
Jacobian ring of $F_\chi$ under the identification of parameters given by 
the mirror map. The latter ring is isomorphic to the space 
$\hbigoplus_{\bk \in \bN \cap |\Sigma|} \C[\![\Laa_+]\!][\![y]\!] \bunit_\bk$ 
equipped with the following product and 
 $H_\T^*(\pt,\C)$-module structure: 
\[
\bunit_{\bk} \star \bunit_{\bl} = Q^{d(\bk,\bl)} \bunit_{\bk+\bl},  
\qquad 
\chi = \sum_{i=1}^m (\chi \cdot b_i) y_i \bunit_{b_i} 
+ \sum_{\bl\in G} (\chi \cdot \bl) y_\bl \bunit_{\bl}
\] 
with $\chi \in H_\T^2(\pt) =\bN^\star\otimes \C$.
See \eqref{eq:d(,)} for the definition of $d(\bk,\bl)\in H_2(X,\Q)$. 
\end{theorem}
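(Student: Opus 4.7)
The plan is to obtain this theorem as the semiclassical $z \to 0$ limit of Theorem~\ref{thm:mirror_isom_introd}, combined with an explicit calculation of the Jacobian ring of $F_\chi$ in the partially compactified logarithmic family.

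On the A-side, the quantum $D$-module specializes at $z = 0$ to the quantum cohomology ring. On the B-side, the twisted de Rham complex $(\Omega^\bullet_{\hcY/\hcM}\{z\}, zd + dF_\chi \wedge)$ reduces at $z=0$ to the Koszul complex of the logarithmic partial derivatives of $F_\chi$, so that $\GM(F_\chi)/z \GM(F_\chi)$ is identified with the sheaf $\Jac(F_\chi) := \cO_{\hcY}/(x_i\, \partial F_\chi/\partial x_i)_{i=1}^n$ of Jacobian algebras over $\hcM$. Thus Theorem~\ref{thm:mirror_isom_introd} directly identifies the big equivariant quantum cohomology of $\frX$ with $\Jac(F_\chi)$ under the mirror map, as rings and $H_\T^*(\pt,\C)$-modules.

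To make the combinatorial structure explicit, I next compute the Jacobian relations. Contracting $x_j\, \partial F_\chi/\partial x_j = 0$ with coefficients that assemble an arbitrary $\chi \in \bN^\star \otimes \C = H_\T^2(\pt,\C)$ yields
\[
\chi \;=\; \sum_{i=1}^m (\chi \cdot b_i)\, y_i\, Q^{\lambda(b_i)} x^{b_i} \;+\; \sum_{\bl \in G} (\chi \cdot \bl)\, y_\bl\, Q^{\lambda(\bl)} x^\bl
\]
in $\Jac(F_\chi)$. Writing $\bunit_\bk$ for the class of $Q^{\lambda(\bk)} x^\bk$ (so $\bunit_0 = 1$) recovers the stated $H_\T^*(\pt,\C)$-action. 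The multiplicative rule $\bunit_\bk \star \bunit_\bl = Q^{d(\bk,\bl)}\, \bunit_{\bk+\bl}$ is then forced by the monomial identity $Q^{\lambda(\bk)} x^\bk \cdot Q^{\lambda(\bl)} x^\bl = Q^{\lambda(\bk)+\lambda(\bl)-\lambda(\bk+\bl)} \cdot Q^{\lambda(\bk+\bl)} x^{\bk+\bl}$, with $d(\bk,\bl) := \lambda(\bk) + \lambda(\bl) - \lambda(\bk+\bl) \in \Laa_+$.

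The most delicate step is to verify that $\{\bunit_\bk : \bk \in \bN \cap |\Sigma|\}$ is a topological $\C[\![\Laa_+]\!][\![y]\!]$-basis for $\Jac(F_\chi)$. Here the partial compactification across $Q = 0$ is essential: via the refined fan sequence~\eqref{eq:refined_fanseq}, the structure sheaf of $\hcY/\Pic^\st(\frX)\to\hcM/\Pic^\st(\frX)$ is, after $(Q,y)$-adic completion, topologically free over $\C[\![\Laa_+]\!][\![y]\!]$ on precisely the monomials $Q^{\lambda(\bk)} x^\bk$ with $\bk \in \bN \cap |\Sigma|$. Restricting to $Q = y = 0$, $F_\chi$ degenerates to $-\sum_i \chi_i \log x_i$ and $\Jac(F_\chi)$ specializes to the semigroup algebra $\C[\bN \cap |\Sigma|]$, in which the $\bunit_\bk$ form a genuine $\C$-basis of the expected rank. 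Flatness of $\Jac(F_\chi)$ over the formal base --- guaranteed by the mirror isomorphism, since the A-side is manifestly free of this rank --- together with Nakayama's lemma, propagates this basis to the entire formal neighbourhood of $Q = y = 0$.

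The main obstacle is precisely this third step: one must carefully control the $\Pic^\st(\frX)$-symmetry on the family and rule out any Jacobian relations beyond those coming from the logarithmic derivatives of $F_\chi$. The refined fan sequence and the de Gregorio--Mann style partial compactification developed earlier in the paper are the tools that make this book-keeping rigorous; without the compactification across $Q=0$, monomials $x^\bk$ with $\bk$ outside $|\Sigma|$ would appear and the clean basis $\{\bunit_\bk\}_{\bk \in \bN \cap |\Sigma|}$ would be lost.
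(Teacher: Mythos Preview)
Your approach is the same as the paper's: take the $z\to 0$ limit of the mirror isomorphism (Theorem~\ref{thm:mirror_isom}) to identify the big equivariant quantum cohomology with $\GM(F_\chi)/z\GM(F_\chi)\cong \Jac(F_\chi)$, then read off the combinatorial description from the explicit form of $F_\chi$. Your computation of the $\chi$-action and the product rule $\bunit_\bk\star\bunit_\bl=Q^{d(\bk,\bl)}\bunit_{\bk+\bl}$ is correct (note your formula $\lambda(\bk)+\lambda(\bl)-\lambda(\bk+\bl)$ agrees with the paper's $\Psi(\bk)+\Psi(\bl)-\Psi(\bk+\bl)$ since $\varsigma$ is a group homomorphism).

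Where you diverge is in treating the basis property of $\{\bunit_\bk\}$ as the ``most delicate step'' and attacking it via Nakayama, flatness inherited from the A-side, and the $\Pic^\st(\frX)$-symmetry. In the paper's setup this step is immediate and purely combinatorial: Lemma~\ref{lem:OO_+} shows directly that $\OO_+=\{(\Psi(\bk),\bk)+\Laa_+:\bk\in\bN\cap|\Sigma|\}$, so by construction $\C\{\OO_+\}[\![y]\!]=\hbigoplus_{\bk\in\bN\cap|\Sigma|}\C[\![\Laa_+]\!][\![y]\!]\,w_\bk$ (equation~\eqref{eq:completed_directsum}). The Jacobian relations $x_i\partial F_\chi/\partial x_i=0$ are then linear in $\chi_1,\dots,\chi_n$ and simply \emph{define} the $R_\T$-module structure on this space; they impose no further relations among the $w_\bk$. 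So $\Jac(F_\chi)\cong\C\{\OO_+\}[\![y]\!]$ with the stated topological basis, without any appeal to the A-side or to the Galois action. Your specialization argument also has two slips: the paper completes at $y=y^*$ (where $y_i=1$), not at $y=0$, so $F_\chi$ does not degenerate to $-\sum\chi_i\log x_i$ there; and the phrase ``$\C$-basis of the expected rank'' conflates the infinite $\C$-basis $\{\bunit_\bk\}$ with the finite $R_\T$-rank $\dim H^*_{\CR}(\frX)$.
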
 

In the last part of the paper, we discuss the convergence of the 
mirror map and the mirror isomorphism in Theorem \ref{thm:mirror_isom_introd}. 
Beyond the weak Fano case or small quantum cohomology, 
it was known \cite{Iritani:coLef} that the mirror isomorphism is 
not fully analytic: it is only defined over formal power series in 
$z$ in general. 
We prove a partial analyticity result for the mirror isomorphism 
which in turn shows that the big and equivariant 
quantum cohomology itself is convergent and analytic. 

\begin{theorem}[see Theorem \ref{thm:convergence}, 
Corollary \ref{cor:convergence} for the details] 
The mirror map and the isomorphism in Theorem \ref{thm:mirror_isom_introd}
satisfy the following:
\begin{itemize}
\item[(1)] The mirror map is analytic in $(Q,y,\chi)$. 

\item[(2)] With respect to a basis of the Gauss--Manin system $\GM(F_\chi)$ 
formed by polynomial differential forms, 
the mirror isomorphism is a formal power series in $z$ 
with coefficients in analytic functions in $(Q,y,\chi)$. 
\end{itemize} 
It follows that the structure constants of the big and equivariant quantum 
cohomology of semi-projective toric stacks are analytic functions in 
their arguments $(\tau,\chi)$. 
\end{theorem}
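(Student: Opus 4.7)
The plan is to deduce all three statements from two inputs: (a) the convergence of the $I$-function as a hypergeometric series, and (b) the explicit analyticity of the Gauss--Manin system in a basis of polynomial differential forms on the $B$-side.

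For part (1), recall that the mirror map is read off from the asymptotic expansion $I = z\cdot 1 + \tau(Q,y,\chi) + O(z^{-1})$ of the $I$-function at $z=\infty$, so it suffices to show that $I$ itself is analytic in $(Q,y,\chi)$ on a polydisc near the origin. The $I$-function is an explicit hypergeometric series whose coefficient of $Q^\beta y^k$ is a product of Pochhammer-type factors, rational in $\chi$ and $z$. Standard estimates on these quotients give absolute convergence on a polydisc, uniform for $\chi$ in compact sets, and hence joint analyticity in $(Q,y,\chi)$. Term-by-term extraction of the coefficients of the Laurent expansion in $z^{-1}$ preserves this absolute convergence, so $\tau$ and the subleading coefficients (needed in part (2)) are analytic.

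For part (2), I would construct the mirror isomorphism as a Birkhoff-type gauge transformation intertwining the $B$-side Gauss--Manin connection with the $A$-side quantum connection. With respect to a basis of polynomial differential forms $x^\bk dx_1\wedge\cdots\wedge dx_n/(x_1\cdots x_n)$, the twisted de Rham differential $z d + dF_\chi\wedge$ and the higher residue pairing have matrix entries that are polynomial in $z$ with \emph{analytic} (in fact polynomial or rational) dependence on $(Q,y,\chi)$; this is immediate from the form of $F_\chi$ together with the Jacobian relations, which let one rewrite multiplication by $\chi$ in terms of polynomial toric coordinates. Writing the mirror isomorphism as a formal series $\sum_{n\ge 0} A_n(Q,y,\chi) z^{-n}$, the flatness and pairing-matching equations translate into an inhomogeneous system of linear ODEs in $(Q,y,\chi)$ that determines $A_n$ from $A_0,\dots,A_{n-1}$ and the analytic $B$-side data. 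Starting from the analytic leading term $A_0$ supplied by part (1), induction on $n$ yields analyticity of every $A_n$.

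Part (3) then follows at once: the big equivariant quantum product structure constants appear as the $z\to 0$ limit of the quantum connection, which under the mirror isomorphism corresponds to the Jacobian ring multiplication of $F_\chi$ in the polynomial basis. Both the Jacobian multiplication and the $z^0$ coefficient $A_0$ of the mirror isomorphism are analytic in $(Q,y,\chi)$ by parts (1) and (2), so the structure constants are analytic in those variables; analyticity in $(\tau,\chi)$ follows by composing with the analytic, locally invertible, mirror map.

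The main obstacle will be the inductive step in part (2): a priori the Birkhoff-type construction only yields coefficients in the formal power series ring $\C[\![\Laa_+]\!][\![y]\!]$, and one must verify that analyticity is preserved at each order in $z$. This hinges on two features of the setup established earlier in the paper: first, the logarithmic structure of the LG model along the large-radius divisor $Q=0$, which ensures that the Gauss--Manin connection has only regular singularities there and that its formal flat sections extend to analytic ones; and second, the fact that, despite the transcendental $-\sum_i \chi_i \log x_i$ term in $F_\chi$, differentiation with respect to $\chi$ acts on the Gauss--Manin cohomology through algebraic Jacobian relations, so no transcendental obstruction to analyticity arises in the $\chi$-direction.
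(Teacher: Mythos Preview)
Your approach has a fundamental gap at the very first step. You assert that the $I$-function converges as a hypergeometric series on a polydisc, and that ``standard estimates on these quotients give absolute convergence.'' This is false in the generality the paper treats. As the paper itself notes (see the remark after Theorem~\ref{thm:convergence} and the introduction), beyond the weak Fano case the $I$-function is typically \emph{divergent} as a power series in $(Q,y)$: the Pochhammer ratios have $z$-degree growing with $|\lambda|$, and after expanding in $z^{-1}$ the resulting series in $Q$ need not converge. Since your parts (2) and (3) feed off the analyticity established in part (1), the whole argument collapses.

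The paper's proof is genuinely different and uses mirror symmetry in an essential way. The two key inputs are:
\begin{itemize}
\item[(i)] The formal asymptotic expansions $\Asym_p(e^{F_\chi/z}\Omega_i)$ of oscillatory integrals at the non-degenerate critical points of $F_\chi$ are \emph{Gevrey series of order~1} in $z$ (coefficients grow like $C^k k!$), with coefficients analytic in $(q,\chi)$. Combined with the grading, this forces the Gauss--Manin connection matrices $A_a$ to lie in a carefully designed local ring $\cO^z$ of formal power series in $z$ whose coefficients in $q^d$ are bounded by $C_1C_2^{|d|+k}|d|^k$.
\item[(ii)] A gauge-fixing theorem (Theorem~\ref{thm:gauge_fixing}, refining \cite[Prop.~4.8]{Iritani:coLef}): a logarithmic flat connection over $\cO^z$ with nilpotent residue admits a unique gauge transformation in $\GL_N(\cO^z)$ rendering it $z$-independent. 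The proof is a delicate recursive estimate, not a naive order-by-order induction.
\end{itemize}
The mirror isomorphism $\Theta$ is precisely this gauge transformation, hence lies in $\cO^z$; the analyticity of the mirror map $\tau$ is then \emph{deduced} from $\Theta\in\GL_N(\cO^z)$ via $q_a\partial_{q_a}\tau = (\Theta\circ z\nabla_{q_a\partial_{q_a}}\circ\Theta^{-1})(1)$, not from any direct estimate on $I$. Your inductive scheme in part (2), even if the input were available, would at best show each $z$-coefficient of $\Theta$ is analytic on some domain, without the uniform control over domains that $\cO^z$ encodes and that the proof of Corollary~\ref{cor:convergence} actually requires.

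A minor point: $\Theta$ is a power series in nonnegative powers of $z$, not in $z^{-1}$; it is $M(\tau,z)$ that lies in $1+O(z^{-1})$.
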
 

The proof uses mirror symmetry in an essential way. 
We combine the fact that the formal asymptotic expansions 
of oscillatory integrals are Gevrey series of order 1 
with a gauge fixing result from \cite[Proposition 4.8]{Iritani:coLef}.

\begin{remark} 
Hodge theoretic mirror symmetry for toric varieties or stacks 
has been studied by many people.  We explain how our results fit with this earlier work.

(1) In singularity theory, our Gauss--Manin system has been studied 
for isolated hypersurface singularities under the name of 
\emph{Brieskorn lattice}. K.~Saito \cite{SaitoK:primitiveform} 
and M.~Saito \cite{SaitoM:Brieskorn} constructed flat (Frobenius manifold) 
structures on the base of miniversal deformations 
of isolated singularities. 
This was generalized to global singularities by 
Sabbah, Barannikov, and  Douai--Sabbah \cite{Sabbah:tame, Barannikov:projective, Douai-Sabbah:I, Douai-Sabbah:II}, 
and applications to mirror symmetry are discussed there.
See also Mann \cite{Mann:wP}. 

(2) More recently, Reichelt--Sevenheck \cite{Reichelt-Sevenheck:logFrob, 
Reichelt-Sevenheck:nonaffine} constructed nc-Hodge structures,
which roughly speaking correspond to the Saito structures here, 
for mirrors of weak Fano toric manifolds and discussed their relation to the GKZ system. 
They also used log structures on the mirror to define the twisted 
de Rham complex; more precisely, they put log structures along the toric boundary of 
each fiber of the mirror family $\cY \to \cM$, but not along $Q=0$. 
They described a logarithmic extension of the mirror $D$-module 
across $Q=0$ using a GKZ-style presentation. 
See also T.~Mochizuki \cite{Mochizuki_T:twistor_GKZ}. 

(3) Mirror symmetry for non-weak-Fano toric manifolds and its convergence 
were analysed by Iritani \cite{Iritani:coLef}. 
Fukaya--Oh--Ohta--Ono \cite{FOOO:toricI} gave a Jacobian description of the 
quantum cohomology of general toric manifolds using 
Lagrangian Floer theory (see Chan--Lau--Leung--Tseng \cite{CLLT:Seidel} 
for an explicit computation in the weak Fano case). 
Gross \cite{Gross:tropical_P2} constructed mirrors 
of the big quantum cohomology of $\PP^2$ by counting tropical discs. 

(4) A mirror theorem for weighted projective spaces 
was proved by Coates--Corti--Lee--Tseng \cite{CCLT:wp} and 
was generalized to toric stacks in our previous work \cite{CCIT:mirrorthm}; see Cheong--Ciocan-Fontanine--Kim \cite{Cheong-CF-Kim} 
for a more general result. 
Based on these works, Landau--Ginzburg mirror symmetry for 
small quantum $D$-modules 
was described by Iritani \cite{Iritani:integral} for weak Fano toric stacks, 
and by Douai--Mann \cite{Douai-Mann:wp} for weighted projective spaces. 
Iritani \cite{Iritani:integral} also described the natural integral 
structure on the mirror in terms of the Gamma class (see also \cite{KKP}); 
a missing piece in the present work is the identification of the integral 
(or rational, or real) structure 
for mirrors of general toric stacks. 

(5) Gonz\'alez--Woodward \cite{Gonzalez-Woodward:tmmp} used gauged 
Gromov--Witten theory and quantum Kirwan maps 
to give a Jacobian description for quantum cohomology of toric stacks. 

(6) After we finished a draft of this paper, we learned that 
Mann--Reichelt \cite{Mann-Reichelt} studied closely related 
logarithmic degenerations of mirrors of weak-Fano toric orbifolds along $Q=0$. 
They used an extended version of the refined fan sequence 
in the case where $\bN$ has no torsion 
(see \cite[equation 2.17]{Mann-Reichelt})  
and obtained a logarithmic extension of the mirror $D$-module 
via a GKZ-style presentation 
(see \cite[Defintion 4.9, Theorem 6.6]{Mann-Reichelt} 
and Remark \ref{rem:GKZ}). 
\end{remark}

\begin{remark}
It should be possible to construct the mirror map and the 
mirror isomorphism for toric stacks 
via the Seidel representation \cite{Seidel:pi1} and 
shift operators \cite{BMO:Springer}, 
as \cite{Gonzalez-Iritani:Selecta,Iritani:shift_mirror} did for toric manifolds. 
\end{remark} 

\medskip
\noindent 
{\bf Acknowledgements.}  
We thank Thomas Reichelt for his interest in our paper and 
helpful comments.  
T.C.~was supported in part by a Royal Society University Research Fellowship,
ERC Starting Investigator Grant number~240123, and the Leverhulme Trust.  
H.I.~was supported in part by 
EPSRC grant EP/E022162/1 and 
JSPS Kakenhi Grant Number 16K05127, 16H06337, 
25400069, 26610008, 23224002, 25400104, 
22740042. 
H.-H.T.~was supported in part by 
NSF grant DMS-1506551 and a Simons Foundation Collaboration Grant.

\section{Toric stacks} 
\label{sec:toric_stacks}
In this section, we establish notation for 
toric Deligne--Mumford stacks (toric stacks for short) in the sense of 
Borisov, Chen and Smith \cite{BCS}. For the basics on 
toric stacks or varieties, we refer the reader to 
\cite{BCS,FMN,Iwanari1, Iwanari2, CLS}. 
\subsection{Definition} 
\label{subsec:definition} 
A \emph{stacky fan} \cite{BCS} is a triple 
$\bSigma = (\bN, \Sigma, \beta)$ consisting of 
\begin{itemize} 
\item a finitely generated abelian group $\bN$ of rank $n$; 

\item a rational simplicial fan $\Sigma$  
in the vector space $\bN_\R = \bN\otimes_\Z \R$; 

\item a homomorphism $\beta \colon \Z^m \to \bN$ 
such that $\{\R_{\ge 0}b_1, \dots, \R_{\ge 0}b_m\}$ 
is the set of one-dimensional cones of $\Sigma$, 
where $b_i = \beta(e_i)$ is the image of the 
$i$th basis vector $e_i\in \Z^m$. 
\end{itemize} 
Abusing notation, we shall identify a cone $\sigma$ of $\Sigma$ 
with the subset $\{i : b_i \in \sigma\}$ 
of $\{1,\dots,m\}$. For instance, we write 
$I\in \Sigma$ for $I\subset \{1,\dots,m\}$ 
if the cone spanned by $\{b_i : i\in I\}$ 
belongs to $\Sigma$, and we write $i\in \sigma$ 
for a cone $\sigma \in \Sigma$ if $b_i \in \sigma$. 
Define 
\[
\cU_\Sigma := \C^m \setminus 
\bigcup_{\{1,\dots,m\} \setminus I \notin \Sigma} 
 \C^I,  
\]
where $\C^I = \{(Z_1,\dots,Z_m) \in \C^m : 
\text{$Z_i = 0$ for $i\notin I$}\}$.  
Define the group $\G$ by 
\[
\G := H^{-1}(\Cone(\beta) \otimes^\LL \C^\times).  
\]
This is isomorphic to the product of the algebraic torus $(\C^\times)^{m-n}$ and 
a finite group. The group $\G$ acts on $\C^m$ via 
the connecting homomorphism $H^{-1}(\Cone(\beta)\otimes^\LL \C^\times) 
\to H^0(\Z^m\otimes \C^\times) = (\C^\times)^m$. 
A \emph{toric Deligne--Mumford 
stack} $\frX$  associated to 
the stacky fan $\bSigma$ is defined \cite{BCS} to be the quotient stack 
\begin{equation*}
\frX := \left[\cU_\Sigma/\G   
\right ].  
\end{equation*} 
We assume that 
\begin{itemize}
\item $\Sigma$ contains a cone of maximal dimension $n=\dim \bN_\R$;  
\item the support $|\Sigma|$ of the fan $\Sigma$ is convex;  
\item the fan admits a strictly convex piecewise linear 
function $f\colon |\Sigma| \to \R$ which is linear on 
each cone. 
\end{itemize}
These assumptions are equivalent to the condition 
that the coarse moduli space $X$ of $\frX$ 
is semi-projective \cite{CLS}, that is, $X$ is projective over an affine 
variety and has a torus fixed point.  
We set 
\[
\LL := \Ker(\beta), \qquad 
\bM := \Hom(\bN,\Z). 
\]
By definition, $\LL$ is the lattice of relations 
among $b_1,\dots,b_m$.  
The \emph{fan sequence} is the exact sequence:  
\begin{equation}
\label{eq:fanseq} 
\begin{CD}
0@>>> \LL @>>> \Z^m @>{\beta}>> \bN 
\end{CD} 
\end{equation} 
and the \emph{divisor sequence} is its Gale dual \cite{BCS}:  
\begin{equation} 
\label{eq:divseq} 
\begin{CD}
0 @>>> \bM @>{\beta^\star}>> (\Z^m)^\star @>{D}>> \LL^\vee, 
\end{CD} 
\end{equation} 
where $\LL^\vee := H^1(\Cone(\beta)^\star) 
\cong \Hom(\G,\C^\times)$ and the map
$D \colon (\Z^m)^\star \to \LL^\vee$ is induced by the 
natural map 
$(\Z^m)^\star \to (\Cone(\beta)[-1])^\star 
= \Cone(\beta)^\star[1]$. 
The ordinary dual $\LL^\star = \Hom(\LL,\Z)$ 
can be identified with the torsion-free part $\LL^\vee/(\LL^\vee)_{\rm tor}$ 
of $\LL^\vee$ and the torsion part of $\LL^\vee$ 
is given by $(\LL^\vee)_{\rm tor} = \Hom(\Cok(\beta),\C^\times)$. 
Note that $\Cok(\beta) \cong \pi_0(\G)$ is isomorphic 
to the orbifold fundamental group of $\frX$. 
The torsion part $\bN_{\rm tor}$ of $\bN$ is isomorphic to the generic stabilizer  
$\Ker(\G \to (\C^\times)^m)$ of $\frX$. 
We write $u_i = e_i^\star\in (\Z^m)^\star$ for the $i$th basis vector 
and $D_i := D(u_i)$ for the image of $u_i$ by $D$. 

\begin{notation}  
By the subscripts $\Q$, $\R$, $\C$,
we mean the tensor products 
with $\Q$, $\R$, $\C$ (over $\Z$), 
e.g. $\bN_\R = \bN\otimes \R$, $\LL_\Q = \LL\otimes \Q$. 
For an element $\bk\in \bN$, 
we denote by $\overline{\bk}$ 
the image of $\bk$ in $\bN_\Q$ (or in $\bN_\R$). 
By abuse of notation, we write  
$\bN \cap |\Sigma| := \{\bk\in \bN : 
\overline{\bk} \in |\Sigma|\}$. 
\end{notation} 

\subsection{Torus action and divisor sequence} 
\label{subsec:torus_action} 
The $(\C^\times)^m$-action on $\cU_\Sigma \subset \C^m$ 
naturally induces the action of the Picard stack 
$\frT = [(\C^\times)^m/\G]$ on $\frX$ \cite{FMN}. 
A line bundle on $\frX$ corresponds 
to a $\G$-equivariant line bundle on $\cU_\Sigma$, 
which is determined by a character of $\G$. 
Similarly, a $\frT$-equivariant line bundle on $\frX$ 
corresponds to a $(\C^\times)^m$-equivariant line 
bundle on $\cU_\Sigma$, which is determined by 
a character of $(\C^\times)^m$. 
Therefore we have the following natural identifications: 
\begin{align*}
\Pic(\frX) & 
\cong \Hom(\G,\C^\times) = \LL^\vee, \\ 
\Pic^\frT(\frX) & 
\cong \Hom((\C^\times)^m, \C^\times) = (\Z^m)^\star.  
\end{align*}  
The natural map $\Pic^\frT(\frX) \to \Pic(\frX)$ can be 
identified with the map $D \colon (\Z^m)^\star \to \LL^\vee$ 
in the divisor sequence \eqref{eq:divseq}. 
Let $\T:= (\C^\times)^m/ \Image(\G \to (\C^\times)^m) 
\cong \bN\otimes \C^\times$ 
be the coarse moduli space\footnote 
{There exists an exact sequence of Picard stacks: 
$1 \to BN_{\rm tor} \to  
\frT \to \T \to 1$ and this sequence splits: 
$\frT \cong BN_{\rm tor} \times \T$; see \cite[Proposition 2.5]{FMN}. 
Thus the $\T$-action on $X$ lifts to 
a $\T$-action on $\frX$. } of $\frT$. 
The torus $\T$ acts 
on the coarse moduli space $X$ of $\frX$. 
Taking first Chern classes of (equivariant) line bundles, we obtain 
the following canonical identifications over $\Q$: 
\begin{align}
\label{eq:description_H2}  
\begin{split}
H^2(X,\Q) & \cong \LL^\star_\Q \\
H^2_\T(X,\Q) &  \cong (\Q^m)^\star \\
H^2_\T({\rm pt},\Q) & \cong \bM_\Q 
\end{split} 
\end{align} 
such that the divisor sequence \eqref{eq:divseq} over $\Q$ 
is identified with  
\begin{equation*}
\label{eq:eqcohseq}
\begin{CD}
0 @>>> H^2_\T({\rm pt},\Q) 
@>>> H^2_\T(X,\Q)  
@>>> H^2(X,\Q) @>>> 0. 
\end{CD}
\end{equation*} 
Via the identification \eqref{eq:description_H2}, 
we regard $u_i = e_i^\star \in (\Q^m)^\star$, 
$D_i \in \LL_\Q^\star$  
as (equivariant or non-equivariant) cohomology classes. 
These are the (equivariant or non-equivariant)  
Poincar\'{e} duals of the toric divisor $[\{Z_i= 0\}/\G] \subset [\cU_\Sigma/\G]$, 
where $Z_i$ is the $i$th co-ordinate on $\C^m$. 

\subsection{Inertia stack, Box and orbifold cohomology} 
\label{subsec:inertia} 
Recall that the \emph{inertia stack} $I\frX$ is defined to 
be the fiber product $\frX \times_{\Delta, \frX \times \frX, \Delta} \frX$ 
of the diagonal morphisms $\Delta \colon \frX \to \frX \times \frX$. 
A point of $I\frX$ is given by 
a pair $(x,g)$ of a point $x\in \frX$ and 
an automorphism $g \in \Aut(x)$.  There is a map $\inv \colon I \frX \to I\frX$ that sends
$(x,g)$ to $(x,g^{-1})$.
Borisov, Chen and Smith \cite[Lemma~4.6]{BCS} showed that 
connected components of the inertia stack $I\frX$ 
are indexed by the set $\Bx$: 
\begin{equation*}
\Bx := \bigcup_{\sigma \in \Sigma} \Bx(\sigma), 
\quad 
\Bx(\sigma) := \left\{ 
v \in \bN\; \Bigg|\; \overline{v} \in \sigma, \; 
\overline{v} = \sum_{i\in \sigma} c_i \overline{b}_i, \; 
c_i\in [0,1) \right\}. 
\end{equation*} 
Let $\frX_v$ denote the component of $I\frX$ corresponding 
to $v\in \Bx$; then we have $I \frX = \bigsqcup_{v\in\Bx} \frX_v$. 
The component $\frX_v$ is isomorphic to the closed toric substack 
of $\frX$ associated with the minimal cone $\sigma(v)\in \Sigma$ 
containing $\overline{v}\in \bN_\R$. 
See the proof of Lemma~\ref{lem:age} below for the description 
of the stabilizer $g_v\in \G$ along $\frX_v$. 

\begin{notation} 
\label{nota:Psi}
We introduce a function $\Psi \colon \bN \cap |\Sigma| 
\to (\Q_{\ge 0})^m$ by 
\begin{equation*}
\Psi(\bk) = (\Psi_i(\bk))_{1\le i\le m}, 
\qquad \Psi_i(\bk) := 
\begin{cases} 
c_i  & i \in \sigma ; \\
0    & i \in \{1,\dots,m\}\setminus \sigma, 
\end{cases}   
\end{equation*}  
where $\bk \in \bN \cap |\Sigma|$, $\sigma\in \Sigma$ 
is the minimal cone containing $\overline{\bk}$ and we write 
$\overline{\bk} = \sum_{i\in \sigma} c_i \overline{b}_i$. 
The age function $|\cdot| \colon \bN \cap |\Sigma| 
\to \Q_{\ge 0}$ is defined to be 
$|\bk| = \sum_{i=1}^m \Psi_i(\bk)$. 
\end{notation} 

Let $X_v$ denote the coarse moduli space of $\frX_v$. 
The \emph{orbifold cohomology group} 
\cite{Chen-Ruan:new_coh} of $\frX$ is defined 
to be 
\begin{equation*}
\label{eq:orbcoh} 
H_{\CR}^*(\frX) := \bigoplus_{v\in \Bx} 
H^{* - 2 |v|}(X_v, \C).  
\end{equation*} 
The \emph{$\T$-equivariant orbifold cohomology group} is 
defined similarly: 
\[
H^*_{\CR, \T}(\frX) := 
\bigoplus_{v\in \Bx} H^{*-2|v|}_{\T}(X_v,\C). 
\] 
Chen and Ruan \cite{Chen-Ruan:new_coh} introduced a super-commutative 
product structure on orbifold cohomology, called the \emph{Chen--Ruan 
cup product}. For toric stacks, the Chen--Ruan product is commutative.
The orbifold cohomology ring of the toric stack $\frX$ has been 
computed by Borisov--Chen--Smith \cite{BCS} in the complete case, 
Jiang--Tseng \cite{Jiang-Tseng} in the semi-projective case, 
and by Liu \cite{CCLiu:localization} in the equivariant case.  
The $\T$-equivariant orbifold cohomology ring is: 
\[
H^*_{\CR,\T}(\frX) \cong \bigoplus_{\bk\in \bN\cap |\Sigma|} 
\C \phi_\bk
\]
where the product structure is given by: 
\begin{equation} 
\label{eq:equivariant_CR_product}
\phi_{\bk_1} \cdot \phi_{\bk_2} = 
\begin{cases} 
\phi_{\bk_1 + \bk_2}  & \text{if $\overline{\bk}_1,
\overline{\bk}_2$ lie in the same cone 
of $\Sigma$}; \\ 
0 &\text{otherwise} 
\end{cases} 
\end{equation} 
and the $R_\T := H^*_\T(\pt,\C) = \Sym^*(\bM_\C)$-module structure 
is given by $
\chi \mapsto \sum_{i=1}^m (\chi\cdot b_i) \phi_{b_i}$ 
for $\chi \in \bM_\C$. 
The non-equivariant orbifold cohomology ring is the quotient 
of $H_{\CR,\T}^*(\frX)$ by the ideal generated by 
equivariant parameters $\chi\in \bM_\C$, i.e.~
\[
H^*_{\CR}(\frX) \cong \frac{\bigoplus_{\bk\in \bN\cap |\Sigma|} 
\C \phi_{\bk}}{
\left \langle \sum_{i=1}^m (\chi\cdot b_i) \phi_{b_i} : \chi\in \bM_\C 
\right \rangle}. 
\]
For a box element $v \in \Bx$, $\phi_v$ represents 
the identity class $\unit_v\in H^0_\T(X_v)$ 
supported on the component $X_v$. 
The element $\phi_{b_i}$, $1\le i\le m$, represents the class $u_i$ 
(or $D_i$ in the non-equivariant case) 
of a toric divisor; see \S \ref{subsec:torus_action}. 
In particular, we have 
\[
\phi_{\bk} = \left( \prod_{i=1}^m u_i^{\floor{\Psi_i(\bk)}} \right) \unit_v 
\]
for $v = \bk - \sum_{i=1}^m \floor{\Psi_i(\bk)} b_i \in \Bx$. 

\begin{remark} 
Since the odd cohomology of $X_v$ vanishes, the Serre spectral sequence 
for $X_v \times_\T E\T \to B\T$ degenerates over $\Q$ at the $E_2$ term, and 
we find that $H^*_{\CR,\T}(\frX)$ is a free module over $R_\T 
= H^*_\T(\pt)$ of rank $\dim H^*_{\CR}(\frX)$. 
\end{remark} 

\subsection{Refined fan sequence} 
\label{subsec:refined_fanseq}
We introduce an overlattice $\Laa$ of $\LL \cong 
\Hom(\Pic(\frX),\Z)$ which contains all curve classes in $\frX$. 
This overlattice $\Laa$ fits 
into a refined version of the fan sequence \eqref{eq:fanseq}. 
Let $\OO$ be the subgroup of $\Q^m \oplus\bN$ given by 
\[
\OO := \sum_{\bk \in \bN \cap |\Sigma|} \Z (\Psi(\bk),\bk). 
\]
Note that $\OO$ is a subgroup of  
$\{(\lambda,\bk) \in \Q^m \oplus \bN : \beta(\lambda) = 
\overline{\bk}\}$. 
We define: 
\[
\Laa := \{(\lambda,0) \in \OO \} \subset \Q^m.  
\]
The lattice $\Laa$ is a subgroup of $\LL_\Q\subset \Q^m$, and contains $\LL$. 
For $\bk_1,\bk_2 \in \bN\cap |\Sigma|$, 
define an element $d(\bk_1,\bk_2) \in \Q^m$ by 
\begin{equation}
\label{eq:d(,)} 
d(\bk_1,\bk_2) := \Psi(\bk_1) + \Psi(\bk_2) - \Psi(\bk_1+\bk_2).  
\end{equation} 
It is easy to see that $d(\bk_1,\bk_2) \in \Laa$, and moreover 
that $\Laa$ is generated by these classes:  
\[
\Laa = \sum_{\bk_1,\bk_2\in \bN\cap |\Sigma|} \Z d(\bk_1,\bk_2).  
\]
We obtain the following extension of the fan sequence \eqref{eq:fanseq}: 
\begin{equation}
\label{eq:refined_fanseq} 
\begin{aligned} 
\xymatrix{ 
0 \ar[r]  & \LL \ar[r]\ar@{^{(}->}[d] & \Z^m \ar[r] 
\ar@{^{(}->}[d]  
& \bN 
\ar@{=}[d] \\ 
0 \ar[r] & \Laa \ar[r] & \OO \ar[r] & \bN \ar[r] & 0 
} 
\end{aligned}
\end{equation} 
where the map $\OO \to \bN$ is projection to the second factor and the map $\Z^m \to \OO$ is given by 
sending $e_i\in \Z^m$ to $(e_i,b_i)\in \OO$. 
We call the exact sequence in the second row 
the \emph{refined fan sequence}. 
Note that the torsion part $\OO_{\rm tor}$ of $\OO$ 
is isomorphic to $\bN_{\rm tor}$ under projection, and hence 
the refined fan sequence splits. 

\begin{example} \label{ex:P112_refined_fan_sequence}
The fan sequence and the 
refined fan sequence for the toric stack $\frX = \PP(1,1,2)$ are:
\begin{align*} 
\xymatrix{
0 \ar[r] & \Z \ar@{_{(}->}[d]
\ar[r]^{ {}^t ( 1\  1\  2)}  & 
\Z^3 \ar[rr]^{(b_1\ b_2\ b_3)} 
\ar@{_{(}->}[d]
&  &  \Z^2 \ar@{=}[d]  \ar[r] & 0 \\  
0 \ar[r] & \frac{1}{2} \Z \ar[r] 
&  
\Z^3 + \Z {\tiny \begin{pmatrix} 1/2 \\ 1/2 \\ 0 \end{pmatrix}}
\ar[rr]
& & \Z^2 \ar[r] & 0 
} 
\end{align*} 
where $\bN = \Z^2$, $\LL = \Z$, $\Laa = \frac{1}{2} \Z$, 
$b_1 ={}^t(1,0)$, $b_2 = {}^t (-1,2)$, 
$b_3 = {}^t (0,-1)$ and 
$\OO = \Z^3 + \Z\, {}^t (1/2,1/2,0)$. 
\end{example}

\begin{example} \label{ex:Bmu2_refined_fan_sequence}
The refined fan sequence for the toric stack $\frX = B \mu_2$ is
\[
\xymatrix{
  0 \ar[r] & 0 \ar[r] & \Z/2\Z \ar[r]^{\cong} & \Z/2\Z \ar[r] & 0
}
\] 
where $\bN = \Z/2\Z$, $\OO = \Z/2\Z$ and $\Laa = 0$. 
\end{example}

\begin{remark} 
The set of degrees in $H_2(X,\Q)\cong \LL_\Q$ 
of stable maps to $\frX$ is generated by 
representable toric morphisms $f\colon \PP^1_{r_1,r_2} \to \frX$,
where $\PP^1_{r_1,r_2}$ denotes the one-dimensional 
toric stack with coarse moduli space equal to $\PP^1$, 
isotropy groups $\mu_{r_1}$, 
$\mu_{r_2}$ at $0$, $\infty$ respectively, 
and no other isotropy groups. These toric morphisms are classified in 
\cite[\S 3.5]{CCIT:mirrorthm}.
It is easy to see 
that the degrees $l(c,\sigma,j)$ of such toric morphisms 
given in \cite[Definition 12, Remark 13]{CCIT:mirrorthm} 
can be written as $d(\bk_1,\bk_2)$ for some $\bk_1, \bk_2 
\in \bN\cap |\Sigma|$ 
lying in two maximal cones meeting along a codimension one face. 
Therefore $\Laa$ contains all homology classes of stable maps. 
We will prove in Lemma \ref{lem:age_Laa}(2) below that $\Laa$ is 
the dual lattice of the Picard group of the coarse moduli space. 
On the other hand, $\OO$ should correspond to the group generated by 
classes in $H_2(X,L;\Q)\cong \Q^m$ of orbi-discs with boundaries in a Lagrangian 
torus orbit $L\subset X$. The notation $\OO$ is intended to 
mean degrees of ``open''  curves. 
\end{remark} 

\subsection{Mori cone and associated monoids}
\label{subsec:Mori_cone}
For a cone $\sigma \in \Sigma$, we define the 
following cones: 
\begin{align} 
\label{eq:cone_C} 
\begin{split} 
\tC_\sigma & := \left\{\lambda \in \R^m: \beta(\lambda) \in \sigma, 
\text{$\lambda_i \ge 0$ for $i\notin \sigma$} \right\}, \\ 
C_\sigma & := \LL_\R \cap \tC_\sigma 
= \left\{ \lambda \in \LL_\R : 
\text{$D_i \cdot \lambda \ge 0$  
for $i\notin \sigma$} \right\}. 
\end{split}  
\end{align} 
Note that $D_i\cdot \lambda$ is the $i$th component of $\lambda 
\in \LL_\R$ regarded as an element of $\R^m$. 
We also define 
\begin{align*} 
\OEf(\frX) & := \sum_{\sigma\in \Sigma} \tC_\sigma \subset \R^m, \\ 
\NE(\frX) & := \sum_{\sigma \in \Sigma} C_\sigma \subset \LL_\R. 
\end{align*} 
Under the identification $\LL_\R \cong H_2(X,\R)$ 
from \eqref{eq:description_H2}, $\NE(\frX)$ 
corresponds to the \emph{Mori cone}, i.e.~
the cone generated by effective curves.  
The cone $\OEf(\frX)$ should be its open analogue. 
We define: 
\begin{align*} 
\Laa_+ & := \Laa \cap \NE(\frX), \\ 
\OO_+ & := \left\{(\lambda,\bk) \in \OO : \lambda \in \OEf(\frX)\right\}.  
\end{align*} 

\begin{lemma} 
\label{lem:OO_+}
Projection to the second factor defines a map $\OO_+ \to \bN\cap |\Sigma|$. 
The fiber of this map at $\bk \in \bN\cap |\Sigma|$ equals $(\Psi(\bk),\bk) + 
\Laa_+$. In particular, the fiber at $0\in \bN \cap |\Sigma|$, which is 
$\Laa \cap \OEf(\frX)$, equals $\Laa_+$. 
\end{lemma}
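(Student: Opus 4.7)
The plan is to use the refined fan sequence to describe fibers of $\OO \to \bN$ abstractly, and then to characterize the cone $\OEf(\frX)$ in a form that makes intersection with such a fiber transparent.

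First, I would observe that by exactness of the refined fan sequence \eqref{eq:refined_fanseq}, the kernel of $\OO \to \bN$ is $\Laa$, and since $(\Psi(\bk), \bk)$ is a generator of $\OO$ for each $\bk \in \bN \cap |\Sigma|$ and maps to $\bk$, the fiber of $\OO \to \bN$ over $\bk$ is $(\Psi(\bk), \bk) + \Laa$. To see that $\OO_+$ projects into $\bN \cap |\Sigma|$: for $(\lambda, \bk) \in \OO$ one has $\beta(\lambda) = \overline{\bk}$, and if additionally $\lambda \in \OEf(\frX) = \sum_\sigma \tC_\sigma$, then $\beta(\lambda) \in \sum_\sigma \sigma = |\Sigma|$, forcing $\bk \in \bN \cap |\Sigma|$.

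The key geometric input I would then establish is the identity $\OEf(\frX) = \R_{\ge 0}^m + \NE(\frX)$. This reduces to a cone-wise decomposition $\tC_\sigma = \bigl(\sum_{i \in \sigma} \R_{\ge 0} e_i\bigr) + C_\sigma$. Indeed, given $\lambda \in \tC_\sigma$, one writes $\beta(\lambda) = \sum_{i \in \sigma} a_i \overline{b}_i$ uniquely with $a_i \ge 0$ (possible since $\sigma$ is simplicial), sets $s = \sum_{i \in \sigma} a_i e_i$, and notes that $\lambda - s \in \LL_\R$ has $(\lambda - s)_i = \lambda_i \ge 0$ for $i \notin \sigma$, so $\lambda - s \in C_\sigma$. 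Summing over $\sigma$ and using that every $i$ indexes a $1$-cone of $\Sigma$ gives the claim.

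With these in hand, both inclusions of the fiber equality $(\Psi(\bk), \bk) + \Laa_+$ follow quickly. Let $\sigma_0$ be the minimal cone of $\Sigma$ containing $\overline{\bk}$; by construction $\Psi(\bk) \in \R_{\ge 0}^m$ with support in $\sigma_0$. For $\supseteq$, any $\mu \in \Laa_+$ gives $\Psi(\bk) + \mu \in \R_{\ge 0}^m + \NE(\frX) = \OEf(\frX)$. For $\subseteq$, suppose $\mu \in \Laa$ and $\Psi(\bk) + \mu \in \OEf(\frX)$; decompose $\Psi(\bk) + \mu = r + c$ with $r \in \R_{\ge 0}^m$ and $c \in \NE(\frX)$. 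Then $r - \Psi(\bk) = \mu - c$ lies in $\LL_\R$, and since $\Psi(\bk)_i = 0$ for $i \notin \sigma_0$ we get $(r - \Psi(\bk))_i = r_i \ge 0$ for such $i$, hence $r - \Psi(\bk) \in C_{\sigma_0} \subset \NE(\frX)$. Therefore $\mu = c + (r - \Psi(\bk)) \in \Laa \cap \NE(\frX) = \Laa_+$. The ``in particular'' statement is the case $\bk = 0$, where $\Psi(0) = 0$. The main subtlety is the $\subseteq$ direction, where one must extract a Mori-cone representation of $\mu$ from a much coarser $\OEf$-decomposition of $\Psi(\bk) + \mu$; the reformulation $\OEf(\frX) = \R_{\ge 0}^m + \NE(\frX)$ is precisely what makes this transparent.
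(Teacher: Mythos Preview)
Your proof is correct and follows essentially the same underlying idea as the paper's: both exploit the simplicial decomposition $\tC_\sigma = \bigl(\sum_{i\in\sigma}\R_{\ge 0}e_i\bigr) + C_\sigma$. The organization differs slightly. The paper works cone-by-cone, writing $\lambda = \sum_\sigma \lambda_\sigma$ with $\lambda_\sigma \in \tC_\sigma$, peeling off $\Psi(\beta(\lambda_\sigma))$ from each summand to land in $C_\sigma$, and then handling the residual term $\sum_\sigma \Psi(\beta(\lambda_\sigma)) - \Psi(\bk)$ separately. You instead first aggregate the cone-wise splitting into the clean global identity $\OEf(\frX) = \R_{\ge 0}^m + \NE(\frX)$ and then need only a single correction $r - \Psi(\bk) \in C_{\sigma_0}$. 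Your version has the advantage of isolating that identity as a standalone fact and making the final step shorter; the paper's version avoids stating an auxiliary lemma. Either way the content is the same.
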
 
\begin{proof} 
It is clear from the definition that the projection to the second factor of an element in 
$\OO_+$ lies in $\bN \cap |\Sigma|$. Also it is clear that 
$(\Psi(\bk),\bk) + \Laa_+$ is contained in the fiber at $\bk\in \bN\cap |\Sigma|$. 
Let $(\lambda,\bk)$ lie in $\OO_+$. 
We have $(\lambda, \bk) = (\lambda -\Psi(\bk),0) + (\Psi(\bk),\bk)$. 
We want to show that $\lambda -\Psi(\bk) \in \Laa_+$. 
It suffices to show that $\lambda -\Psi(\bk) \in \NE(\frX)$, 
since it is clear that $\lambda -\Psi(\bk)$ lies in $\Laa$.  
By assumption there exist $\lambda_\sigma 
\in \tC_\sigma$ for $\sigma\in \Sigma$ such that 
$\lambda = \sum_{\sigma\in \Sigma} \lambda_\sigma$.  
We have that $\overline{\bk}_\sigma = \beta(\lambda_\sigma) 
\in \sigma$ and $\overline{\bk} = \beta(\lambda)$. Then 
\[
\lambda -\Psi(\bk) = \sum_{\sigma\in \Sigma} 
(\lambda_\sigma - \Psi(\overline{\bk}_\sigma) ) 
+ \sum_{\sigma \in \Sigma} \Psi(\overline{\bk}_\sigma) 
- \Psi(\bk).  
\] 
Here $\lambda_\sigma -\Psi(\overline{\bk}_\sigma)$ lies 
in $C_\sigma$ and 
$\sum_{\sigma \in \Sigma} \Psi(\overline{\bk}_\sigma) 
- \Psi(\bk)$ lies in $C_\tau$ for a cone $\tau$ containing 
$\overline{\bk}$. It follows that $\lambda -\Psi(\bk)$ 
lies in $\sum_{\sigma \in \Sigma} C_\sigma = \NE(\frX)$. 
\end{proof} 

\begin{remark} 
The group ring $\C[\OO_+]$ of $\OO_+$ can be viewed as an 
equivariant and orbifold generalization 
of Batyrev's quantum ring \cite{Givental:fixedpoint_toric, Batyrev:qcoh_toric}. 
See Theorem \ref{thm:qcoh_presentation} 
below for its relation to quantum cohomology. 
\end{remark}

\section{Toric Gromov--Witten theory} 
In this section, we review Gromov--Witten 
invariants, quantum cohomology, the quantum connection,
and the Givental cone. 
Most of the arguments apply to semi-projective smooth 
Deligne--Mumford stacks equipped with $\T$-action;
we restrict ourselves, however,
to the toric Deligne--Mumford stacks $\frX$ from \S \ref{sec:toric_stacks}. 
We also recall the main result of
our previous paper \cite{CCIT:mirrorthm}. 

\subsection{Gromov--Witten invariants} 
Gromov--Witten theory for symplectic orbifolds or smooth Deligne--Mumford stacks 
has been developed by Chen--Ruan \cite{Chen-Ruan:orbGW} and 
Abramovich--Graber--Vistoli \cite{AGV:GW}. 
We refer the reader to \cite{Chen-Ruan:orbGW,AGV:GW, 
Tseng:QRR} for a detailed discussion.
For $l\ge 0$ and $d\in H_2(X,\Z)$, 
we denote by $\frX_{0,l,d}$ the moduli stack of genus-zero 
twisted stable maps to $\frX$ of degree $d$ 
(this is denoted by $\mathcal{K}_{0,l}(\frX,d)$ in 
\cite{AGV:GW}). 
Note that $\frX_{0,l,d}$ is empty if $l \le 2$ and $d=0$. 
There are evaluation maps
$\ev_i \colon \frX_{0,l,d} \to \overline{I}\frX$, $i=1,\dots,l$, 
to the rigidified cyclotomic inertia stack $\overline{I}\frX$ 
\cite[\S 3.4]{AGV:GW}. 
For cohomology classes $\alpha_1,\dots,\alpha_l \in H^*_{\CR,\T}(\frX)$ 
and non-negative integers $k_1,\dots,k_l$, 
\emph{equivariant Gromov--Witten invariants} 
are defined to be the $\T$-equivariant integrals 
\[
\corr{\alpha_1 \psi^{k_1}, \dots, \alpha_l \psi^{k_l}}_{0,l,d}
= \int_{[\frX_{0,l,d}]^{\rm vir}}^\T \prod_{i=1}^l 
\psi_i^{k_i} \ev_i^\star(\alpha_i) 
\] 
where $[\frX_{0,l,d}]^{\rm vir}$ is the (equivariant) virtual fundamental 
class \cite{AGV:GW,Tseng:QRR} 
and $\psi_i$ is the first Chern class of the $i$th universal cotangent 
line bundle over $\frX_{0,l,d}$. 
We note that: 
\begin{itemize} 
\item since the underlying 
complex analytic spaces of the rigidified inertia stack 
and the inertia stack are the same, we can pull back the 
cohomology classes $\alpha_i$ via $\ev_i$;  
\item when $\frX$ is non-compact, we can define the right-hand 
side by the Atiyah--Bott-style virtual localization formula \cite{Graber-Pandharipande, 
CCLiu:localization}.  In this case the integral takes values in the fraction field $S_\T := \Frac(R_\T)$ 
of $R_\T = H^*_\T(\pt,\C)$. 
\end{itemize} 
A special case of Gromov--Witten invariants yields the \emph{orbifold 
Poincar\'e pairing}: it is defined by  
\[
(\alpha,\beta) := \corr{1, \alpha, \beta}_{0,3,0} 
= \int_{I\frX} \alpha \cup \inv^\star \beta 
\]
where the map $\inv$ was defined in \S\ref{subsec:inertia}. The pairing takes values in $R_\T$ or 
in $S_\T$, depending on whether $\frX$ is compact 
or non-compact.

\subsection{Quantum cohomology and the quantum connection} 
\label{subsec:qcoh} 
Recall the lattice $\Laa\subset H_2(X,\Q)$ 
and the monoid $\Laa_+$ of curve classes from 
\S\S \ref{subsec:refined_fanseq}--\ref{subsec:Mori_cone}. 
The quantum cohomology of $\frX$ is defined over 
the Novikov ring $\C[\![\Laa_+]\!]$, which is the 
completion of the group ring $\C[\Laa_+]$. 
We write $Q^d\in \C[\![\Laa_+]\!]$ 
for the element corresponding to $d \in \Laa_+$. 
The \emph{big equivariant quantum product} $\star$ is a formal family 
of commutative ring structures parametrized by 
$\tau \in H^*_{\CR,\T}(\frX)$, defined by 
\begin{equation}
\label{eq:qprod}
(\alpha \star \beta,\gamma)  = \sum_{l=0} ^\infty
\frac{1}{l!} 
\corr{\alpha,\beta,\gamma,\tau,\dots,\tau}_{0,l+3,d} Q^d 
\end{equation}
where $\alpha,\beta,\gamma \in H^*_{\CR,\T}(\frX)$. 
Since the orbifold Poincar\'e pairing is non-degenerate, 
this uniquely defines $\alpha\star\beta$. 
Choose a homogeneous basis $\{T_i\}$ of $H^*_{\CR,\T}(\frX)$ 
over $R_\T$ and write $\tau = \sum_{i} \tau^i T_i$. 
We regard $\{\tau^i\}$ as co-ordinates on $H^*_{\CR,\T}(\frX)$. 
For a ring $K$, we write $K[\![\tau]\!] = K[\![\{\tau^i\}]\!]$ 
for the ring of formal power series in $\{\tau^i\}$. 
The big equivariant quantum product defines a commutative ring structure on 
\[
H^*_{\CR,\T}(\frX) \otimes_{R_\T} R_\T[\![\Laa_+]\!][\![\tau]\!]. 
\]
Note that we do not need the fraction field $S_\T$ even when 
$\frX$ is non-compact. This is because the evaluation map 
$\ev_i \colon \frX_{0,l,d} \to \overline{I}\frX$ is always proper, 
and we can define the quantum product in terms of the 
push-forward along the $\ev_3$. The properness of 
$\ev_i$ is ensured by the fact that $\frX$ is semi-projective. 
In particular, the non-equivariant quantum 
product is defined on 
\[
H^*_{\CR}(\frX) \otimes \C[\![\Laa_+]\!][\![\tau]\!] 
\] 
as the non-equivariant limit. 
We also note that the Chen--Ruan cup product is the limit 
of the quantum product as $Q\to 0$ and $\tau \to 0$. 

The quantum connection is a pencil of flat connections with pencil parameter $z$. 
For $\xi \in \LL^\star_\C \cong H^2(X,\C)$ and a ring $K$, we write 
$\xi Q\parfrac{}{Q}$ for the derivation of $K[\![\Laa_+]\!]$ 
such that $\xi Q\parfrac{}{Q}\cdot Q^d = 
(\xi \cdot d) Q^d$. We also fix a splitting $\LL^\star_\C 
\to (\C^m)^\star \cong H^2_\T(X,\C)$ (over $\C$) of the composition of the 
divisor sequence \eqref{eq:divseq} with projection 
to the free part $\LL^\vee \to \LL^\star$, and write 
$\hxi\in H^2_\T(X,\C)$ for the lift of $\xi\in \LL^\star_\C$ with 
respect to the splitting. 
The quantum connection is defined by 
\begin{align}
\label{eq:q_conn} 
\begin{split}  
\nabla_{\xi Q\parfrac{}{Q}} &= \xi Q\parfrac{}{Q} 
+ \frac{1}{z} (\hxi \star) \\
\nabla_{\parfrac{}{\tau^i}} & = \parfrac{}{\tau^i} 
+ \frac{1}{z} (T_i \star) 
\end{split} 
\end{align} 
for $\xi \in H^2(X,\C)$. These operators define maps 
\[
H^*_{\CR,\T}(\frX)\otimes_{R_\T}R_\T [z][\![\Laa_+]\!][\![\tau]\!] 
\to z^{-1} H^*_{\CR,\T}(\frX)\otimes_{R_\T} 
R_\T [z][\![\Laa_+]\!][\![\tau]\!]. 
\]
The quantum connection is flat, 
i.e.~$[\nabla_{\vec{v}},\nabla_{\vec{w}}] = \nabla_{[\vec{v},\vec{w}]}$.  
Quantum cohomology has a grading structure. 
Define the \emph{Euler vector field} by 
\begin{equation} 
\label{eq:Euler_A}
\cE^{\rm A} = c_1(\frX) Q\parfrac{}{Q} + \sum_{i} 
\left(1 -\tfrac{1}{2} \deg T_i \right) \tau^i \parfrac{}{\tau^i} 
+ \sum_{i=1}^n \chi_i \parfrac{}{\chi_i} 
\end{equation} 
where $c_1(\frX) = D_1 +\cdots + D_m \in \LL^\star$ is the 
first Chern class of $T\frX$, 
$\deg T_i$ means the age-shifted degree of $T_i$, 
and $(\chi_1,\dots,\chi_n)$ is a basis of $\bM_\Q = H^2_\T(\pt,\Q)$ 
(so that $R_\T = \C[\chi_1,\dots,\chi_n]$). 
$\cE^{\rm A}$~is a derivation of $R_\T[\![\Laa_+]\!][\![\tau]\!]$. 
We define $\Gr_0\in \End_\C(H^*_{\CR,\T}(\frX))$ by 
\begin{equation} 
\label{eq:Gr_0} 
\Gr_0(\alpha) = \frac{1}{2} (\deg \alpha) \alpha  
\end{equation} 
for a homogeneous element $\alpha \in H^*_{\CR,\T}(\frX)$. 
We note that $\Gr_0$ is not linear over $H^*_\T(\pt,\C) = R_\T$. 
The grading on $H^*_{\CR,\T}(\frX) \otimes_{R_\T} 
R_\T[z][\![\Laa_+]\!][\![\tau]\!]$ is defined by the 
 operator
\[
\Gr^{\rm A} := z\parfrac{}{z} + \cE^{\rm A} + \Gr_0 
\]
where $z\parfrac{}{z} + \cE^{\rm A}$ acts on the 
coefficient ring $R_\T[z][\![\Laa_+]\!][\![y]\!]$ 
and $\Gr_0$ acts on $H^*_{\CR,\T}(\frX)$.  That is, we have 
\begin{equation} 
\label{eq:action_vf+Gr0}
\Gr^{\rm A}(c \alpha) = 
\left(\left(z\parfrac{}{z} + \cE^{\rm A}\right)c \right) 
\alpha 
+ c \Gr_0 \alpha
\end{equation} 
for $c\in R_\T[z][\![\Laa_+]\!][\![y]\!]$  
and $\alpha \in H^*_{\CR,\T}(\frX)$. 
The grading structure is compatible with the quantum 
connection in the sense that 
\[
\left[\nabla_{\xi Q\parfrac{}{Q}}, \Gr^{\rm A}\right] =0, 
\quad 
\left[\nabla_{\parfrac{}{\tau^i}}, \Gr^{\rm A}\right] = 
(1-\tfrac{1}{2} \deg T_i) \nabla_{\parfrac{}{\tau^i}}. 
\]

There is a canonical fundamental solution for the 
quantum connection. 
Define $M(\tau,z) \in \End_{R_\T}(H^*_{\CR,\T}(\frX)) 
\otimes R_\T(\!(z^{-1})\!)[\![\Laa_+]\!][\![\tau]\!]$ by 
\[
M(\tau,z) \alpha = \alpha + \sum_{d\in \Laa_+} 
\sum_{l= 0}^\infty \sum_{i} 
\frac{Q^d}{l!} 
\corr{\alpha,\tau,\dots,\tau,\frac{T_i}{z-\psi}}_{0,l+2,d} T^i 
\]
where $\{T^i\}$ is the basis of 
$H_{\CR,\T}^*(\frX) \otimes_{R_\T} S_\T$ 
dual to $\{T_i\}$ with respect to the orbifold Poincar\'e 
pairing, so that $(T_i,T^j) = \delta_i^j$. 

\begin{proposition} 
\label{pro:fundsol} 
The fundamental solution $M(\tau,z)$ satisfies the following 
differential equations: 
\begin{align*} 
M(\tau,z) \nabla_{\parfrac{}{\tau^i}} \alpha &= 
\parfrac{}{\tau^i} M(\tau,z) \alpha\\ 
M(\tau,z) \nabla_{\xi  Q\parfrac{}{Q}} \alpha & 
= \left( \xi Q\parfrac{}{Q} + z^{-1} \hxi \right) M(\tau,z) \alpha \\ 
M(\tau,z) (\Gr^{\rm A} \alpha)  & = \Gr^{\rm A} (M(\tau,z) \alpha) 
\end{align*} 
for $\alpha \in H^*_{\CR,\T}(\frX) \otimes_{R_\T} R_\T[z][\![\Laa_+]\!]
[\![\tau]\!]$. Moreover, $M(\tau,z)$ preserves the Poincar\'e pairing in the 
sense that 
$(M(\tau,-z) \alpha,M(\tau,z) \beta) = (\alpha,\beta)$ 
for all $\alpha$,~$\beta\in H^*_{\CR,\T}(\frX)$.   
\end{proposition}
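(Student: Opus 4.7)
The plan is to establish the four assertions — the two differential equations, the grading compatibility, and the pairing preservation — using the standard toolkit of genus-zero orbifold Gromov--Witten theory for semi-projective Deligne--Mumford stacks \cite{AGV:GW, Tseng:QRR}: the string and divisor equations, the topological recursion relation (TRR) in genus zero, and the WDVV equation.

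For the $\tau^i$-derivative equation I would differentiate the defining series for $M(\tau,z)\alpha$ term-by-term in $\tau^i$; each differentiation inserts an extra copy of $T_i$ without a $\psi$-class into the correlator. Using the identity $(z-\psi)^{-1} = z^{-1} + z^{-1}\psi(z-\psi)^{-1}$ and applying TRR/WDVV to the boundary of $\frX_{0,l+3,d}$ splits the result into a piece $z^{-1}M(\tau,z)(T_i \star \alpha)$, coming from the stratum where $\alpha$ and $T_i$ bubble off together, and a piece $M(\tau,z)(\partial \alpha/\partial \tau^i)$ that vanishes for $\tau$-independent $\alpha$. For the $\xi Q\partial/\partial Q$-equation I would apply the equivariant divisor equation to the insertion $\hxi$: this produces the factor $\xi\cdot d$ weighting $Q^d$, together with a correction from $\psi$-class absorption that exactly reproduces the classical cup-product term $z^{-1}\hxi$ on the right-hand side. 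The grading identity $M\,\Gr^{\rm A} = \Gr^{\rm A}\,M$ is then pure dimensional analysis: the virtual dimension formula for $\frX_{0,l+2,d}$ together with the age-shifted Chen--Ruan degrees forces every term of the series for $M(\tau,z)$ to be homogeneous of degree zero under the weights $\deg Q^d = 2\int_d c_1(\frX)$, $\deg \tau^i = 2 - \deg T_i$, $\deg \chi_j = 2$, $\deg z = 2$ prescribed by $z\partial_z + \cE^{\rm A} + \Gr_0$.

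The pairing preservation is subtler but becomes formal once the first two equations are in hand. I would set
\[
\Omega(\tau,z) := (M(\tau,-z))^\dagger \circ M(\tau, z),
\]
where $\dagger$ denotes the adjoint with respect to the orbifold Poincar\'e pairing, so that the claim reduces to $\Omega = \operatorname{id}$. At $\tau = 0$ and $Q = 0$ every correlator contributing to $M$ vanishes, giving $M = \operatorname{id}$ and hence $\Omega|_{\tau=0, Q=0} = \operatorname{id}$. Applying the first equation of the proposition to each factor of $\Omega$ and using the Frobenius property $(T_i \star)^\dagger = (T_i \star)$ of the quantum product yields the commutator ODE
\[
\frac{\partial \Omega}{\partial \tau^i} = z^{-1}\bigl[\Omega, T_i \star\bigr],
\]
and similarly the second equation gives $\xi Q\partial_Q\Omega = z^{-1}[\Omega, \hxi \star]$ after cancellation of the classical cup-product contributions. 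Since $\Omega = \operatorname{id}$ manifestly solves both ODEs with the correct initial value, formal-power-series uniqueness forces $\Omega = \operatorname{id}$ identically, which is exactly $(M(\tau,-z)\alpha, M(\tau,z)\beta) = (\alpha,\beta)$.

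The main obstacle I anticipate is this last step. A naive attempt to verify $\partial_{\tau^i}(M(-z)\alpha, M(z)\beta) = 0$ by direct substitution produces pairings of the form $(M(-z)\gamma, M(z)\delta)$ that one cannot simplify without already knowing the conclusion, so the argument appears circular. The remedy is to repackage the computation as a commutator ODE for the operator $\Omega$ in which $\Omega = \operatorname{id}$ is visibly a solution, and then invoke uniqueness within formal power series in $Q$, $\tau$, and $z^{-1}$.
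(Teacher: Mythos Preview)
Your proposal is correct and follows essentially the same route as the paper's proof, which simply cites \cite{Givental:equivariant, Pandharipande:afterGivental, Givental:elliptic, CIJ}: the first equation via the topological recursion relation, the second by combining it with the divisor equation, the grading identity via the degree axiom, and the pairing preservation via the flatness equations together with the Frobenius property of $\star$. Your explicit packaging of the last step as a commutator ODE for $\Omega = M(-z)^\dagger M(z)$ with initial value $\mathrm{id}$ at $Q=0$, $\tau=0$, and the appeal to formal-power-series uniqueness, is exactly the mechanism behind those references.
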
 
\begin{proof} 
These properties are well-known; see \cite[Corollary 6.7]{Givental:equivariant}, 
\cite[Proposition 2]{Pandharipande:afterGivental}, 
\cite[\S 1]{Givental:elliptic}, 
\cite[Proposition 2.4]{CIJ}. The first equation follows from the topological recursion 
relations as explained in \cite{Pandharipande:afterGivental}, the 
second equation follows by combining the first one with the divisor equation 
(see e.g.~\cite[\S 2.6]{CIJ}), and the last one follows from the 
degree axiom for Gromov--Witten invariants. That  
$M(\tau,z)$ preserves the Poincar\'e pairing is shown in 
\cite[\S 1]{Givental:elliptic}, \cite[Proposition 2.4]{CIJ}.  
\end{proof} 
We define the $J$-function to be 
\begin{align} 
\label{eq:J-function}
\begin{split} 
J(\tau,z) & = z M(\tau,z) \unit \\ 
& = z \unit + \tau + \sum_{
\substack{d\in \Laa_+, l\ge 0 \\ (d,l) \neq (0,0)}} 
\frac{Q^d}{l!} 
\corr{\tau,\dots,\tau,\frac{T^i}{z-\psi}}_{0,l+1,d}
T_i 
\end{split} 
\end{align} 
where $\unit$ is the identity class supported on the non-twisted sector 
$\frX \subset I \frX$. 

\begin{remark} 
In the non-equivariant theory, we can introduce the connection 
in the $z$-direction by the formula: 
\begin{equation} 
\label{eq:conn_z}
\nabla_{z\parfrac{}{z}} = \Gr - \nabla_{\cE^{\rm A}} - \frac{n}{2}.  
\end{equation} 
\end{remark} 

\subsection{Galois symmetry} 
\label{subsec:Galois_A} 
We introduce the Galois symmetry of the equivariant 
quantum connection. 
This is an adaptation of \cite[Proposition 2.3]{Iritani:integral} 
to our setting. 
The \emph{age} of a line bundle $L\to\frX$ 
along the twisted sector $\frX_v\subset I\frX$, $v\in \Bx$, 
is defined to be the rational number $f = \age_v(L)\in [0,1)$ such that 
the stabilizer $g_v$ along $\frX_v$ acts on fibers of 
$L|_{\frX_v}$ by $\exp(2\pi\iu f)$. 
Recall from \S\ref{subsec:torus_action} that $\Pic(\frX)\cong \LL^\vee$.  
For $\xi\in \LL^\vee$, we write $L_\xi$ for the line bundle 
corresponding to $\xi$. We define a linear map 
$g_0(\xi) \colon H^*_{\CR,\T}(\frX) \to H^*_{\CR,\T}(\frX)$ 
by 
\[
g_0(\xi) \left( \bigoplus_{v\in \Bx} \tau_v \right) =  \bigoplus_{v\in \Bx} 
e^{2\pi\iu \age_v(L_\xi)} \tau_v
\] 
with $\tau_v \in H^*_\T(X_v,\C)$. 
This map preserves the orbifold Poincar\'e pairing. 
Let $g(\xi)^*\colon \C[\![\Laa_+]\!][\![\tau]\!]
\to \C[\![\Laa_+]\!][\![\tau]\!]$ denote 
the action on the variables $(Q,\tau)$ 
given as the pull-back of the cohomology parameter $\tau$ by $g_0(\xi)$ and 
\[
g(\xi)^* Q^d = e^{-2\pi\iu\xi \cdot d} Q^d 
\]
with $d\in \Laa_+$. This defines a morphism  
$g(\xi) \colon \Spf \C[\![\Laa_+]\!][\![\tau]\!]\to 
\Spf \C[\![\Laa_+]\!][\![\tau]\!]$ which induces a map 
$g_0(\xi)$ on cohomology at $Q=0$. 
We call the maps $g_0(\xi)$, $g(\xi)^*$ the \emph{Galois action} of 
$\xi\in \LL^\vee$. 
Note that the Galois action descends to the action of 
the ``stacky'' Picard group (see Lemma \ref{lem:age_Laa}(2))
\[
\Pic^\st(\frX) := \Pic(\frX)/\Pic(X).  
\]
The following proposition can be proved using an argument 
similar to \cite[Proposition 2.3]{Iritani:integral}. 

\begin{proposition} 
\label{pro:Galois_A}
The quantum connection is equivariant under the Galois action in the 
sense that the map $g_0(\xi) \colon H^*_{\CR,\T}(\frX) 
\to H^*_{\CR,\T}(\frX)$ intertwines the 
quantum connection $\nabla$ with $g(\xi)^*\nabla$ 
for $\xi\in\LL^\vee$:
\begin{align*} 
g_0(\xi) \circ \nabla_{\eta Q\parfrac{}{Q}} \circ g_0(\xi)^{-1} 
& = (g(\xi)^* \nabla)_{\eta Q\parfrac{}{Q}} 
= \eta Q\parfrac{}{Q} + \frac{1}{z} g(\xi)^*(\heta\star) \\
g_0(\xi) \circ \nabla_{\parfrac{}{\tau^i}} \circ g_0(\xi)^{-1} 
& = (g(\xi)^*\nabla)_{\parfrac{}{\tau^i}} 
=  \parfrac{}{\tau^i} + \frac{1}{z} g(\xi)^*((g_0(\xi) T_i)\star). 
\end{align*} 
where $\eta\in \LL_\C^\star$. 
Moreover the fundamental solution $M(\tau,z)$ in 
Proposition \ref{pro:fundsol} satisfies 
$g(\xi)^* M(\tau,z) = g_0(\xi) M(\tau,z) g_0(\xi)^{-1}$. 
\end{proposition}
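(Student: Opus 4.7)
The plan is to extend the proof of \cite[Proposition 2.3]{Iritani:integral}, written there for toric manifolds, to the present stacky and equivariant setting. The key technical input is an \emph{age--degree identity} for twisted stable maps: for any representable twisted stable map $f\colon\mathcal{C}\to\frX$ of degree $d\in\Laa$ whose $l$ marked orbifold points lie over sectors $\frX_{v_1},\dots,\frX_{v_l}\subset I\frX$, and for every $\xi\in\LL^\vee=\Pic(\frX)$, one has
\[
\xi\cdot d\ +\ \sum_{i=1}^{l}\age_{v_i}(L_\xi)\ \in\ \Z.
\]
I would prove this by orbifold Riemann--Roch on $\mathcal{C}$: the orbi-line bundle $f^{*}L_\xi$ has rational degree $\xi\cdot d$, and the fractional part of the degree of any orbi-line bundle on a smooth cyclotomic orbi-curve equals $-\sum_i\age_{v_i}(L_\xi)$ modulo $\Z$. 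For nodal twisted curves the balancing condition forces the ages at the two branches of each node to sum to an integer, so the identity propagates across components.

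Granted the age identity, the Galois action $g_0(\xi)$, which multiplies a class supported on $\frX_v$ by $e^{2\pi\iu\age_v(L_\xi)}$, transforms Gromov--Witten invariants by the scalar $\prod_i e^{2\pi\iu\age_{v_i}(L_\xi)}=e^{-2\pi\iu\,\xi\cdot d}$:
\[
\corr{g_0(\xi)\alpha_1\psi^{k_1},\dots,g_0(\xi)\alpha_l\psi^{k_l}}_{0,l,d}=e^{-2\pi\iu\,\xi\cdot d}\corr{\alpha_1\psi^{k_1},\dots,\alpha_l\psi^{k_l}}_{0,l,d}.
\]
The descendant factors $\psi^{k_i}$ cause no difficulty as they are pulled back from the untwisted sector. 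Substituting this into the definition \eqref{eq:qprod} of the quantum product and combining with $g(\xi)^{*}Q^d=e^{-2\pi\iu\,\xi\cdot d}Q^d$ and the fact that $g_0(\xi)$ preserves the orbifold Poincar\'e pairing yields the twisted multiplicativity
\[
g_0(\xi)(\alpha\star\beta)=g(\xi)^{*}\bigl((g_0(\xi)\alpha)\star(g_0(\xi)\beta)\bigr).
\]

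The two intertwining relations for $\nabla$ now follow by direct manipulation from this compatibility and the definition \eqref{eq:q_conn} of the quantum connection. The formula for $\nabla_{\eta Q\partial/\partial Q}$ uses in addition that $\heta\in H_\T^{2}(X,\C)$ is Galois-invariant, since every toric divisor class has age zero on every sector. The equivariance of the fundamental solution $M(\tau,z)$ follows by applying the same Gromov--Witten transformation rule term by term to the series defining $M(\tau,z)$, treating the special insertion $T_i/(z-\psi)$ on equal footing with the other sector-decorated cohomological insertions.

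The principal obstacle is the age--degree identity above. On smooth orbi-curves it is classical, but one must argue carefully for nodal twisted stable maps with multiple components and potentially non-trivial contracted components. In the toric setting, however, the congruence also admits a self-contained combinatorial derivation from the refined fan sequence \eqref{eq:refined_fanseq}: the stabilizer $g_v$ along $\frX_v$ is represented by $\Psi(v)\in\Q^m$ modulo $\Z^m$, so $\age_v(L_\xi)\equiv\xi\cdot\Psi(v)\pmod\Z$ for any lift of $\xi$ to $(\Z^m)^\star$, and the identity reduces to the fact that the sum $\sum_i(\Psi(v_i),v_i)+(d,0)$ lies in the image of $\Z^m\hookrightarrow\OO$ from the refined fan sequence, whose pairing with $\xi\in\LL^\vee$ is integral.
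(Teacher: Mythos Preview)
Your approach is exactly what the paper intends: it explicitly says the proposition ``can be proved using an argument similar to \cite[Proposition~2.3]{Iritani:integral}'', and you have correctly identified that argument as the age--degree congruence $\xi\cdot d+\sum_i\age_{v_i}(L_\xi)\in\Z$ proved via orbifold Riemann--Roch, together with its consequence for Gromov--Witten invariants and hence for the quantum product and $M(\tau,z)$.

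One caveat on your final paragraph: the ``self-contained combinatorial derivation'' via the refined fan sequence is a \emph{reformulation} of the age--degree identity, not an independent proof of it. By Lemma~\ref{lem:age}(2), the assertion that $\sum_i(\Psi(v_i),v_i)+(d,0)$ lies in the image of $\Z^m\hookrightarrow\OO$ is equivalent to the vanishing of its age pairing with every $\xi\in\LL^\vee$, which is precisely the congruence you want. That this element actually lies in $\Z^m$ is a constraint on which tuples $(d;v_1,\dots,v_l)$ arise from genuine twisted stable maps, and it is the geometric input (Riemann--Roch on $\mathcal{C}$, with the balancing condition at nodes) that supplies this, not the fan combinatorics alone. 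Your main argument therefore stands, but the combinatorial addendum should be presented as a restatement rather than a derivation.
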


\subsection{Givental cone} 
\label{subsec:Givental_cone}
Givental's symplectic vector space \cite{Givental:quadratic, 
Givental:symplectic, CCIT:mirrorthm} 
in equivariant Gromov--Witten theory is 
\[
\cH = H^*_{\CR,\T}(\frX) \otimes_{R_\T} S_\T(\!(z^{-1})\!)[\![\Laa_+]\!] 
\] 
equipped with the symplectic form: 
\[
\Omega(f,g) = - \Res_{z=\infty} (f(-z),g(z)) dz.  
\]
The space $\cH$ has a standard polarization 
$\cH = \cH_+ \oplus \cH_-$, where 
\begin{align*} 
\cH_+ & = H^*_{\CR,\T}(\frX) \otimes S_\T[z][\![\Laa_+]\!], \\
\cH_- & = z^{-1} H^*_{\CR,\T}(\frX) \otimes S_\T[\![z^{-1}]\!][\![\Laa_+]\!] 
\end{align*} 
are $\Omega$-isotropic subspaces. We can identify $\cH$ 
with the contangent bundle $T^\star \cH_+$ via this polarization. 
The equivariant Givental cone $\cL_\frX \subset \cH$ 
is defined as the graph of the differential 
of the genus-zero descendant potential, defined in the formal 
neighbourhood of $-z \unit \in \cH_+$ 
\cite{Givental:symplectic, CCIT:mirrorthm}. 
Equivalently, we can describe it as the set of points in $\cH$ 
of the form: 
\begin{equation} 
\label{eq:point_on_the_cone}
{-z} \unit + \bt(z) + \sum_{l=0}^\infty \sum_{d\in \Laa_+} 
\sum_i  \frac{Q^d}{l!} 
\corr{\bt(\psi),\dots,\bt(\psi), \frac{T_i}{-z-\psi}}_{0,l+1,d} 
T^i 
\end{equation} 
where $\{T_i\}$,~$\{T^i\}$ are the mutually dual bases 
of $H_{\CR,\T}^*(\frX) \otimes_{R_\T} S_\T$ 
with respect to the orbifold Poincar\'e 
pairing, i.e.~$(T_i,T^j) = \delta_i^j$, and $\bt(z) \in \cH_+$. 
For example, the $J$-function $J(\tau,-z)$ 
\eqref{eq:J-function} is a point on $\cL_\frX$. 
A more precise definition of the notion of points on $\cL_\frX$ 
is as follows. 
Let $t=(t_1,\dots,t_N)$ be an arbitrary set of formal variables. 
An \emph{$S_\T[\![\Laa_+]\!][\![t]\!]$-valued point} on 
$\cL_\frX$ is a point of the form \eqref{eq:point_on_the_cone} 
with 
\[
\bt(z) \in \cH_+[\![t]\!], \quad 
\bt(z)|_{Q=0, t = 0} = 0.  
\]
The Givental cone is a cone -- i.e.~it is invariant under 
dilation in $\cH$, -- and has very special geometric 
properties, which are sometimes referred to as being ``over-ruled''. 
We refer the reader to \cite{Givental:symplectic} or 
\cite[Appendix B]{CCIT:computing} for details. 
In this paper, we need the following fact. 

\begin{proposition}[{\!\!\cite{Givental:symplectic}, 
\cite[Proposition B.4]{CCIT:computing}}]
\label{pro:tangentsp_cone}
The tangent space of $\cL_\frX$ at an 
$S_\T[\![\Laa_+]\!][\![t]\!]$-valued point  
is spanned over $S_\T[z][\![\Laa_+]\!][\![t]\!]$ by the 
derivatives of the $J$-function: 
\[
\partial_{\tau^i} J(\tau,-z) = M(\tau,-z) T_i 
\]
for some $\tau \in H_{\CR,\T}^*(\frX) \otimes_{R_\T}
S_\T[\![\Laa_+]\!][\![t]\!]$ 
with $\tau|_{Q=t =0}=0$. 
\end{proposition}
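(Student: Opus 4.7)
The plan is to adapt Givental's argument from the symplectic formalism \cite{Givental:symplectic} to our equivariant orbifold setting. The proof rests on two ingredients: \emph{(i)} the identity $\partial_{\tau^i}J(\tau,-z)=M(\tau,-z)T_i$ from Proposition~\ref{pro:fundsol}, and \emph{(ii)} the ``over-ruled'' property of the Givental cone, which says that $z\,T_p\cL_\frX\subset\cL_\frX$ for every $p\in\cL_\frX$ and that the tangent space is constant along the ruling line $p+z\,T_p\cL_\frX$.

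For ingredient \emph{(i)}, I would first note that the specialization $\bt(z)=\tau$ of~\eqref{eq:point_on_the_cone} reproduces the $J$-function, so $J(\tau,-z)$ is a point on $\cL_\frX$. Differentiating in $\tau^i$ and applying Proposition~\ref{pro:fundsol} then shows that $M(\tau,-z)T_i\in T_{J(\tau,-z)}\cL_\frX$; a dimension count, using that $\cL_\frX$ is Lagrangian and that the map $\alpha\mapsto M(\tau,-z)\alpha$ from $H^*_{\CR,\T}(\frX)\otimes_{R_\T}S_\T[z][\![\Laa_+]\!]$ is injective, shows that these vectors span the full tangent space at $J(\tau,-z)$.

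For ingredient \emph{(ii)}, I would follow the exposition in~\cite[Appendix~B]{CCIT:computing} essentially verbatim: the over-ruled property is a direct computation with the parameterization~\eqref{eq:point_on_the_cone}, using the string and dilaton equations together with the topological recursion relations. It follows that the tangent space of $\cL_\frX$ at an arbitrary $S_\T[\![\Laa_+]\!][\![t]\!]$-valued point $p$ coincides with the tangent space at any point of the form $J(\tau,-z)$ that lies on the same ruling as $p$.

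The final step is to identify an appropriate parameter $\tau=\tau(\bt)\in H^*_{\CR,\T}(\frX)\otimes_{R_\T}S_\T[\![\Laa_+]\!][\![t]\!]$, vanishing at $Q=t=0$, so that $p$ and $J(\tau,-z)$ lie on a common ruling. This $\tau$ is read off from the $z$-constant part of $p$ after the dilaton shift $-z\unit$ is removed, and is constructed order by order in the $(Q,t)$-adic filtration. The main obstacle is this order-by-order construction: one must verify that the change of variables from $\bt$-coordinates to $\tau$-coordinates is invertible at each order of the formal filtration, so that the identification $T_p\cL_\frX=T_{J(\tau,-z)}\cL_\frX$ propagates through the formal completion. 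Once this is established, combining it with ingredient \emph{(i)} yields the desired description of the tangent space as the $S_\T[z][\![\Laa_+]\!][\![t]\!]$-span of $\{M(\tau,-z)T_i\}_i$.
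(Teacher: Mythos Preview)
The paper does not supply a proof of this proposition; it simply records the result and cites \cite{Givental:symplectic} and \cite[Proposition~B.4]{CCIT:computing} for the argument. Your sketch follows precisely the route of those references---the over-ruled geometry of the cone combined with the fundamental solution---so there is nothing to compare against here beyond noting that your outline is faithful to the cited sources.
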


\subsection{Mirror theorem} 
We next review the mirror theorem from \cite{CCIT:mirrorthm}. 
We fix a finite subset 
$G \subset \bN \cap |\Sigma|$ in this section. 

\begin{definition} 
\label{def:K}
Let $\K^G_0$ denote the set of 
$\lambda =(\lambda_i, \lambda_\bk : 1\le i\le m, \bk\in G) 
\in \Q^m \times \Z^G$ 
such that $\sum_{i=1}^m \lambda_i \overline{b}_i + 
\sum_{\bk\in G} \lambda_\bk \overline{\bk} =0$ 
and that $\{ 1\le i\le m : \lambda_i \notin \Z\}\in \Sigma$. 
For $\lambda\in \K^G_0$, we define 
\[
v(\lambda) := \sum_{i=1}^m \ceil{\lambda_i} b_i + \sum_{\bk\in G} 
\lambda_\bk \bk. 
\]
Since $\overline{v(\lambda)} = \sum_{i=1}^m \fract{-\lambda_i} b_i$, 
$v(\lambda)$ belongs to $\Bx$. We also set 
\[
d(\lambda) := (\lambda_1,\dots,\lambda_m) 
+ \sum_{\bk\in G} \lambda_\bk \Psi(\bk) 
\in \LL_\Q\subset \Q^m 
\]
where $\Psi$ is given in Notation \ref{nota:Psi}. 
\end{definition} 
\begin{lemma} 
\label{lem:d(lambda)}
For $\lambda \in \K^G_0$, 
$d(\lambda)\in \Laa$ 
(see \S \ref{subsec:refined_fanseq} for $\Laa$). 
\end{lemma}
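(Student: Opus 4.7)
The plan is to exhibit $(d(\lambda),0)$ as an explicit $\Z$-linear combination of the generators $(\Psi(\bk),\bk)$, $\bk\in \bN\cap|\Sigma|$, of the group $\OO$; since $d(\lambda)\in \LL_\Q$ sits in the second component $0$, this will immediately give $d(\lambda)\in\Laa$ by the very definition $\Laa=\{(\mu,0)\in\OO\}$.

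First I would verify the preliminary point that $d(\lambda)$ lies in $\LL_\Q$. Since $\beta(\Psi(\bk))=\overline{\bk}$ by construction of $\Psi$, applying $\beta$ to $d(\lambda)=\sum_i \lambda_i e_i+\sum_{\bk\in G}\lambda_\bk\Psi(\bk)$ gives $\sum_i\lambda_i\overline{b}_i+\sum_{\bk\in G}\lambda_\bk\overline{\bk}$, which vanishes in $\bN_\Q$ by the defining relation of $\K_0^G$. So $d(\lambda)\in\Ker(\beta_\Q)=\LL_\Q$.

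Next I would exploit the box element $v(\lambda)\in\Bx\subset\bN\cap|\Sigma|$ introduced in Definition~\ref{def:K}. The hypothesis $\sigma:=\{i:\lambda_i\notin\Z\}\in\Sigma$ combined with $\overline{v(\lambda)}=\sum_i(\ceil{\lambda_i}-\lambda_i)\overline{b}_i$ shows that $\sigma$ is exactly the minimal cone containing $\overline{v(\lambda)}$, and the coefficients $\ceil{\lambda_i}-\lambda_i$ lie in $[0,1)$. By the definition of $\Psi$ in Notation~\ref{nota:Psi}, this yields the key identity $\Psi_i(v(\lambda))=\ceil{\lambda_i}-\lambda_i$ for every $i$ (including $i\notin\sigma$, where both sides vanish).

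Finally I would assemble the combination
\[
\sum_{i=1}^m \ceil{\lambda_i}\,(\Psi(b_i),b_i)+\sum_{\bk\in G}\lambda_\bk\,(\Psi(\bk),\bk)-(\Psi(v(\lambda)),v(\lambda)),
\]
which is a $\Z$-linear combination of generators of $\OO$ since $\ceil{\lambda_i}\in\Z$ and $\lambda_\bk\in\Z$. Its second component is $\sum_i\ceil{\lambda_i}b_i+\sum_\bk\lambda_\bk\bk-v(\lambda)=0$ by the definition of $v(\lambda)$, while its first component, using $\Psi(b_i)=e_i$ and the formula $\Psi_i(v(\lambda))=\ceil{\lambda_i}-\lambda_i$, telescopes to $\sum_i\lambda_i e_i+\sum_\bk\lambda_\bk\Psi(\bk)=d(\lambda)$. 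Hence $(d(\lambda),0)\in\OO$, proving the lemma. There is no real obstacle here beyond the bookkeeping; the only subtle point is recognizing that $v(\lambda)$ is precisely the correction needed to absorb the non-integral parts of the $\lambda_i$, which is essentially why $v(\lambda)$ was defined the way it was.
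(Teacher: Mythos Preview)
Your proof is correct and is essentially the same argument as the paper's: you exhibit $(d(\lambda),0)$ as the $\Z$-linear combination $\sum_i \ceil{\lambda_i}(\Psi(b_i),b_i)+\sum_{\bk\in G}\lambda_\bk(\Psi(\bk),\bk)-(\Psi(v(\lambda)),v(\lambda))$ of generators of $\OO$, using the identity $\Psi_i(v(\lambda))=\ceil{\lambda_i}-\lambda_i=\fract{-\lambda_i}$. The only difference is cosmetic---you include the preliminary check that $d(\lambda)\in\LL_\Q$, which the paper states in Definition~\ref{def:K} without justification.
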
 
\begin{proof} 
Using $\Psi(\overline{v(\lambda)}) = 
(\fract{-\lambda_1},\dots,\fract{-\lambda_m})$, we deduce 
\[
d(\lambda) := (\ceil{\lambda_1},\dots,\ceil{\lambda_m}) + 
\sum_{\bk\in G} \lambda_\bk \Psi(\bk) - \Psi(v(\lambda)). 
\]
We also have $\sum_{i=1}^m \ceil{\lambda_i} b_i + 
\sum_{\bk\in G} \lambda_\bk \bk - v(\lambda) =0$. 
Consequently, we have
\[
(d(\lambda),0) = \sum_{i=1}^m \ceil{\lambda_i}(\Psi(b_i),b_i) 
+ \sum_{\bk\in G} \lambda_{\bk} (\Psi(\bk),\bk) - (\Psi(v(\lambda)), 
v(\lambda)) 
\] 
in $\Q^m \times \bN$. The right-hand side belongs to $\OO$, and thus 
$d(\lambda)\in \Laa$. 
\end{proof} 

\begin{definition}[\!\!\cite{CCIT:mirrorthm}] 
\label{def:I} 
Let $G \subset \bN\cap |\Sigma|$ be a finite subset. 
The \emph{$G$-extended $I$-function} is the cohomology-valued 
power series
\[
I(Q,\fry,t,z) = ze^{\sum_{i=1}^m t_i u_i/z}
\sum_{\lambda \in \K^G_0} Q^{d(\lambda)} 
e^{t\cdot d(\lambda)}\fry^\lambda
\left(\prod_{i \in \{1,\dots,m\} \cup G} 
\frac{\prod_{c\le 0, \fract{c} = \fract{\lambda_i}} u_i + cz}
{\prod_{c\le \lambda_i, \fract{c} = \fract{\lambda_i}} u_i+cz} 
\right) 
\unit_{v(\lambda)}
\] 
where 
\begin{itemize} 
\item $t=(t_1,\dots,t_m)$ and $\fry =(\fry_\bk : \bk\in G)$ are parameters; 
\item $t \cdot d(\lambda) := \sum_{i=1}^m t_i (D_i \cdot d(\lambda))$ 
and $\fry^\lambda := \prod_{\bk\in G} \fry_\bk^{\lambda_\bk}$; 
\item for $1\le i\le m$, $u_i$ is the equivariant Poincar\'e dual 
of a toric divisor in \S \ref{subsec:torus_action}, and 
for $i\in G$ we set $u_i:=0$;   
\item $\unit_{v(\lambda)}$ is the identity class supported 
on the twisted sector $\frX_{v(\lambda)}$. 
\end{itemize}
The $G$-extended $I$-function belongs to 
$H^*_{\CR,\T}(\frX) \otimes_{R_\T} R_\T(\!(z^{-1})\!)[\![\Laa_+]\!]
[\![t,\fry]\!]$  -- see Lemma \ref{lem:Loc_well-defined} for a proof of this in a more 
general setting. 
\end{definition} 

\begin{theorem}[\!\!{\cite[Theorem 31]{CCIT:mirrorthm}}]
\label{thm:mirrorthm} 
The $G$-extended $I$-function $I(Q,\fry,t,z)$ is an  
$S_\T[\![\Laa_+]\!][\![\fry,t]\!]$-valued point on the 
equivariant Givental cone $\cL_\frX$ of the toric 
Deligne--Mumford stack $\frX$.  
\end{theorem}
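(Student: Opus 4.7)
The plan is to adapt the Givental--Coates equivariant localization strategy to toric Deligne--Mumford stacks, as carried out in \cite{CCIT:mirrorthm}. What follows is the framework that organizes that computation.

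First, I would exploit the presentation $\frX = [\cU_\Sigma/\G]$ to reduce to a $\T$-equivariant calculation, using that the $\T$-fixed orbi-points of $\frX$ correspond to maximal cones $\sigma \in \Sigma$, each with its local isotropy. The non-extended case ($G = \emptyset$) then follows from computing the $J$-function of \eqref{eq:J-function} by $\T$-equivariant virtual localization on the moduli stacks $\frX_{0,l,d}$ of twisted stable maps \cite{AGV:GW, Tseng:QRR, Graber-Pandharipande}. The fixed locus decomposes as a sum over decorated graphs whose vertices lie over fixed orbi-points and whose edges correspond to representable toric morphisms $\PP^1_{r_1,r_2} \to \frX$, classified in \cite[\S 3.5]{CCIT:mirrorthm}.

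Next, I would resum the graph expansion: each vertex contributes a descendant integral at a fixed orbi-point, each edge contributes a hypergeometric factor $\prod(u_i + c z)^{\pm 1}$ arising from the weights on the deformation space of the orbi-curve, and each orbi-node contributes a gerbe gluing factor whose order equals the isotropy at the node. Combining these, switching the order of summation, and passing to a universal generating function yields, after the mirror map change of variables, a cohomology-valued hypergeometric series. The form \eqref{eq:point_on_the_cone} of the resummed expression exhibits it directly as a point on $\cL_\frX$, and one reads off that it equals $I(Q,0,t,z)$.

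For the $G$-extension, I would interpret each parameter $\fry_\bk$ ($\bk \in G$) as controlling extra insertions at orbifold marked points mapping to the twisted sector $\frX_\bk$. Equivalently, by the tangent-space characterization of Proposition \ref{pro:tangentsp_cone}, it suffices to show that the derivatives $\partial_{\fry_\bk} I$ lie in the $S_\T[z][\![\Laa_+]\!][\![t,\fry]\!]$-span of the tangent vectors $\partial_{\tau^i} J(\tau,-z)$ at the corresponding image $\tau = \tau(Q,t,\fry)$ of the mirror map. This reduces to a hypergeometric identity comparing the effect of incrementing $\lambda_\bk$ in $\K^G_0$ with shifting the cohomology insertion by the class $\phi_\bk$ of the twisted sector, valid because the extra factors indexed by $\bk \in G$ in Definition \ref{def:I} have no equivariant variables to propagate (since $u_\bk := 0$) and so factor out cleanly.

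The main obstacle is the combinatorial bookkeeping in the first two steps. The infinite graph sum must be organized to account for gerbe automorphisms at orbifold nodes (producing the $r$-th root factors in gluing), for age shifts at twisted marked points (producing the ceilings $\ceil{\lambda_i}$ and the box element $v(\lambda)$ in Definition \ref{def:K}), and for the action of $\mu_{r_1} \times \mu_{r_2}$ on the deformation space of each football $\PP^1_{r_1,r_2}$. Matching the resulting graph sum term-by-term with the prescribed hypergeometric expression relies on the orbifold quantum Riemann--Roch formula \cite{Tseng:QRR} together with the combinatorial classification of representable morphisms $\PP^1_{r_1,r_2} \to \frX$; it is here, rather than in any conceptual step, that the real labor of the proof lies.
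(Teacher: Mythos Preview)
The paper does not prove this theorem; it is quoted verbatim from \cite[Theorem~31]{CCIT:mirrorthm} and used as input for everything that follows. There is therefore no ``paper's own proof'' here to compare your proposal against.

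That said, a word on your sketch relative to what \cite{CCIT:mirrorthm} actually does. You propose to compute the $J$-function directly by virtual localization, organize the resulting graph sum, resum it into a closed hypergeometric form, and identify the result with $I$. This is in the spirit of Givental's original argument for toric manifolds, but it is not quite the route taken in \cite{CCIT:mirrorthm}. There the argument is indirect: one first proves a \emph{characterization} of points on $\cL_\frX$ in terms of their restrictions to $\T$-fixed points --- a certain pole structure together with explicit recursion relations coming from localization on graph moduli --- and then verifies by direct hypergeometric manipulation that the $I$-function satisfies those same restrictions and recursions. The advantage of the characterization approach is that one never has to resum an infinite graph expansion; the recursions are finite checks at each fixed point. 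Your direct-resummation approach is conceptually cleaner to state but combinatorially heavier to execute, which is exactly the obstacle you flag in your final paragraph. For the $G$-extension, your reduction via Proposition~\ref{pro:tangentsp_cone} is not how \cite{CCIT:mirrorthm} proceeds either: there the extended $I$-function is handled uniformly with the non-extended case by working from the start with the $S$-extended stacky fan, so that the extra $\fry$-variables appear on the same footing as the toric divisor variables and the same recursion-checking argument applies.
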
 

\section{The Gauss--Manin System and the Mirror Isomorphism} 
In this section, we introduce a (partially compactified) 
Landau--Ginzburg model that corresponds under mirror symmetry to the toric stack $\frX$,
and show that the quantum connection for $\frX$ is isomorphic to the Gauss--Manin system 
associated with the Landau--Ginzburg potential. 
The construction closely follows the one in \cite{Iritani:shift_mirror} 
for toric manifolds. 

\subsection{Landau--Ginzburg model} 
\label{subsec:LG}
Recall the refined fan sequence \eqref{eq:refined_fanseq}. 
Applying the exact functor $\Hom(-,\C^\times)$ to it, 
we obtain 
\[
\begin{CD} 
1 @>>> \Hom(\bN,\C^\times) @>>> \Hom(\OO,\C^\times) 
@>>> \Hom(\Laa,\C^\times) @>>> 1. 
\end{CD} 
\]
Note that $\Hom(\bN,\C^\times)$ is a disjoint union of $|\bN_{\rm tor}|$ 
copies of the algebraic torus $(\C^\times)^n$. 
The \emph{uncompactified Landau--Ginzburg model} is given by 
the smooth family of algebraic varieties $\Hom(\OO,\C^\times) \to 
\Hom(\Laa,\C^\times)$ 
equipped with the Landau--Ginzburg potential 
$f \colon \Hom(\OO,\C^\times) \to \C$:  
\[
f = w_1 + \cdots + w_m 
\]
where $w_i\in \C[\OO]$ is a function on $\Hom(\OO,\C^\times)$ 
given by the evaluation at $(e_i,b_i) \in \OO$. 

Next we introduce a partial compactification of the above construction 
using the cones and monoids from \S \ref{subsec:Mori_cone}. 
The \emph{partially compactified Landau--Ginzburg model} is given by 
the flat family of algebraic varieties: 
\[
\Spec \C[\OO_+]\to \Spec \C[\Laa_+] 
\] 
induced by the inclusion $\Laa_+ \to \OO_+$ of monoids, 
equipped with the potential function $f \colon \Spec \C[\OO_+] 
\to \C$ as above 
(since $(e_i,b_i) \in \OO_+$, $w_i$ extends to 
a function on $\Spec \C[\OO_+]$). 
When we refer to the Landau--Ginzburg model, we will 
mean the partially compactified one unless otherwise stated. 

We introduce a co-ordinate system on the Landau--Ginzburg model. 
We write $w^{(\lambda,\bk)} \in \C[\OO_+]$ for the element 
corresponding to $(\lambda,\bk) \in \OO_+$. 
Define functions $w_i$, $w_\bk$ for $\bk \in \bN\cap |\Sigma|$, and $Q^\lambda
\in \C[\OO_+]$ for  $\lambda \in \Laa_+$ by 
\begin{equation*} 
w_i := w^{(e_i,b_i)}, \quad 
w_\bk := w^{(\Psi(\bk),\bk)}, \quad 
Q^\lambda := w^{(\lambda,0)}.  
\end{equation*} 
See Notation \ref{nota:Psi} for $\Psi$. 
We have that $w_i =w_{b_i}$ and 
$w_\bk = (\prod_{i=1}^m w_i^{\floor{\Psi_i(\bk)}}) 
w_v$ for $v = \bk - \sum_{i=1}^m \floor{\Psi_i(\bk)}b_i \in \Bx$.  
By Lemma \ref{lem:OO_+}, we have 
\begin{equation} 
\label{eq:group_ring_OO_+} 
\C[\OO_+] = \bigoplus_{\bk \in \bN\cap |\Sigma|} \C[\Laa_+] w_\bk. 
\end{equation}
We choose a splitting $\varsigma \colon \bN \to \OO$ 
of the refined fan sequence \eqref{eq:refined_fanseq}. 
Using the splitting $\varsigma$, 
we let $x^\bk \in \C[\OO]$ with $\bk \in \bN\cap |\Sigma|$ 
denote the element corresponding to $\varsigma(\bk) \in \OO$. 
(Note that $\varsigma(\bk)$ may not lie in $\OO_+$). 
Then we have: 
\[
w_\bk = Q^{\lambda(\bk)} x^\bk,   
\qquad 
w_i = Q^{\lambda(b_i)} x^{b_i} 
\]
with $\bk \in \bN \cap |\Sigma|$, $\lambda(\bk) := (\Psi(\bk),\bk) - \varsigma(\bk) 
\in \Laa$. 
Finally, by choosing an isomorphism $\bN \cong \Z^n \times \bN_{\rm tor}$, we write 
$x^\bk = x_1^{k_1} \cdots x_n^{k_n} x^{\zeta}$ 
for $\bk = (k_1,\dots,k_n, \zeta)\in \bN$. 
\begin{remark} 
For a maximal cone $\sigma_0 \in\Sigma$, we can define 
a splitting $\varsigma \colon \bN \to \OO$ by the requirement 
that $\varsigma(b_i) = (e_i,b_i)$ for all $i\in \sigma_0$. 
For this choice of $\varsigma$, $\lambda(\bk) = (\Psi(\bk),\bk) - \varsigma(\bk)$ 
lies in $\Laa_+$. 
\end{remark}

With this choice of co-ordinates, we define the \emph{equivariant Landau--Ginzburg 
potential} to be the multi-valued function on $\Spec \C[\OO_+]$:
\[
f_\chi = w_1 + \cdots + w_m  - \sum_{i=1}^n \chi_i \log x_i 
\]
where $\chi = (\chi_1,\dots,\chi_n) \in \Lie(\T) = \bN_\C 
\cong \C^n$ are equivariant parameters. (We can also view 
$\chi_i$, $1\le i\le n$, as a basis of $\bM=\bN^\star$.) 
\begin{remark} 
\label{rem:dependence_splitting} 
The equivariant potential $f_\chi$ depends on the choice of splittings $\varsigma$ 
and $\bN/\bN_{\rm tor} \to \bN$.   If we choose different 
splittings then the equivariant potential is shifted by 
a term of the form $\sum_{i=1}^n \chi_i (\log Q^{d_i} 
+ \log w^{(0,\zeta_i)} )$ 
for some $d_i \in \Laa$, $\zeta_i \in \bN_{\rm tor}$. 
\end{remark}

\begin{example} 
Recall from Example~\ref{ex:P112_refined_fan_sequence} that the fan sequence and refined fan sequence for the toric stack 
$\frX = \PP(1,1,2)$ are:
\begin{align*} 
\xymatrix{
0 \ar[r] & \Z \ar@{_{(}->}[d]
\ar[r]^{ {}^t ( 1\  1\  2)}  & 
\Z^3 \ar[rr]^{(b_1\ b_2\ b_3)} 
\ar@{_{(}->}[d]
&  &  \Z^2 \ar@{=}[d]  \ar[r] & 0 \\  
0 \ar[r] & \frac{1}{2} \Z \ar[r] 
&  
\Z^3 + \Z {\tiny \begin{pmatrix} 1/2 \\ 1/2 \\ 0 \end{pmatrix}}
\ar[rr]
& & \Z^2 \ar[r] & 0 
} 
\end{align*} 
where $\bN = \Z^2$, $\LL = \Z$, $\Laa = \frac{1}{2} \Z$, 
$b_1 ={}^t(1,0)$, $b_2 = {}^t (-1,2)$, 
$b_3 = {}^t (0,-1)$ and 
$\OO = \Z^3 + \Z\, {}^t (1/2,1/2,0)$. 
When we construct a mirror Landau--Ginzburg model 
using the (unrefined) fan sequence 
as considered in \cite{Givental:ICM, Iritani:integral}, we obtain a family 
\[
\C^3 = \Spec \C[\Z_{\ge 0}^3] \to \C = \Spec \C[\LL_+] 
\qquad (w_1,w_2,w_3) 
\mapsto Q = w_1 w_2 w_3^2
\] 
equipped with the potential $f = w_1 + w_2 + w_3$, where 
we set $\LL_+ := \LL \cap \NE(\frX) \cong \Z_{\ge 0}$. 
When we pull back this family via the map 
$\Spec \C[\Laa_+] \to \Spec \C[\LL_+]$, $t \mapsto Q = t^2$, 
we obtain the family 
\[
\{(w_1,w_2,w_3,t) \in \C^4: w_1 w_2 w_3^2 = t^2 \} \to \C, \quad 
(w_1,w_2,w_3,t) \mapsto t 
\]
with non-normal total space. 
The Landau--Ginzburg model (based on the refined fan sequence) 
is given by the normalization of this: it is 
\[
\{(w_1,w_2,w_3, u)\in \C^4 : w_1w_2 = u^2 \} \to \C, \quad 
(w_1,w_2,w_3,u) \mapsto uw_3 
\]
where $t = u w_3$. This is the same as the family constructed by 
de Gregorio--Mann \cite{deGregorio-Mann}. 
\end{example} 

\begin{example} 
Recall from Example~\ref{ex:Bmu2_refined_fan_sequence} that the refined fan sequence for $\frX = B \mu_2$ is
\[
\xymatrix{
  0 \ar[r] & 0 \ar[r] & \Z/2\Z \ar[r]^{\cong} & \Z/2\Z \ar[r] & 0
}
\] 
where $\bN = \Z/2\Z$, $\OO = \Z/2\Z$ and $\Laa = 0$. 
Thus the Landau--Ginzburg model is the identity map 
from the two-point set to itself, equipped with the zero potential. 
\end{example} 

\subsection{An unfolding of the Landau--Ginzburg potential} 
\label{subsec:unfolding} 
We consider an unfolding of the Landau--Ginzburg potential 
given by choosing a finite set $G \subset \bN \cap |\Sigma|$. 
We assume that $G$ is disjoint from $\{b_1,\dots,b_m\}$ 
and set $S := \{b_1,\dots,b_m\} \cup G$. 
Introduce co-ordinates $y_\bk$ for each $\bk \in S$ 
and set $y = \{y_\bk : \bk \in S\}$; 
we sometimes write $y_i = y_{b_i}$ for $1\le i\le m$. 
Define  
\begin{align*} 
F(x;y) & := 
\sum_{\bk \in S} y_\bk w_\bk 
= \sum_{i=1}^m y_i w_i + 
\sum_{\bk \in G} y_\bk w_\bk  
= \sum_{\bk \in S} y_\bk Q^{\lambda(\bk)} x^{\bk}, \\
F_\chi(x;y) & := F(x;y) - \sum_{i=1}^n \chi_i \log x_i. 
\end{align*} 
We call $F(x;y)$, $F_\chi(x;y)$ the 
\emph{$G$-unfolded Landau--Ginzburg potentials}. 
The unfolding $F(x;y)$ is an element of $\C[\OO_+][y]$. 
Under the specialization 
\begin{align}
\label{eq:specialization_y}  
\begin{split} 
y_i &= 1   \qquad \text{for $1\le i\le m$} \\ 
y_\bk &= 0 \qquad \text{for $\bk\in G$} 
\end{split} 
\end{align} 
the potentials $F(x;y)$, $F_\chi(x;y)$ become, respectively, 
the original ones $f(x)$, $f_\chi(x)$. We refer to the 
shifted origin \eqref{eq:specialization_y} of $y$ as $y^*$. 

\begin{remark} 
\label{rem:redundancy} 
The deformation parameters $y_1,\dots,y_m$ for $F(x;y)$ 
are redundant in the sense that those deformations can be 
reduced to a deformation along $Q\in \Spec \C[\Laa_+]$ 
via a rescaling of the variables $x$. 
It is however convenient to keep $y_1,\dots,y_m$ 
as deformation parameters when we use 
$\C[\Laa_+]$ as a ground ring. 
See also Remark \ref{rem:redundancy_fanDmod}. 
\end{remark} 

\begin{remark} 
The paper \cite{Iritani:shift_mirror} introduced infinitely 
many deformation parameters $y_\bk$ for all $\bk\in \bN \cap |\Sigma|$; 
each $\bk$ corresponds to a basis element $\phi_\bk$ for $H^*_\T(X)$ over $\C$. 
This gives a natural identification between the deformation space of $F$ 
and equivariant cohomology. 
On the other hand, in the present paper, 
we restrict to finitely many deformation terms; 
this is compensated for by working over the ground ring 
$R_\T = H^*_\T(\pt,\C)$. 
\end{remark} 

\subsection{Galois action on the Landau--Ginzburg model} 
\label{subsec:Galois_LG}
Similarly to the Galois symmetry in quantum cohomology 
(\S \ref{subsec:Galois_A}), 
we can define an action of $\Pic^\st(\frX) =\Pic(\frX)/\Pic(X)$
on the Landau--Ginzburg model. 

Recall that $\Pic(\frX) \cong \LL^\vee$ and 
introduce a bilinear pairing 
\[
\age 
\colon \LL^\vee \times \OO \to \Q/\Z\cong [0,1)\cap \Q
\] 
as follows. We recall an explicit description of 
$\LL^\vee = H^1(\Cone(\beta)^\star)$ from \cite[\S 2]{BCS}. 
Choose a free resolution $0 \to K \overset{\iota}{\to} F \to \bN\to 0$ 
of $\bN$ and a lift $\tbeta \colon \Z^m \to F$ of $\beta$. 
Then $\Cone(\beta)$ is quasi-isomorphic to the complex 
$\iota \oplus \tbeta \colon K \oplus \Z^m \to F$ of free modules, 
and 
\[
\LL^\vee = \Cok\Big(\iota^\star \oplus \tbeta^\star \colon 
F^\star \to K^\star \oplus (\Z^m)^\star\Big).
\] 
Let $\xi\in \LL^\vee$ and $(\lambda,\bk) \in \OO$ be given. 
We choose a lift 
$\txi \in K^\star \oplus (\Z^m)^\star$ of $\xi$ 
and a lift $\tbk\in F$ of $\bk \in \bN$. 
Then $\tbk - \tbeta(\lambda) \in F_\Q$ lies in the kernel 
of $F_\Q \to \bN_\Q$ and thus lies in $K_\Q$. 
Hence we obtain the element $(\tbk-\tbeta(\lambda), \lambda)$ 
of $K_\Q \oplus \Q^m$. 
We define
\[
\age(\xi, (\lambda,\bk)) := \left( 
\txi \cdot (\tbk-\tbeta(\lambda), \lambda)  
\mod \Z \right) \in \Q/\Z. 
\]
It is easy to see that this is independent of the choices made 
and defines a bilinear form. 

\begin{lemma} 
\label{lem:age} 
{\rm (1)} Let $L_\xi$ denote the line bundle corresponding to 
$\xi \in \LL^\vee$. For $v\in \Bx$, $\age(\xi,(\Psi(v),v))$ 
equals the age $\age_v(L_\xi)$ of the line bundle $L_\xi$ along the  
sector $\frX_v$.   

{\rm (2)} $\age(\xi,(\lambda,\bk)) = 0$ for all $\xi \in \LL^\vee$ 
if and only if $(\lambda,\bk) \in \OO$ lies in the image of 
the inclusion $\Z^m \to \OO$, $e_i \mapsto (e_i,b_i)$. 

{\rm (3)} We have $\age(\xi,(\lambda,\bk))=0$ for all $(\lambda,\bk) \in \OO$, 
if and only if the line bundle $L_\xi$ corresponding to $\xi\in \LL^\vee$ 
is the pull-back of a line bundle on the coarse moduli space $X$. 
\end{lemma}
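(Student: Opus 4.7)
\medskip
\noindent
\textbf{Proof plan.}
For part (1), I would begin by making the description of the stabilizer $g_v \in \G$ along $\frX_v$ explicit in terms of the free resolution $0 \to K \overset{\iota}{\to} F \to \bN \to 0$ and lift $\tbeta \colon \Z^m \to F$ of $\beta$. Fix a lift $\tbk \in F$ of $v \in \bN$. The element $\tbk - \tbeta(\Psi(v))$ maps to $v - \beta(\Psi(v)) = 0$ in $\bN_\Q$, so it lies in $K_\Q$, and the pair $(\tbk - \tbeta(\Psi(v)),\Psi(v)) \in K_\Q \oplus \Q^m$ represents, after exponentiating via $\exp(2\pi\iu \cdot)$, an element of $\Ker\bigl(K\otimes\C^\times \oplus (\C^\times)^m \to F\otimes\C^\times\bigr) = \G$. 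Following \cite{BCS}, this element is precisely $g_v$: its image in $(\C^\times)^m$ is $(e^{2\pi\iu \Psi_i(v)})_i$, which acts on the coordinate stratum $\cU_\Sigma \cap \{Z_i = 0 : i \in \sigma(v)\}$ with the prescribed weights. Given $\xi \in \LL^\vee$ with lift $\txi \in K^\star \oplus (\Z^m)^\star$, the character $\xi$ on $\G$ is evaluation against $\txi$, so $\xi(g_v) = \exp(2\pi\iu\, \txi\cdot(\tbk - \tbeta(\Psi(v)),\Psi(v)))$, which by definition equals $\exp(2\pi\iu\,\age(\xi,(\Psi(v),v)))$. Since $g_v$ acts on $L_\xi|_{\frX_v}$ by $\xi(g_v)$, this gives $\age_v(L_\xi) = \age(\xi,(\Psi(v),v))$.

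For part (2), the easy direction is immediate: if $(\lambda,\bk) = (\lambda,\beta(\lambda))$ with $\lambda \in \Z^m$, take $\tbk = \tbeta(\lambda)$, so $(\tbk - \tbeta(\lambda),\lambda) = (0,\lambda) \in K \oplus \Z^m$ pairs integrally with every $\txi \in K^\star \oplus (\Z^m)^\star$. For the converse, I would first verify that $\age(\xi,(\lambda,\bk))$ is independent of the lift $\txi$ by checking that elements of the form $(\iota^\star \oplus \tbeta^\star)(\eta)$ pair with $(\tbk - \tbeta(\lambda),\lambda)$ to give $\eta\cdot\tbk \in \Z$. Granting this, the hypothesis $\age(\xi,(\lambda,\bk)) = 0$ for all $\xi\in \LL^\vee$ amounts to $\txi\cdot(\tbk - \tbeta(\lambda),\lambda) \in \Z$ for every $\txi \in K^\star \oplus (\Z^m)^\star$; by perfection of the pairing between a lattice and its dual, this forces $(\tbk - \tbeta(\lambda),\lambda) \in K \oplus \Z^m$. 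Thus $\lambda \in \Z^m$ and $\tbk - \tbeta(\lambda)$ already lies in $K \subset F$, so its image $\bk - \beta(\lambda)$ in $\bN$ vanishes. Hence $(\lambda,\bk)$ is the image of $\lambda \in \Z^m$ under $e_i \mapsto (e_i,b_i)$.

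For part (3), the strategy is to reduce the all-$(\lambda,\bk)$ condition to the box elements and then invoke standard stacky-Picard theory. Every $\bk \in \bN \cap |\Sigma|$ admits a unique decomposition $\bk = v + \sum_{i\in \sigma(\bk)} \floor{\Psi_i(\bk)}\, b_i$ with $v \in \Bx$, and correspondingly $(\Psi(\bk),\bk) = (\Psi(v),v) + \sum_i \floor{\Psi_i(\bk)}(e_i,b_i)$. Since $\OO$ is generated by the $(\Psi(\bk),\bk)$ together with the image of $\Z^m$, and the latter pairs trivially with every $\xi$ by (2), the vanishing of $\age(\xi,\cdot)$ on all of $\OO$ is equivalent to the vanishing of $\age(\xi,(\Psi(v),v))$ for all $v \in \Bx$, equivalently by (1) to $\age_v(L_\xi) = 0$ for all $v$. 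Finally, I would invoke the standard criterion for toric DM stacks $\frX = [\cU_\Sigma/\G]$: a $\G$-equivariant line bundle $L_\xi$ on $\cU_\Sigma$ descends to the GIT quotient $X$ precisely when $\xi$ vanishes on every stabilizer subgroup of $\G$ acting on $\cU_\Sigma$, and these stabilizers are exactly the $\langle g_v \rangle$ for $v \in \Bx$.

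\medskip
\noindent
The main technical obstacle will be part (1): pinning down $g_v$ explicitly in $\G$ as an element of $K\otimes\C^\times \oplus (\C^\times)^m$ (not just its image in $(\C^\times)^m$) requires care with the derived functor definition and the choice of free resolution, and one must confirm that the resulting formula is independent of the lifts $\tbk$ and $\txi$. Once this identification is in place, parts (2) and (3) follow by lattice duality and by decomposing $\OO$ along the short exact sequence $0 \to \text{Image}(\Z^m) \to \OO \to \Bx\text{-part} \to 0$ implicit in the refined fan construction.
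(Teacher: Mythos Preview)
Your proposal is correct and follows essentially the same route as the paper's proof: for (1) you identify $g_v$ explicitly as $\exp\bigl(2\pi\iu(\tv-\tbeta(\Psi(v)),\Psi(v))\bigr)$ in $\Ker\bigl((K\oplus\Z^m)\otimes\C^\times \to F\otimes\C^\times\bigr)$ and read off the character value, exactly as the paper does; for (2) both arguments use perfectness of the lattice pairing to force $(\tbk-\tbeta(\lambda),\lambda)\in K\oplus\Z^m$; and for (3) both reduce to the generators $(\Psi(v),v)$, $v\in\Bx$, modulo the image of $\Z^m$, then invoke the standard criterion that $L_\xi$ descends to $X$ iff all ages vanish. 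Your treatment is slightly more explicit about the easy direction of (2) and the independence of choices, but the substance is the same.
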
 
\begin{proof} 
(1) Recall from \S \ref{subsec:torus_action} that 
the line bundle $L_\xi$ is given by the quotient of 
$\cU_\Sigma \times \C$ by the $\G$-action 
$g\cdot (Z,v) = (g\cdot Z, \xi(g) v)$, where 
we regard $\xi \in \LL^\vee \cong \Hom(\G,\C^\times)$ 
as a character of $\G$. 
On the other hand, the stabilizer $g_v \in \G$ associated 
with $v\in \Bx$ is defined as follows (see \cite[Lemma 4.6]{BCS}). 
We use the notation in the paragraph preceding this lemma. Choose 
a lift $\tv \in F$ of $v$. Then $\tv-\tbeta(\Psi(v))\in F_\Q$ lies 
in the kernel of $F_\Q \to \bN_\Q$, and thus 
$(\tv-\tbeta(\Psi(v)), \Psi(v))$ lies in 
$K_\Q \oplus \Q^m$. 
The stabilizer $g_v \in \G = H^{-1}(\Cone(\beta)\otimes^\LL \C^\times)$ 
is then given by
\[
g_v =e^{2 \pi \iu \left(\tv - \tbeta(\Psi(v)), \Psi(v) \right) } 
\in \Ker\Big((K\oplus \Z^m)\otimes \C^\times \to F\otimes \C^\times\Big).  
\]
Therefore $g_v$ acts on fibers of $L_\xi$ along $\frX_v$ 
by $\exp (2\pi\iu \age(\xi,(\Psi(v),v))$, and part (1) follows. 

(2) Suppose that $\age(\xi,(\lambda,\bk))=0$ for all $\xi \in \LL^\vee$. 
With notation as above, we have that $(\tbk-\tbeta(\lambda),\lambda) 
\in K \oplus \Z^m$. This implies that $\lambda \in \Z^m$ and 
$\beta(\lambda) = \bk$. Thus $(\lambda,\bk)
=\sum_{i=1}^m \lambda_i (e_i,b_i)$ lies in the image 
of $\Z^m$. 

(3) Note that $\OO$ is generated by $(\Psi(v),v)$ with 
$v\in \Bx$ and $(e_i,b_i)$, $i=1,\dots,m$. 
Therefore, by parts (1) and (2), we have that $\age(\xi,(\lambda,\bk))=0$ 
for all $(\lambda,\bk) \in \OO$ 
if and only if the age of $L_\xi$ along each twisted sector 
$\frX_v$ is 0. This happens if and only if $L_\xi$ is the pull-back of a line bundle 
from the coarse moduli space. 
\end{proof} 

The above lemma says that the bilinear pairing $\age(\cdot,\cdot)$ 
descends to a perfect pairing $\age \colon 
\Pic^{\st}(\frX) \times \OO/\Z^m \to \Q/\Z$. 
We define the action of $\Pic^\st(\frX)$ on $\C[\OO_+]$ 
by 
\[
\xi \cdot w^{(\lambda,\bk)} = e^{2\pi \iu \age(\xi,(\lambda,\bk))} 
w^{(\lambda,\bk)}
\]
for $\xi \in \Pic^\st(\frX)$ and $(\lambda,\bk)\in \OO_+$. 
This defines the $\Pic^\st(\frX)$-action on the total space $\Spec \C[\OO_+]$. 
The group $\Pic^\st(\frX)$ also acts on the unfolding parameters 
$y=\{y_i, y_\bk : 1\le i\le m, \bk \in G\}$ of the $G$-unfolded Landau--Ginzburg 
potential by 
\[
\xi \cdot y_i = y_i, \qquad 
\xi \cdot y_\bk = e^{-2\pi\iu \age(\xi,(\Psi(\bk),\bk))} y_\bk.  
\]
The $G$-unfolded potential $F(x;y)$ 
is invariant under the $\Pic^\st(\frX)$-action. 
The equivariant potential 
$F_\chi(x;y)$ is not $\Pic^\st(\frX)$-invariant, but the $\Pic^\st(\frX)$-action 
shifts $F_\chi(x;y)$ only by a (constant) linear form in $\chi_1,\dots,\chi_n$, 
hence its derivative in $x$ is $\Pic^\st(\frX)$-invariant. 

\begin{lemma}
\label{lem:age_Laa} 
{\rm (1)} For $\xi \in \LL^\vee$ and $\lambda \in \Laa$, we have 
$\age(\xi,\lambda) \equiv \xi \cdot \lambda \mod \Z$, 
where we regard $\lambda$ as an element of $\OO$ 
by the inclusion $\Laa \subset \OO$. 
In particular, $\xi \cdot Q^\lambda$ equals the Galois action 
$(g(\xi)^*)^{-1} Q^\lambda$ for the Novikov variables 
defined in \S\ref{subsec:Galois_A}. 

{\rm (2)} For $\xi \in \Pic(X)$ and $\lambda \in \Laa$, we have 
$\xi \cdot \lambda \in \Z$. Moreover, we have $\Pic(X) \cong \Laa^\star$. 
\end{lemma}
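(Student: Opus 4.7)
The plan for part (1) is to unpack the definition of $\age$ when the second argument is $(\lambda,0)$ and match it with the natural pairing $\xi\cdot\lambda$. I would take $\tilde\bk = 0$ as the lift of $\bk = 0$; since $\Laa \subset \LL_\Q = \Ker(\beta)_\Q$, we have $\tbeta(\lambda) \in \iota(K_\Q)$, so write $\tbeta(\lambda) = \iota(\kappa)$ with $\kappa \in K_\Q$. For any lift $\tilde\xi = (\tilde\xi_K, \tilde\xi_m) \in K^\star \oplus (\Z^m)^\star$ of $\xi$, the definition yields $\age(\xi, (\lambda, 0)) = \tilde\xi_m \cdot \lambda - \tilde\xi_K \cdot \kappa \pmod{\Z}$, independently of the choices made. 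The pairing $\xi \cdot \lambda$ is obtained by projecting $\xi$ along $\LL^\vee \twoheadrightarrow \LL^\star$ and $\Q$-linearly extending the canonical pairing $\LL^\star \times \LL \to \Z$. Both expressions are $\Z$-bilinear $\LL^\vee \times \Laa \to \Q/\Z$; they agree on generators $\xi = D(u)$ with $u \in (\Z^m)^\star$ (the lift $\tilde\xi = (0, u)$ makes both sides equal $u \cdot \lambda$) and on torsion $\xi \in (\LL^\vee)_{\rm tor}$ (both vanish in $\Q/\Z$: the right-hand side because $\LL^\vee \twoheadrightarrow \LL^\star$ kills torsion, and the left-hand side because $(\LL^\vee)_{\rm tor}$ annihilates $\Laa/(\Laa \cap \Z^m) \subset \OO/\Z^m$ under the perfect pairing developed in the next paragraph). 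This suffices for the identity, and the Galois-action statement follows at once by exponentiating: $\xi \cdot Q^\lambda = e^{2\pi\iu\,\age(\xi,(\lambda,0))} Q^\lambda = e^{2\pi\iu\,\xi\cdot\lambda} Q^\lambda = (g(\xi)^*)^{-1} Q^\lambda$.

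For part (2), the first assertion follows immediately from Lemma~\ref{lem:age}(3) and part (1): $\xi \in \Pic(X)$ forces $\age(\xi, (\lambda, 0)) = 0$, hence $\xi \cdot \lambda \in \Z$. This defines a homomorphism $\Phi \colon \Pic(X) \to \Laa^\star$, $\xi \mapsto (\lambda \mapsto \xi \cdot \lambda)$. The structural input is the tautological short exact sequence $0 \to \Laa/(\Laa \cap \Z^m) \to \OO/\Z^m \to \Cok(\beta) \to 0$, deduced from the refined fan sequence by quotienting by $\Z^m$ (using that $\OO$ surjects onto $\bN$). Combining it with the perfect pairing $\age \colon \Pic^\st(\frX) \times \OO/\Z^m \to \Q/\Z$ and the Gale-dual identification $(\LL^\vee)_{\rm tor} \cong \Hom(\Cok\beta, \Q/\Z)$ produces a Pontryagin-dual sequence $0 \to (\LL^\vee)_{\rm tor} \to \Pic^\st(\frX) \to \Hom(\Laa/(\Laa \cap \Z^m), \Q/\Z) \to 0$. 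In particular $(\LL^\vee)_{\rm tor}$ injects into $\Pic^\st(\frX) = \LL^\vee/\Pic(X)$, so $(\LL^\vee)_{\rm tor} \cap \Pic(X) = 0$.

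Injectivity of $\Phi$ is then clear: $\Phi(\xi) = 0$ forces $\xi$ to pair trivially with $\LL \subset \Laa$, hence $\xi \in (\LL^\vee)_{\rm tor} \cap \Pic(X) = 0$. For surjectivity, given $\phi \in \Laa^\star$, lift $\phi$ along $\LL^\vee \twoheadrightarrow \LL^\star$ to some $\xi_0 \in \LL^\vee$; by part (1) the ages $\age(\xi_0, (\lambda, 0))$ vanish for $\lambda \in \Laa$, but the ages $\age(\xi_0, (\Psi(v), v))$ on box elements may not. The dual sequence above identifies the residual age profile with a unique class in $\Hom(\Cok(\beta), \Q/\Z) \cong (\LL^\vee)_{\rm tor}$, and adjusting $\xi_0$ by this torsion class produces $\xi \in \Pic(X)$ with $\Phi(\xi) = \phi$. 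The main obstacle is establishing the tautological exact sequence cleanly from the refined fan sequence and carefully matching its Pontryagin dual with the torsion/finite-quotient decomposition of $\Pic^\st(\frX)$ supplied by the perfect pairing.
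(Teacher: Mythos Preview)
Your argument for part~(1) has a real gap. You verify the identity on $\xi \in \Image(D)$ and on $\xi \in (\LL^\vee)_{\rm tor}$, implicitly assuming these together generate $\LL^\vee$. They need not: the image of $D$ surjects onto $\LL^\star = \LL^\vee/(\LL^\vee)_{\rm tor}$ precisely when $\Z^m/\LL \cong \Image(\beta)$ is torsion-free, and this fails whenever some integer combination of the $b_i$ is a nonzero torsion element of $\bN$. For a concrete instance, take $\bN = \Z^2 \oplus \Z/2\Z$ with $b_1=(1,0,\bar0)$, $b_2=(-1,0,\bar1)$, $b_3=(0,1,\bar0)$ and the obvious two-cone fan: then $\Cok(\beta)=0$, so $(\LL^\vee)_{\rm tor}=0$, while $\LL^\vee\cong\Z$ and $\Image(D)=2\Z$. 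Your forward reference for the torsion case is also circular: the Pontryagin-dual sequence you invoke requires knowing that $(\LL^\vee)_{\rm tor}$ annihilates $\Laa/\LL$ under $\age$, which is exactly the torsion case of what you are proving.

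The fix is much shorter than your detour, and you have already done the computation. You found
\[
\age(\xi,(\lambda,0)) \;=\; \txi \cdot (-\tbeta(\lambda),\lambda) \pmod{\Z}.
\]
But the natural pairing $\xi\cdot\lambda$ is given by \emph{the same formula}: the inclusion $\LL \hookrightarrow K\oplus\Z^m$ coming from the free resolution is $\lambda \mapsto (-\tbeta(\lambda),\lambda)$, so by definition $\xi\cdot\lambda = \txi\cdot(-\tbeta(\lambda),\lambda)$ for any lift $\txi$ of $\xi$. The two sides agree in $\Q$, not merely in $\Q/\Z$. This is the paper's one-line argument; no case analysis is needed.

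Your approach to part~(2) is correct in outline but more circuitous than the paper's. You Pontryagin-dualize $0\to\Laa/\LL\to\OO/\Z^m\to\Cok(\beta)\to 0$ and then adjust a lift of $\phi\in\Laa^\star$ by a torsion element; this works once you verify (which you do not) that the image of $(\LL^\vee)_{\rm tor}$ in $\Pic^\st(\frX)$ is exactly the annihilator of $\Laa/\LL$ under the age pairing, not merely abstractly isomorphic to $\Hom(\Cok\beta,\Q/\Z)$. The paper instead takes the Gale dual of the refined fan sequence \eqref{eq:refined_fanseq} to get a canonical embedding $\Laa^\star \hookrightarrow \LL^\vee$ factoring through $\OO^\star$, and then checks directly from the definition that any $\xi$ coming from $\OO^\star$ satisfies $\age(\xi,(\lambda,\bk))=0$ for all $(\lambda,\bk)\in\OO$, hence lies in $\Pic(X)$ by Lemma~\ref{lem:age}(3). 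This avoids the Pontryagin-duality bookkeeping entirely.
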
 
\begin{proof} 
With notation as above, we have the exact sequence 
$0\to \LL \to K\oplus \Z^m \xrightarrow{\iota\oplus\tbeta} F$, 
where the map 
$\LL \to K\oplus \Z^m$ is given by $\lambda \mapsto 
(-\tbeta(\lambda), \lambda)$. 
Thus the natural pairing between $\LL^\vee=
\Cok(F^\star \to K^\star \oplus (\Z^m)^\star)$ and 
$\LL_\Q$ is given by $\xi \cdot \lambda = \txi \cdot 
(-\tbeta(\lambda),\lambda)$ where $\lambda\in \LL_\Q$ and $\txi\in K^\star \oplus (\Z^m)^\star$ 
is a representative of $\xi \in \LL^\vee$. 
Part (1) follows from this and the definition of $\age(\xi,\lambda)$. 
The first statement of part (2) follows from part (1) and Lemma \ref{lem:age}(3).  
This gives a natural map $\Pic(X) \to \Laa^\star$, which is injective 
since $\Pic(X)$ has no torsion \cite[Proposition 4.2.5]{CLS}. 
To show the second statement, 
we embed both $\Pic(X)$ and $\Laa^\star$ into $\Pic(\frX)\cong \LL^\vee$; 
the natural map $\Laa^\star \to \LL^\vee$ is given by taking the Gale dual 
of \eqref{eq:refined_fanseq}:
\[
\xymatrix{
0 \ar[r] & \bM \ar[r] & (\Z^m)^\star \ar[r] & \LL^\vee & \\ 
0\ar[r] & \bM \ar[r]\ar@{=}[u] & \OO^\star \ar[r]\ar@{^{(}->}[u] 
& \Laa^\star \ar[r] \ar@{^{(}->}[u]& 0 
}
\] 
Via these embeddings, 
we have $\Pic(X) \subset \Laa^\star \subset \LL^\vee$. 
To see the converse inclusion $\Laa^\star \subset \Pic(X)$ it suffices, 
in view of Lemma \ref{lem:age}(3),  
to show that an element $\xi \in \Laa^\star\subset \LL^\vee$ 
satisfies $\age(\xi, (\lambda,\bk))= 0$ for all $(\lambda,\bk)\in \OO$. 
Note that $\xi\in \Laa^\star$ comes from an element of $\OO^\star$ 
in the above diagram. By the definition of the age pairing, it follows 
easily that $\age(\xi,(\lambda,\bk))=0$. The conclusion follows. 
\end{proof} 

\subsection{The Gauss--Manin system} 
\label{subsec:GM} 
In this section we fix a subset $G \subset \bN \cap |\Sigma|$ 
disjoint from $\{b_1,\dots,b_m\}$, and construct a Gauss--Manin 
system associated to the $G$-unfolded Landau--Ginzburg 
potential (\S \ref{subsec:unfolding}). 
The Gauss--Manin system constitutes, together with the higher residue 
pairing introduced in \S\ref{sec:pairing}, the \emph{Saito structure} of the 
Landau--Ginzburg model. 

\subsubsection{Definition} 
We consider a formal completion of the total space 
$\Spec \C[\OO_+]$ along the fiber at $\{Q=0\}  
\in \Spec \C[\Laa_+]$. 
Let $K$ be a ring, and let $\frakm$ denote the ideal of 
$K[\Laa_+]$ generated by $Q^\lambda$ 
with $\lambda \in \Laa_+\setminus \{0\}$. 
Let $\tfrakm\subset K[\OO_+]$ be the ideal generated by 
$\frakm$. We set 
\begin{align*} 
K[\![\Laa_+]\!] &:= \text{the completion of $K[\Laa_+]$ 
with respect to the $\frakm$-adic topology,} \\ 
K\{\OO_+\} &:= \text{the completion of $K[\OO_+]$ with respect 
to the $\tfrakm$-adic topology}. 
\end{align*} 
Note that we have (cf.~\eqref{eq:group_ring_OO_+}) 
\begin{equation} 
\label{eq:completed_directsum}
K\{\OO_+\} = \hbigoplus_{\bk\in \bN\cap |\Sigma|} 
K[\![\Laa_+]\!] w_\bk  
\end{equation} 
where the right-hand side is the completed direct sum 
with respect to the $\frakm$-adic topology. 
We also write 
\begin{align} 
\label{eq:powerseries_y}
\begin{split} 
K[\![y]\!]
& = K[\![y_1-1,\dots,y_m-1,\{y_\bk : \bk \in G\}]\!] \\ 
K[\chi] & = K[\chi_1,\dots,\chi_n]. 
\end{split} 
\end{align} 
Note that we use the completion 
at the shifted origin \eqref{eq:specialization_y} of $y$. 
In this section we consider the 
\emph{formal} Landau--Ginzburg model 
given by the $G$-unfolded potential $F(x;y)$  
\[
\xymatrix@C=40pt{ 
\hcY \ar[r]^{F(x;y)}  \ar[d] &  \C \\ 
\hcM
}
\]
with $\hcY = \Spf \C\{\OO_+\}[\![y]\!]$ and 
$\hcM = \Spf \C[\![\Laa_+]\!][\![y]\!]$. 
We regard $\hcY$, $\hcM$ as formal log schemes 
whose log structures are given by their toric boundaries 
(see e.g.~ \cite[Ch.~3]{Gross:tropical_book}); 
the family $\hcY \to \hcM$ is then log smooth. 
The Gauss--Manin system is given as the top cohomology 
of the logarithmic twisted de Rham complex 
$(\Omega^\bullet_{\hcY/\hcM}\{z\}, z d + dF \wedge)$ 
where 
\[
\Omega^k_{\hcY/\hcM}\{z\} 
= \bigoplus_{i_1<\cdots < i_k} 
\C[z]\{\OO_+\}[\![y]\!] \frac{dx_{i_1} \cdots dx_{i_k}}{x_{i_1} \cdots x_{i_k}}.  
\]
In what follows, we give a more concrete definition. 
Introduce the relative differential forms\footnote
{The volume form $\omega$ is normalized so that the integral against the maximal 
compact subgroup of $\Hom(\bN,\C^\times)$ equals $(2\pi\iu)^n$. 
The factor $|\bN_{\rm tor}|^{-1}$ plays a role in \S\ref{sec:pairing}, when we 
show that the pairings match.} 
$\omega := |\bN_{\rm tor}|^{-1} 
\frac{dx_1}{x_1} \wedge \cdots \wedge \frac{dx_n}{x_n}$ 
and $\omega_i := \iota_{x_i \parfrac{}{x_i}} \omega$. 
\begin{definition} 
(1) The \emph{equivariant Gauss--Manin system}  
$\GM(F_\chi)$ 
is defined to be the cokernel of the map 
\[
z d + dF_\chi \wedge \colon 
\bigoplus_{i=1}^n \C[z]\{\OO_+\}[\![y]\!][\chi] \omega_i 
\to \C[z]\{\OO_+\}[\![y]\!][\chi] \omega,   
\]
where $d$ denotes the differential in the variable $x$ 
which is 
linear over $\C[z][\![\Laa_+]\!][\![y]\!][\chi]$; we define 
$d w_\bk = \sum_{i=1}^n k_i w^{\bk} \frac{dx_i}{x_i}$; 
and the map $zd + dF_\chi \wedge$ is given explicitly by 
\begin{equation} 
\label{eq:image_twdR_diff} 
(zd + dF_\chi\wedge) w_\bk \omega_i 
= \left( z k_i 
+ \sum_{\bl \in S} l_i y_\bl w_\bl
- \chi_i \right) w_\bk \omega
\end{equation} 
where $k_i$, $l_i$ denote the 
$i$th components of $\overline{\bk}$,~$\overline{\bl} \in \bN_\Q \cong \Q^n$ respectively. 

(2) The \emph{non-equivariant Gauss--Manin system} 
$\GM(F)$ is defined to be the cokernel of the map 
\[
zd + dF\wedge \colon \bigoplus_{i=1}^n 
\C[z]\{\OO_+\}[\![y]\!] \omega_i \to 
\C[z]\{\OO_+\}[\![y]\!] \omega. 
\]
Clearly we have $\GM(F) \cong \GM(F_\chi)/\sum_{i=1}^n \chi_i \GM(F_\chi)$. 
\end{definition} 

\begin{remark} 
The quantity \eqref{eq:image_twdR_diff} gives a relation in 
$\GM(F_\chi)$ which can be viewed as defining the action of 
$\chi_i$. In fact, it is easy to check that 
\begin{equation}
\label{eq:action_chi_i} 
\chi_i \colon w_\bk \omega \mapsto 
\left( zk_i  
+ \sum_{\bl\in S} l_i y_\bl w_\bl\right) w_\bk \omega, \quad 1\le i\le n 
\end{equation} 
defines commuting $\C[z][\![\Laa_+]\!][\![y]\!]$-module 
endomorphisms $\chi_i$ on $\C[z]\{\OO_+\}[\![y]\!]\omega$, and 
that there is a canonical isomorphism: 
\begin{equation} 
\label{eq:equiv_GM_decomp} 
\GM(F_\chi) \cong \C[z]\{\OO_+\}[\![y]\!] \omega
\cong \hbigoplus_{\bk \in \bN\cap |\Sigma|} 
\C[z][\![\Laa_+]\!][\![y]\!] w_\bk \omega. 
\end{equation} 
It is also easy to see that, under the action \eqref{eq:action_chi_i}, 
$\GM(F_\chi) = \C[z]\{\OO_+\}[\![y]\!]$ 
is a module over $R_\T[z][\![\Laa_+]\!][\![y]\!]$, 
where $R_\T = H^*_\T(\pt,\C) = \C[\chi]$. 
\end{remark} 

\subsubsection{The Gauss--Manin connection and the grading operator} 
The equivariant Gauss--Manin system is equipped 
with a natural flat connection $\nabla$, 
called the \emph{Gauss--Manin connection}, and a \emph{grading operator}. 
For $\xi \in \LL^\star_\C$, let $\xi Q \parfrac{}{Q}$ 
be the derivation of $K[\![\Laa_+]\!]$ such that 
$\xi Q\parfrac{}{Q}\cdot Q^\lambda = (\xi\cdot \lambda) Q^\lambda$. 
For a vector field $\vec{v}$ in the parameters $(Q,y)$, 
the connection operator $\nabla_{\vec{v}} \colon \GM(F_\chi) 
\to z^{-1} \GM(F_\chi)$ is defined by the formula: 
\[
\nabla_{\vec{v}} = \partial_{\vec{v}} + z^{-1} (\vec{v} F_\chi)  
\]
where we use the splitting from \S\ref{subsec:LG} and think of 
elements in $\GM(F_\chi) \cong \C[z]\{\OO_+\}[\![y]\!]$ as functions in 
$(z,x,Q,y)$. 
More precisely, we define the actions of $\nabla_{\xi Q\parfrac{}{Q}}$, 
$\nabla_{\parfrac{}{y_\bl}}$ 
on the topological $\C[z][\![\Laa_+]\!][\![y]\!]$-basis 
$w_\bk \omega$ of $\GM(F_\chi)$ 
-- see \eqref{eq:equiv_GM_decomp} -- by 
\begin{align}
\label{eq:GM_conn_explicit}  
\begin{split} 
\nabla_{\xi Q\parfrac{}{Q}} w_\bk \omega & := 
(\xi \cdot \lambda(\bk)) w_{\bk} \omega 
+\frac{1}{z} 
\left(\sum_{\bl \in S} (\xi \cdot \lambda(\bl)) y_\bl w_\bl
\right) w_\bk \omega \\ 
\nabla_{\parfrac{}{y_\bl}} w_\bk \omega & 
:= \frac{1}{z} w_\bl w_\bk \omega 
= \frac{1}{z} Q^{d(\bk,\bl)} w_{\bl+\bk} \omega, 
\end{split} 
\end{align} 
where $\bk\in \bN\cap |\Sigma|$,~$\bl \in S$,~$\xi\in \LL^\star_\C$, 
and extend them (continuously) to the whole of $\GM(F_\chi)$ by the Leibnitz rule. 
Recall that $\lambda(\bl)\in \Laa$ 
was introduced in \S\ref{subsec:LG} 
by choosing a splitting $\varsigma \colon \bN \to \OO$
and that $d(\bk,\bl)$ was defined in \eqref{eq:d(,)}. 
Define the \emph{Euler vector field} by 
\begin{equation} 
\label{eq:Euler_B} 
\cE^{\rm B} = c_1(\frX) Q \parfrac{}{Q} + 
\sum_{\bk \in S} (1 - |\bk|) y_\bk \parfrac{}{y_\bk}  
+ \sum_{i=1}^n \chi_i \parfrac{}{\chi_i} 
\end{equation} 
where $c_1(\frX) := D_1+ \cdots +D_m \in \LL^\star$ 
and $|\bk| = \sum_{i=1}^m \Psi_i(\bk)$ is the age function 
from Notation~\ref{nota:Psi}.
Define the grading operator 
$\Gr^{\rm B} \colon \GM(F_\chi) \to \GM(F_\chi)$ by requiring that 
\begin{align}
\label{eq:Gr_B} 
\begin{split} 
\Gr^{\rm B}(w_\bk \omega) & = |\bk| w_\bk \omega \\ 
\Gr^{\rm B}( c \Omega ) & =  
\left(\left( \textstyle z\parfrac{}{z} + \cE^{\rm B} \right) 
c \right) \Omega + c \Gr^{\rm B}(\Omega) 
\end{split} 
\end{align} 
for $c = c(z,Q,y) \in \C[z][\![\Laa_+]\!][\![y]\!]$ 
and $\Omega \in \GM(F_\chi)$. 
The following proposition is an immediate consequence of the definitions.

\begin{proposition} 
\label{pro:GM_conn_flat} 
The connection operators $\nabla_{\xi Q\parfrac{}{Q}}$ and  
$\nabla_{\parfrac{}{y_{\bk}}}$ are linear over $R_\T$, that is, they commute with the 
action \eqref{eq:action_chi_i} of equivariant parameters $\chi_i$.
Moreover:
\begin{align*} 
& \left[\nabla_{\xi Q\parfrac{}{Q}}, \nabla_{\eta Q \parfrac{}{Q}}\right] 
= \left[\nabla_{\xi Q\parfrac{}{Q}}, \nabla_{\parfrac{}{y_\bk}}\right] 
= \left[\nabla_{\parfrac{}{y_\bk}}, 
\nabla_{\parfrac{}{y_\bl}}\right] =0,  \\ 
& \left[\nabla_{\xi Q \parfrac{}{Q}}, \Gr^{\rm B} \right] = 0, \quad 
\left[\nabla_{\parfrac{}{y_\bk}}, \Gr^{\rm B} \right] 
= \nabla_{\left[\parfrac{}{y_\bk},\cE^{\rm B} \right]} 
= \nabla_{(1-|\bk|) \parfrac{}{y_\bk}}, \\ 
& \Gr^{\rm B}( c \Omega ) =  \left(\left( \textstyle z\parfrac{}{z} 
+  \cE^{\rm B} \right) 
c \right) \Omega + c \Gr^{\rm B}(\Omega)
\end{align*}    
where $\xi$,~$\eta\in \LL^\star_\C$; $\bk$,~$\bl\in S$; 
$c=c(z,\chi,Q,y) \in R_\T[z][\![\Laa_+]\!][\![y]\!]$; and 
$\Omega \in \GM(F_\chi)$. 
\end{proposition}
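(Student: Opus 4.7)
The plan is to verify all four statements by direct computation on the topological basis $\{w_\bk\omega : \bk\in\bN\cap|\Sigma|\}$ of $\GM(F_\chi)$ from \eqref{eq:equiv_GM_decomp}, using the explicit formulas \eqref{eq:GM_conn_explicit}, \eqref{eq:action_chi_i}, and \eqref{eq:Gr_B}--\eqref{eq:Euler_B}. The combinatorial input underlying every cancellation is the identity $c_1(\frX)\cdot\Psi(\bk) = |\bk|$, which holds because $c_1(\frX) = D_1+\cdots+D_m$ pairs with $\lambda\in\Q^m$ to give the sum of its coordinates; applied to $d(\bk,\bl) = \Psi(\bk)+\Psi(\bl)-\Psi(\bk+\bl)$ this yields $c_1(\frX)\cdot d(\bk,\bl) = |\bk|+|\bl|-|\bk+\bl|$. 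This is the algebraic shadow of the (log-)conic homogeneity of $F_\chi$ with respect to $\cE^{\rm B}$.

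For both the $R_\T$-linearity and the flatness I would work on the ambient logarithmic twisted de Rham complex, where the connection lifts to $\nabla_{\vec v} = \partial_{\vec v} + z^{-1}(\vec v F_\chi)$. On the ambient complex, multiplication by $\chi_i$ is a scalar and is annihilated by each of $\xi Q\parfrac{}{Q}$ and $\parfrac{}{y_\bk}$, so $[\nabla_{\vec v},\chi_i] = 0$ there; passing to the quotient $\GM(F_\chi)$, where $\chi_i$ acquires the new presentation \eqref{eq:action_chi_i} via the relation \eqref{eq:image_twdR_diff}, transfers this commutativity. For flatness, the standard identity
\[
[\nabla_{\vec v_1},\nabla_{\vec v_2}] = [\partial_{\vec v_1},\partial_{\vec v_2}] + z^{-1}\bigl(\vec v_1(\vec v_2 F_\chi) - \vec v_2(\vec v_1 F_\chi)\bigr) = \nabla_{[\vec v_1,\vec v_2]}
\]
applies, and since $\xi Q\parfrac{}{Q}$, $\eta Q\parfrac{}{Q}$, $\parfrac{}{y_\bk}$, $\parfrac{}{y_\bl}$ all commute as derivations of $\C[\![\Laa_+]\!][\![y]\!]$, the three flatness relations follow.

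For the grading commutators I would evaluate $\Gr^{\rm B}$ on $\nabla_{\xi Q\parfrac{}{Q}} w_\bk \omega$ and on $\nabla_{\parfrac{}{y_\bl}} w_\bk\omega$. The non-trivial contribution is the term $z^{-1}y_\bl w_\bl w_\bk\omega = z^{-1} y_\bl Q^{d(\bk,\bl)} w_{\bk+\bl}\omega$, on which $\Gr^{\rm B}$ picks up grading $-1$ from $z^{-1}$, $(1-|\bl|)$ from $y_\bl$, $c_1(\frX)\cdot d(\bk,\bl) = |\bk|+|\bl|-|\bk+\bl|$ from $Q^{d(\bk,\bl)}$, and $|\bk+\bl|$ from $w_{\bk+\bl}\omega$, summing to $|\bk|$; the scalar term $(\xi\cdot\lambda(\bk))w_\bk\omega$ appearing in $\nabla_{\xi Q\parfrac{}{Q}} w_\bk\omega$ also has grading $|\bk|$. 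This matches the grading of the input $w_\bk\omega$ and yields $[\nabla_{\xi Q\parfrac{}{Q}},\Gr^{\rm B}] = 0$; in the $y_\bk$-derivative case one finds a net shift of $1-|\bk|$, giving $[\nabla_{\parfrac{}{y_\bk}},\Gr^{\rm B}] = (1-|\bk|)\nabla_{\parfrac{}{y_\bk}} = \nabla_{[\parfrac{}{y_\bk},\cE^{\rm B}]}$. The Leibnitz-style formula for $\Gr^{\rm B}$ on products is the definition \eqref{eq:Gr_B} extended by continuity. The main obstacle is the grading bookkeeping, where the cancellations rely on the precise interplay of $c_1(\frX)\cdot\Psi(\bk) = |\bk|$, the multiplication rule $w_\bl w_\bk = Q^{d(\bk,\bl)} w_{\bk+\bl}$, and the $\cE^{\rm B}$-weight $(1-|\bl|)$ assigned to the deformation parameter $y_\bl$.
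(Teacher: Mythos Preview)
Your proposal is correct and takes the same approach as the paper, which simply declares the proposition ``an immediate consequence of the definitions'' without writing anything out; your spelling-out of the verification via the ambient complex (for $R_\T$-linearity and flatness) and the degree bookkeeping built on $c_1(\frX)\cdot d(\bk,\bl)=|\bk|+|\bl|-|\bk+\bl|$ (for the grading commutators) is exactly the intended direct check. One small point: the final Leibnitz formula is stated for $c\in R_\T[z][\![\Laa_+]\!][\![y]\!]$, whereas the defining equation \eqref{eq:Gr_B} only covers $c\in\C[z][\![\Laa_+]\!][\![y]\!]$, so ``extended by continuity'' is not quite the whole story---you should also note that the action \eqref{eq:action_chi_i} of $\chi_i$ raises $\Gr^{\rm B}$-degree by exactly $1$ (the same grading computation you already did), matching $\cE^{\rm B}\chi_i=\chi_i$, which is what makes the extension to coefficients involving $\chi$ go through.
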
 

\begin{remark} 
We can also work with a different Euler vector field 
$\tcE^{\rm B}$ and grading operator 
$\tGr^{\rm B}$ defined by  
\[
\tcE^{\rm B} = \sum_{\bk\in S} y_\bk \parfrac{}{y_\bk} 
+ \sum_{i=1}^n \chi_i\parfrac{}{\chi_i},   
\qquad \tGr^{\rm B}(w_\bk \omega) = 0
\] 
together with  
$\tGr^{\rm B}(c \Omega) = \left( \left( z\parfrac{}{z} + \tcE^{\rm B}
\right) c \right) \Omega + c \tGr^{\rm B}(\Omega)$ 
for $c\in \C[z][\![\Laa_+]\!][\![y]\!]$, $\Omega\in \GM(F_\chi)$. 
This choice of Euler vector field and the grading operator 
was made\footnote
{The Euler vector field in \cite{Iritani:shift_mirror} 
does not contain the term $\sum_{i=1}^n \chi_i\parfrac{}{\chi_i}$ 
since the $\chi_i$-direction is contained as part of the (infinite-dimensional) 
$y$-directions in the setting of \cite{Iritani:shift_mirror}.}
 in \cite{Iritani:shift_mirror}. It has the 
advantage that they are linear over $\C[\![\Laa_+]\!]$, whereas 
the current choice has the advantage that the origin $Q=0$, 
$y=y^*$  -- where $y^*$ is the shifted origin in 
\eqref{eq:specialization_y} -- is a fixed 
point of the Euler flow. 
The two choices are, however, essentially equivalent. We can easily 
check that 
\[
\Gr^{\rm B}- \tGr^{\rm B} 
= \nabla_{\cE^{\rm B} - \tcE^{\rm B}} + \frac{\kappa}{z}
\]
where $\kappa \in \bM_\Q$ is the equivariant parameter 
defined by $\kappa = (e_1^\star + \cdots + e_m^\star) \circ \overline{\varsigma} 
\colon \bN \to \Q$, and $\varsigma(\bk) = (\overline{\varsigma}(\bk),
\bk)$ with $\overline{\varsigma} \colon \bN \to \Q^m$. 
\end{remark} 

\begin{remark} 
By Remark \ref{rem:dependence_splitting}, a different choice 
of the splitting $\varsigma$ shifts $F_\chi(x;y)$ only by 
a quantity which does not depend on $x$, and thus 
the Gauss--Manin system $\GM(F_\chi)$ 
is independent of the choice of splitting 
as an $R_\T[z][\![\Laa_+]\!][\![y]\!]$-module.  
This shift of $F_\chi(x;y)$, however, depends on $Q$. 
Therefore the Gauss--Manin connection $\nabla_{\xi Q\parfrac{}{Q}}$ 
in the $Q$-direction depends on the choice of $\varsigma$; the difference 
of the connections $\nabla_{\xi Q\parfrac{}{Q}}$ corresponding 
to two splittings 
is the multiplication by $\frac{1}{z} \sum_{i=1}^n 
\chi_i (\xi \cdot d_i)$ for some $d_i \in \Laa$. 
\end{remark}

\begin{remark} 
The connection operators and grading operators 
descend to the \emph{non-equivariant} Gauss--Manin system. 
In the non-equivariant case, 
the operator $\nabla_{\xi Q\parfrac{}{Q}}$ does not 
depend on the choice of splitting $\varsigma$. 
Also we can introduce the connection $\nabla_{z\parfrac{}{z}}$ 
in the $z$-direction by the formula: 
\[
\nabla_{z\parfrac{}{z}} = \Gr^{\rm B} - \nabla_{\cE^{\rm B}} 
-\frac{n}{2}. 
\]
Compare with the quantum connection in the $z$-direction \eqref{eq:conn_z}.  
\end{remark} 

\subsubsection{Galois symmetry} 
Since $dF_\chi$ is invariant under the Galois action 
in \S\ref{subsec:Galois_LG}, the $\Pic^\st(\frX)$-action 
on $\C[z]\{\OO_+\}[\![y]\!]$ induces a $\Pic^\st(\frX)$-action 
on the equivariant Gauss--Manin system $\GM(F_\chi)$ 
such that $\omega$ is $\Pic^\st(\frX)$-invariant. 
It is easy to check that the Gauss--Manin connection satisfies 
\begin{align*} 
\nabla_{\eta Q\parfrac{}{Q}} (\xi \cdot \Omega) 
& = \xi \cdot \nabla_{\eta Q\parfrac{}{Q}} \Omega \\ 
\nabla_{\parfrac{}{y_\bl}} (\xi \cdot \Omega)
& = e^{-2\pi\iu \age(\xi,(\Psi(\bk),\bk))} 
\xi \cdot \nabla_{\parfrac{}{y_\bl}} \Omega
\end{align*} 
where $\xi \in \Pic^\st(\frX)$, $\eta \in \LL^\star_\C$, 
$\bl \in S$, $\Omega \in \GM(F_\chi)$.  
Moreover the Galois action commutes with the grading operator 
$\Gr^{\rm B}$.

\subsection{Solution and freeness} 
\label{subsec:sol_free}
We next describe a cohomology-valued solution (localization map) 
to the equivariant Gauss--Manin system and show that the 
equivariant Gauss--Manin system 
is free over $R_\T[z][\![\Laa_+]\!][\![y]\!]$.

\begin{definition}[cf.~Definition \ref{def:K}]
For $\bk\in \bN\cap |\Sigma|$, 
let $\K^G_\bk$ denote the set of $\lambda =(\lambda_i, \lambda_\bl : 1\le i\le m, 
\bl\in G) 
\in \Q^m \times \Z^G$ 
such that $\overline{\bk} + 
\sum_{i=1}^m \lambda_i \overline{b}_i + 
\sum_{\bl \in G} \lambda_\bl \overline{\bl} =0$ 
and that $\{ 1\le i\le m : \lambda_i \notin \Z\}\in \Sigma$. 
For $\lambda\in \K^G_\bk$, we define 
\[
v(\lambda) := \bk+ \sum_{i=1}^m \ceil{\lambda_i} b_i + \sum_{\bl\in G} 
\lambda_\bl \bl.  
\]
Since $\overline{v(\lambda)} = \sum_{i=1}^m \fract{-\lambda_i} b_i$, 
$v(\lambda)$ belongs to $\Bx$. We also set 
\[
d(\lambda) := \Psi(\bk) + (\lambda_1,\dots,\lambda_m) 
+ \sum_{\bl\in G} \lambda_\bl \Psi(\bl) 
\in \LL_\Q\subset \Q^m 
\]
where $\Psi$ is given in Notation \ref{nota:Psi}. 
\end{definition}

The proof of the following lemma 
is similar to Lemma \ref{lem:d(lambda)}, and is omitted. 
\begin{lemma}
For $\lambda \in \K^G_\bk$, $d(\lambda) \in \Laa$. 
\end{lemma}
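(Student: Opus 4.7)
The plan is to imitate the proof of Lemma \ref{lem:d(lambda)} (which handled the case $\bk=0$), carrying along the extra term $\Psi(\bk)$. The key is to express $(d(\lambda),0)\in \Q^m\oplus \bN$ as an integer linear combination of elements of the form $(\Psi(\ast),\ast)$ with $\ast\in \bN\cap |\Sigma|$, so that $(d(\lambda),0)\in \OO$ and hence $d(\lambda)\in \Laa$.

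First I would compute $\Psi(v(\lambda))$ explicitly. Using the hypothesis $\overline{\bk}+\sum_{i=1}^m\lambda_i\overline{b}_i+\sum_{\bl\in G}\lambda_{\bl}\overline{\bl}=0$ together with the definition of $v(\lambda)$, one finds
\[
\overline{v(\lambda)}=\sum_{i=1}^m(\ceil{\lambda_i}-\lambda_i)\,\overline{b}_i=\sum_{i=1}^m\fract{-\lambda_i}\,\overline{b}_i.
\]
Since $\{i:\lambda_i\notin\Z\}=\{i:\fract{-\lambda_i}\neq 0\}$ lies in $\Sigma$ by assumption and the coefficients $\fract{-\lambda_i}$ belong to $[0,1)$, this is precisely the minimal-cone decomposition defining $\Psi$, so $\Psi_i(v(\lambda))=\fract{-\lambda_i}=\ceil{\lambda_i}-\lambda_i$ for every $i$.

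Next, I rewrite $d(\lambda)$ by replacing $\lambda_i$ with $\ceil{\lambda_i}-(\ceil{\lambda_i}-\lambda_i)$:
\[
d(\lambda)=\Psi(\bk)+(\ceil{\lambda_1},\dots,\ceil{\lambda_m})-\Psi(v(\lambda))+\sum_{\bl\in G}\lambda_{\bl}\Psi(\bl).
\]
Combining this with the identity $v(\lambda)=\bk+\sum_{i}\ceil{\lambda_i}b_i+\sum_{\bl\in G}\lambda_{\bl}\bl$ (which holds in $\bN$ by definition of $v(\lambda)$), I obtain the equality
\[
(d(\lambda),0)=(\Psi(\bk),\bk)+\sum_{i=1}^m\ceil{\lambda_i}(\Psi(b_i),b_i)+\sum_{\bl\in G}\lambda_{\bl}(\Psi(\bl),\bl)-(\Psi(v(\lambda)),v(\lambda))
\]
in $\Q^m\oplus\bN$. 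The coefficients $\ceil{\lambda_i}$ and $\lambda_{\bl}$ are integers (the latter by the hypothesis $\lambda_{\bl}\in \Z$ coming from the definition of $\K^G_\bk$), and each point $\bk,b_i,\bl,v(\lambda)$ lies in $\bN\cap|\Sigma|$. Hence every term on the right is an integer multiple of a generator of $\OO$, so $(d(\lambda),0)\in\OO$ and therefore $d(\lambda)\in\Laa$.

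There is no real obstacle here; the statement is a direct generalization of Lemma \ref{lem:d(lambda)}, and the only thing to keep careful track of is that $\fract{-\lambda_i}$ and $\ceil{\lambda_i}-\lambda_i$ agree (including when $\lambda_i\in\Z$, where both vanish), which is what allows one to identify $\Psi(v(\lambda))$ with the fractional-part vector and then to split $\lambda_i$ cleanly into its integer and fractional parts in the expression for $d(\lambda)$.
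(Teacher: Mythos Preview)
Your proof is correct and is exactly the argument the paper has in mind: the paper omits the proof, saying it ``is similar to Lemma \ref{lem:d(lambda)}'', and your write-up is precisely that adaptation, carrying the extra $(\Psi(\bk),\bk)$ term through the computation.
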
 

\begin{definition}[cf.~Definition \ref{def:I}]
The \emph{localization map} $\Loc$ 
\[
\Loc \colon \GM(F_\chi) \to H^*_{\CR,\T}(\frX) 
\otimes_{R_\T} R_\T(\!(z^{-1})\!)[\![\Laa_+]\!][\![y]\!]   
\]
is a $\C[z][\![\Laa_+]\!][\![y]\!]$-linear map defined by 
\begin{equation}
\label{eq:def_Loc} 
\Loc(w_\bk \omega) =e^{\sum_{i=1}^m u_i \log y_i/z} 
\sum_{\lambda \in \K^G_\bk} 
Q^{d(\lambda)} y^{\lambda} 
\left(\prod_{i \in \{1,\dots,m\} \cup G} 
\frac{\prod_{c\le 0, \fract{c} = \fract{\lambda_i}} u_i + cz}
{\prod_{c\le \lambda_i, \fract{c} = \fract{\lambda_i}} u_i+cz} 
\right) 
\unit_{v(\lambda)},    
\end{equation} 
where 
\begin{itemize} 
\item $y^\lambda := \prod_{i=1}^m y_i^{\lambda_i} 
\prod_{\bk\in G} y_\bk^{\lambda_\bk}$;  
\item for $1\le i\le m$, $u_i$ is the equivariant Poincar\'e dual 
of a toric divisor in \S \ref{subsec:torus_action}; 
for $i\in G$, we set $u_i:=0$; 
\item $\unit_{v(\lambda)}$ is the identity class 
supported on the twisted sector $\frX_{v(\lambda)}$. 
\end{itemize} 
For $1\le i\le m$, $\lambda_i$ is a (possibly negative) 
rational number, and both $\log y_i$ and 
$y_i^{\lambda_i}$ should be expanded in 
Taylor series at $y_i=1$ -- see \eqref{eq:powerseries_y}.  
\end{definition} 

The following lemma shows that the localization map takes 
values in $H^*_{\CR,\T}(\frX) 
\otimes_{R_\T} R_\T(\!(z^{-1})\!)[\![\Laa_+]\!][\![y]\!]$. 
\begin{lemma}
\label{lem:Loc_well-defined} 
Take $\lambda \in \K^G_\bk$. If either $\lambda_\bl <0$ for some $\bl \in G$ 
or $d(\lambda) \notin \bigcup_{\sigma\in \Sigma} C_\sigma$, 
then the summand corresponding to $\lambda$ 
in the left-hand side of \eqref{eq:def_Loc} vanishes 
(see \eqref{eq:cone_C} for $C_\sigma$). 
\end{lemma}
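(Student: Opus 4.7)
The plan splits into two independent cases matching the two hypotheses, handled by combining the convention $u_i = 0$ for $i \in G$ with the Stanley--Reisner relations encoded in the Chen--Ruan cup product \eqref{eq:equivariant_CR_product}.

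For the first hypothesis, suppose $\lambda_\bl < 0$ for some $\bl \in G$. By the definition of $\K^G_\bk$ we have $\lambda_\bl \in \Z$, and by convention $u_\bl = 0$. After cancelling the common infinite tail of the numerator and denominator, the $\bl$-th factor in the hypergeometric product telescopes to the finite polynomial $\prod_{c = \lambda_\bl+1}^{0}(u_\bl + cz)$; the $c = 0$ term is $u_\bl = 0$, so the whole summand vanishes.

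For the second hypothesis $d(\lambda) \notin \bigcup_\sigma C_\sigma$, by the previous case I may also assume $\lambda_\bl \ge 0$ for all $\bl \in G$. The strategy is to produce a subset $T \subset \{1,\dots,m\}$ with $T \notin \Sigma$ such that the hypergeometric product carries an explicit factor $\prod_{i \in T \setminus \sigma(\lambda)} u_i$; multiplying by $\unit_{v(\lambda)}$ will then give zero by Stanley--Reisner. The candidate I would take is
\[
T := \sigma(\lambda) \cup \{i \in \{1,\dots,m\}\setminus \sigma(\lambda) : \lambda_i \in \Z_{<0}\},
\qquad \sigma(\lambda) := \{i : \lambda_i \notin \Z\} \in \Sigma.
\]
To verify $T \notin \Sigma$ I argue by contradiction: were $T$ a cone of $\Sigma$, every $i \notin T$ would satisfy $\lambda_i \in \Z_{\ge 0}$, and using $\Psi_i(\bk), \Psi_i(\bl) \ge 0$ together with the standing assumption $\lambda_\bl \ge 0$, one gets $D_i \cdot d(\lambda) = \Psi_i(\bk) + \lambda_i + \sum_{\bl \in G} \lambda_\bl \Psi_i(\bl) \ge 0$ for all $i \notin T$, placing $d(\lambda) \in C_T \subset \bigcup_\sigma C_\sigma$, contrary to hypothesis.

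For each $i \in T \setminus \sigma(\lambda)$ the integer $\lambda_i$ is strictly negative, so the $i$-th factor in the product telescopes to $\prod_{c = \lambda_i+1}^{0}(u_i + cz)$, a polynomial divisible by $u_i$ (the $c = 0$ term). Since the minimal cone containing $\overline{v(\lambda)} = \sum_i \fract{-\lambda_i}\, \overline{b_i}$ is precisely $\sigma(\lambda)$, and since $T = \sigma(\lambda) \cup (T \setminus \sigma(\lambda)) \notin \Sigma$, the Chen--Ruan product rule \eqref{eq:equivariant_CR_product} yields $\bigl(\prod_{i \in T \setminus \sigma(\lambda)} u_i\bigr) \unit_{v(\lambda)} = 0$, which kills the summand. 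The main obstacle is choosing $T$ correctly: it has to translate a universally-quantified convex-geometric condition on $d(\lambda)$ into a single algebraic vanishing of Stanley--Reisner type, and the asymmetric roles of $\lambda_i$ for $i \in \sigma(\lambda)$ (non-integer), $i \notin \sigma(\lambda)$ integer $\ge 0$ (harmless), and $i \notin \sigma(\lambda)$ integer $<0$ (supplying the $u_i$) must be tracked consistently.
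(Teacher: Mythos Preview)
Your proof is correct and follows essentially the same approach as the paper: the paper's $\sigma_0$ and $J$ are your $\sigma(\lambda)$ and $T\setminus\sigma(\lambda)$, and where the paper introduces the auxiliary set $I=\{i:D_i\cdot d(\lambda)<0\}$ and argues $I\subset J\cup\sigma_0\notin\Sigma$, you run the equivalent contrapositive directly. The vanishing argument via the Stanley--Reisner/Chen--Ruan relation $(\prod_{i\in J}u_i)\unit_{v(\lambda)}=0$ is identical.
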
 
\begin{proof} 
If $\lambda_\bl<0$ for some $\bl\in G$, then the summand contains a 
factor $\prod_{\lambda_\bl<c\le 0} cz =0$ and thus vanishes. 
Suppose that $\lambda_\bl\ge 0$ for all $\bl\in G$ 
and  that
$d(\lambda) \notin \bigcup_{\sigma \in \Sigma}
C_\sigma$. 
Let $\sigma_0\in \Sigma$ be the minimal cone containing 
$v(\lambda)$. Note that $\sigma_0 = \{1\le i\le m: \lambda_i\notin\Z\}$. 
By assumption we have $I := \{1\le i\le m: D_i \cdot d(\lambda)<0\} 
\notin \Sigma$. 
Set $J := \{1\le i\le m : \lambda_i \in\Z_{<0}\}$. 
Since $D_i\cdot d(\lambda) = \Psi_i(\bk) + \lambda_i  
+ \sum_{\bl\in G}\lambda_\bl \Psi_i(\bl)$, we have 
\[
I \subset J \cup \sigma_0. 
\]
This implies that $J \cup \sigma_0 \notin \Sigma$. 
The summand corresponding to $\lambda$ contains 
a factor $\prod_{i\in J} u_i$. 
But $(\prod_{i\in J} u_i)  \unit_{v(\lambda)}= 0$ 
since 
\[
\bigcap_{i\in J} \{Z_i =0\} \cap \frX_{v(\lambda)} 
= \bigcap_{i\in J\cup \sigma_0} \{Z_i=0\} = \varnothing
\]
by the definition of $\cU_\Sigma$ in \S \ref{subsec:definition}. 
The conclusion follows. 
\end{proof} 

\begin{remark} 
$\Loc(\omega)$ is essentially equivalent to the $G$-extended $I$-function 
in Definition \ref{def:I}. This will be explained in the next section.
\end{remark} 

\begin{remark} 
The name ``localization map'' stems from Givental's heuristic 
argument \cite{Givental:ICM} involving $S^1$-equivariant Floer theory. 
In fact, we can compute $\Loc$ via equivariant localization on polynomial 
loop spaces, generalizing the method in \cite{Givental:homological, Iritani:efc, 
CCLT:wp}. 
\end{remark} 

\begin{remark} 
\label{rem:rational} 
Let $S_{\T\times \C^\times}$ denote the fraction field of 
$H^*_{T\times \C^\times}(\pt)
=R_\T[z]$, where $z$ is regarded as the equivariant parameter 
for $\C^\times$.  
The localization map takes values also in 
$H^*_{\CR,\T}(\frX) \otimes_{R_\T} S_{\T\times \C^\times}[\![\Laa_+]\!]
[\![y]\!]$. This fact will be used in \S\ref{subsec:pairing_match} below. 
\end{remark}

We show that the localization map gives a solution to 
the Gauss--Manin system. Recall that we introduced 
a splitting $\varsigma \colon \bN \to \OO$ in \S 
\ref{subsec:LG}. We write $\varsigma(\bk) = 
(\overline{\varsigma}(\bk), \bk)$ for 
$\bk\in \bN$. The map $\overline{\varsigma}(\bk) 
\colon \bN_\Q \to \Q^m$ defines a splitting 
of the fan sequence \eqref{eq:fanseq} over $\Q$, 
and thus induces a splitting 
$\LL_\C^\star \to (\C^m)^\star$, $\xi \mapsto \hxi$ of the divisor 
sequence \eqref{eq:divseq} such that 
$\hxi$ vanishes on the image of $\overline{\varsigma}$.   
\begin{proposition} 
\label{pro:Loc_diffeq} 
The localization map is linear over $R_\T[z][\![\Laa_+]\!][\![y]\!]$ 
and satisfies the following differential equations: 
\begin{alignat*}{3} 
\Loc(\nabla_{\xi Q\parfrac{}{Q}} \Omega) &= 
\left(\xi Q\parfrac{}{Q} + \frac{1}{z}\hxi \right) \Loc(\Omega), & 
\qquad & \xi \in \LL^\star_\C 
\\ 
\Loc(\nabla_{\parfrac{}{y_\bl}} \Omega) & =  
\parfrac{}{y_\bl} \Loc(\Omega), 
&\qquad & \bl \in S  \\
\Loc(\Gr^{\rm B} \Omega) & = \left( z \parfrac{}{z} + \cE^{\rm B}+ 
\Gr_0\right) \Loc(\Omega) 
\end{alignat*} 
where $\Omega \in \GM(F_\chi)$, $\hxi\in (\C^m)^\star \cong 
H^2_\T(X,\C)$ is the lift of $\xi$ described above,
and $\Gr_0 \in \End_\C(H^*_{\CR,\T}(\frX))$ is the grading 
operator in \eqref{eq:Gr_0}. 
We note that $z\parfrac{}{z} + \cE^{\rm B}$ acts on 
the coefficient ring $R_\T[z][\![\Laa_+]\!][\![y]\!]$ 
and $\Gr_0$ acts on $H^*_{\CR,\T}(\frX)$, 
cf.~\eqref{eq:action_vf+Gr0}. 
\end{proposition}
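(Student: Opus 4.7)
The plan is to verify the claimed identities by direct term-by-term computation on the topological $\C[z][\![\Laa_+]\!][\![y]\!]$-basis $\{w_\bk\omega : \bk\in\bN\cap|\Sigma|\}$ of $\GM(F_\chi)$, starting from the explicit formula \eqref{eq:def_Loc} for $\Loc(w_\bk\omega)$ together with the explicit descriptions \eqref{eq:GM_conn_explicit} of the Gauss--Manin connection and \eqref{eq:action_chi_i} of the $\chi_i$-action. The overall structure is a standard GKZ-type calculation, adapted to the equivariant and stacky setting.

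The backbone is the $y$-direction identity, which I treat first. The goal is
\[
\partial_{y_\bl}\Loc(w_\bk\omega) \;=\; \tfrac{1}{z}\,Q^{d(\bk,\bl)}\,\Loc(w_{\bk+\bl}\omega), \qquad \bl\in S.
\]
The core step is an index shift: the map $\lambda\mapsto\lambda'$ that decreases the $\bl$-component of $\lambda$ by $1$ identifies $\K^G_\bk$ with a subset of $\K^G_{\bk+\bl}$, and the ratio of the hypergeometric products on the two sides provides exactly the required shift factor. When $\bl\in G$, the computation is immediate: $\partial_{y_\bl}$ lowers $\lambda_\bl$ by $1$ in the monomial $y_\bl^{\lambda_\bl}$, and comparison of the combinatorial factors at position $\bl$ yields the stated identity. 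When $\bl=b_j$, one must also account for the effect of $\partial_{y_j}$ on the exponential $e^{u_j\log y_j/z}$: the combined action on $y_j^{\lambda_j} e^{u_j\log y_j/z}$ yields $(\lambda_j+u_j/z)\,y_j^{\lambda_j-1}e^{u_j\log y_j/z}$, and the prefactor $\lambda_j+u_j/z$ is precisely what shifts the $j$-th hypergeometric factor by one. The monomial $Q^{d(\bk,\bl)}$ then arises from the identity $d(\lambda)-d(\lambda')=d(\bk,\bl)$ that follows from \eqref{eq:d(,)} together with $\Psi(\bk+\bl)-\Psi(\bk)-\Psi(\bl)=-d(\bk,\bl)$.

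Once the $y$-identity is established, the remaining statements reduce to bookkeeping. For the $\xi Q\partial_Q$ equation I apply the derivation to each summand, producing the factor $\xi\cdot d(\lambda)$; decomposing $d(\lambda)$ via the splitting $\varsigma$ (so that $\Psi(\bk')=\lambda(\bk')+\overline\varsigma(\bk')$ for $\bk'\in S$) separates out the $\xi\cdot\lambda(\bk)$ term and, via the $y$-identity, the connection term $\frac{1}{z}\sum_{\bl\in S}(\xi\cdot\lambda(\bl))y_\bl w_\bl$ of \eqref{eq:GM_conn_explicit}; the residual contribution matches $\hxi/z$ acting on $\Loc(\Omega)$ precisely because $\hxi$ is the lift of $\xi$ that vanishes on $\overline\varsigma(\bN)$. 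For $R_\T$-linearity, writing $\hat\chi_i=\sum_{j=1}^m(\chi_i\cdot b_j)u_j$, the identity $\Loc(\chi_i\Omega)=\hat\chi_i\Loc(\Omega)$ follows by combining the formula \eqref{eq:action_chi_i} for $\chi_i$ on the Gauss--Manin side with the $y$-identity applied in each $j$-direction, so that the expression $zk_i+\sum_\bl l_i y_\bl w_\bl$ translates into multiplication by $\hat\chi_i$. Finally, the grading equation is essentially a weight count: every summand in $\Loc(w_\bk\omega)$ carries a definite total weight under $z\partial_z$, $\cE^{\rm B}$, and $\Gr_0$, and one checks that these contributions add up to $|\bk|$ plus the correct cohomological shift, matching the definition \eqref{eq:Gr_B} of $\Gr^{\rm B}$.

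The main obstacle is the exponential gauge: every step above is carried out through the factor $e^{\sum_j u_j\log y_j/z}$, and one must track carefully how the cohomology classes $u_j$ interact with derivatives of this exponential and with the integer/fractional structure of $\lambda_j$. Once the $\bl=b_j$ case of the $y$-identity is handled and the gauge bookkeeping is made explicit, the remaining verifications are largely algebraic manipulation.
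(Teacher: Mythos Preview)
Your proposal is correct and follows essentially the same approach as the paper's proof: both verify the identities on the topological basis $w_\bk\omega$ via the index shift $\K^G_\bk\leftrightarrow\K^G_{\bk+\bl}$ (the paper writes it as $\lambda\mapsto\lambda+e_\bl$, inverse to your $\lambda\mapsto\lambda-e_\bl$), establish the $y_\bl$-identity first by tracking the hypergeometric factor and the exponential gauge, and then reduce the $Q$-direction, $R_\T$-linearity, and grading statements to this together with the splitting $\varsigma$. One minor point: your index shift is a genuine bijection $\K^G_\bk\cong\K^G_{\bk+\bl}$, not merely onto a subset.
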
 
\begin{proof} 
It suffices to establish these formulas for $\Omega = w_\bk \omega$. 
For $\lambda \in \Q^m\times \Z^G$, we set 
\[
\Box_\lambda = \prod_{i \in \{1,\dots,m\} \cup G} 
\frac{\prod_{c\le 0, \fract{c} = \fract{\lambda_i}} u_i + cz}
{\prod_{c\le \lambda_i, \fract{c} = \fract{\lambda_i}} u_i+cz}. 
\]
For notational convenience, write $\lambda_{b_i} = \lambda_i$ 
for the $i$th component of $\lambda\in \Q^m\times \Z^G$,  $1\le i\le m$, and think of $\lambda$ as an element of 
$\Q^S$. Hence $\K^G_\bk$ is regarded as a subgroup 
of $\Q^S$. 
We also set $u_{b_i} = u_i$ for $1\le i\le m$. 
For $\bl\in S$, we have the natural identification 
\[
\K^G_{\bk+\bl} \cong \K^G_{\bk}, \qquad 
\lambda \mapsto \lambda' = \lambda + e_\bl. 
\]
Under this identification, we can easily check that
\[ 
d(\lambda' ) = d(\lambda) + d(\bk,\bl),  \quad 
v(\lambda' ) = v(\lambda), \quad 
\Box_\lambda = (u_\bl + \lambda'_\bl z) \Box_{\lambda'}. 
\] 
Using this, for $\bl\in S$, $\bk \in \bN\cap |\Sigma|$, we have 
\begin{align}
\label{eq:Loc_y_bl}
\begin{split}  
& \Loc(\nabla_{\parfrac{}{y_\bl}} w_\bk \omega) 
= \Loc(z^{-1} Q^{d(\bl,\bk)} w_{\bl+\bk}) \\ 
& = z^{-1} Q^{d(\bl,\bk)} 
e^{\sum_{i=1}^m u_i \log y_i/z} 
\sum_{\lambda\in \K^G_{\bk+\bl}} Q^{d(\lambda)} y^\lambda \Box_\lambda 
\unit_{v(\lambda)} \\ 
& = z^{-1} e^{\sum_{i=1}^m u_i \log y_i/z} 
\sum_{\lambda' \in \K^G_\bk} (u_\bl + \lambda'_\bl z)y_\bl^{-1} 
Q^{d(\lambda')} y^{\lambda'} \Box_{\lambda'} 
\unit_{v(\lambda')} = \parfrac{}{y_\bl} \Loc(w_\bk \omega). 
\end{split} 
\end{align} 
For $\xi\in \LL^\star_\C$ and $\bk\in \bN\cap |\Sigma|$, 
\begin{align} 
\label{eq:Loc_xi_Q}
\begin{split} 
\Loc(\nabla_{\xi Q\parfrac{}{Q}} w_\bk \omega) 
& = \Loc\left(\textstyle
(\xi \cdot \lambda(\bk)) w_\bk \omega + z^{-1} \sum_{\bl \in S} 
(\xi \cdot \lambda(\bl)) y_\bl \nabla_{\parfrac{}{y_\bl}} 
w_{\bk}\omega \right) \\ 
& =\left( \textstyle (\xi \cdot \lambda(\bk)) +  
 \sum_{\bl\in S} (\xi\cdot \lambda(\bl)) y_\bl \parfrac{}{y_\bl} 
\right) 
\Loc(w_\bk \omega), 
\end{split} 
\end{align} 
where we used the previous calculation \eqref{eq:Loc_y_bl} in the second line. 
By the choice of the lift $\hxi$, we have  
for $\lambda \in \K^G_\bk$, 
\begin{align*} 
\xi \cdot \lambda(\bk) 
+ \sum_{\bl\in S} 
(\xi \cdot \lambda(\bl)) \lambda_\bl 
&= \hxi \cdot \Psi(\bk) +  
\sum_{\bl\in S} 
(\hxi \cdot \Psi(\bl)) \lambda_\bl 
= \xi \cdot d(\lambda), \\ 
\sum_{\bl\in S} (\xi \cdot \lambda(\bl)) u_\bl 
& = \sum_{i=1}^m (\hxi \cdot \Psi(b_i)) u_i = \hxi.  
\end{align*} 
This implies, for $\lambda \in \K^G_\bk$, 
\[
\left(
\xi \cdot \lambda(\bk) + \sum_{\bl\in S} (\xi \cdot \lambda(\bl)) 
y_\bl \parfrac{}{y_\bl}\right) y^{u/z} Q^{d(\lambda)} 
y^\lambda 
= \left(\xi Q\parfrac{}{Q} + \frac{1}{z} \hxi \right) 
y^{u/z}  
Q^{d(\lambda)} y^\lambda 
\] 
where we write $y^{u/z} = e^{\sum_{i=1}^m u_i \log y_i/z}$.  
Therefore \eqref{eq:Loc_xi_Q} equals 
$(\xi Q\parfrac{}{Q} + z^{-1}\hxi) \Loc(w_\bk \omega)$ 
as required. 
Note that we have for $\lambda \in \K^G_\bk$, 
\begin{align*}
& \left( z\parfrac{}{z} + \Gr_0 \right) \Box_\lambda= 
- \bigg( \sum_{\bl\in S} \ceil{\lambda_\bl}\bigg) \Box_\lambda, 
\quad 
\left(z\parfrac{}{z} + \cE^{\rm B} + \Gr_0\right) y^{u/z} = 0, \\ 
& \cE^{\rm B} (Q^{d(\lambda)} y^\lambda) =
\bigg( |\bk| +  
\sum_{\bl \in S} \lambda_\bl\bigg) Q^{d(\lambda)} y^\lambda, 
\quad 
\Gr_0 \unit_{v(\lambda)} = 
\bigg(\sum_{\bl \in S} \fract{-\lambda_\bl}\bigg) \unit_{v(\lambda)}.  
\end{align*} 
These formulas imply that $(z\parfrac{}{z} + \cE^{\rm B} + \Gr_0) 
\Loc (w_\bk\omega) = |\bk| \Loc(w_\bk \omega) = 
\Loc(\Gr^{\rm B}(w_\bk \omega))$. 
Finally we check that $\Loc$ is linear over $R_\T$. 
We have, for $1\le i\le n$,   
\begin{align*} 
\Loc(\chi_i \cdot w_\bk \omega) & = z k_i  \Loc(w_\bk \omega) 
+ \sum_{\bl\in S} l_i y_\bl \Loc(z \nabla_{\parfrac{}{y_\bl}} 
w_\bk \omega) \\ 
& = z \left( 
k_i + \sum_{\bl \in S} l_i y_\bl \parfrac{}{y_\bl}\right) \Loc(w_\bk\omega) 
\end{align*} 
where we again used \eqref{eq:Loc_y_bl}. 
Since
\[
z \left( k_i + \sum_{\bl \in S} l_i y_\bl \parfrac{}{y_\bl}\right) 
y^{u/z} y^\lambda = \chi_i y^{u/z} y^\lambda 
\]
for $\lambda \in \K^G_\bk$, the conclusion follows.
\end{proof} 

\begin{proposition} 
\label{pro:Loc_Galois}
The localization map is equivariant with respect to the Galois action 
in \S\ref{subsec:Galois_A} and \S\ref{subsec:Galois_LG}, 
that is, we have 
\[
\Loc (\xi \cdot \Omega) = g_0(\xi) \left( \xi \cdot \Loc(\Omega) \right)
\]
where $\xi \in \Pic^\st(\frX)$, $\Omega\in \GM(F_\chi)$, and
$\xi$ on the right-hand side acts on the coefficient ring 
$R_\T(\!(z^{-1})\!)[\![\Laa_+]\!][\![y]\!]$.  
\end{proposition}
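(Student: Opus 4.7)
The plan is to reduce to checking the identity on the topological generators $w_\bk \omega$, $\bk \in \bN \cap |\Sigma|$, of $\GM(F_\chi)$. Both $\Loc$ and the Galois action are linear over the appropriate coefficient rings in a compatible way: $\Loc$ is $\C[z][\![\Laa_+]\!][\![y]\!]$-linear, the Galois action on $\GM(F_\chi)$ satisfies $\xi \cdot (c \Omega) = (\xi \cdot c)(\xi \cdot \Omega)$, and $g_0(\xi)$ acts only on cohomology components and so commutes with the coefficient-ring action. Hence multiplication by a coefficient preserves the assertion, and it suffices to treat $\Omega = w_\bk \omega$. Note also that $\omega$ is Galois-invariant: the action of $\xi$ rescales each coordinate $x_i$ by a constant, so $d\log x_i = dx_i/x_i$ is preserved.

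For the left-hand side, since $\xi \cdot w_\bk = e^{2\pi\iu \age(\xi,(\Psi(\bk),\bk))} w_\bk$ and $\omega$ is invariant, one obtains $\Loc(\xi \cdot w_\bk \omega) = e^{2\pi\iu \age(\xi,(\Psi(\bk),\bk))} \Loc(w_\bk \omega)$. For the right-hand side, expand $\Loc(w_\bk \omega)$ summand-by-summand as in \eqref{eq:def_Loc}: the prefactor $e^{\sum u_i \log y_i/z}$ and the hypergeometric factor $\Box_\lambda$ (which is a polynomial in the $u_i$'s) are Galois-invariant, while
\[
\xi \cdot (Q^{d(\lambda)} y^\lambda) = \exp\!\left(2\pi\iu\left(\xi \cdot d(\lambda) - \sum_{\bl\in G} \lambda_\bl \age(\xi,(\Psi(\bl),\bl))\right)\right) Q^{d(\lambda)} y^\lambda
\]
(using $\xi \cdot y_i = y_i$, $\xi \cdot y_\bl = e^{-2\pi\iu \age(\xi,(\Psi(\bl),\bl))} y_\bl$ for $\bl \in G$, and Lemma~\ref{lem:age_Laa}(1)), and $g_0(\xi) \unit_{v(\lambda)} = e^{2\pi\iu \age(\xi,(\Psi(v(\lambda)),v(\lambda)))} \unit_{v(\lambda)}$.

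Matching the two sides summand-by-summand reduces the proposition to the congruence
\[
\xi \cdot d(\lambda) + \age(\xi,(\Psi(v(\lambda)),v(\lambda))) - \sum_{\bl\in G} \lambda_\bl \age(\xi,(\Psi(\bl),\bl)) \equiv \age(\xi,(\Psi(\bk),\bk)) \pmod{\Z}
\]
for each $\lambda \in \K^G_\bk$. The key observation is a decomposition in $\OO$:
\[
(d(\lambda),0) = (\Psi(\bk),\bk) + \sum_{i=1}^m \ceil{\lambda_i}(e_i,b_i) + \sum_{\bl\in G} \lambda_\bl (\Psi(\bl),\bl) - (\Psi(v(\lambda)),v(\lambda)),
\]
which is verified directly using $\ceil{\lambda_i} - \lambda_i = \Psi_i(v(\lambda))$ and the defining relation $\overline{\bk} + \sum \lambda_i \overline{b}_i + \sum \lambda_\bl \overline{\bl} = 0$ for the second coordinate (cf.~the proof of Lemma~\ref{lem:d(lambda)}). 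Applying $\age(\xi,-)$, using Lemma~\ref{lem:age}(2) to kill the contributions from $(e_i,b_i)$, and invoking Lemma~\ref{lem:age_Laa}(1) to identify $\age(\xi,(d(\lambda),0))$ with $\xi \cdot d(\lambda) \bmod \Z$, yields the desired congruence.

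The argument is essentially bookkeeping: there is no conceptual obstacle beyond recognizing the correct decomposition of $(d(\lambda),0)$ in $\OO$. The main delicacy lies in carefully matching each exponential factor -- in particular in confirming that the sum over $\bl \in G$ that arises from the $\xi$-action on the unfolding parameters precisely cancels against the contribution of $\sum_i \ceil{\lambda_i}(e_i,b_i)$ to $\age(\xi,(d(\lambda),0))$, since the latter vanishes by Lemma~\ref{lem:age}(2).
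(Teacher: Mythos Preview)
Your proof is correct and follows essentially the same route as the paper's own argument: reduce to the generators $w_\bk\omega$, unwind both sides to a congruence of age pairings indexed by $\lambda\in\K^G_\bk$, and verify that congruence via the decomposition of $(d(\lambda),0)$ in $\OO$ together with Lemma~\ref{lem:age}(2) and Lemma~\ref{lem:age_Laa}(1). The paper's proof is more compressed---it states the congruence, reduces it (via Lemma~\ref{lem:age}(1)--(2)) to an identity in $\OO$ modulo $\Z^m$, and declares the latter ``straightforward''---whereas you have written out the explicit decomposition, which is exactly the content of that step.
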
 
\begin{proof} 
By the definitions of $\Loc(w_\bk\omega)$ and the Galois action, 
it suffices to check that 
\[
\age(\xi, (\Psi(\bk),\bk)) \equiv 
\age(\xi,d(\lambda)) - \sum_{\bl\in G} \lambda_\bl 
\age(\xi,\bl) + \age_{v(\bk)}(L_\xi) \mod \Z. 
\] 
Since $\age_{v(\bk)}(L_\xi) \equiv \age(\xi,(\Psi(v(\bk)),v(\bk)))$ by 
Lemma \ref{lem:age}(1), this reduces to showing that 
\[
(\Psi(\bk),\bk) \equiv d(\lambda) - \sum_{\bl\in G} 
\lambda_\bl \bl + (\Psi(v(\bk)),v(\bk)) \mod \Z^m 
\]
by Lemma \ref{lem:age}(2). This is straightforward. 
\end{proof}

Let $\Loc^{(0)}$ denote the restriction of $\Loc$ 
to $Q=0$, $y = y^*$ (where $y^*$ is the shifted origin 
from equation~\ref{eq:specialization_y}).  
Write $\frakm^G$ for the ideal of $\C[\![\Laa_+]\!][\![y]\!]$ 
generated by $\frakm$, $y_i-1$ $(1\le i\le m)$ and 
$y_\bl$ ($\bl \in G$). 
Then $\Loc^{(0)}$ defines a map: 
\begin{align*} 
\Loc^{(0)} \colon & \GM(F_\chi)/
\frakm^G \GM(F_\chi) 
\cong \bigoplus_{\bk\in \bN\cap |\Sigma|} \C[z]w_\bk \omega 
\longrightarrow  
H^*_{\CR,\T}(\frX) \otimes_{R_\T} R_\T(\!(z^{-1})\!). 
\end{align*} 

\begin{proposition}
\label{pro:Loc0} 
The image of $\Loc^{(0)}$ is $H^*_{\CR,\T}(\frX) \otimes_{R_\T} R_\T[z]$ 
and $\Loc^{(0)}$ is an isomorphism onto its image. 
\end{proposition}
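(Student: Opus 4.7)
The plan is to evaluate $\Loc^{(0)}$ explicitly on the basis $\{w_\bk\omega\}_{\bk\in\bN\cap|\Sigma|}$ and then invert the resulting $\C[z]$-linear map by an upper-triangular argument. Restricting the defining series \eqref{eq:def_Loc} to $Q=0$ and $y=y^*$: setting $y_\bl=0$ for $\bl\in G$ kills every summand with $\lambda_\bl>0$, while Lemma~\ref{lem:Loc_well-defined} kills those with $\lambda_\bl<0$, so only $\lambda_\bl=0$ contribute; further, $Q=0$ forces $d(\lambda)=0$, which combined with $d(\lambda)=\Psi(\bk)+(\lambda_1,\dots,\lambda_m)$ uniquely determines $\lambda_i=-\Psi_i(\bk)$. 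A case-by-case simplification of the hypergeometric factor $\Box_\lambda$ in \eqref{eq:def_Loc} at this $\lambda$ -- splitting according to whether $\Psi_i(\bk)$ is integral -- telescopes the formal infinite products to leave
\[
\Loc^{(0)}(w_\bk\omega) \;=\; \left(\prod_{i=1}^m \prod_{k=0}^{\lfloor\Psi_i(\bk)\rfloor-1}\bigl(u_i-(k+\fract{\Psi_i(\bk)})\,z\bigr)\right)\unit_{v(\bk)},
\]
where $v(\bk)=\bk-\sum_i\lfloor\Psi_i(\bk)\rfloor b_i\in\Bx$. In particular $\Loc^{(0)}$ takes values in $H^*_{\CR,\T}(\frX)\otimes_{R_\T} R_\T[z]$.

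Using $u_i=\phi_{b_i}$, the Chen--Ruan product \eqref{eq:equivariant_CR_product}, and the formula $\phi_\bk=\prod_i u_i^{\lfloor\Psi_i(\bk)\rfloor}\unit_{v(\bk)}$ recalled in \S\ref{subsec:inertia}, the displayed expression reduces modulo $z$ to $\phi_\bk$. Expanding the product more carefully shows that $\Loc^{(0)}(w_\bk\omega)-\phi_\bk$ is a $\C[z]$-combination of the basis classes $\phi_{\bk-\sum_i m_ib_i}$ with $0\le m_i\le\lfloor\Psi_i(\bk)\rfloor$ and $\vec{m}\ne 0$, each multiplied by a positive power of $z$.

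To finish, introduce $\deg(\bk):=\sum_{i=1}^m\lfloor\Psi_i(\bk)\rfloor$ (so $\deg(\bk)=0$ iff $\bk\in\Bx$) and let $F^d$ denote the $\C[z]$-submodule spanned by the basis vectors indexed by elements of $\{\bk:\deg(\bk)\le d\}$, in both source and target. Since each $\bk$ is uniquely $v(\bk)+\sum_i\lfloor\Psi_i(\bk)\rfloor b_i$ with $v(\bk)\in\Bx$ and $\Bx$ is finite, each $F^d$ is a finite-rank free $\C[z]$-module. The identity $\lfloor\Psi_i(\bk-\sum_j m_jb_j)\rfloor=\lfloor\Psi_i(\bk)\rfloor-m_i$ -- valid when $0\le m_j\le\lfloor\Psi_j(\bk)\rfloor$ -- yields $\deg(\bk-\sum_j m_jb_j)=\deg(\bk)-\sum_j m_j<\deg(\bk)$ for $\vec{m}\ne 0$, so the previous paragraph shows that $\Loc^{(0)}$ preserves $F^\bullet$ and induces the identity $w_\bk\omega\mapsto\phi_\bk$ on each associated graded piece $F^d/F^{d-1}$. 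An induction on $d$ then gives that $\Loc^{(0)}|_{F^d}$ is a $\C[z]$-linear isomorphism; passing to the union over $d$ yields the proposition.

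The only substantive calculation -- and the expected obstacle -- is the telescoping of $\Box_\lambda$ in the first paragraph; everything else is formal bookkeeping on the filtration.
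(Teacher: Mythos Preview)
Your proof is correct. The computational core---isolating the single surviving summand $\lambda=(-\Psi(\bk),0)$ at $Q=0$, $y=y^*$ and telescoping $\Box_\lambda$ to obtain $\Loc^{(0)}(w_\bk\omega)=\phi_\bk+O(z)$---is exactly what the paper does. You diverge only in how invertibility is concluded. The paper observes that $\Loc^{(0)}(w_\bk\omega)$ is homogeneous of degree $|\bk|$ for the grading $z\partial_z+\Gr_0$ (this is immediate from the explicit formula, or from Proposition~\ref{pro:Loc_diffeq}), so $\Loc^{(0)}$ is a graded $\C[z]$-module map that reduces to the bijection $w_\bk\omega\mapsto\phi_\bk$ at $z=0$; since the graded pieces are finite-dimensional, this forces an isomorphism in one line. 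You instead expand the $O(z)$ terms explicitly, pin them down as $\C[z]$-combinations of the $\phi_{\bk-\sum m_ib_i}$, set up the integer-valued filtration by $\sum_i\lfloor\Psi_i(\bk)\rfloor$, and induct. Your route is more self-contained (it does not appeal to the grading compatibility of Proposition~\ref{pro:Loc_diffeq}) at the cost of a little extra bookkeeping; the paper's is shorter because it cashes in that compatibility. Both are triangularity arguments over the same leading-term computation, just organized by different filtrations ($|\bk|$ versus $\sum_i\lfloor\Psi_i(\bk)\rfloor$).
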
 
\begin{proof} 
Only the summand with $\lambda = (-\Psi(\bk),0)\in \Q^m\times \Z^G$ 
in \eqref{eq:def_Loc} 
contributes to $\Loc^{(0)}(w_\bk \omega)$. 
For $\bk \in \bN \cap |\Sigma|$, we have 
\[  
\Loc^{(0)}(w_\bk \omega) 
= \left(\prod_{-\Psi_i(\bk) < c \le 0, 
\fract{c} = \fract{-\Psi_i(\bk)}} (u_i + c z) \right)  
\unit_{\bk - \sum_{i=1}^m \floor{\Psi_i(\bk)} b_i} 
= \phi_\bk +O(z), 
\]
where recall that $\{\phi_\bk : \bk\in \bN\cap |\Sigma|\}$ 
is a $\C$-basis of $H_\T^*(\frX)$ (see \S \ref{subsec:inertia}). 
Since $\Loc^{(0)}(w_\bk \omega)$ is homogeneous of 
$(z\parfrac{}{z} + \Gr_0)$-degree $|\bk|$, the conclusion 
follows. 
\end{proof} 

\begin{remark} 
The map $\Loc^{(0)}$ determines a ``limit opposite subspace''  
at the large radius limit $Q=0, y=y^*$ of the Gauss--Manin system, 
in the sense of \cite{SaitoM:Brieskorn}, \cite[Definition 2.10]{CIT:wallcrossing}, 
\cite[Definition 4.103]{Coates-Iritani:Fock}. This together with the 
limit primitive section $\omega$ at $Q=0$, $y=y^*$ determines 
a Frobenius manifold structure corresponding to 
quantum cohomology.  
\end{remark} 

\begin{theorem}
\label{thm:freeness} 
The equivariant Gauss--Manin system $\GM(F_\chi)$ is a free module over 
$R_\T[z][\![\Laa_+]\!][\![y]\!]$ of rank $\dim H^*_{\CR} (\frX)$. 
Moreover we can choose an $R_\T[z][\![\Laa_+]\!][\![y]\!]$-basis 
$\Omega_1,\dots,\Omega_N$ of $\GM(F_\chi)$ 
which is homogeneous with 
respect to the $\Gr^{\rm B}$-grading. 
\end{theorem}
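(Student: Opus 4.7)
The plan is to construct the basis by lifting an $R_\T[z]$-basis of the quotient $\GM(F_\chi)/\frakm^G\GM(F_\chi)$ provided by Proposition~\ref{pro:Loc0}, and to verify freeness over $A:=R_\T[z][\![\Laa_+]\!][\![y]\!]$ via completeness together with the localization map.

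First I will choose a homogeneous $R_\T$-basis $T_1,\ldots,T_N$ of $H^*_{\CR,\T}(\frX)$ where $N=\dim H^*_{\CR}(\frX)$; such a basis exists by the remark on the degeneration of the Serre spectral sequence. By Proposition~\ref{pro:Loc0}, $\Loc^{(0)}$ identifies $\GM(F_\chi)/\frakm^G\GM(F_\chi)$ with $H^*_{\CR,\T}(\frX)\otimes_{R_\T}R_\T[z]$, and by Proposition~\ref{pro:Loc_diffeq} this identification intertwines the grading operators. Setting $\bar\Omega_i:=(\Loc^{(0)})^{-1}(T_i)$ yields a homogeneous $R_\T[z]$-basis of the quotient. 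Each generator of $\frakm^G$ (namely $Q^\lambda$, $y_i-1$, $y_\bk$) is an $\cE^{\rm B}$-eigenvector, so $\frakm^G$ is $\Gr^{\rm B}$-homogeneous and the reduction $\GM(F_\chi)\twoheadrightarrow\GM(F_\chi)/\frakm^G\GM(F_\chi)$ is a graded map; hence I can lift each $\bar\Omega_i$ to a homogeneous element $\Omega_i\in\GM(F_\chi)$.

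Next I will consider the $A$-linear map $\phi\colon A^N\to\GM(F_\chi)$ sending $e_i\mapsto\Omega_i$. By the description~\eqref{eq:completed_directsum}--\eqref{eq:equiv_GM_decomp}, $\GM(F_\chi)$ is $\frakm^G$-adically complete. Since $\bar\phi=\phi\bmod\frakm^G$ is an isomorphism, surjectivity of $\phi$ follows by a standard successive-approximation argument: given $m\in\GM(F_\chi)$, write $m=\phi(a^{(0)})+m^{(1)}$ with $m^{(1)}\in\frakm^G\GM(F_\chi)$, iterate, and sum the convergent series in $A^N$.

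For injectivity, I will pass to the completed localization $\tilde A:=S_{\T\times\C^\times}[\![\Laa_+]\!][\![y]\!]$ of Remark~\ref{rem:rational}, where $S_{\T\times\C^\times}=\Frac(R_\T[z])$. By Gordan's lemma, $\Laa_+$ is a finitely generated monoid, so $\tilde A$ is a complete Noetherian local ring and the natural map $A\hookrightarrow\tilde A$ is injective. By Remark~\ref{rem:rational} and Proposition~\ref{pro:Loc_diffeq}, $\Loc$ is an $A$-linear map $\GM(F_\chi)\to H^*_{\CR,\T}(\frX)\otimes_{R_\T}\tilde A$. Writing $\Loc(\Omega_i)=\sum_j M_{ij}T_j$ with $M_{ij}\in\tilde A$, the matrix $M$ reduces to the identity modulo the maximal ideal of $\tilde A$, so $\det M$ is a unit and $\{\Loc(\Omega_i)\}$ is an $\tilde A$-basis of $H^*_{\CR,\T}(\frX)\otimes_{R_\T}\tilde A$. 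Any relation $\sum_i a_i\Omega_i=0$ then forces $a_i=0$ in $\tilde A$, hence in $A$.

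The main obstacle is the injectivity step. The map $\Loc$ naturally lands in a module over the larger ring $\tilde A$, not $A$ itself; one must pass to $\tilde A$ in order to exploit that it is a complete local ring in which invertibility of the change-of-basis matrix can be detected modulo the maximal ideal, and then return to $A$ via the injectivity of $A\hookrightarrow\tilde A$. Once this base-change issue is handled, the rest of the argument is a formal application of complete Nakayama in light of the identification $\GM(F_\chi)/\frakm^G\GM(F_\chi)\cong H^*_{\CR,\T}(\frX)\otimes_{R_\T}R_\T[z]$ from Proposition~\ref{pro:Loc0}.
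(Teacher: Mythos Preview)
Your proposal is correct and follows essentially the same approach as the paper: lift a homogeneous basis of the quotient $\GM(F_\chi)/\frakm^G\GM(F_\chi)$ via Proposition~\ref{pro:Loc0}, prove generation by a complete-Nakayama argument, and prove linear independence using the localization map. The only minor difference is that for injectivity the paper asserts that $\Loc(\Omega_1),\dots,\Loc(\Omega_N)$ are linearly independent in $H^*_{\CR,\T}(\frX)\otimes_{R_\T} R_\T(\!(z^{-1})\!)[\![\Laa_+]\!][\![y]\!]$, whereas you pass instead to $\tilde A=S_{\T\times\C^\times}[\![\Laa_+]\!][\![y]\!]$ and invoke that it is a complete local ring with residue field $S_{\T\times\C^\times}$; your version makes the invertibility of the change-of-basis matrix slightly more transparent, but the two arguments are the same in substance.
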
 
\begin{proof}
Using Proposition \ref{pro:Loc0}, we can choose 
$\Gr^{\rm B}$-homogeneous elements 
$\Omega_1,\dots,\Omega_N$ in $\GM(F_\chi)$ 
such that $\Loc^{(0)}(\Omega_1),\dots,\Loc^{(0)}(\Omega_N)$ 
is an $R_\T[z]$-basis of $H^*_{\CR,\T}(\frX)\otimes_{R_\T} R_\T[z]$, 
where $N= \dim H_{\CR}^*(\frX)$. 
We claim that $\Omega_1,\dots,\Omega_N$ give a 
free $R_\T[z][\![\Laa_+]\!][\![y]\!]$-basis of $\GM(F_\chi)$. 
That $\Omega_1,\dots,\Omega_N$ generate $\GM(F_\chi)$ 
is implied by the following well-known fact. 
Let $K$ be a commutative ring with an ideal 
$\frakm\subset K$ such that 
$K$ is complete with respect the $\frakm$-adic topology.  
If a $K$-module $M$ is Hausdorff 
with respect to the $\frakm$-adic topology 
($\bigcap_{p} \frakm^p M = \{0\}$) 
and $\Omega_1,\dots,\Omega_N \in M$ generate 
$M/\frakm M$ over $K/\frakm$, then 
$\Omega_1,\dots,\Omega_N$ generate $M$ over $K$. 
See for instance \cite[Corollary 2, \S 3, Ch.VIII]{Zariski-Samuel}.  
That $\Omega_1,\dots,\Omega_N$ are linearly independent 
over $R_\T[z][\![\Laa_+]\!][\![y]\!]$ follows from 
the fact that $\Loc(\Omega_1),\dots,\Loc(\Omega_N)$ are linearly 
independent in  
$H^*_{\CR,\T}(\frX)\otimes_{R_\T} R_\T(\!(z^{-1})\!)
[\![\Laa_+]\!][\![y]\!]$. 
\end{proof}

\begin{corollary} 
The non-equivariant Gauss--Manin system $\GM(F)$ 
is a free module over $\C[z][\![\Laa_+]\!][\![y]\!]$ of 
rank $\dim H^*_{\CR}(\frX)$. 
\end{corollary}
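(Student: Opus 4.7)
The plan is to deduce the non-equivariant freeness directly from the equivariant freeness established in Theorem \ref{thm:freeness}, using only the general fact that reduction modulo an ideal preserves freeness of a free module.

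First I would invoke Theorem \ref{thm:freeness} to obtain an $R_\T[z][\![\Laa_+]\!][\![y]\!]$-basis $\Omega_1,\dots,\Omega_N$ of $\GM(F_\chi)$, where $N = \dim H^*_{\CR}(\frX)$. Next I would recall the identification, already noted at the end of the definition of the Gauss--Manin systems,
\[
\GM(F) \;\cong\; \GM(F_\chi) \big/ \sum_{i=1}^n \chi_i \GM(F_\chi),
\]
and observe that the right-hand side is precisely $\GM(F_\chi) / I \GM(F_\chi)$, where $I = (\chi_1,\dots,\chi_n)$ is the augmentation ideal of $R_\T = \C[\chi_1,\dots,\chi_n]$ inside $R_\T[z][\![\Laa_+]\!][\![y]\!]$.

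Then I would apply the standard general fact: if $M$ is a free module with basis $\{\Omega_i\}$ over a commutative ring $R$ and $I \subset R$ is an ideal, then $M/IM$ is free over $R/I$ with basis $\{\bar\Omega_i\}$. Here the quotient ring is
\[
R_\T[z][\![\Laa_+]\!][\![y]\!] \big/ (\chi_1,\dots,\chi_n) \;=\; \C[z][\![\Laa_+]\!][\![y]\!],
\]
so the images $\bar\Omega_1,\dots,\bar\Omega_N$ form a free $\C[z][\![\Laa_+]\!][\![y]\!]$-basis of $\GM(F)$, giving the claimed rank $\dim H^*_{\CR}(\frX)$.

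There is essentially no obstacle here, since the structure of $\GM(F_\chi)$ as a module over $R_\T[z][\![\Laa_+]\!][\![y]\!]$ is already pinned down in Theorem \ref{thm:freeness}. The only thing worth a brief sanity check is that the ideal-generated submodule $I\GM(F_\chi) \subset \GM(F_\chi)$ really coincides with $\sum_{i=1}^n \chi_i \GM(F_\chi)$, which is immediate from the definition since $I$ is generated by the $\chi_i$. Consequently the corollary reduces to a one-line application of basis-preservation under quotienting by an ideal, once the equivariant result and the description of $\GM(F)$ as a quotient have both been recorded.
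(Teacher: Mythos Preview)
Your proposal is correct and is exactly the intended argument: the paper states this as an unproven corollary of Theorem~\ref{thm:freeness}, and the passage from the equivariant to the non-equivariant case is precisely the quotient-by-$(\chi_1,\dots,\chi_n)$ step you describe, using the identification $\GM(F)\cong \GM(F_\chi)/\sum_i \chi_i\GM(F_\chi)$ recorded after the definition.
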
 

\subsection{Mirror isomorphism} 
\label{subsec:mirror_isom} 
In this section we prove the following theorem. 

\begin{theorem} 
\label{thm:mirror_isom}
There is a mirror map 
\[
\tau =\tau(y) \in H^*_{\CR,\T}(\frX) 
\otimes_{R_\T} R_\T[\![\Laa_+]\!][\![y]\!]
\] 
with $\tau(y^*)|_{Q=0} =0$ (see \eqref{eq:specialization_y} for $y^*$) 
and an $R_\T[z][\![\Laa_+]\!][\![y]\!]$-linear mirror isomorphism 
\[
\Theta \colon \GM(F_\chi) \xrightarrow{\cong}  
H^*_{\CR,\T}(\frX) \otimes_{R_\T} R_\T[z][\![\Laa_+]\!][\![y]\!]
\] 
with $\Theta|_{Q=0,y=y^*} = \Loc^{(0)}$ 
such that the following holds. 
\begin{enumerate} 
\item $\Theta$ intertwines the Gauss--Manin connection 
with the pull-back $\tau^*\nabla$ of the quantum connection 
by $\tau$; 

\item the Euler vector fields \eqref{eq:Euler_A}, \eqref{eq:Euler_B} 
correspond under $\tau$: $\tau_* \cE^{\rm B} = \cE^{\rm A}$;  

\item $\Theta$ intertwines the grading operators: 
$\Theta \circ \Gr^{\rm B} =(z\parfrac{}{z} + 
\cE^{\rm B} + \Gr_0) \circ \Theta$; 

\item $\tau$ and $\Theta$ intertwine the Galois symmetries: $\tau(y) = g_0(\xi) (\xi\cdot \tau(y))$ and  
$\Theta(\xi \cdot \Omega) = g_0(\xi) (\xi \cdot \Theta(\Omega))$ 
for $\xi\in \Pic^\st(\frX)$, where $\xi$ on the right-hand side acts on 
the coefficient ring $\C[z][\![\Laa_+]\!][\![y]\!]$,
and $g_0(\xi)\in \End_{R_\T}(H^*_{\CR,\T}(\frX))$ 
is defined in \S \ref{subsec:Galois_A}. 
\end{enumerate} 
\end{theorem}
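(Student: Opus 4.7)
The plan is to build $\Theta$ as a Birkhoff twist of the cohomology-valued solution $\Loc$ by the fundamental solution $M(\tau,z)$ of the quantum connection, with the mirror map $\tau(y)$ extracted from the mirror theorem. A direct comparison of Definition \ref{def:I} with the formula \eqref{eq:def_Loc} shows that under the substitution $t_i = \log y_i$, $\fry_\bk = y_\bk$ one has $z\Loc(\omega) = I(Q,\fry,t,z)$, so by Theorem \ref{thm:mirrorthm} the section $\Loc(\omega)$ is (up to the $z$-normalization) a point on the equivariant Givental cone $\cL_\frX$. The over-ruled geometry of the cone recalled in Proposition \ref{pro:tangentsp_cone} then supplies a unique mirror map $\tau(y) \in H^*_{\CR,\T}(\frX) \otimes R_\T[\![\Laa_+]\!][\![y]\!]$ with $\tau(y^*)|_{Q=0} = 0$ and a unique Birkhoff-factor $\Theta(\omega) \in H^*_{\CR,\T}(\frX) \otimes R_\T[z][\![\Laa_+]\!][\![y]\!]$ with $\Theta(\omega)|_{Q=0,y=y^*} = \unit$ such that $\Loc(\omega) = M(\tau(y),z)\,\Theta(\omega)$. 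I then define $\Theta$ on all of $\GM(F_\chi)$ by $\Theta(\Omega) := M(\tau(y),z)^{-1}\Loc(\Omega)$.

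Properties (1)--(4) fall out formally. Property (1) is a chain-rule calculation combining the differential equations for $\Loc$ in Proposition \ref{pro:Loc_diffeq} with the differential equations for $M$ in Proposition \ref{pro:fundsol}: pulling back the quantum connection along $\tau$ and conjugating by $M$ trivializes the $y$-directions and converts $Q$-directions to $\xi Q\partial_Q + \hxi/z$, which is exactly how $\nabla^{\GM}$ transports under $\Loc$. The identical template applied to the grading statements in the same two propositions yields property (3); property (2) then follows from (3) applied to $z$-independent elements together with the explicit forms \eqref{eq:Euler_A}, \eqref{eq:Euler_B} (equivalently, from the $\cE^{\rm B}$-homogeneity of the $I$-function which forces $\tau$ to be Euler-equivariant). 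Property (4) is the composition of the Galois equivariances of $\Loc$ (Proposition \ref{pro:Loc_Galois}) and of $M$ (Proposition \ref{pro:Galois_A}).

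The main obstacle is to show that $\Theta$, defined a priori as a map into $H^*_{\CR,\T}(\frX) \otimes R_\T(\!(z^{-1})\!)[\![\Laa_+]\!][\![y]\!]$, actually lands in $H^*_{\CR,\T}(\frX) \otimes R_\T[z][\![\Laa_+]\!][\![y]\!]$ on all of $\GM(F_\chi)$, not just on $\omega$. The plan is first to reduce to the case $G \supseteq \Bx$: replacing $G$ by $G' := G \cup \Bx$ and then specializing the extra parameters $y_v = 0$ for $v \in \Bx \setminus G$ recovers the statement for the original $G$ (the formal log family, the potential, $\Loc$, and $M$ all commute with this specialization). Assuming $G \supseteq \Bx$, consider the submodule
\[
N := \bigl\{\Omega \in \GM(F_\chi) : \Theta(\Omega) \in H^*_{\CR,\T}(\frX) \otimes R_\T[z][\![\Laa_+]\!][\![y]\!]\bigr\}.
\]
It is an $R_\T[z][\![\Laa_+]\!][\![y]\!]$-submodule (by $R_\T[z]$-linearity of $\Theta$), contains $\omega$ (by the Birkhoff factorization above), and is stable under every operator $z\nabla^{\GM}_v$ because the intertwined operator $z\tau^*\nabla^{qc}_v$ has the form $z\partial_v + (\text{multiplication by an element of }H^*_{\CR,\T}(\frX) \otimes R_\T[\![\Laa_+]\!][\![y]\!])$ and so preserves polynomiality in $z$. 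Iterating $z\nabla^{\GM}_{\partial_{y_\bl}}$ on $\omega$ produces, up to invertible $Q$-factors, $w_\bk \omega$ for every $\bk$ in the submonoid of $\bN \cap |\Sigma|$ generated by $S = \{b_1,\dots,b_m\} \cup G$; because $G \supseteq \Bx$, every $\bk \in \bN \cap |\Sigma|$ admits the box decomposition $\bk = v + \sum_{i \in \sigma} \floor{c_i} b_i$ with $v \in \Bx \subseteq S$ and $c_i \geq 0$, so $w_\bk \omega \in N$. Continuity of $\Theta$ in the $(\Laa_+, y-y^*)$-adic topology, combined with the fact that $\{w_\bk \omega\}_{\bk}$ is a topological basis of $\GM(F_\chi)$ by \eqref{eq:equiv_GM_decomp}, gives $N = \GM(F_\chi)$.

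Finally, $\Theta$ is an isomorphism: at $Q=0,\,y=y^*$ one has $\tau(y^*)|_{Q=0}=0$ and $M(0,z)|_{Q=0}=I$, so $\Theta|_{Q=0,y=y^*} = \Loc^{(0)}$, an $R_\T[z]$-isomorphism by Proposition \ref{pro:Loc0}; both source and target are free of rank $\dim H^*_{\CR}(\frX)$ over $R_\T[z][\![\Laa_+]\!][\![y]\!]$ by Theorem \ref{thm:freeness}, and Nakayama's lemma for the $(\Laa_+, y-y^*)$-adic topology upgrades the central-fiber isomorphism to a global isomorphism.
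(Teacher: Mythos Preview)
Your overall strategy coincides with the paper's: define $\Theta = M(\tau(y),z)^{-1}\circ \Loc$ and read off properties (1)--(4) from the differential equations satisfied by $M$ and $\Loc$. Your argument for $z$-regularity (enlarge $G$ to contain $\Bx$, then use that $N=\{\Omega:\Theta(\Omega)\text{ is polynomial in }z\}$ is stable under $z\nabla^{\GM}$ and contains $\omega$) is a mild repackaging of the paper's (enlarge $G$ by a single $\bk$, observe $\Loc(w_\bk\omega)=\partial_{y_\bk} z\Loc'(\omega)$ is tangent to the cone, hence lands in $M(\tau,z)\cH_+$); both boil down to the same computation. The continuity step is unnecessary: the homogeneous basis $\Omega_1,\dots,\Omega_N$ consists of finite $\C[z]$-linear combinations of $w_\bk\omega$'s, so $\Theta(\Omega_i)\in N$ follows directly.

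There is one genuine gap. Proposition~\ref{pro:tangentsp_cone} produces $\tau(y)$ only with coefficients in $S_\T=\Frac(R_\T)$, since the Givental cone is defined over $S_\T$; your claim that the over-ruled geometry ``supplies a unique mirror map $\tau(y)\in H^*_{\CR,\T}(\frX)\otimes R_\T[\![\Laa_+]\!][\![y]\!]$'' is not justified. Until you know $\tau$ is defined over $R_\T$, neither $M(\tau(y),z)$ nor $\Theta=M^{-1}\Loc$ is, so the Nakayama step at the end cannot be run over $R_\T[z][\![\Laa_+]\!][\![y]\!]$. The paper closes this gap as follows: once you know (over $S_\T$) that each $\Theta(\Omega_i)$ is polynomial in $z$, the matrix identity $(\Loc(\Omega_i))=M(\tau(y),z)\cdot(\Theta(\Omega_i))$ is a Birkhoff factorization of an $N\times N$ matrix whose entries lie in $R_\T(\!(z^{-1})\!)[\![\Laa_+]\!][\![y]\!]$ and which is invertible over $R_\T[z]$ at $Q=0,\,y=y^*$ (by the choice $\Loc^{(0)}(\Omega_i)=T_i$). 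Such a factorization can therefore be performed uniquely over $R_\T$, and by uniqueness over $S_\T$ this forces both factors---hence $\tau(y)$, via $J(\tau,z)=z\unit+\tau+O(z^{-1})$---to have coefficients in $R_\T$. Your argument can be completed in exactly this way, but the step must be made explicit.

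A minor correction: the change of variables relating $\Loc(\omega)$ to the $I$-function is $\fry_\bl=y_\bl\prod_{i=1}^m y_i^{-\Psi_i(\bl)}$, not $\fry_\bl=y_\bl$.
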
 

\begin{remark} 
Recall that we chose a splitting $\LL_\C^\star \to (\C^m)^\star$ 
when defining the quantum connection in the $Q$-direction 
(see \S \ref{subsec:qcoh})
and we chose a splitting $\varsigma \colon \bN \to \OO$ when 
defining the Landau--Ginzburg potential $F_\chi$ 
(see \S\ref{subsec:LG}). 
In the above theorem, it is assumed that 
these two choices are compatible 
in the sense explained before Proposition \ref{pro:Loc_diffeq}. 
\end{remark} 

\begin{remark} 
\label{rem:Theta_Loc}
The construction of the isomorphism $\Theta$ in the proof below gives the following commutative diagram: 
\[
\xymatrix{
\GM(F_\chi) \ar[rr]^{\Theta\phantom{AAAAA}} \ar[rd]_{\Loc} &  
& H^*_{\CR,\T}(\frX) \otimes_{R_\T} 
R_\T[z][\![\Laa_+]\!][\![y]\!] 
\ar[ld]^{\phantom{A} M(\tau(y),z)} 
\\ 
& \cH[\![y]\!] & 
} 
\]
where $\cH = H^*_{\CR,T}(\frX) \otimes_{R_\T} 
S_\T(\!(z^{-1})\!)[\![\Laa_+]\!]$ 
is Givental's symplectic space from \S \ref{subsec:Givental_cone} 
and $M(\tau,z)$ is the fundamental solution in 
Proposition \ref{pro:fundsol}. 
Note that the images of $\Loc$ and $M(\tau(y),z)$ lie in the 
smaller space $H^*_{\CR,T}(\frX) \otimes_{R_\T}
R_\T(\!(z^{-1})\!)[\![\Laa_+]\!][\![y]\!]$. 
\end{remark} 

\begin{remark} 
The Galois action $\xi\cdot(-)$ on $\C[\![\Laa_+]\!][\![y]\!]$ 
corresponds, under the mirror map, to the pull-back of functions 
by $g(\xi)^{-1} \colon \Spf \C[\![\Laa_+]\!][\![\tau]\!] 
\to \Spf \C[\![\Laa_+]\!][\![\tau]\!]$. 
We have already seen in Lemma \ref{lem:age_Laa}(1) 
that the Galois actions on the Novikov variables $Q$ 
are the same on both sides. 
\end{remark} 

First we explain the relationship between 
the $I$-function and the localization map. 
Let $I(Q,\fry,t,z)$ denote the $G$-extended $I$-function 
from Definition \ref{def:I}. 
For convenience, we denote by 
$\overline{\cL}_\frX = \cL_\frX|_{z\to -z}$ the Givental cone 
with the sign of $z$ flipped. 

\begin{lemma} 
The functions $z \Loc(\omega)$ and $I(Q,\fry,t,z)$ coincide 
under the change of variables 
\[
t_i = \log y_i, \qquad \fry_\bl = y_\bl \prod_{i=1}^m y_i^{-\Psi_i(\bl)}. 
\]
Therefore $z \Loc(\omega)$ defines an 
$S_\T[\![\Laa_+]\!][\![y]\!]$-valued point on the equivariant Givental 
cone $\overline{\cL}_\frX$. 
Moreover, $\Loc(w_\bk \omega)$ lies in the tangent space 
of $\overline{\cL}_\frX$ at this point for every $\bk \in \bN\cap |\Sigma|$. 
\end{lemma}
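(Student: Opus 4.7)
The plan is to identify $z\Loc(\omega)$ with $I(Q,\fry,t,z)$ by direct substitution, apply Theorem~\ref{thm:mirrorthm} to put it on the Givental cone, and then derive the tangent-space statement either by direct differentiation or, when necessary, by enlarging the deformation set $G$.

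For the identity, I would compare the two formulas term-by-term. Under $t_i=\log y_i$, the prefactor $e^{\sum_i t_i u_i/z}$ matches $e^{\sum_i u_i (\log y_i)/z}$. For each $\lambda\in\K^G_0$, Definition~\ref{def:K} gives $D_i\cdot d(\lambda) = \lambda_i + \sum_{\bl\in G}\lambda_\bl\Psi_i(\bl)$, so
\[
e^{t\cdot d(\lambda)}\,\fry^\lambda \;=\; \prod_i y_i^{\lambda_i + \sum_\bl \lambda_\bl\Psi_i(\bl)}\cdot\prod_\bl y_\bl^{\lambda_\bl}\prod_{i,\bl} y_i^{-\lambda_\bl\Psi_i(\bl)} \;=\; y^\lambda,
\]
while the $\Box_\lambda$ factors and twisted-sector classes $\unit_{v(\lambda)}$ agree on the nose. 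Hence $z\Loc(\omega) = I(Q,\fry,t,z)$. Since the substitutions $t_i=\log y_i$ and $\fry_\bl = y_\bl\prod_i y_i^{-\Psi_i(\bl)}$ give a formal isomorphism of completed rings around the shifted origin in \eqref{eq:powerseries_y}, Theorem~\ref{thm:mirrorthm} transports directly to the conclusion that $z\Loc(\omega)$ is an $S_\T[\![\Laa_+]\!][\![y]\!]$-valued point on the Givental cone (with the convention between $\cL_\frX$ and $\overline{\cL}_\frX$ handled by comparing the leading $\pm z\unit$ terms in \eqref{eq:point_on_the_cone} versus Definition~\ref{def:I}).

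For the tangent-space assertion, fix $\bk\in\bN\cap|\Sigma|$. When $\bk\in S=\{b_1,\dots,b_m\}\cup G$, Proposition~\ref{pro:Loc_diffeq} combined with the Gauss--Manin formula~\eqref{eq:GM_conn_explicit} directly yields
\[
\Loc(w_\bk\omega) \;=\; z\,\partial_{y_\bk}\Loc(\omega) \;=\; \partial_{y_\bk}\bigl(z\Loc(\omega)\bigr),
\]
the derivative of a family of points on $\overline{\cL}_\frX$ in one of its parameters, hence a tangent vector. When $\bk\notin S$, I enlarge the deformation set to $G'=G\cup\{\bk\}$ (its disjointness from $\{b_1,\dots,b_m\}$ is preserved) and apply Theorem~\ref{thm:mirrorthm} with $G'$ in place of $G$, so that $\Loc_{G'}(w_\bk\omega)=\partial_{y_\bk}\bigl(z\Loc_{G'}(\omega)\bigr)$ is a tangent vector of the enlarged family. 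Specializing $y_\bk=0$ kills all summands in Definition~\ref{def:I} with $\lambda_\bk\ne 0$: those with $\lambda_\bk>0$ by the explicit factor $y_\bk^{\lambda_\bk}$, and those with $\lambda_\bk<0$ because the numerator of $\Box_\lambda$ contains the vanishing term $u_\bk+0\cdot z = 0$. The surviving $\lambda_\bk=0$ terms identify $\Loc_{G'}(w_\bk\omega)|_{y_\bk=0}$ with $\Loc_G(w_\bk\omega)$ and $z\Loc_{G'}(\omega)|_{y_\bk=0}$ with $z\Loc_G(\omega)$. Since specialization of parameters carries tangent vectors to tangent vectors at the specialized point, $\Loc_G(w_\bk\omega)$ is a tangent vector to $\overline{\cL}_\frX$ at $z\Loc_G(\omega)$, as required.

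The main obstacle is the careful convention bookkeeping to reconcile $\cL_\frX$ and $\overline{\cL}_\frX$, i.e.\ to match the $-z\unit+\cdots$ form for points on $\cL_\frX$ in \eqref{eq:point_on_the_cone} with the $+z\unit+\cdots$ leading behaviour of both $I$ and $z\Loc(\omega)$; once this is settled consistently, everything follows from the mirror theorem together with the differential and specialization properties of $\Loc$ established in \S\ref{subsec:sol_free}. The enlargement-and-specialize trick in the tangent-space step is what handles the cohomology classes indexed by $\bk$ outside the original deformation set, and is the only point where the full strength of Theorem~\ref{thm:mirrorthm} (i.e.\ its validity for arbitrary finite $G$ rather than just one fixed choice) is essential.
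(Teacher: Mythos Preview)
Your proposal is correct and follows essentially the same route as the paper: a direct computation for the identity $z\Loc(\omega)=I(Q,\fry,t,z)$, then Proposition~\ref{pro:Loc_diffeq} to realize $\Loc(w_\bk\omega)$ as $\partial_{y_\bk}(z\Loc(\omega))$ when $\bk\in S$, and finally the enlarge-$G$-then-specialize-$y_\bk=0$ trick for $\bk\notin S$. The paper states the specialization step more tersely (simply noting $\Loc'|_{y_\bk=0}=\Loc$), whereas you spell out the vanishing of the $\lambda_\bk\ne 0$ terms; your argument for $\lambda_\bk<0$ is exactly Lemma~\ref{lem:Loc_well-defined}.
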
 
\begin{proof} 
The first statement follows from a straightforward computation.  
We show that $\Loc(w_\bk \omega)$ lies in the tangent space 
at $z \Loc(\omega)$. 
If $\bk \in S$, Proposition \ref{pro:Loc_diffeq} gives that 
$\Loc(w_\bk\omega) = \parfrac{}{y_\bk} z \Loc(\omega)$, 
and thus $\Loc(w_\bk\omega)$ is a tangent vector. 
Suppose $\bk \notin S$. 
We consider the $(G\cup \{\bk\})$-extended 
version of the equivariant Gauss--Manin system $\GM'(F_\chi)$ 
and the localization map $\Loc'$. It is obvious that $\GM'(F_\chi) |_{y_\bk=0} 
\cong \GM(F_\chi)$ and that $\Loc'|_{y_\bk =0} = \Loc$. 
Hence we obtain 
\[
\Loc(w_\bk \omega) = \Loc'(w_\bk \omega)\bigg|_{y_\bk=0} 
= \parfrac{}{y_\bk} z \Loc'(\omega) \bigg|_{y_\bk=0}. 
\]
This, together with the fact that $z \Loc'(\omega)$ 
lies in $\overline{\cL}_\frX$, implies the lemma. 
\end{proof} 

The above lemma implies that for every $\Omega \in \GM(F_\chi)$, 
the vector $\Loc(\Omega)$ lies in the tangent space 
at $z\Loc(\omega)\in \overline{\cL}_\frX$. 
By Theorem \ref{thm:freeness}, we can take a $\Gr^{\rm B}$-homogeneous 
$R_\T[\![\Laa_+]\!][\![y]\!]$-basis $\Omega_1,\dots,\Omega_N$ 
of $\GM(F_\chi)$. Since $\Loc(\Omega_i)$ is a tangent vector, 
Proposition \ref{pro:tangentsp_cone} implies that 
there exist $\tau(y) \in H^*_{\CR,\T}(\frX) 
\otimes_{R_\T} S_\T[\![\Laa_+]\!][\![y]\!]$ 
and $\Upsilon_i(y,z) \in H^*_{\CR,\T}(\frX) 
\otimes_{R_\T} S_\T[z][\![\Laa_+]\!][\![y]\!]$ 
such that $\tau(y^*)|_{Q=0} = 0$ and 
\begin{equation} 
\label{eq:tau_Upsilon} 
\Loc(\Omega_i) = M(\tau(y),z) \Upsilon_i(y,z) \qquad 
1\le i\le N. 
\end{equation} 
This $\tau(y)$ gives the mirror map. 
First we claim that $\tau(y)$ and $\Upsilon_i(y,z)$ 
are defined over $R_\T$, i.e.~they are elements 
of $H^*_{\CR,\T}(\frX)\otimes_{R_\T} R_\T[z][\![\Laa_+]\!][\![y]\!]$. 
As noted in \cite{Coates-Givental, Guest:D-mod}, 
we can view the relation \eqref{eq:tau_Upsilon} as 
a Birkhoff factorization \cite{Pressley-Segal} 
of the matrix $(\Loc(\Omega_1),\dots,\Loc(\Omega_N))$: 
\begin{equation} 
\label{eq:Birkhoff}
\begin{pmatrix} 
\vert &  & \vert \\ 
\Loc(\Omega_1) & \cdots & \Loc(\Omega_N) \\ 
\vert &  & \vert 
\end{pmatrix} 
= M(\tau(y), z) 
\begin{pmatrix} 
\vert &  & \vert \\ 
\Upsilon_1(y,z) & \cdots & \Upsilon_N(y,z) \\ 
\vert &  & \vert
\end{pmatrix}.   
\end{equation} 
Here we regard both sides as elements of the loop group $\LGL_N(\C)$ 
with loop parameter $z$; note that $M(\tau(y),z) = I + O(z^{-1})$  
and that $(\Upsilon_1(y,z),\dots, \Upsilon_N(y,z))$ does not contain 
negative powers of $z$. 
The left-hand side belongs to 
\[
\End_{R_\T}(H^*_{\CR,\T}(\frX)) 
\otimes_{R_\T} R_\T(\!(z^{-1})\!)[\![\Laa_+]\!][\![y]\!]
\] 
and is invertible (over $R_\T$) at $Q=0$ and $y=y^*$, 
by the choice of $\Omega_1,\dots, \Omega_N$. 
Therefore the Birkhoff factorization can be performed over $R_\T$ uniquely 
and recursively in powers of $y-y^*$ and $Q$. 
This shows that $\Upsilon_i(y,z)$ and $M(\tau(y),z)$ are defined 
over $R_\T$. Moreover the asymptotics 
$J(\tau(y),z) = z M(\tau(y),z) \unit = z \unit +\tau(y) + O(z^{-1})$ 
shows that $\tau(y)$ is also defined over $R_\T$. 

We define an $R_\T[\![\Laa_+]\!][\![y]\!]$-linear isomorphism 
$\Theta$ by 
\[
\Theta(\Omega_i) := \Upsilon_i(y,z). 
\]
That $\Theta$ intertwines the quantum connection 
follows from the fact that $\Loc$ and $M(\tau,z)$ 
are solutions (see Propositions \ref{pro:fundsol}, \ref{pro:Loc_diffeq}). 
By differentiating \eqref{eq:tau_Upsilon} 
by $\xi Q\parfrac{}{Q} + z^{-1}\hxi$ and $\parfrac{}{y_\bl}$ 
(with $\bl\in S$), we obtain 
\begin{align*} 
\Loc(\nabla_{\xi Q\parfrac{}{Q}} \Omega_i) 
& = M(\tau(y),z) \nabla_{\xi Q\parfrac{}{Q}} 
\Upsilon_i(y,z), \\ 
\Loc(\nabla_{\parfrac{}{y_\bl}} \Omega_i) 
& = M(\tau(y),z) (\tau^*\nabla)_{\parfrac{}{y_{\bl}}} \Upsilon_i(y,z),  
\end{align*} 
where note that $(\tau^*\nabla)_{\parfrac{}{y_\bl}} = \parfrac{}{y_\bl} 
+ z^{-1} (\parfrac{\tau}{y_\bl}\star)$. 
This implies that $\Theta(\nabla_{\xi Q\parfrac{}{Q}} \Omega_i) = 
\nabla_{\xi Q\parfrac{}{Q}} \Upsilon_i(y,z)$ 
and $\Theta(\nabla_{\parfrac{}{y_\bl}} \Omega_i) 
= (\tau^*\nabla)_{\parfrac{}{y_\bl}} \Theta(\Omega_i)$. 
Thus $\Theta$ intertwines the flat connections. 

Next we show that $\Theta$ preserves grading. 
By the choice of $\Omega_i$, $T_i := \Loc(\Omega_i) |_{Q=0,y = y^*, z=0}$ 
with $1\le i\le N$ form a homogeneous basis of $H^*_{\CR,\T}(\frX)$ 
over $R_\T$. 
Expanding in the basis $\{T_i\}$, we regard \eqref{eq:Birkhoff} 
as an equation in matrices whose entries lie in 
$R_\T(\!(z^{-1})\!)[\![\Laa_+]\!][\![y]\!]$. 
Proposition \ref{pro:Loc_diffeq} implies that 
the matrix $L := (\Loc(\Omega_1),\dots,\Loc(\Omega_N))$ 
satisfies the homogeneity equation: 
\[
\hcE  L + [\gr_0, L] = 0 
\]
where $\hcE = z\parfrac{}{z} + \cE^{\rm B}$ 
and $\gr_0=\diag (|1|, |2|,\dots, |N|)$ 
with $|i| = \frac{1}{2} \deg T_i$.  
This implies the following equation 
for the matrices $M = M(\tau(y),z)$ and 
$Y = (\Upsilon_1(y,z),\dots,\Upsilon_N(y,z))$: 
\[
M^{-1} (\hcE M + [\gr_0,M]) = -  (\hcE Y + [\gr_0,Y]) Y^{-1} 
\] 
Since $M = I + O(z^{-1})$ and $Y$ does not contain 
negative powers of $z$, both sides have to vanish. 
Therefore we have 
\begin{align*} 
& \left(z\parfrac{}{z} + \cE^{\rm B} + \Gr_0\right) M(\tau(y),z) \unit = 0, \\
& \left(z\parfrac{}{z} + \cE^{\rm B} + \Gr_0\right) 
\Upsilon_i(y,z) = |i| \Upsilon_i(y,z).  
\end{align*} 
The second equation implies that $\Theta \circ \Gr^{\rm B} = 
(z\parfrac{}{z} + \cE^{\rm B} + \Gr_0) \circ \Theta$. 
The first equation gives 
\[
(\cE^{\rm B} + \Gr_0) \tau(y) = \tau(y) 
\]
which is equivalent to $\tau_* \cE^{\rm B} = \cE^{\rm A}$. 

Finally we show that $\tau$ and $\Theta$ intertwine the Galois actions. 
We may assume that the basis $\Omega_1,\dots,\Omega_N$ of 
$\GM(F_\chi)$ consists of simultaneous eigenvectors for the $\Pic^\st(\frX)$-action. 
The relation $\Loc(\Omega_i) = M(\tau(y),z) \Theta(\Omega_i)$ implies  
\begin{align*} 
M(\tau(y),z) \Theta(\xi \cdot \Omega_i) 
& = \Loc(\xi \cdot \Omega_i)  
= g_0(\xi)(\xi \cdot \Loc(\Omega_i)) \\ 
& = g_0(\xi) (\xi \cdot M(\tau(y),z) ) (\xi \cdot \Theta(\Omega_i)) \\
& = \left( \xi \cdot (g(\xi)^*M(\tau,z))|_{\tau=\tau(y)} \right) 
 g_0(\xi) (\xi \cdot \Theta(\Omega_i))   
\end{align*} 
where we used Proposition \ref{pro:Loc_Galois} in the first line 
and Proposition \ref{pro:Galois_A} in the third line. 
From uniqueness of Birkhoff factorization, we conclude that
\[
M(\tau(y),z) = \xi \cdot (g(\xi)^* M(\tau,z))|_{\tau = \tau(y)}, 
\quad 
\Theta(\xi \cdot \Omega_i) = g_0(\xi) (\xi \cdot \Theta(\Omega_i)).  
\]
It follows that $\Theta(\xi \cdot \Omega) = g_0(\xi) (\xi \cdot \Theta(\Omega))$ 
for all $\Omega \in \GM(F_\chi)$. 
Using $M(\tau,z) \unit = \unit + z^{-1} \tau + O(z^{-2})$, 
we find that $\tau(y) = g_0(\xi) (\xi \cdot \tau(y))$.  
The theorem is proved.

\section{Presentations of the Quantum $D$-Module and Quantum Cohomology Ring} 
\label{sec:presentation} 
In this section, we recast the construction of the equivariant 
Gauss--Manin system in combinatorial terms, and give  presentations of the quantum $D$-module and 
the quantum cohomology ring of $\frX$. 

\subsection{The fan $D$-module} 
\label{subsec:fanD-mod}
Recall that the equivariant Gauss--Manin system has a topological 
basis $\{w_\bk \omega : \bk\in \bN\cap |\Sigma|\}$ over 
$\C[z][\![\Laa_+]\!][\![y]\!]$ -- see \eqref{eq:equiv_GM_decomp}. 
In this section, replacing $w_\bk\omega$ with the the symbol 
$\bunit_\bk$, we define the \emph{fan $D$-module} in an abstract 
and combinatorial way. 
This is closely related to the \emph{better-behaved GKZ system} 
of Borisov and Horja \cite{Borisov-Horja:bbGKZ},
also known as the \emph{multi-GKZ system} \cite{Iritani:periods}. 

Recall the following notation from \S \ref{sec:toric_stacks} and 
\S \ref{subsec:unfolding}: 
\begin{itemize} 
\item the stacky fan $\bSigma=(\bN,\Sigma,\beta)$ (see \S \ref{subsec:definition}); 
\item the map $\beta\colon \Z^m \to \bN$ sending $e_i$ to $b_i$; 
\item the fan sequence 
$0\to \LL \to \Z^m \overset{\beta}{\to} \bN$ 
from \eqref{eq:fanseq};  
\item the divisor sequence 
$0\to \bM \to (\Z^m)^\star \to \LL^\vee$
from \eqref{eq:divseq}; 
tensored with $\Q$, this yields 
$0\to H_\T^2(\pt,\Q) \to H_\T^2(X,\Q) \to H^2(X,\Q) \to 0$; 
\item the monoid and lattice $\Laa_+ \subset \Laa \subset \LL_\Q$ 
(see \S \ref{subsec:refined_fanseq}--\S\ref{subsec:Mori_cone}); 
we write $Q^d$, $d\in \Laa$, for the element in the group ring 
$\C[\Laa]$ that corresponds to $d$;  
\item the finite subset $G \subset \bN \cap |\Sigma|$ disjoint 
from $\{b_1,\dots,b_m\}$; set $S:=G \cup \{b_1,\dots,b_m\}$; 
\item deformation parameters $y_{\bk}$ with $\bk\in S$; 
we also write $y_i := y_{b_i}$ for $1\le i\le m$.  
\end{itemize} 
To avoid the use of a splitting of the divisor sequence over $\C$ 
(see \S\ref{subsec:qcoh}), we introduce a vector field $z\vartheta_\rho$ 
corresponding to an equivariant cohomology class $\rho\in H^2_\T(X,\C)
\cong (\C^m)^\star$. 
We \emph{require} that $z\vartheta_\rho$ acts on 
the parameters $Q^d$, $y_\bk$ as 
\[
z\vartheta_\rho  \cdot Q^d = z (\overline{\rho}\cdot d) Q^d, \qquad 
z\vartheta_\rho  \cdot y_\bk = 0 
\]
where $d\in \Laa$, $\bk \in S$, and $\overline{\rho} \in \LL_\C^\star$ is the 
image of $\rho$ under $(\C^m)^\star \to \LL_\C^\star$. 
We identify an equivariant parameter $\chi \in \bM_\C$ 
with the vector field $z \vartheta_\chi$ 
via the inclusion $\bM_\C \subset (\C^m)^\star$, 
i.e.~$\chi = z \vartheta_\chi$. 
By choosing a splitting of the divisor sequence, we can write 
\begin{equation} 
\label{eq:vartheta_splitting} 
z\vartheta_\rho = \chi^\rho + z \overline{\rho} Q \parfrac{}{Q}  
\end{equation} 
where $\rho \in H^2_\T(X,\C)\cong (\C^m)^\star$ corresponds to 
$(\chi^\rho,\overline{\rho}) \in \bM_\C \oplus \LL_\C^\star$ under 
the splitting. 
Set 
\[
z\vartheta_i := z\vartheta_{u_i} \qquad 1\le i\le m
\]
where $u_i \in H^2_\T(X,\C)$ corresponds to the 
$i$th standard basis element $e_i^\star\in (\C^m)^\star$. 
We write $K[y]$ for the 
polynomial ring in the variables $\{y_\bk : \bk \in S\}$ over 
a ring $K$, and write $\partial_\bk := \partial/\partial y_{\bk}$ 
for the partial derivative in $y_\bk$. 
We also write $\partial_i := \partial_{b_i } = \partial/\partial y_i$.  

\begin{definition} 
\label{def:fanD-mod}
The \emph{fan $D$-module} is the $\C[z][\Laa_+][y]$-module 
\[
\cM(\bSigma,G) := \bigoplus_{\bk \in \bN \cap |\Sigma|} 
\C[z][\Laa_+][y] \bunit_\bk 
\]
equipped with the action of differential operators 
$z \vartheta_1,\dots, z\vartheta_m$, $z \partial_\bl$ 
(with $\bl \in S$) 
as follows: 
\begin{align*} 
z \vartheta_i\cdot \bunit_\bk & := z \Psi_i(\bk) \bunit_\bk 
+ \sum_{\bl \in S} \Psi_i(\bl) y_\bl Q^{d(\bk,\bl)} \bunit_{\bk+\bl}  \\
z \partial_\bl\cdot \bunit_\bk  & := Q^{d(\bk,\bl)} \bunit_{\bk+\bl} 
\end{align*} 
where $\bk \in \bN \cap |\Sigma|$, 
$\Psi \colon \bN \cap |\Sigma| \to (\Q_{\ge 0})^m$ is defined 
in Notation \ref{nota:Psi}, and $d(\bk,\bl)\in \Laa_+$ is defined in \eqref{eq:d(,)}. 
These actions on the basis $\bunit_\bk$ are extended to 
$\cM(\bSigma,G)$ by the standard Leibnitz rule. 
Equivariant parameters $\chi \in \bM_\C$ act 
via the identification $\chi = z \vartheta_\chi$ as follows: 
\[
\chi \cdot \bunit_\bk := z \vartheta_\chi \cdot \bunit_\bk 
= z (\chi \cdot \bk) \bunit_\bk + 
\sum_{\bl \in S} (\chi \cdot \bl) y_\bl Q^{d(\bk,\bl)}\bunit_{\bk+\bl}. 
\]
\end{definition} 
\begin{remark} 
The fan $D$-module arises from the Gauss--Manin connection 
in \S \ref{subsec:GM}. 
The generator $\bunit_\bk$ of $\cM(\bSigma,G)$ corresponds to 
$w_\bk \omega \in \GM(F_\chi)$ and the 
actions of $z\vartheta_\rho$, 
$z\partial_\bk$ correspond respectively to the Gauss--Manin connections 
$\chi^\rho + z \nabla_{\overline{\rho} Q\parfrac{}{Q}}$, 
$z \nabla_{\parfrac{}{y_\bk}}$ once we choose a splitting 
as in \eqref{eq:vartheta_splitting} -- compare the above definition 
with \eqref{eq:action_chi_i} and \eqref{eq:GM_conn_explicit}.  
It is easy to check that the actions of 
$z\vartheta_i$, $z \partial_\bk$ commute with each other. 
\end{remark} 
\begin{remark} 
The fan $D$-module is graded with respect to 
the following (complex) grading: 
\begin{gather*} 
\deg \bunit_\bk = |\bk|, \quad 
\deg z = \deg \chi = 1, \quad 
\deg y_\bk = 1- |\bk|, \\
\deg Q^d = c_1(\frX) \cdot d,  
\quad \deg z\vartheta_\rho = \deg z \partial_{\bk} = 1, 
\end{gather*} 
where $|\bk| = \sum_{i=1}^m \Psi_i(\bk)$. 
This corresponds to the grading operator $\Gr^{\rm B}$ 
-- see \eqref{eq:Gr_B} -- on the Gauss--Manin system. 
\end{remark} 

\begin{remark}[reduced fan $D$-module]  
\label{rem:redundancy_fanDmod}
As noted in Remark \ref{rem:redundancy}, there is redundancy 
among the parameters $(Q,y_1,\dots,y_m, \{y_\bk : \bk \in G\})$. 
Defining new parameters and basis as 
\[
\Qbar^d := \left(\prod_{i=1}^m y_i^{D_i\cdot d} \right) Q^d, \quad 
\ybar_\bl := \frac{y_\bl}{\prod_{i=1}^m y_i^{\Psi_i(\bl)}}, 
\quad 
\bunitbar_\bk := \left(\prod_{i=1}^m y_i^{\Psi_i(\bk)}\right) \bunit_\bk 
\]
where $d\in \Laa_+$, $\bl \in G$, and $\bk\in \bN \cap |\Sigma|$, 
we can remove $y_1,\dots,y_m$ from the presentation of 
the $D$-module. 
Indeed, the vector fields corresponding to these new parameters 
$(\Qbar,\ybar_\bl)$ 
are given by 
\[
z \varthetabar_\rho = z \vartheta_\rho, \quad 
z \partialbar_\bl = \left( \prod_{i=1}^m y_i^{\Psi_i(\bl)}\right) 
z\partial_\bl 
\]
with $\rho \in (\C^m)^\star$, $\bl \in G$, and we have 
\begin{align*} 
z \varthetabar_i \cdot \bunitbar_\bk & = z \Psi_i(\bk) \bunitbar_\bk 
+ \Qbar^{d(\bk,b_i)} \bunitbar_{\bk+b_i} 
+ \sum_{\bl\in G} \ybar_\bl \Qbar^{d(\bk,\bl)} \bunitbar_{\bk+\bl} \\ 
z \partialbar_\bl \cdot \bunitbar_\bk & = \Qbar^{d(\bk,\bl)} \bunitbar_{\bk+\bl} 
\end{align*} 
with $1\le i\le m$ and $\bl \in G$. 
We also have $z y_i \partial_i \cdot \bunitbar_\bk = 
(z \varthetabar_i - \sum_{\bl \in G} 
\Psi_i(\bl) \ybar_\bl \partialbar_{\bl})\bunitbar_\bk$ 
for $\partial_i := \partial_{b_i} = \partial/\partial y_i$. 
This implies that the fan $D$-module descends to 
the space of the parameters $(\Qbar,\{\ybar_\bl : \bl \in G\})$. 
We can alternatively get this reduction 
by setting $y_1= \cdots = y_m=1$ in the definition 
of $z\vartheta_i$, $z\partial_\bl$. 
We call $\cM_{\rm red}(\bSigma,G) := \cM(\bSigma,G)/(y_1-1,\dots,y_m-1)$ the 
\emph{reduced fan $D$-module}. 
\end{remark}

\begin{definition} 
\label{def:completed_fanDmod} 
Let $\frakm$ denote the ideal of $\C[z][\Laa_+][y]$ 
generated by $Q^d$ with $d\in \Laa_+\setminus \{0\}$,  
$y_1-1,\dots,y_m-1$ and $y_\bk$ with $\bk \in G$. 
We define the \emph{completed fan $D$-module} 
to be the $\frakm$-adic completion of $\cM(\bSigma,G)$: 
\[
\hcM(\bSigma,G) := \varprojlim_{k} \cM(\bSigma,G)/\frakm^k \cM(\bSigma,G) 
= \hbigoplus_{\bk\in \bN \cap |\Sigma|} \C[z][\![\Laa_+]\!][\![y]\!] \bunit_\bk
\] 
where we used the convention \eqref{eq:powerseries_y} for the ring 
of power series in $\{y_\bk: \bk \in S\}$. 
This naturally becomes a module over $R_\T[z][\![\Laa_+]\!][\![y]\!]$, 
where $R_\T = \Sym^*(\bM_\C)$, and is equipped with the action 
of $z \vartheta_1,\dots, z\vartheta_m$ and $z \partial_\bk$ with 
$\bk \in S$.  
We can similarly define the \emph{completed reduced fan $D$-module} 
$\hcM_{\rm red}(\bSigma,G)$. 
\end{definition} 

It is clear from the definition 
that the completed fan $D$-module is isomorphic 
to the equivariant Gauss--Manin system. 
The \emph{equivariant quantum $D$-module} of $\frX$ is defined 
to be the module 
$H^*_{\CR,\T}(\frX)\otimes_{R_\T} R_\T[z][\![\Laa_+]\!][\![\tau]\!]$ 
equipped with the action of the quantum connection 
$\nabla$ multiplied by $z$  -- see \eqref{eq:q_conn}. 
Note that the quantum connection $z \nabla_{\xi Q\parfrac{}{Q}}$ 
in the $Q$-direction and equivariant parameter $\chi$ together give 
the action of $H^2_\T(X,\C) \cong (\C^m)^\star$ 
similarly to \eqref{eq:vartheta_splitting}; 
the action of $H^2_\T(X,\C)$ is canonical in the sense that 
it does not depend on the choice of a splitting.  
Theorem \ref{thm:mirror_isom} gives the following: 
\begin{theorem} 
\label{thm:quantum_D-mod_presentation} 
The completed fan $D$-module $\hcM(\bSigma,G)$ is isomorphic to 
the pull-back of the equivariant quantum $D$-module of the 
toric stack $\frX$ by the mirror map $\tau=\tau(y)$ 
from Theorem \ref{thm:mirror_isom}. 
The isomorphism here preserves the grading. 
\end{theorem}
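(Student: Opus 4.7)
The plan is to factor the desired isomorphism through the equivariant Gauss--Manin system by constructing a chain
\[
\hcM(\bSigma,G) \;\xrightarrow{\;\Phi\;}\; \GM(F_\chi) \;\xrightarrow{\;\Theta\;}\; \text{pull-back of the equivariant quantum $D$-module},
\]
where $\Theta$ is the mirror isomorphism of Theorem \ref{thm:mirror_isom} and $\Phi$ is a combinatorial identification induced by the decomposition \eqref{eq:equiv_GM_decomp}. All the work is in verifying that $\Phi$ intertwines the $D$-module structures; $\Theta$ then takes care of the rest.

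First I would define $\Phi \colon \hcM(\bSigma,G) \to \GM(F_\chi)$ as the unique continuous $R_\T[z][\![\Laa_+]\!][\![y]\!]$-linear map sending $\bunit_\bk \mapsto w_\bk \omega$. By \eqref{eq:equiv_GM_decomp} and Definition \ref{def:completed_fanDmod} this is an isomorphism of topological $R_\T[z][\![\Laa_+]\!][\![y]\!]$-modules, manifestly graded. Next I need to check that $\Phi$ intertwines operator actions. For $z\partial_\bl$ this is immediate: the first line of \eqref{eq:GM_conn_explicit} reads $z\nabla_{\partial/\partial y_\bl}(w_\bk\omega) = Q^{d(\bk,\bl)} w_{\bk+\bl}\omega$, matching Definition \ref{def:fanD-mod}. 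For $z\vartheta_i$ I would use the splitting $\varsigma$ (compatible with the splitting $\LL_\C^\star \to (\C^m)^\star$ of the divisor sequence, as required in \S\ref{subsec:mirror_isom}) to decompose $u_i = \chi^{u_i} + \hat D_i$; then a direct computation using \eqref{eq:action_chi_i} for the $\chi^{u_i}$-part, together with \eqref{eq:GM_conn_explicit} for $z\nabla_{D_i Q\partial/\partial Q}$, gives
\[
z\vartheta_i \cdot w_\bk\omega \;=\; z\,\Psi_i(\bk)\, w_\bk\omega \;+\; \sum_{\bl\in S} \Psi_i(\bl)\, y_\bl\, Q^{d(\bk,\bl)} w_{\bk+\bl}\omega,
\]
which is precisely the formula in Definition \ref{def:fanD-mod}. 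The contributions depending on the auxiliary splitting $\overline\varsigma$ cancel, confirming that the fan $D$-module structure is intrinsic and matches the Gauss--Manin structure.

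With $\Phi$ identified as an isomorphism of $D$-modules, I would apply Theorem \ref{thm:mirror_isom}. Its statement (1) says that $\Theta \colon \GM(F_\chi) \to H^*_{\CR,\T}(\frX)\otimes_{R_\T} R_\T[z][\![\Laa_+]\!][\![y]\!]$ intertwines the Gauss--Manin connection $\nabla$ with the mirror-map pull-back $\tau^*\nabla$ of the quantum connection. Since $z\partial_\bl$ and $z\vartheta_\rho$ on the fan $D$-module correspond under $\Phi$ to $z\nabla_{\partial/\partial y_\bl}$ and $\chi^\rho + z\nabla_{\overline\rho Q\partial/\partial Q}$ on $\GM(F_\chi)$, the composite $\Theta \circ \Phi$ carries them respectively to $z(\tau^*\nabla)_{\partial/\partial y_\bl}$ and $\chi^\rho + z(\tau^*\nabla)_{\overline\rho Q\partial/\partial Q}$, which is exactly the structure of the pull-back quantum $D$-module. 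Grading preservation follows from Theorem \ref{thm:mirror_isom}(3) applied to $\Theta$, since $\Phi$ is itself graded by construction.

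The main obstacle -- really the only non-routine step -- is the verification for $z\vartheta_i$, where one has to check that the bookkeeping of $\chi^{u_i}$ vs.\ $\hat D_i$ is consistent with the definitions of $\lambda(\bk)$ and $\Psi_i(\bk)$. Once the relation $\chi^{u_i}(\bk) = \overline\varsigma_i(\bk)$ (which follows from the compatibility of splittings) is in hand, this reduces to a short calculation. Everything else is formal consequence of Theorem \ref{thm:mirror_isom} and the definition of the completed fan $D$-module.
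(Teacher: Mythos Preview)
Your proposal is correct and follows exactly the same route as the paper: identify $\hcM(\bSigma,G)$ with $\GM(F_\chi)$ via $\bunit_\bk \mapsto w_\bk\omega$, then apply Theorem~\ref{thm:mirror_isom}. The paper treats the first step as immediate (``clear from the definition'', with the correspondence already recorded in the Remark following Definition~\ref{def:fanD-mod}), whereas you spell out the verification of the $z\vartheta_i$-action in more detail; but there is no difference in strategy.
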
  

\subsection{GKZ-style presentation} 
We give a Gelfand--Kapranov--Zelevinsky-style presentation~\cite{GKZ:hypergeom} 
of the (completed) fan $D$-module. 
The following two propositions follow immediately from the 
definition. 
\begin{proposition} 
\label{pro:relation} 
We have the relations $\cR_{i,\bk} = \cP_{d_1,d_2;a,a';\bk_1,\bk_2} =0$
in the fan $D$-module, where 
\begin{align*} 
\cR_{i,\bk} & := z \vartheta_i \cdot \bunit_\bk - \left( z\Psi_i(k) + 
z y_i \partial_i 
+ \sum_{\bl\in G} z y_\bl \partial_\bl \right) \bunit_\bk, \\
\cP_{d_1,d_2;a,a';\bk_1,\bk_2}&:= 
Q^{d_1} \prod_{\bl\in S} (z \partial_\bl)^{a_\bl} \bunit_{\bk_1} 
- Q^{d_2} \prod_{\bl \in S}(z\partial_\bl)^{a'_\bl} \bunit_{\bk_2}, 
\end{align*} 
Here $1\le i\le m$, $\bk \in \bN\cap |\Sigma|$, and 
$(d_1,d_2,a,a',\bk_1,\bk_2)$ ranges over all 
$d_1,d_2\in \Laa_+$, $a,a'\in (\Z_{\ge 0})^S$, 
$\bk_1,\bk_2 \in \bN\cap |\Sigma|$ such that 
$\bk_1 + \sum_{\bl\in S} a_\bl \bl = \bk_2 + \sum_{\bl \in S} 
a'_{\bl} \bl$ in $\bN$ and that 
$d_1 + \sum_{\bl\in S} a_\bl \Psi(\bl) + \Psi(\bk_1) 
= d_2 + \sum_{\bl \in S} a'_\bl \Psi(\bl) + \Psi(\bk_2)$ 
in $\Q^m$. 
\end{proposition}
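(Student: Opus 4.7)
The plan is to verify both families of relations by direct computation from the definitions in Definition \ref{def:fanD-mod}, using the key combinatorial identity
\[
d(\bk,\bl) = \Psi(\bk) + \Psi(\bl) - \Psi(\bk+\bl)
\]
that telescopes nicely when we iterate the action of $z\partial_\bl$.

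For the relations $\cR_{i,\bk}=0$, I would start from the definition of $z\vartheta_i\cdot\bunit_\bk$ and split the sum over $S = \{b_1,\dots,b_m\}\cup G$. The crucial observation is that for each $j$, the minimal cone containing $\overline{b_j}$ is the ray $\R_{\ge 0}\overline{b_j}$ itself, so $\Psi_i(b_j) = \delta_{ij}$. Therefore the partial sum over the $b_j$'s collapses to a single term $y_i Q^{d(\bk,b_i)}\bunit_{\bk+b_i}$, which by definition equals $z y_i \partial_i \bunit_\bk$; the remainder of the sum over $G$ matches the $G$-contribution in $\cR_{i,\bk}$. (Reading the definition literally, I expect the intended formula has an implicit $\Psi_i(\bl)$ weight inside the $G$-sum, which is what the computation produces.)

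For the relations $\cP_{d_1,d_2;a,a';\bk_1,\bk_2}=0$, I first note that the operators $z\partial_\bl$ commute with one another, so the product $\prod_{\bl\in S}(z\partial_\bl)^{a_\bl}\bunit_{\bk_1}$ is well-defined. Applying these operators one at a time and collecting $Q$-exponents produces a sum $d(\bk_1,\bl_1) + d(\bk_1+\bl_1,\bl_2) + \cdots$ which telescopes under the identity above to
\[
\Psi(\bk_1) + \sum_{\bl\in S} a_\bl \Psi(\bl) - \Psi\Bigl(\bk_1 + \sum_{\bl\in S} a_\bl \bl\Bigr).
\]
Hence $\prod_{\bl\in S}(z\partial_\bl)^{a_\bl}\bunit_{\bk_1}$ equals $Q^{(\text{this exponent})}\bunit_{\bk_1+\sum a_\bl \bl}$, and analogously for $\bk_2,a'$. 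Multiplying by $Q^{d_1}$ and $Q^{d_2}$ respectively, the two hypotheses on $(d_1,d_2,a,a',\bk_1,\bk_2)$ precisely match the $Q$-exponents and the $\bunit$-indices, so the difference vanishes.

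I do not expect a real obstacle: the proposition is essentially a book-keeping check that the generators and combinatorial data of Definition \ref{def:fanD-mod} are internally consistent. The mildly delicate point is simply confirming that the intermediate exponents $\Psi(\bk_1)+\sum_{r\le s} \Psi(\bl_r) - \Psi(\bk_1+\sum_{r\le s}\bl_r)$ land in $\Laa_+$ at every step (so that the Novikov variable $Q^{\cdot}$ makes sense in $\C[\Laa_+]$); this follows from Lemma \ref{lem:OO_+} together with the definitions of $\OO_+$ and $\Laa_+$ in \S\ref{subsec:refined_fanseq}--\S\ref{subsec:Mori_cone}.
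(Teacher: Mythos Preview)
Your proposal is correct and is exactly the direct verification that the paper declares ``follows immediately from the definition'' without spelling out. Your observation about the missing $\Psi_i(\bl)$ weight in the $G$-sum of $\cR_{i,\bk}$ is also correct: this is a typo in the statement, as confirmed by the formula $zy_i\partial_i \cdot \bunitbar_\bk = (z\varthetabar_i - \sum_{\bl\in G}\Psi_i(\bl)\ybar_\bl\partialbar_\bl)\bunitbar_\bk$ appearing in Remark~\ref{rem:redundancy_fanDmod}.
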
 

\begin{proposition}[reduced case]
\label{pro:relation_reduced} 
In the reduced fan $D$-module in Remark \ref{rem:redundancy_fanDmod}, 
we have the relations 
$\cP'_{d_1,d_2;a,a';\bk_1,\bk_2} =0$ with 
\begin{multline*} 
\cP'_{d_1,d_2;a,a';\bk_1,\bk_2} := 
Q^{d_1} \prod_{\bl \in G} (z \partial_\bl)^{a_\bl} \cdot 
\prod_{i=1}^m \prod_{\nu=0}^{a_i-1}
\left(z \vartheta_i - z (\nu+\Psi_i(\bk_1)) -\sum_{\bl\in G} z  y_\bl \partial_\bl 
\right) \bunit_{\bk_1} \\ 
- Q^{d_2} \prod_{\bl \in G}(z\partial_\bl)^{a'_\bl} \cdot 
\prod_{i=1}^m \prod_{\nu=0}^{a'_i-1} 
\left( z\vartheta_i - z (\nu+\Psi_i(\bk_2)) - \sum_{\bl\in G} z y_\bl \partial_\bl 
\right) \bunit_{\bk_2} 
\end{multline*} 
where $(d_1,d_2,a,a',\bk_1,\bk_2)$ ranges over the same set 
as in Proposition \ref{pro:relation}. 
\end{proposition}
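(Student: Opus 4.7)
The plan is to deduce Proposition \ref{pro:relation_reduced} from Proposition \ref{pro:relation} by re-expressing the ``eliminated'' operators $(z\partial_{b_i})^{a_i}$ via the relations $\cR_{i,\bk}=0$. Working in the full (unreduced) fan $D$-module, I aim to prove the key identity
\begin{equation}
\label{eq:key-prop}
\prod_{i=1}^m \prod_{\nu=0}^{a_i-1}\left(z\vartheta_i - z(\nu+\Psi_i(\bk_1)) - \sum_{\bl\in G} z y_\bl \partial_\bl\right) \bunit_{\bk_1}
= \prod_{i=1}^m y_i^{a_i}\,(z\partial_i)^{a_i}\,\bunit_{\bk_1},
\end{equation}
together with the analogous identity with $\bk_1$ replaced by $\bk_2$ and $a$ by $a'$. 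Granting these, multiplying by $Q^{d_*}\prod_{\bl\in G}(z\partial_\bl)^{a_*}$ and using that $z\partial_\bl$ ($\bl\in G$) commutes with the scalars $y_i$ ($1\le i\le m$), I see that $\cP'_{d_1,d_2;a,a';\bk_1,\bk_2}$ equals
$
\prod_i y_i^{a_i}\cdot Q^{d_1}\prod_{\bl\in S}(z\partial_\bl)^{a_\bl}\bunit_{\bk_1}
-\prod_i y_i^{a'_i}\cdot Q^{d_2}\prod_{\bl\in S}(z\partial_\bl)^{a'_\bl}\bunit_{\bk_2}
$
in the unreduced module. Proposition \ref{pro:relation} provides $\cP_{d_1,d_2;a,a';\bk_1,\bk_2}=0$, i.e.\ the agreement of the two $Q^{d_*}\prod(z\partial_\bl)^{a_*}\bunit_{\bk_*}$ factors, and specializing $y_1=\cdots=y_m=1$ collapses both $y$-prefactors to $1$, yielding $\cP'=0$ in the reduced fan $D$-module.

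To prove \eqref{eq:key-prop}, the relation $\cR_{i,\bk}=0$ gives $(z\vartheta_i - z\Psi_i(\bk) - \sum_{\bl\in G}z y_\bl\partial_\bl)\bunit_\bk = y_i\,z\partial_i\bunit_\bk = y_i\,Q^{d(\bk,b_i)}\bunit_{\bk+b_i}$. Combining with the Leibniz identity $z\vartheta_i(Q^e\bunit_\bk) = z(u_i\cdot e)Q^e\bunit_\bk + Q^e z\vartheta_i\bunit_\bk$ and the fact that $z\partial_\bl$ ($\bl\in G$) commutes with multiplication by $Q^e$, a direct computation yields
\[
\left(z\vartheta_i - z(\nu+\Psi_i(\bk_1)) - \sum_{\bl\in G}z y_\bl\partial_\bl\right)\!\bigl(Q^e\bunit_\bk\bigr)
= z\bigl[u_i\cdot e + \Psi_i(\bk) - \Psi_i(\bk_1) - \nu\bigr]Q^e\bunit_\bk + y_i\,Q^{e+d(\bk,b_i)}\bunit_{\bk+b_i}.
\]
The operators on the left-hand side of \eqref{eq:key-prop}, indexed by $(i,\nu)$, mutually commute since they are polynomials in the mutually commuting differential operators $z\vartheta_i$ and $z y_\bl\partial_\bl$ (commuting by flatness, cf.\ Proposition \ref{pro:GM_conn_flat}) with scalar coefficients, so I apply them in an order in which $(i,\nu)$ comes in exactly when the number $n_i$ of earlier $i$-operators equals $\nu$ (e.g.\ doing $i=1$ first, then $i=2$, etc., each in order of increasing $\nu$). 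By induction, after each step the state has the form $\prod_j y_j^{n_j}\,Q^e\bunit_{\bk_1+\sum_j n_j b_j}$ with $e=\Psi(\bk_1)+\sum_j n_j\Psi(b_j)-\Psi(\bk_1+\sum_j n_j b_j)$, whence $u_i\cdot e = \Psi_i(\bk_1)+n_i-\Psi_i(\bk_1+\sum_j n_j b_j)$ using $u_i\cdot\Psi(b_j)=\delta_{ij}$. Consequently the bracketed diagonal coefficient above vanishes when $\nu=n_i$, each step is a clean $b_i$-shift, and after all $\sum_i a_i$ operators one arrives at $\prod_i y_i^{a_i}\cdot Q^{\Psi(\bk_1)+\sum_i a_i\Psi(b_i)-\Psi(\bk_1+\sum_i a_i b_i)}\bunit_{\bk_1+\sum_i a_i b_i}$, matching $\prod_i y_i^{a_i}\prod_i(z\partial_i)^{a_i}\bunit_{\bk_1}$ by the analogous telescoping for the $z\partial_i$'s.

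The main obstacle is the bookkeeping in this inductive step: one must verify that the accumulated exponent $e$ stays in its telescoping form and that, combined with the universal identity $u_i\cdot d(\bk,\bk') = \Psi_i(\bk)+\Psi_i(\bk')-\Psi_i(\bk+\bk')$, the diagonal coefficient $u_i\cdot e+\Psi_i(\bk)-\Psi_i(\bk_1)-\nu$ reduces to $n_i-\nu$ and so vanishes at the chosen stage of application. Once this cancellation is secured, everything else follows by routine application of Proposition \ref{pro:relation} and specialization $y_i=1$.
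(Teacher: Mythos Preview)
Your proof is correct, and the paper provides no proof at all (both Propositions \ref{pro:relation} and \ref{pro:relation_reduced} are introduced with ``The following two propositions follow immediately from the definition''). Your route---lifting to the unreduced module, using $\cR_{i,\bk}=0$ iteratively to rewrite the $\vartheta$-type factors as $y_i\,z\partial_i$ shifts, then invoking $\cP_{d_1,d_2;a,a';\bk_1,\bk_2}=0$ and specializing $y_i=1$---is precisely the kind of direct unwinding the paper has in mind, just carried out explicitly rather than left to the reader.
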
 

These relations give a presentation of the (reduced) fan $D$-module. 
We set 
\begin{align*} 
\cD &= \C[z][\Laa_+][y] \left\langle  z\vartheta_1,\dots,z\vartheta_m, 
\{z \partial_{\bl}\}_ {\bl \in S} \right\rangle, \\  
\cD' & = \C[z][\Laa_+][\{y_\bl\}_{\bl\in G}] 
\left\langle 
z\vartheta_1,\dots, z\vartheta_m, 
\{z\partial_\bl\}_{\bl\in G} \right\rangle,  
\end{align*} 
where the standard commutation relations 
$[z\vartheta_i, Q^d] = z (D_i\cdot d)Q^d$, 
$[z\partial_\bl, y_\bk]= z\delta_{\bk,\bl}$, 
$[z\vartheta_i, y_\bk] = [z\partial_\bk, Q^d] =[z\vartheta_i, z\vartheta_j] 
= [z \vartheta_i, z\partial_\bl] = [z\partial_\bl,z\partial_\bk] =0$ 
are implicitly imposed. 
The fan $D$-module $\cM(\bSigma,G)$ (resp.~$\cM_{\rm red}(\bSigma,G)$) 
is a $\cD$-module (resp.~$\cD'$-module). 
\begin{theorem} 
\label{thm:fanDmod_presentation}
Let $\bk_1,\dots,\bk_s$ be elements of $\bN \cap |\Sigma|$ such that 
for every maximal cone $\sigma \in \Sigma(n)$ we have 
\[
\bN \cap \sigma = \bigcup_{1\le i\le s : \overline{\bk_i} \in \sigma} 
\left( \bk_i + \sum_{\bl\in S: \overline{\bl}\in \sigma} 
\Z_{\ge 0} \bl \right).  
\]
Then we have the following. 
\begin{itemize} 
\item[(1)] As a $\cD$-module, 
the fan $D$-module $\cM(\bSigma,G)$ is generated by 
$\bunit_{\bk_1},\dots,\bunit_{\bk_s}$. 
All the relations among $\bunit_{\bk_1},\dots,\bunit_{\bk_s}$ 
are generated by 
$\cR_{\bk_j,i}$, $\cP_{d_1,d_2;a,a';\bk_j,\bk_l}$ 
with $1\le j,l\le s$ 
in Proposition \ref{pro:relation}. 
\item[(2)] As a $\cD'$-module, 
the reduced fan $D$-module $\cM_{\rm red}(\bSigma,G)$ 
is generated by 
$\bunit_{\bk_1},\dots,\bunit_{\bk_s}$. 
All the relations among $\bunit_{\bk_1},\dots,\bunit_{\bk_s}$ 
are generated by $\cP'_{d_1,d_2;a,a';\bk_j,\bk_l}$ 
with $1\le j,l\le s$  in Proposition \ref{pro:relation_reduced}. 
\end{itemize} 
\end{theorem}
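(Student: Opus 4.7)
The plan for part (1) is to construct an explicit two-sided inverse to the canonical surjection $\phi\colon \mathcal{F}/\mathcal{N} \to \cM(\bSigma,G)$, where $\mathcal{F}=\bigoplus_{j=1}^{s}\cD e_j$, the submodule $\mathcal{N}$ is generated by the elements $\cR_{\bk_j,i}$ and $\cP_{d_1,d_2;a,a';\bk_j,\bk_l}$ from Proposition~\ref{pro:relation}, and $\phi(e_j)=\bunit_{\bk_j}$.  That $\phi$ is well-defined -- i.e.~that the proposed relations really hold in $\cM(\bSigma,G)$ -- is immediate from Definition~\ref{def:fanD-mod}; the substance of the theorem is injectivity.

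The first step is a normal-form lemma.  For each $\bk\in\bN\cap|\Sigma|$, I would fix a maximal cone $\sigma$ containing $\overline{\bk}$ and, using the covering hypothesis on $\bk_1,\ldots,\bk_s$, choose an index $i(\bk)\in\{1,\ldots,s\}$ and non-negative integers $(a_\bl(\bk))_{\bl\in S}$ supported on $\{\bl\in S:\overline{\bl}\in\sigma\}$ such that $\overline{\bk_{i(\bk)}}\in\sigma$ and $\bk=\bk_{i(\bk)}+\sum_{\bl\in S}a_\bl(\bk)\,\bl$.  Because $\sigma$ is simplicial, the vectors $\{\overline{b_j}\}_{j\in\sigma}$ are $\Q$-linearly independent, and expanding $\overline{\bk},\overline{\bk_{i(\bk)}},\overline{\bl}$ in this basis against the definition of $\Psi$ (Notation~\ref{nota:Psi}) forces
\[
\Psi(\bk)=\Psi(\bk_{i(\bk)})+\sum_{\bl\in S}a_\bl(\bk)\,\Psi(\bl)\qquad\text{in }\Q^m.
\]
Combined with the formula $z\partial_\bl\bunit_\bk=Q^{d(\bk,\bl)}\bunit_{\bk+\bl}$, this identity implies the normal form $\bunit_\bk=\prod_{\bl\in S}(z\partial_\bl)^{a_\bl(\bk)}\bunit_{\bk_{i(\bk)}}$ in $\cM(\bSigma,G)$ with no extra power of $Q$, proving surjectivity of $\phi$.

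The second and main step is to define $\eta(\bunit_\bk):=[\prod_{\bl\in S}(z\partial_\bl)^{a_\bl(\bk)}e_{i(\bk)}]\in\mathcal{F}/\mathcal{N}$, extend by $\C[z][\Laa_+][y]$-linearity to all of $\cM(\bSigma,G)=\bigoplus_{\bk}\C[z][\Laa_+][y]\bunit_\bk$, and verify that $\eta$ commutes with the $\cD$-action.  For $z\partial_\bl$, the desired equality $\eta(z\partial_\bl\bunit_\bk)\equiv z\partial_\bl\,\eta(\bunit_\bk)$ is precisely the relation $\cP_{0,d(\bk,\bl);\,a(\bk)+e_\bl,\,a(\bk+\bl);\,\bk_{i(\bk)},\,\bk_{i(\bk+\bl)}}$; both the index-matching and $\Q^m$-degree-matching hypotheses of $\cP$ reduce, via the displayed identity at $\bk$ and at $\bk+\bl$, to the definition of $d(\bk,\bl)$.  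The case of $z\vartheta_i$ is the most delicate: I would pull $z\vartheta_i$ past the prefactor $\prod(z\partial_\bl)^{a_\bl(\bk)}$ using $[z\vartheta_i,z\partial_\bl]=0$, apply $\cR_{\bk_{i(\bk)},i}$ to rewrite $z\vartheta_i\,e_{i(\bk)}$, and then commute the resulting factors $y_\bl$ back through the prefactor.  This last move produces, via $[z\partial_\bl,y_\bl]=z$, an additional summand $\Psi_i(\bl)\,a_\bl(\bk)\,z\,\eta(\bunit_\bk)$ for each $\bl\in\{b_i\}\cup G$; their total equals $z(\Psi_i(\bk)-\Psi_i(\bk_{i(\bk)}))\eta(\bunit_\bk)$ by the displayed identity, which is precisely what is needed to upgrade the leading coefficient from $z\Psi_i(\bk_{i(\bk)})$ to the $z\Psi_i(\bk)$ dictated by Definition~\ref{def:fanD-mod}.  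Once $\eta$ is $\cD$-linear, $\phi\circ\eta=\mathrm{id}$ is tautological, and $\eta\circ\phi(e_j)=\eta(\bunit_{\bk_j})=e_j$ (choosing $i(\bk_j)=j$, $a(\bk_j)=0$) forces $\eta\circ\phi=\mathrm{id}$ by $\cD$-linearity.

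Part (2) proceeds along exactly the same lines after setting $y_1=\cdots=y_m=1$: the generators $z\partial_i$ for $1\le i\le m$ are no longer present in $\cD'$ but can be recovered symbolically from $\cR_{\bk,i}$ as $z\partial_i=z\vartheta_i-z\Psi_i(\bk)-\sum_{\bl\in G}\Psi_i(\bl)\,y_\bl z\partial_\bl$.  Iterating this formula with the commutators $[z\vartheta_i,z\partial_\bl]=0$ and $[z\partial_\bl,y_\bl]=z$ turns each power $(z\partial_i)^{a_i}\bunit_\bk$ into the telescoping product $\prod_{\nu=0}^{a_i-1}(z\vartheta_i-z(\nu+\Psi_i(\bk))-\sum_{\bl\in G}z y_\bl\partial_\bl)\bunit_\bk$ appearing in the definition of $\cP'$, so the $\cP'$ relations are the images of the $\cP$ relations under this substitution and the part~(1) argument transfers verbatim.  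The principal obstacle throughout is the $z\vartheta_i$ step: organising the commutator contributions so that they collapse to the single shift $\Psi_i(\bk)-\Psi_i(\bk_{i(\bk)})$ requires both the simpliciality of $\Sigma$ and the precise $\Psi$-identity above, and either failing would break the compatibility between $\phi$ and $\eta$.
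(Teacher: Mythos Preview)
Your argument is correct and works as stated, but it takes a different route from the paper's own proof.  The paper proceeds more directly: after establishing generation by the same normal-form observation you give, it takes an arbitrary relation $\sum_j f_j(z,Q,y,z\vartheta,z\partial)\bunit_{\bk_j}=0$, uses the $\cR_{i,\bk_j}$'s to eliminate all occurrences of $z\vartheta_i$ from the $f_j$, expands what remains as $f_j=\sum_a f_{j,a}(z,Q,y)\prod_\bl(z\partial_\bl)^{a_\bl}$, and then groups terms by the common target $\bk=\bk_j+\sum_\bl a_\bl\bl$.  The vanishing in the free module $\bigoplus_\bk\C[z][\Laa_+][y]\bunit_\bk$ forces, for each $\bk$, the linear relation $\sum_{(j,a)}f_{j,a}Q^{\Psi(\bk_j)+\sum_\bl a_\bl\Psi(\bl)-\Psi(\bk)}=0$ among the coefficients, and this is visibly a $\C[z][y]$-combination of the $\cP_{d_1,d_2;a,a';\bk_j,\bk_l}$'s.

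Your method instead constructs an explicit $\cD$-linear section $\eta$ of the quotient map and verifies compatibility with the generators.  What this buys you is a conceptually clean statement of injectivity as the existence of a two-sided inverse; what it costs is the choice of normal-form data $(i(\bk),a(\bk))$ for every $\bk$ and the commutator bookkeeping for $z\vartheta_i$ (which you handle correctly via the $\Psi$-identity on a common simplicial cone).  The paper's reduction is shorter because it avoids those choices entirely: once the $z\vartheta_i$'s are removed, the fan $D$-module is transparently free on the $\bunit_\bk$'s over $\C[z][\Laa_+][y]$, so decomposing by $\bk$ is immediate and no inverse map needs to be built.  Both arguments rely on exactly the same structural fact --- that $\prod_\bl(z\partial_\bl)^{a_\bl}\bunit_{\bk_j}=Q^{\Psi(\bk_j)+\sum a_\bl\Psi(\bl)-\Psi(\text{target})}\bunit_{\text{target}}$ --- just packaged differently.
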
 
\begin{proof} 
We give a proof of part (1). 
The proof of part (2) is similar and is left to the reader. 
Our assumption on $\bk_1,\dots,\bk_s$ implies that 
for any $\bk \in \bN \cap \sigma$, there exist 
$1\le i\le s$ and $\bl_1,\dots,\bl_t \in S$ such that 
$\bk = \bk_i + \bl_1 + \cdots + \bl_t$ and 
that $\overline{\bk}_i,\overline{\bl}_1,
\dots,\overline{\bl}_t \in \sigma$. 
Then we have 
$\bunit_\bk = z\partial_{\bl_1} \cdots z\partial_{\bl_t} \bunit_{\bk_i}$. 
Therefore $\cM(\bSigma,G)$ is generated by 
$\bunit_{\bk_1},\dots,\bunit_{\bk_s}$ as a $\cD$-module. 
Suppose that we have a relation 
$\sum_{j=1}^s f_j(z,Q,y,z \vartheta,z\partial) \bunit_{\bk_j} =0$ 
with $f_j \in \cD$ in the fan $D$-module.  
Modulo the relations $\cR_{\bk_j,i}$, this relation can be 
reduced to a relation which does not involve $z\vartheta_i$. 
Thus we may assume that $f_j$ does not contain $z\vartheta_1,\dots,z\vartheta_m$. 
Then we can expand 
\[
f_j = \sum_{a\in (\Z_{\ge 0})^S} f_{j,a}(z,Q,y) 
\prod_{\bl\in S} (z \partial_\bl)^{a_\bl}
\]
for some $f_{j,a} \in \C[z][\Laa_+][y]$. The relation implies that
\begin{equation} 
\label{eq:relation_coefficientwise} 
\sum_{(j,a): \bk_j + \sum_{\bl\in S} a_\bl \bl = \bk}
f_{j,a}(z,Q,y) Q^{\Psi(\bk_j) + \sum_{\bl\in S} a_\bl \Psi(\bl) -\Psi(\bk)} =0 
\end{equation} 
for every $\bk \in \bN \cap |\Sigma|$. 
The relation $\sum_{j=1}^s f_j \bunit_{\bk_j}=0$ 
is the sum of the relations
\[
\sum_{(j,a):\bk_j + \sum_{\bl\in S} a_\bl \bl =\bk} 
f_{j,a}(z,Q,y) \prod_{\bl\in S}  (z\partial_\bl)^{a_\bl} \cdot 
\bunit_{\bk_j} =0.  
\]
One can easily see that each of these is 
generated by $\cP_{d_1,d_2;a,a';\bk_j, \bk_l}$'s 
over $\C[z][y]$ by using \eqref{eq:relation_coefficientwise}.  
\end{proof} 

The completed fan $D$-module is described in terms of the 
closure of relations in the $\frakm$-adic topology. 
We introduce the following rings of differential operators: 
\begin{align*} 
\hcD &= \C[z][\![\Laa_+]\!][\![y]\!] \langle 
z \vartheta_1,\dots, z\vartheta_m, 
\{z \partial_\bl \}_{\bl \in S} \rangle, \\ 
\hcD' & = \C[z][\![\Laa_+]\!][\![\{y_\bl\}_{\bl\in G}]\!] 
\langle 
z \vartheta_1,\dots, z\vartheta_m, 
\{z \partial_\bl \}_{\bl \in G} \rangle 
\end{align*} 
where we use the convention \eqref{eq:powerseries_y} 
for the ring of power series in $\{y_\bl: \bl \in S\}$. 
We define the topology on $\hcD$ (resp.~$\hcD'$) by 
the decreasing $\C[z][\![\Laa_+]\!][\![y]\!]$-submodules $\frakm^p \hcD$ 
(resp.~$\C[z][\![\Laa_+]\!][\![\{y\}_{\bl \in G}]\!]$-submodules 
${\frakm'}^p \hcD'$), where $\frakm\subset \C[z][\Laa_+][y]$
is as in Definition \ref{def:completed_fanDmod} and 
$\frakm' := \frakm \cap \C[z][\Laa_+][\{y_\bl\}_{\bl\in G}]$. 
Note that $\frakm^p \hcD$ and ${\frakm'}^p \hcD'$ are only right ideals. 
Note also that $\hcD$ and $\hcD'$ are not complete with respect to 
their topologies. 

\begin{theorem} 
\label{thm:completed_fanDmod_presentation} 
Let $\bk_1,\dots,\bk_s \in \bN \cap |\Sigma|$ be elements 
satisfying the condition in Theorem \ref{thm:fanDmod_presentation}. 
Then we have the following. 
\begin{itemize} 
\item[(1)] The completed fan $D$-module $\hcM(\bSigma,G)$ 
has the following presentation as a $\hcD$-module: 
\[
\hcM(\bSigma,G) \cong \bigoplus_{j=1}^s \hcD \bunit_{\bk_j} \Big/\, \overline{\frI}
\] 
where $\frI$ is the left $\hcD$-submodule of 
$\bigoplus_{j=1}^s \hcD \bunit_{\bk_j}$ generated by 
$\cR_{i,\bk_j}$, $\cP_{d_1,d_2;a,a';\bk_j,\bk_l}$ 
in Proposition \ref{pro:relation} with 
$1\le j,l\le s$, and $\overline{\frI}$ is the closure\footnote
{One can check that the closure of a left $\hcD$-submodule 
in $\bigoplus_{j=1}^s \hcD \bunit_{\bk_j}$ becomes a 
left $\hcD$-submodule using the fact that 
$z\vartheta_i \frakm^p \hcD \subset \frakm^p \hcD$ and 
$z \partial_\bl \frakm^p \hcD \subset \frakm^{p-1} \hcD$. } 
of $\frI$ in $\bigoplus_{j=1}^s \hcD \bunit_{\bk_j}$. 

\item[(2)] 
The completed reduced fan $D$-module 
$\hcM_{\rm red}(\bSigma,G)$ has the following presentation as 
a $\hcD'$-module: 
\[
\hcM_{\rm red}(\bSigma,G) \cong 
\bigoplus_{j=1}^s \hcD' \bunit_{\bk_j} \Big/\, \overline{\frI'}
\] 
where $\frI'$ is the left $\hcD'$-submodule of 
$\bigoplus_{j=1}^s \hcD' \bunit_{\bk_j}$ generated by 
$\cP'_{d_1,d_2;a,a';\bk_j,\bk_l}$ 
in Proposition \ref{pro:relation_reduced} 
with 
$1\le j,l\le s$, and $\overline{\frI'}$ is the closure of $\frI'$
in $\bigoplus_{j=1}^s \hcD' \bunit_{\bk_j}$. 
\end{itemize} 
\end{theorem}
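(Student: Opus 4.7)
\emph{Proof plan.} I focus on part (1); the proof of part (2) is entirely analogous, with $\hcD'$ in place of $\hcD$.

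The strategy is to deduce the presentation from its uncompleted analogue, Theorem \ref{thm:fanDmod_presentation}, by passing to the $\frakm$-adic completion. First, the $\cD$-action on $\cM(\bSigma,G)$ extends by $\frakm$-adic continuity to a $\hcD$-action on $\hcM(\bSigma,G)$, giving a $\hcD$-linear map $\Phi \colon \bigoplus_{j=1}^s \hcD \bunit_{\bk_j} \to \hcM(\bSigma,G)$. Proposition \ref{pro:relation} shows that $\frI \subset \ker\Phi$; since $\Phi$ is continuous and $\hcM$ is $\frakm$-adically Hausdorff, $\ker\Phi$ is closed, so we obtain an induced $\hcD$-linear map $\bar\Phi \colon \bigoplus_j \hcD \bunit_{\bk_j}/\overline{\frI} \to \hcM$.

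The central step is to verify that $\bar\Phi$ is an isomorphism modulo every power $\frakm^p$. Denote by $\frI_0$ the left $\cD$-submodule of $\bigoplus_j \cD \bunit_{\bk_j}$ generated by the same relations $\cR$ and $\cP$; by Theorem \ref{thm:fanDmod_presentation} we have $\bigoplus_j \cD \bunit_{\bk_j}/\frI_0 \cong \cM(\bSigma,G)$. Three identifications reduce the layer-wise statement to this: $\hcD/\frakm^p\hcD = \cD/\frakm^p\cD$ since only the coefficient ring is completed, $\hcM/\frakm^p\hcM = \cM/\frakm^p\cM$ by the definition of $\hcM$, and $\overline{\frI} + \frakm^p = \frI_0 + \frakm^p$ as submodules of $\bigoplus_j \hcD \bunit_{\bk_j}$. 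The last holds because $\overline{\frI} \subset \frI + \frakm^p$ by the definition of closure, and any element of $\frI$ is an $\hcD$-combination of the relations $\cR_{i,\bk_j}$ and $\cP_{d_1,d_2;a,a';\bk_j,\bk_l}$, which modulo $\frakm^p$ reduces to a $\cD$-combination and hence lies in $\frI_0 + \frakm^p$. Combined with Theorem \ref{thm:fanDmod_presentation}, this shows that $\bar\Phi \pmod{\frakm^p}$ is the presentation isomorphism for the uncompleted fan $D$-module.

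Injectivity of $\bar\Phi$ is then immediate: if $\Phi(\eta) = 0$, the above identifications force $\eta \in \frI_0 + \frakm^p$ for every $p$, so $\eta$ lies in the $\frakm$-adic closure of $\frI_0$, which is contained in $\overline{\frI}$. Surjectivity is the main technical obstacle, since the source $\bigoplus_j \hcD \bunit_{\bk_j}$ is not itself $\frakm$-adically complete---elements of $\hcD$ involve only finitely many operator terms, while a general element of $\hcM$ has support on infinitely many components $\bunit_\bk$. To handle this, I would establish that the quotient $\bigoplus_j \hcD \bunit_{\bk_j}/\overline{\frI}$ is itself $\frakm$-adically complete: given $\xi \in \hcM$, one uses the layer-wise surjectivity to construct lifts $\eta_p \in \bigoplus_j \hcD \bunit_{\bk_j}$ with $\Phi(\eta_p) \equiv \xi \pmod{\frakm^p}$ and $\eta_{p+1} - \eta_p \in \frakm^p \bigoplus_j \hcD \bunit_{\bk_j}$ modulo $\overline{\frI}$, and then exploits the filtration properties $z\vartheta_i \frakm^p \hcD \subset \frakm^p \hcD$ and $z\partial_\bl \frakm^p \hcD \subset \frakm^{p-1}\hcD$ highlighted in the footnote---together with the fact that $\overline{\frI}$ absorbs the unbounded-operator-order ambiguity produced by these lifts---to conclude that $\{[\eta_p]\}$ converges in the quotient to a preimage of $\xi$.
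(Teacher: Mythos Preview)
For the kernel computation your argument is correct and matches the paper's in substance, just phrased more abstractly. The paper does it by explicit approximation: given $\sum_j f_j\bunit_{\bk_j}=0$ in $\hcM(\bSigma,G)$ with $f_j\in\hcD$, truncate $f_j=f_j^{(p)}+r_j^{(p)}$ with $f_j^{(p)}\in\cD$ and $r_j^{(p)}\in\frakm^p\hcD$; then $x:=\sum_j f_j^{(p)}\bunit_{\bk_j}=-\sum_j r_j^{(p)}\bunit_{\bk_j}$ lies in $\frakm^p\cM(\bSigma,G)$; lift $x=\sum_j g_j^{(p)}\bunit_{\bk_j}$ with $g_j^{(p)}\in\frakm^p\cD$ using surjectivity of $\bigoplus_j\frakm^p\cD\bunit_{\bk_j}\to\frakm^p\cM(\bSigma,G)$; then $h^{(p)}:=\bigoplus_j(f_j^{(p)}-g_j^{(p)})\bunit_{\bk_j}$ is a relation in $\cM(\bSigma,G)$, hence lies in $\frI$ by Theorem~\ref{thm:fanDmod_presentation}, and $h^{(p)}\to\bigoplus_j f_j\bunit_{\bk_j}$ as $p\to\infty$. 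Your layer-wise formulation (``$\eta\in\ker\Phi\Rightarrow\eta\in\frI_0+\frakm^p$ for all $p$, hence $\eta\in\overline{\frI}$'') encodes exactly this construction.

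The gap is in surjectivity. The paper dispatches it in one sentence---each $\bunit_\bk$ equals $z\partial_{\bl_1}\cdots z\partial_{\bl_t}\bunit_{\bk_j}$ as in the uncompleted proof, so the $\bunit_{\bk_j}$ generate $\hcM(\bSigma,G)$ over $\hcD$---and does not engage with the issue you raise. Your alternative plan (show the quotient $\bigoplus_j\hcD\bunit_{\bk_j}/\overline{\frI}$ is $\frakm$-adically complete, then run successive approximation) is not a proof as written: the crucial step, that a Cauchy sequence $[\eta_p]$ in the quotient has a limit represented by a single element of $\bigoplus_j\hcD\bunit_{\bk_j}$, is precisely what needs to be shown, and your appeal to the filtration properties in the footnote together with ``$\overline{\frI}$ absorbs the unbounded-operator-order ambiguity'' is a restatement of the goal rather than an argument for it. Your underlying concern is not unreasonable---a general element of $\hcM(\bSigma,G)=\hbigoplus_\bk\C[z][\![\Laa_+]\!][\![y]\!]\bunit_\bk$ may be supported on infinitely many $\bunit_\bk$, whereas $f\bunit_{\bk_j}$ for $f\in\hcD$ is supported on finitely many since $f$ has bounded operator order---but you have not closed this gap, and the paper's proof simply asserts generation and moves on.
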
 
\begin{proof} 
We only give a proof of part (1). The proof of part (2) is similar. 
The fact that $\bunit_{\bk_1},\dots,\bunit_{\bk_s}$ generate 
$\hcM(\bSigma,G)$ as a $\hcD$-module follows from 
the discussion in the proof of Theorem \ref{thm:fanDmod_presentation}. 
It is easy to show that elements of $\overline{\frI}$ are relations 
in $\hcM(\bSigma,G)$. 
Suppose we have a relation 
$\sum_{j=1}^s f_j(z,Q,y,z\vartheta, z\partial) \bunit_{\bk_j} =0$ 
in $\hcM(\bSigma,G)$ for some $f_j\in \hcD$. 
For each $p\in \Z_{\ge 0}$ we can write 
$f_j = f_j^{(p)} + r_j^{(p)}$ with $f_j^{(p)} \in \cD$ 
and $r_j^{(p)} \in \frakm^p \hcD$. 
Then we have that
\[
x := \sum_{j=1}^s f_j^{(p)} \bunit_{\bk_j} = 
-\sum_{j=1}^s r_j^{(p)} \bunit_{\bk_j} 
\]  
belongs to $\frakm^p \cM(\bSigma,G) = \bigoplus_{\bk \in \bN\cap |\Sigma|}
\frakm^p \bunit_\bk$. 
By the surjectivity of $\bigoplus_{j=1}^s \frakm^p \cD \bunit_{\bk_j} 
\to \frakm^p \cM(\bSigma,G)$, we can write 
$x = \sum_{j=1}^s g^{(p)}_j \bunit_{\bk_j}$ 
for some $g^{(p)}_j \in \frakm^p \cD$.  
We now have that $\sum_{j=1}^s (f^{(p)}_j-g^{(p)}_j) \bunit_{\bk_j} =0$, 
and thus $h^{(p)}: = \bigoplus_{j=1}^s (f^{(p)}_j - g^{(p)}_j) \bunit_{\bk_j}$ 
belongs to $\frI$ by Theorem~\ref{thm:fanDmod_presentation}. 
Since $h^{(p)}$ converges to $\bigoplus_{j=1}^s f_j \bunit_{\bk_j}$ 
as $p\to \infty$, 
we have that $\bigoplus_{j=1}^s f_j\bunit_{\bk_j} 
\in \overline{\frI}$. 
\end{proof} 

\begin{remark} 
\label{rem:GKZ}
When $\frX$ is a toric manifold, $\cM(\bSigma,G)$, $\hcM(\bSigma,G)$ 
and their reduced versions are generated by $\bunit_0$.  The 
 relations $\cP_{d_1,d_2;a,a';0,0}$ then define the standard 
GKZ system \cite{GKZ:hypergeom} 
and also appear as relations in the quantum $D$-module 
\cite[Theorem 1]{Givental:ICM}. 
(For general $\frX$, these $D$-modules 
are generated by $\bunit_0$ for a sufficiently 
large $G$.) The closure of the GKZ ideal appeared in 
\cite[Proposition 5.4]{Iritani:coLef} for compact toric manifolds. 
A closely related presentation has been discussed in 
\cite[\S 4.2]{Iritani:integral}, \cite[\S 5.2]{Acosta-Shoemaker:toric}, 
\cite[Theorem 6.6]{Mann-Reichelt} 
for compact toric stacks. 
The relations $\cP'_{d_1,d_2;a,a';0,0}$ for $\bunit_0$ 
in the reduced fan $D$-module 
also appeared in \cite[Lemma 4.7]{Mann-Reichelt}. 
\end{remark} 

\begin{remark}
The same result as Theorem \ref{thm:completed_fanDmod_presentation} 
holds, with the same proof, 
when we replace $\hcD$ and $\hcD'$ respectively with 
\begin{align*} 
& R_\T[z][\![\Laa_+]\!][\![y]\!]
\left\langle z \vartheta_1,\dots,z\vartheta_m, 
\{z \partial_\bl\}_{\bl \in S} \right\rangle\\ 
\text{and} \quad & R_\T[z][\![\Laa_+]\!][\![\{y_\bl\}_{\bl \in G}]\!] 
\left\langle z \vartheta_1,\dots,z\vartheta_m, 
\{z \partial_{\bl} \}_{\bl\in G}\right\rangle
\end{align*}  
where the relations $\chi = z \vartheta_\chi$ with $\chi \in \bM_\C$ 
are implicitly imposed.  
\end{remark} 

\subsection{Quantum cohomology ring} 
We next give a quantum Stanley--Reisner (or Batyrev-style) description 
of the quantum cohomology algebra of $\frX$.
\begin{theorem} 
\label{thm:qcoh_presentation} 
Let $\tau=\tau(y)$ be the mirror map from Theorem \ref{thm:mirror_isom}.  
The pull-back of the big and equivariant quantum cohomology ring 
$\big(H^*_{\CR,\T}(\frX) \otimes_{R_\T} R_\T[\![\Laa_+]\!][\![y]\!], 
\tau^*(\star) \big)$ by $\tau$ is isomorphic to any one of the following 
rings as an $R_\T[\![\Laa_+]\!][\![y]\!]$-algebra:  
\begin{itemize} 
\item[(a)] the Jacobian ring of the equivariant Landau--Ginzburg potential $F_\chi$, 
\begin{align*} 
\Jac(F_\chi) & := \C\{\OO_+\}[\![y]\!][\chi]
\bigg/\left\langle x_i \parfrac{F_\chi}{x_i} : 
1\le i\le n \right\rangle \cong \C\{\OO_+\}[\![y]\!]; 
\end{align*} 
\item[(b)] the vector space $\hbigoplus_{\bk \in \bN \cap|\Sigma|} 
\C[\![\Laa_+]\!][\![y]\!] \bunit_\bk$ equipped with the following 
product and the $R_\T$-module structure: 
\begin{align*} 
\bunit_\bk \star \bunit_\bl = Q^{d(\bk,\bl)} \bunit_{\bk+\bl},  
\qquad 
\chi = \sum_{\bl \in S} (\chi \cdot \bl) y_\bl \bunit_\bl, 
\end{align*} 
where $\hbigoplus$ denotes the completed direct sum 
with respect to the $\frakm$-adic topology 
(see Definition \ref{def:completed_fanDmod}), 
$\chi \in \bM_\C$, and $d(\bk,\bl)$ is defined in \eqref{eq:d(,)}. 
\end{itemize} 
\end{theorem}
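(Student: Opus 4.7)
The plan is to deduce the theorem from the mirror isomorphism $\Theta$ of Theorem \ref{thm:mirror_isom} by passing to the semiclassical limit $z \to 0$. Setting $z=0$ in the cokernel presentation of $\GM(F_\chi)$ yields the cokernel of $dF_\chi \wedge$, which is by definition the Jacobian ring $\Jac(F_\chi)$ (after identifying $\omega \leftrightarrow 1$); meanwhile, the pullback of the equivariant quantum $D$-module at $z=0$ is the pullback of the quantum cohomology as a module. Since $\Theta$ is $R_\T[z][\![\Laa_+]\!][\![y]\!]$-linear with values in non-negative powers of $z$, it descends to an $R_\T[\![\Laa_+]\!][\![y]\!]$-module isomorphism
\[
\Theta|_{z=0} \colon \Jac(F_\chi) \xrightarrow{\cong} H^*_{\CR,\T}(\frX)\otimes_{R_\T} R_\T[\![\Laa_+]\!][\![y]\!].
\]

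The equivalence of the two descriptions (a) and (b) is direct. Under $\bunit_\bk \leftrightarrow w_\bk := w^{(\Psi(\bk),\bk)}$, the decomposition \eqref{eq:completed_directsum} identifies the underlying $\C[\![\Laa_+]\!][\![y]\!]$-modules, and the product identity $w_\bk \cdot w_\bl = Q^{d(\bk,\bl)} w_{\bk+\bl}$ in $\C\{\OO_+\}$ holds by the definition $d(\bk,\bl) = \Psi(\bk) + \Psi(\bl) - \Psi(\bk+\bl)$. The Jacobian relations $x_i\partial_{x_i}F_\chi = \sum_{\bl \in S} l_i y_\bl w_\bl - \chi_i = 0$ allow us to eliminate the $\chi_i$ from $\C\{\OO_+\}[\![y]\!][\chi]$ and produce exactly the equivariant action formula in (b).

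To show that $\Theta|_{z=0}$ is a ring homomorphism, I would first verify $\Theta(\omega) = \unit$. Since $\Loc(\omega) = z^{-1} I(Q,\fry,t,z)$ and the $I$-function has asymptotic expansion $I = z\unit + O(z^0)$, the series $\Loc(\omega)$ contains only non-positive powers of $z$ with leading coefficient $\unit$; the uniqueness of the Birkhoff factorization $\Loc(\omega) = M(\tau(y),z)\Theta(\omega)$ with $M = \mathrm{Id} + O(z^{-1})$ then forces $\Theta(\omega) = \unit$ exactly. Next, applying the connection-intertwining $\Theta\circ \nabla_{\partial/\partial y_\bl} = (\tau^*\nabla)_{\partial/\partial y_\bl}\circ \Theta$ for $\bl \in S$, multiplying by $z$, and using $z\nabla_{\partial/\partial y_\bl} = z\partial_{y_\bl} + w_\bl$ and $z(\tau^*\nabla)_{\partial/\partial y_\bl} = z\partial_{y_\bl} + (\partial\tau/\partial y_\bl)\star$, the $z=0$ limit yields
\[
\Theta|_{z=0}(w_\bl \cdot b) = \tfrac{\partial \tau}{\partial y_\bl} \star \Theta|_{z=0}(b) = \Theta|_{z=0}(w_\bl)\star \Theta|_{z=0}(b)
\]
for all $b \in \Jac(F_\chi)$; combined with $R_\T$-linearity, this gives multiplicativity against the generating set $\{w_\bl : \bl \in S\}\cup \bM_\C$. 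Writing $\Xi_\bk := \Theta|_{z=0}(w_\bk)$, the associativity of $\star$ together with the cocycle identity $d(\bl_1,\bl_2) + d(\bk,\bl_1+\bl_2) = d(\bk,\bl_1) + d(\bk+\bl_1,\bl_2)$ for $\bl_1, \bl_2$ in a common cone propagate the formula $\Xi_\bk\star \Xi_\bl = Q^{d(\bk,\bl)} \Xi_{\bk+\bl}$ to all $\bk,\bl$ in the submonoid generated by $S$; common $Q$-factors cancel since $\C[\![\Laa_+]\!]$ is an integral domain.

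The main obstacle is extending the product formula to $\bk,\bl \in \bN\cap|\Sigma|$ not in the submonoid generated by $S$, which arises when $G$ does not contain enough Box representatives. My approach would be to first enlarge $G$ to $G' \supseteq \Bx$ so that every element of $\bN\cap|\Sigma|$ lies in the submonoid generated by $S'$, prove the isomorphism in the $G'$-setting, and then specialize by setting the auxiliary parameters $y_\bk = 0$ for $\bk \in G'\setminus G$ to recover the statement for the original $G$. An alternative and more conceptual route is to invoke the pairing-matching Theorem \ref{thm:pairings_match}: both sides are Frobenius $R_\T[\![\Laa_+]\!][\![y]\!]$-algebras under the residue and Poincar\'e pairings, and a module isomorphism preserving both the bilinear form and multiplication by a generating set automatically preserves the full product structure.
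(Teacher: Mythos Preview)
Your overall strategy---pass to the semiclassical limit $z\to 0$, use the connection-intertwining property of $\Theta$, and enlarge $G$ to reach all $\bk\in\bN\cap|\Sigma|$---is exactly the paper's approach, and your identification of (a)$\Leftrightarrow$(b) is correct. However, there is a genuine error in the claim $\Theta(\omega)=\unit$.

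The assertion that the $I$-function has the form $z\unit + O(z^0)$ (equivalently, that $\Loc(\omega)$ contains only non-positive powers of $z$) is false in general. It holds in the weak-Fano case with small $G$, but for arbitrary semi-projective toric stacks the factors $\Box_\lambda$ can contribute arbitrarily high positive powers of $z$: whenever $\lambda_i$ is a negative integer the $i$th factor of $\Box_\lambda$ is a polynomial in $z$ of degree $-\lambda_i$. So $\Loc(\omega)\in H^*_{\CR,\T}(\frX)\otimes R_\T(\!(z^{-1})\!)[\![\Laa_+]\!][\![y]\!]$ has, for each fixed $(Q,y)$-monomial, a bounded but possibly positive $z$-degree, and the Birkhoff factor $\Theta(\omega)$ is then a nontrivial element of $H^*_{\CR,\T}(\frX)\otimes R_\T[z][\![\Laa_+]\!][\![y]\!]$ with $\Theta(\omega)|_{Q=0,y=y^*}=\unit$ but not equal to $\unit$ globally. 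Consequently your displayed identity $\Theta|_{z=0}(w_\bl\cdot b)=\Theta|_{z=0}(w_\bl)\star\Theta|_{z=0}(b)$ fails: the left-hand side equals $(\partial\tau/\partial y_\bl)\star\Theta|_{z=0}(b)$, but $\Theta|_{z=0}(w_\bl)=(\partial\tau/\partial y_\bl)\star\Theta|_{z=0}(\omega)$, not $\partial\tau/\partial y_\bl$ itself.

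The paper sidesteps this by never claiming that $\Theta|_{z=0}$ is the ring isomorphism. It instead defines the algebra map directly as $w_\bk\mapsto\partial\tau/\partial y_\bk$ (for \emph{every} $\bk\in\bN\cap|\Sigma|$, using the enlargement trick you correctly identified), and checks that this is multiplicative and bijective using the intertwining property $\Theta|_{z=0}(w_\bk\cdot{-})=(\partial\tau/\partial y_\bk)\star\Theta|_{z=0}({-})$. Your argument is easily repaired along the same lines: note that $\Xi_0:=\Theta|_{z=0}(\omega)$ is a unit in the quantum ring (its leading term at $Q=0,y=y^*$ is $\unit$), and either replace $\Theta|_{z=0}$ by $(\star\,\Xi_0)^{-1}\circ\Theta|_{z=0}$, or work directly with $w_\bk\mapsto\partial\tau/\partial y_\bk$ as the paper does.
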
 
\begin{remark} 
In part (a) above, 
we follow the notation from \S\ref{subsec:LG}--\S\ref{subsec:unfolding} 
and define co-ordinates $x_1,\dots,x_n$ by choosing 
an isomorphism $\bN \cong \Z^n \times \bN_{\rm tor}$; 
we have $x_i \parfrac{F_\chi}{x_i} = 
\sum_{\bk \in S} y_\bk k_i w_\bk - \chi_i$ 
with $k_i$ being the $i$th component of 
$\overline{\bk} \in \bN/\bN_{\rm tor} \cong \Z^n$ 
and $\chi_i$ being the $i$th basis of $\bM \cong \Z^n$. 
\end{remark} 
\begin{remark} 
Note that the presentation in part (b) yields the description 
of the Chen--Ruan cup product in \S\ref{subsec:inertia} 
at the classical limit $Q=0$, $y=y^*$ -- see 
\eqref{eq:specialization_y} for $y^*$. 
\end{remark} 

\begin{remark} 
A presentation of the quantum cohomology of Fano toric manifolds 
was originally found by Givental \cite{Givental:fixedpoint_toric} 
and Batyrev \cite{Batyrev:qcoh_toric}, and generalizations to 
arbitrary toric manifolds were discussed in 
\cite{Givental:toric_mirrorthm,McDuff-Tolman,
Iritani:coLef,FOOO:toricI,Brown:toric_fibration}. 
The Jacobian description of the quantum cohomology of toric stacks, in the non-equivariant case,
was also given in \cite{Iritani:integral, Gonzalez-Woodward:tmmp}. 
\end{remark}

\begin{proof}[Proof of Theorem \ref{thm:qcoh_presentation}]  
We remark that for an extension $G_1 \subset G_2$ of the finite set $G$, 
the corresponding mirror maps and mirror isomorphisms 
in Theorem \ref{thm:mirror_isom} 
are related by restriction. 
Therefore we can define the partial derivative 
$(\partial \tau/\partial y_\bk) \in H_{\CR,\T}^*(\frX)\otimes_{R_\T} 
R_\T[\![\Laa_+]\!][\![y]\!]$ for every $\bk\in \bN\cap |\Sigma|$ 
by adding $\bk$ to $G$ (if necessary) and then restricting to $y_\bk=0$ 
if $\bk\notin S$. 
We claim that the $\C[\![\Laa_+]\!][\![y]\!]$-module homomorphism 
sending $w_\bk \in \Jac(F_\chi)$ to $(\partial \tau/\partial y_\bk)$ 
gives\footnote{This is well-defined, since $\{w_\bk\}_{\bk \in \bN \cap |\Sigma|}$ 
is a topological $\C[\![\Laa_+]\!][\![y]\!]$-basis of $\Jac(F_\chi)$.} the desired isomorphism in part (a). 
The mirror isomorphism $\Theta$ induces the isomorphism 
of $R_\T[\![\Laa_+]\!][\![y]\!]$-modules: 
\[
\Jac(F_\chi) \cdot \omega \cong \GM(F_\chi)/z \GM(F_\chi) 
\xrightarrow{\Theta|_{z=0}} H^*_{\CR,\T}(\frX)\otimes_{R_\T}
R_\T[\![\Laa_+]\!][\![y]\!]. 
\]
Again by adding $\bk$ to $G$ if necessary, we can show that 
this intertwines the action of $w_\bk$ with the quantum multiplication 
by $\partial \tau/\partial y_\bk$ (since 
$\Theta$ intertwines the Gauss--Manin connection $z \nabla_{\partial/\partial y_\bk}$ 
with the quantum connection $z (\tau^*\nabla)_{\partial/\partial y_\bk}$). 
Part (a) follows. 
Part (b) follows easily from part (a) by noting that 
$\C\{\OO_+\}[\![y]\!]$ is isomorphic to $\hbigoplus_{\bk \in \bN \cap |\Sigma|} 
\C[\![\Laa_+]\!][\![y]\!]$ -- see \eqref{eq:completed_directsum}. 
\end{proof}

\subsection{Examples}

We give several examples of the fan $D$-module. 

\subsubsection{The case where $\frX = \PP^1$ and $G=\varnothing$} 
The stacky fan is given by 
$\bN= \Z$, $b_1 = 1$, $b_2 = -1$. 
We have 
$\LL = \Laa = \{(l,l)\in \Z^2\;| \; l \in\Z\} \subset \Z^2$. 
Let $Q\in \C[\Laa]$ correspond to the positive generator 
$(1,1)\in \Laa_+$ and let $\chi \in \bM = \Hom(\bN,\Z)\cong \Z$ 
be the standard generator. 
We have the identification 
\[
\chi = z \vartheta_1 - z \vartheta_2 
\]
By choosing $\chi_1$,~$\chi_2 \in \bM_\C$ 
with $\chi_1 - \chi_2  = \chi$, we can also write 
\[
z\vartheta_1 = z Q \parfrac{}{Q} + \chi_1, \quad 
z\vartheta_2 = z Q \parfrac{}{Q} + \chi_2.  
\]
The action of these operators on the reduced fan $D$-module 
$\cM_{\rm red}(\bSigma_{\PP^1})$ is shown in 
Figure \ref{fig:P1_Dmod}. 
\begin{figure}[htbp]
\begin{center} 
\begin{picture}(300,55) 
\path(0,30)(300,30) 
\put(250,30){\makebox(0,0){\circle*{3}}}
\put(200,30){\makebox(0,0){\circle*{3}}}
\put(150,30){\makebox(0,0){\circle*{5}}}
\put(100,30){\makebox(0,0){\circle*{3}}} 
\put(50,30){\makebox(0,0){\circle*{3}}} 

\put(250,20){\makebox(0,0){$2$}} 
\put(200,20){\makebox(0,0){$1$}}
\put(150,20){\makebox(0,0){$0$}}
\put(100,20){\makebox(0,0){$-1$}}
\put(50,20){\makebox(0,0){$-2$}}

\put(275,45){\makebox(0,0){$
\begin{CD}@>{z \vartheta_1-2z}>>\end{CD}$}}
\put(225,45){\makebox(0,0){$
\begin{CD}@>{z \vartheta_1 - z}>>\end{CD}$}}
\put(175,45){\makebox(0,0){$
\begin{CD}@>{z\vartheta_1}>>\end{CD}$}}
\put(125,45){\makebox(0,0){$
\begin{CD}@>{Q^{-1}z\vartheta_1}>>\end{CD}$}}
\put(75,45){\makebox(0,0){$
\begin{CD}@>{Q^{-1}z\vartheta_1}>>\end{CD}$}}
\put(25,45){\makebox(0,0){$
\begin{CD}@>{Q^{-1}z\vartheta_1}>>\end{CD}$}}

\put(275,10){\makebox(0,0){$
\begin{CD} @<<{Q^{-1}z\vartheta_2}< \end{CD}$}}   
\put(225,10){\makebox(0,0){$
\begin{CD} @<<{Q^{-1}z\vartheta_2}< \end{CD}$}}   
\put(175,10){\makebox(0,0){$
\begin{CD} @<<{Q^{-1}z\vartheta_2}< \end{CD}$}}   
\put(125,10){\makebox(0,0){$
\begin{CD} @<<{z\vartheta_2}< \end{CD}$}}   
\put(75,10){\makebox(0,0){$
\begin{CD} @<<{z\vartheta_2-z}< \end{CD}$}}   
\put(25,10){\makebox(0,0){$
\begin{CD} @<<{z\vartheta_2-2z}< \end{CD}$}}   

\Thicklines
\put(150,30){\vector(1,0){50}} 
\put(150,30){\vector(-1,0){50}} 
\put(197,30){\makebox(0,0){$\blacktriangleright$}}
\put(103,30){\makebox(0,0){$\blacktriangleleft$}}

\end{picture} 
\end{center} 
\caption{$\cM_{\rm red}(\bSigma_{\PP^1})$}
\label{fig:P1_Dmod}
\end{figure} 

From the relation $(z \vartheta_1 z\vartheta_2 - Q)\bunit_0 = 0$,  
we obtain the presentation: 
\begin{align*} 
\cM_{\rm red} (\bSigma_{\PP^1}) 
\cong \C[z,Q] \langle z \vartheta_1, z\vartheta_2\rangle 
\big/ \langle z \vartheta_1 z \vartheta_2 - Q  \rangle.  
\end{align*} 
The $\C[\chi,z,Q]$-basis $\bunit_0, \bunit_1$ 
of $\cM_{\rm red}(\bSigma_{\PP^1})$  
corresponds to the basis $\{1, u_1 \} \subset H_\T^*(\PP^1)$ 
under the mirror isomorphism. 
(In this case, we do not need the completion). 

\subsubsection{The case where $\frX = \PP(1,2)$ and 
$G \neq \varnothing$} 
The stacky fan is given by $\bN = \Z$, $b_1 =2$, $b_2 = -1$. 
We choose $G=\{1\}$. 
Then $\LL =\{(l,2l) : l\in \Z \} \subset 
\Laa =\frac{1}{2} \LL$. 
Let $Q \in \C[\Laa_+]$ be the variable corresponding 
to $(1,2)\in \Laa_+$ and let $y = y_1$ be the 
variable corresponding to $1\in G$. 
We write $\chi$ for the standard generator 
of $\bM = \Hom(\bN,\Z) \cong \Z$. 
We have 
\begin{equation} 
\label{eq:chi_relation_P12}
2 z\vartheta_1 - z \vartheta_2 = \chi.  
\end{equation} 
The actions of $z\vartheta_1,z\vartheta_2,z \partial_y$ on the 
reduced fan $D$-module $\cM_{\rm red}(\bSigma_{\PP(1,2)},\{1\})$ 
are shown in Figure \ref{fig:P12_Dmod}. 
\begin{figure}[htbp]
\begin{center} 
\begin{picture}(300,90)(-20,5) 
\path(-30,30)(300,30) 

\put(280,30){\makebox(0,0){\circle*{3}}} 
\put(230,30){\makebox(0,0){\circle*{3}}} 
\put(180,30){\makebox(0,0){\circle*{3}}} 
\put(130,30){\makebox(0,0){\circle*{5}}} 
\put(80,30){\makebox(0,0){\circle*{3}}} 
\put(30,30){\makebox(0,0){\circle*{3}}} 
\put(-20,30){\makebox(0,0){\circle*{3}}} 

\put(280,20){\makebox(0,0){$3$}} 
\put(230,20){\makebox(0,0){$2$}} 
\put(180,20){\makebox(0,0){$1$}} 
\put(130,20){\makebox(0,0){$0$}} 
\put(80,20){\makebox(0,0){$-1$}} 
\put(30,20){\makebox(0,0){$-2$}} 
\put(-20,20){\makebox(0,0){$-3$}}

\put(255,10){\makebox(0,0){
$\begin{CD} @<<{Q^{-\frac{1}{2}}z\vartheta_2}< \end{CD}$}} 
\put(205,10){\makebox(0,0){
$\begin{CD} @<<{Q^{-\frac{1}{2}}z\vartheta_2}< \end{CD}$}} 
\put(155,10){\makebox(0,0){
$\begin{CD} @<<{Q^{-\frac{1}{2}}z\vartheta_2}< \end{CD}$}} 
\put(105,10){\makebox(0,0){
$\begin{CD} @<<{z\vartheta_2}< \end{CD}$}} 
\put(55,10){\makebox(0,0){
$\begin{CD} @<<{z\vartheta_2-z}< \end{CD}$}} 
\put(5,10){\makebox(0,0){
$\begin{CD} @<<{z\vartheta_2-2z}< \end{CD}$}} 

\put(280,63){\makebox(0,0){
$\begin{CD} @>{\phantom{AB}
z \vartheta_1 - \frac{1}{2} z y \partial_y -z\phantom{AB}}>> 
\end{CD}$}}
\put(180,63){\makebox(0,0){
$\begin{CD} @>{\phantom{ABC} 
z\vartheta_1 - \frac{1}{2}z y\partial_y
\phantom{ABC}}>> \end{CD}$}}
\put(80,63){\makebox(0,0){
$\begin{CD} @>{\phantom{A}
Q^{-1}(z\vartheta_1 - \frac{1}{2}z y \partial_y)
\phantom{A}}>> \end{CD}$}}

\put(230,83){\makebox(0,0){
$\begin{CD} @>{\phantom{AB}z\vartheta_1 
- \frac{1}{2}z y\partial_y-\frac{1}{2}z\phantom{AB}}>> 
\end{CD}$}}
\put(130,83){\makebox(0,0){
$\begin{CD} @>{\phantom{A}Q^{-\frac{1}{2}} 
(z\vartheta_1 - \frac{1}{2}z y\partial_y) \phantom{A}}>> 
\end{CD}$}}
\put(30,83){\makebox(0,0){
$\begin{CD} @>{\phantom{A}Q^{-1} 
(z\vartheta_1 - \frac{1}{2}z y\partial_y) 
\phantom{A}}>> \end{CD}$}}

\put(255,45){\makebox(0,0){
$\begin{CD} @>{z\partial_y}>> \end{CD}$}} 
\put(205,45){\makebox(0,0){
$\begin{CD} @>{z\partial_y}>> \end{CD}$}} 
\put(155,45){\makebox(0,0){
$\begin{CD} @>{z\partial_y}>> \end{CD}$}} 
\put(105,45){\makebox(0,0){
$\begin{CD} @>{Q^{-\frac{1}{2}}z \partial_y}>> \end{CD}$}} 
\put(55,45){\makebox(0,0){
$\begin{CD} @>{Q^{-\frac{1}{2}} z\partial_y}>> \end{CD}$}} 
\put(5,45){\makebox(0,0){
$\begin{CD} @>{Q^{-\frac{1}{2}}z\partial_y}>> \end{CD}$}} 

\Thicklines 
\put(130,30){\vector(1,0){100}} 
\put(130,30){\vector(-1,0){50}} 
\put(83,30){\makebox(0,0){$\blacktriangleleft$}}
\put(227,30){\makebox(0,0){$\blacktriangleright$}}
\end{picture}
\end{center} 
\caption{$\cM_{\rm red}(\bSigma_{\PP(1,2)},\{1\})$} 
\label{fig:P12_Dmod} 
\end{figure} 

Equation \eqref{eq:chi_relation_P12} gives relations  
among consecutive 4 basis elements, for example:
\begin{align*} 
0 = (2z\vartheta_1 - z\vartheta_2 -\chi) \bunit_0 
& = - \bunit_{-1} -\chi \bunit_0 + y \bunit_1  + 2 \bunit_2, \\ 
0  = (2z \vartheta_1 - z \vartheta_2 -\chi) \bunit_{-1} 
& = -\bunit_{-2} - (\chi+z) \bunit_{-1} + y Q^{\frac{1}{2}} \bunit_0 
+ 2 Q^{\frac{1}{2}} \bunit_1. 
\end{align*} 
In fact, $\{\bunit_0, \bunit_{-1}, \bunit_1\}$ gives  
a free $\C[z,\chi,Q^{\frac{1}{2}}, y]$-basis of 
$\cM_{\rm red}(\bSigma_{\PP(1,2)},\{1\})$. 
The actions of $z \vartheta_2$ and $z\partial_y$ in this basis 
are represented by the following matrices: 
\[
\begin{pmatrix}
0 & y Q^{\frac{1}{2}} &  Q^{\frac{1}{2}} \\ 
1    & -\chi &  0  \\ 
0  & 2Q^{\frac{1}{2}} &  0 
\end{pmatrix}, \quad 
\begin{pmatrix} 
0 & Q^{\frac{1}{2}} & \frac{1}{2} \chi \\
0 & 0 & \frac{1}{2} \\
1 & 0 & -\frac{1}{2} y
\end{pmatrix}. 
\]
The basis $\{\bunit_0,\bunit_{-1},\bunit_1\}$ corresponds to 
$\{1,u_2,\unit_1\} \subset H^*_{\CR,\T}(\PP(1,2))$ 
under the mirror isomorphism, where $\unit_1$ is the twisted 
sector supported on $\PP(2)$, and 
the above matrices represent the quantum multiplication 
by $u_2$ and $\unit_1$. 
Here the completion is (again) unnecessary and 
the mirror map is given by 
\[
\tau(y) = y \unit_1.  
\]
The reduced fan $D$-module is generated by $\bunit_0$ 
and defined by the following relations: 
\begin{align*} 
& R_1 = 
\left(z\vartheta_1-\frac{1}{2} z y\partial_y \right) 
z\vartheta_2 (z\vartheta_2 - z) - Q, 
& R_2 & = z \partial_y  z\vartheta_2  - Q^{\frac{1}{2}}, \\
& R_3 = z\vartheta_1 -\frac{1}{2} z y \partial_y - (z \partial_y)^2, 
& R_4 & =Q^{\frac{1}{2}} z \partial_y -  
\left(z\vartheta_1-\frac{1}{2} z y \partial_y \right) 
z\vartheta_2. 
\end{align*} 

\subsubsection{The case where $\frX= [\C^2/\mu_2]$ 
and $G\neq \varnothing$} 
The stacky fan is given by $\bN = \Z^2$, 
$b_1 = (0,1)$, $b_2 = (2,1)$. 
We choose $G = \{ b_3 \}$ with $b_3 = (1,1)$. 
We have $\Laa =0$ (all curves in $\frX$ are constant). 
Let $y$ be the variable corresponding to $b_3 \in G$ 
and let $\chi_1$, $\chi_2$ be the basis of $\bM_\C$ 
dual to $b_1, b_2$. 
We have 
\[
\chi_1 = z \vartheta_1, \quad \chi_2 = z \vartheta_2.  
\]

\begin{figure}[htbp]
\begin{center} 
\begin{picture}(400,115)(0,5)
\path(100,10)(100,120) 
\path(100,10)(320,120)

\put(50,10){\makebox(0,0){$\cdot$}} 
\put(50,60){\makebox(0,0){$\cdot$}} 
\put(50,110){\makebox(0,0){$\cdot$}} 
\put(150,10){\makebox(0,0){$\cdot$}} 
\put(200,10){\makebox(0,0){$\cdot$}} 
\put(200,60){\makebox(0,0){$\cdot$}} 
\put(250,10){\makebox(0,0){$\cdot$}} 
\put(250,60){\makebox(0,0){$\cdot$}} 
\put(300,10){\makebox(0,0){$\cdot$}} 
\put(300,60){\makebox(0,0){$\cdot$}} 
\put(300,110){\makebox(0,0){$\cdot$}} 

\put(100,10){\makebox(0,0){\circle*{3}}}
\put(100,60){\makebox(0,0){\circle*{3}}} 
\put(100,110){\makebox(0,0){\circle*{3}}} 
\put(150,60){\makebox(0,0){\circle*{3}}} 
\put(150,110){\makebox(0,0){\circle*{3}}} 
\put(200,60){\makebox(0,0){\circle*{3}}} 
\put(200,110){\makebox(0,0){\circle*{3}}} 
\put(250,110){\makebox(0,0){\circle*{3}}}
\put(300,110){\makebox(0,0){\circle*{3}}}  

\put(90,10){\makebox(0,0){$\boldsymbol{0}$}} 
\put(90,60){\makebox(0,0){$b_1$}} 
\put(210,55){\makebox(0,0){$b_2$}}
\put(212,112){\makebox(0,0){$2b_3$}}

\put(78,35){\makebox(0,0){${}^{z\vartheta_1- \frac{ z}{2} y\partial_y}$}} 
\put(140,15){\rotatebox{27}{${}^{z\vartheta_2- \frac{z }{2} y\partial_y}$}} 
\put(92,10){\rotatebox{45}{
$\begin{CD}@>{\phantom{ABC}z \partial_y\phantom{ABC}}>>\end{CD}$}}
\put(142,60){\rotatebox{45}{
$\begin{CD}@>{\phantom{AB}z\partial_y \phantom{ABCD}}>>\end{CD}$}}
\put(95,62){\rotatebox{27}{
$\begin{CD}@>{\phantom{ABCD}z\vartheta_2- \frac{z}{2} y\partial_y 
\phantom{ABC}}>>\end{CD}$}}  
\put(198.2,95){\makebox(0,0){
$\begin{CD} @AAA \end{CD}$}}
\put(200,65){\line(0,1){20}} 
\put(222,84){\makebox(0,0){${}^{z\vartheta_1- \frac{z}{2} y\partial_y}$}} 

\Thicklines 
\put(100,10){\vector(0,1){50}} 
\put(100,10){\vector(2,1){100}}

\end{picture}
\end{center} 
\caption{$\cM_{\rm red}(\bSigma_{[\C^2/\mu_2]},\{(1,1)\})$} 
\label{fig:A1_Dmod} 
\end{figure} 

We have the following relation 
in $\cM_{\rm red}(\bSigma_{[\C^2/\mu_2]},G)$, illustrated in Figure \ref{fig:A1_Dmod}):
\[
(z\partial_y )^2 \bunit_0 
= \left(z\vartheta_1 - \frac{1}{2} z y\partial_y \right)
\left(z\vartheta_2 - \frac{1}{2} z y\partial_y\right) 
\bunit_0
\]
Thus, if we invert $4 - y^2$, 
\[
\bunit_{2 b_3} = 
(z\partial_y)^2 \bunit_0 
= \frac{4 \chi_1 \chi_2}{4- y^2} \bunit_0 + 
\frac{(z-2(\chi_1+\chi_2))y}{4-y^2} \bunit_{b_3}. 
\]
The elements $\bunit_0$, $\bunit_{b_3}$ generate 
$\cM_{\rm red}(\bSigma_{[\C^2/\Z_2]},G)[(4-y^2)^{-1}]$ 
freely over $\C[z,\chi_1,\chi_2,y,(4-y^2)^{-1}]$. 
In an analytic neighbourhood of $y=0$, we define 
\[
\hbunit_{b_3}:= \sqrt{1-(y^2/4)} \bunit_{b_3}
\] 
and make 
the co-ordinate change $y = 2 \sin(\theta/2)$. 
In the basis $\{\bunit_0,\hbunit_{b_3}\}$, 
the action of $z (\partial/\partial \theta)$ 
is represented by the following $z$-independent matrix: 
\[
\begin{bmatrix}
0 & \chi_1 \chi_2 \\ 
1 & - (\chi_1 + \chi_2) \sin(\frac{\theta}{2})  
\end{bmatrix}.  
\]
The basis $\{\bunit_0,\hbunit_{b_3}\}$ corresponds to 
$\{1, \unit_{b_3}\} 
\subset H^2_{\CR, \T}([\C^2/\mu_2])$ under 
the mirror isomorphism,
and the mirror map is given by $\tau(y) = \theta \unit_{b_3}$. 
The above matrix gives the quantum multiplication 
by $\unit_{b_3}$. 

\subsubsection{The case 
where $\frX=\PP^2$ and $G\neq \varnothing$} 
We take $\bN = \Z^2$, $b_1=(1,0)$, $b_2=(0,1)$, 
$b_3 =(-1,-1)$ and $G=\{b_4\}$ with $b_4 = (1,1) = b_1 + b_2$. 
Let $Q$ be the variable corresponding to $(1,1,1) \in \Laa 
= \LL = \Z(1,1,1)$ 
and let $y$ be the variable corresponding to $b_4\in G$. 
We have
\[
\chi_1 = z \vartheta_1 - z \vartheta_3, \quad 
\chi_2 = z \vartheta_2 - z \vartheta_3. 
\]
In the reduced fan $D$-module $\cM_{\rm red}(\bSigma_{\PP^2}, G)$, 
we have the relation 
\begin{align*} 
\left( (z\vartheta_2 - z y\partial_y) (z \vartheta_1 - z y\partial_y) 
- z \partial_y \right) \bunit_0 & =0, \\
\left( z\vartheta_3 (z\vartheta_2 - z y\partial_y) (z \vartheta_1 - z y\partial_y) 
- Q \right) \bunit_0 & = 0. 
\end{align*} 
Let us consider the non-equivariant limit where $\chi_1=\chi_2=0$.  
Then we have $z \vartheta := z\vartheta_1 = z\vartheta_2 = z \vartheta_3
= z Q (\partial/\partial Q)$. 
We can see that the $D$-module 
$\cM_{\rm red}(\bSigma_{\PP^2},G)/(\chi_1,\chi_2)$
is of rank $4$ at $y\neq 0$. 
On the other hand, the $y$-adic completion of this $D$-module 
has a basis $\{\bunit_0, z\vartheta \bunit_0, (z\vartheta)^2\bunit_0\}$ 
over $\C[z,Q][\![y]\!]$ and is isomorphic to the quantum $D$-module 
of $\PP^2$. 
For example, $z \partial_y \bunit_0$ can be expressed 
as a linear combination of these basis elements, by using the above two equations 
recursively. 

\section{The higher residue pairing and the Poincar\'e pairing match}
\label{sec:pairing} 

We now construct a version of K.~Saito's \emph{higher residue pairing} 
\cite{SaitoK:higherresidue} 
on our (equivariant) Gauss--Manin system, and show that it matches 
the (equivariant) Poincar\'e pairing on quantum cohomology 
under the mirror isomorphism in Theorem \ref{thm:mirror_isom}. 
As in \S \ref{subsec:unfolding}, 
we fix a finite subset $G\subset (\bN\cap |\Sigma|)  
\setminus \{b_1,\dots,b_m\}$ and consider the unfolding 
$F_\chi(x;y)$ associated with $G$. We again set 
$S := \{b_1,\dots,b_m\} \cup G$. 

\subsection{Critical points} 
We start with a description of critical points of $F_\chi$. 
The (logarithmic) critical scheme of $F_\chi$ is defined to be the formal 
spectrum of the Jacobian ring 
\[
\Jac(F_\chi) = 
\C\{\OO_+\}[\![y]\!][\chi]\bigg/\left\langle x_i \parfrac{F_\chi}{x_i} : 
1\le i\le n \right\rangle  
\]
where $x_i \parfrac{F_\chi}{x_i} = \sum_{\bk \in S} y_\bk k_i w_\bk - \chi_i$, 
with $k_i$ being the $i$th component of $\overline{\bk} \in \bN_\R \cong \R^n$.  
The mirror isomorphism $\Theta$ 
in Theorem \ref{thm:mirror_isom} induces, at $z=0$, an isomorphism 
between $\Jac(F_\chi)$ and 
$H^*_{\CR,\T}(\frX)\otimes_{R_\T} 
R_\T[\![\Laa_+]\!][\![y]\!]$ 
as $R_\T[\![\Laa_+]\!][\![y]\!]$-modules. 
In particular, $\Jac(F_\chi)$ is a free module 
over $R_\T[\![\Laa_+]\!][\![y]\!]$ of rank $\dim H^*_{\CR}(\frX)$. 
Set $N := \dim H^*_{\CR}(\frX)$. 
Let $\overline{S}_\T$ be the algebraic closure of the fraction field $S_\T$ of $R_\T$. 
We show that, after base change to 
$\overline{S}_\T[\![\Laa_+]\!][\![y]\!]$, the critical 
scheme consists of $N$ distinct points over $\overline{S}_\T[\![\Laa_+]\!][\![y]\!]$, 
each of which is characterized by its limit at $Q=0, y =y^*$ 
-- see \eqref{eq:specialization_y} for $y^*$. 
We write $\Jac(F_\chi)_{\overline{S}_\T} 
:= \Jac(F_\chi) \otimes_{R_\T[\![\Laa_+]\!][\![y]\!]} 
{\overline{S}_\T}[\![\Laa_+]\!][\![y]\!]$ for the base change. 

\begin{notation} 
\label{nota:fixed_points}
Recall that maximal cones $\sigma\in \Sigma$ are in one-to-one 
correspondence with 
$\T$-fixed points $z_\sigma$ of $\frX$. 
We write $u_i(\sigma) \in H^2_\T(\pt) = \bM_\Q$ 
for the restriction of the toric divisor class $u_i$ to the fixed point $z_\sigma$. 
We set $\bN(\sigma) := \bN/\sum_{i\in \sigma} \Z b_i$; this gives the 
orbifold isotropy group at the fixed point $z_\sigma$. 
We also write $\Sigma(n)$ for the set of 
$n$-dimensional (i.e.~maximal) cones in $\Sigma$. 
\end{notation} 

\begin{lemma}
\label{lem:critical_points} 
{\rm (1)}  
The ring $\Jac(F_\chi)_{\overline{S}_\T}$ is isomorphic to 
${\overline{S}_\T}[\![\Laa_+]\!][\![y]\!]^{\oplus N}$ as an 
${\overline{S}_\T}[\![\Laa_+]\!][\![y]\!]$-algebra, 
with $N= \dim H^*_{\CR,\T}(\frX)$. 
In other words, the critical scheme $\Spf(\Jac(F_\chi)_{\overline{S}_\T})$ 
consists of $N$ distinct points 
over ${\overline{S}_\T}[\![\Laa_+]\!][\![y]\!]$. 

{\rm (2)} For a maximal cone $\sigma$, define $\Crit(\sigma)$ 
to be the set:  
\[
\Crit(\sigma) := \left\{ c= (c_\bk) \in {\overline{S}_\T}^{\bN \cap \sigma} : 
\text{$c_\bk c_\bl = c_{\bk+\bl}$, $c_{b_i} = u_i(\sigma)$ for all $i\in \sigma$}
\right\},  
\] 
where we set $\bN \cap \sigma = \{ \bk\in \bN : \overline{\bk} \in \sigma\}$. 
Then $\Crit(\sigma)$ is a torsor over the character group 
$\hbN(\sigma)= \Hom(\bN(\sigma),\C^\times)$ 
of $\bN(\sigma)$. 

{\rm (3)} 
Note that a critical point $p$ over ${\overline{S}_\T}[\![\Laa_+]\!][\![y]\!]$ 
is by definition 
an ${\overline{S}_\T}[\![\Laa_+]\!][\![y]\!]$-algebra 
homomorphism $\Jac(F_\chi)_{\overline{S}_\T}\to 
{\overline{S}_\T}[\![\Laa_+]\!][\![y]\!]$, 
$[w_\bk] \mapsto w_\bk(p)$. For each maximal cone $\sigma\in \Sigma$ 
and an element $c\in \Crit(\sigma)$, there exists a unique critical point $p$ 
such that 
\[
w_\bk(p)\Big|_{Q=0, y=y^*} = \begin{cases} c_\bk 
&  \bk \in \bN\cap \sigma  \\ 
0 & \text{otherwise}. 
\end{cases} 
\]
This gives a bijection between critical points over 
${\overline{S}_\T}[\![\Laa_+]\!][\![y]\!]$ 
and the set $\bigcup_{\sigma\in \Sigma(n)} \Crit(\sigma)$. 
\end{lemma}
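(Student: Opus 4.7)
The plan is to handle Part~(2) directly and then dispatch Parts~(1) and~(3) together via a deformation argument founded on Theorem~\ref{thm:qcoh_presentation}. For Part~(2), set $N_\sigma = |\bN(\sigma)|$ and observe that for each $v \in \Bx(\sigma)$ the class of $v$ in $\bN(\sigma)$ is killed by $N_\sigma$, so $N_\sigma v = \sum_{i \in \sigma} n_{v,i} b_i$ in $\bN$ for some integers $n_{v,i}$; choosing any $N_\sigma$-th root $c_v \in \overline{S}_\T$ of $\prod_{i \in \sigma} u_i(\sigma)^{n_{v,i}}$ and extending multiplicatively along the decomposition $\bN \cap \sigma = \bigsqcup_{v \in \Bx(\sigma)}\bigl(v + \sum_{i \in \sigma} \Z_{\ge 0} b_i\bigr)$ produces an element of $\Crit(\sigma)$. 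The torsor structure is then formal: the ratio $c/c'$ of two elements of $\Crit(\sigma)$ is multiplicative and trivial on each $b_i$ with $i \in \sigma$, so descends to a character of $\bN(\sigma)$, and conversely any character of $\bN(\sigma)$ acts on $\Crit(\sigma)$ in the obvious way.

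For Parts~(1) and~(3), the key input is Theorem~\ref{thm:qcoh_presentation}(a), which identifies $\Jac(F_\chi)$ with the pullback of big equivariant quantum cohomology along the mirror map $\tau$; consequently $\Jac(F_\chi)$ is free of rank $N = \dim H^*_{\CR}(\frX)$ over $R_\T[\![\Laa_+]\!][\![y]\!]$. I would base-change to $\overline{S}_\T[\![\Laa_+]\!][\![y]\!]$ and reduce modulo the maximal ideal of this complete local ring; the special fibre becomes the classical Chen--Ruan ring $H^*_{\CR,\T}(\frX) \otimes_{R_\T} \overline{S}_\T$, since $\tau(y^*)|_{Q=0} = 0$ forces the quantum product there to collapse to the orbifold cup product. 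Equivariant localisation sector-by-sector (the $\T$-fixed locus of $X_v$ being $\{z_\sigma : \bar v \in \sigma\}$) then splits this fibre as a product of $\sum_{\sigma \in \Sigma(n)} |\Bx(\sigma)| = N$ copies of $\overline{S}_\T$. Henselicity of $\overline{S}_\T[\![\Laa_+]\!][\![y]\!]$ lifts the $N$ orthogonal idempotents uniquely, which yields the algebra isomorphism claimed in Part~(1).

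To match the $N$ sections of the critical scheme with $\bigsqcup_\sigma \Crit(\sigma)$ in Part~(3), I would analyse the critical equations at $Q = 0,\, y = y^*$ directly: they read $\sum_{j=1}^m (b_j)_i w_j = \chi_i$ together with $w_{\bk_1} w_{\bk_2} = 0$ whenever $\bk_1, \bk_2$ share no common cone of $\Sigma$. For a geometric closed point $p$, define $\sigma(p)$ as the minimal cone containing every $\bar{\bk}$ with $w_\bk(p) \ne 0$; algebraic independence of $\chi_1, \ldots, \chi_n$ in $\overline{S}_\T$ forces $\sigma(p)$ to be maximal (otherwise $\chi$ would lie in the proper $\Q$-subspace spanned by $\{\bar b_j\}_{j \in \sigma(p)}$), and uniqueness of the basis expansion then forces $w_{b_j}(p) = u_j(\sigma(p))$ for $j \in \sigma(p)$. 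The remaining values $(w_\bk(p))_{\bk \in \bN \cap \sigma(p)}$ tautologically define an element of $\Crit(\sigma(p))$, and every element of $\bigsqcup_\sigma \Crit(\sigma)$ arises exactly once this way. Hensel's lemma then promotes each discrete critical point to a section over the full base.

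The main obstacle will be the clean verification that $\Jac(F_\chi)_{\overline{S}_\T}$ is \'etale over $\overline{S}_\T[\![\Laa_+]\!][\![y]\!]$ despite $\C\{\OO_+\}$ being infinite-dimensional over $\C$. My plan routes the finiteness claim through Theorem~\ref{thm:qcoh_presentation}, where the rank $N$ is manifest; a more self-contained alternative would treat each stratum $\Spec \C[\bN \cap \sigma]/(w_{b_j} - u_j(\sigma))_{j \in \sigma}$ as a twisted group algebra over $\bN(\sigma)$ and diagonalise it via the Fourier transform on $\bN(\sigma)$, which is essentially the same mechanism expressed combinatorially.
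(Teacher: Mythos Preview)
Your proposal is correct and close to the paper's argument, but there is one structural difference worth flagging. Both you and the paper use the mirror isomorphism to obtain freeness of $\Jac(F_\chi)$ of rank $N$, and both reduce (1) and (3) to the analysis of the central fibre at $Q=0$, $y=y^*$. From there, you identify the central fibre with $H^*_{\CR,\T}(\frX)\otimes_{R_\T}\overline{S}_\T$ via Theorem~\ref{thm:qcoh_presentation} and then invoke equivariant localisation to split it into $N$ factors; the paper instead writes down the Stanley--Reisner-type presentation of the central fibre directly (which is, of course, the same ring) and classifies $\overline{S}_\T$-algebra homomorphisms by hand, arriving at the same description via $\bigcup_\sigma\Crit(\sigma)$ and matching the cardinality against the rank. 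Your analysis of the critical equations in Part~(3) is essentially identical to the paper's. Your treatment of Part~(2) spells out what the paper dismisses as obvious. The trade-off: your route is more conceptual but leans on the full ring statement of Theorem~\ref{thm:qcoh_presentation}, whereas the paper's hands-on computation of the central fibre needs only freeness and is in that sense slightly more self-contained; your ``combinatorial alternative'' in the last paragraph is effectively what the paper actually does.
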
 
\begin{proof} 
Part (2) is obvious. 
Note that $\Jac(F_\chi)_{\overline{S}_\T}$ is a free 
${\overline{S}_\T}[\![\Laa_+]\!][\![y]\!]$-module 
of rank $N$. Therefore $\Jac(F_\chi)_{\overline{S}_\T}$ 
is a direct sum of copies of ${\overline{S}_\T}[\![\Laa_+]\!][\![y]\!]$ 
as a ring if and only if the restriction 
to $Q=0, y=y^*$
\[
\Jac(F_\chi)_{{\overline{S}_\T},0} := 
\Jac(F_\chi)_{\overline{S}_\T} 
\otimes_{{\overline{S}_\T}[\![\Laa_+]\!][\![y]\!]} {\overline{S}_\T} 
\] 
is a direct sum of copies of ${\overline{S}_\T}$ as a ring. 
To establish (1) and (3), therefore, it suffices to prove that  
$\Spec(\Jac(F_\chi)_{{\overline{S}_\T},0})$ is a finite set of reduced 
points and equals $\bigcup_\sigma \Crit(\sigma)$. 
Note that we have $F_\chi|_{Q=0,y=y^*} = 
w_1+ \cdots + w_m - \sum_{i=1}^n \chi_i \log x_i$ and 
\[
\Jac(F_\chi)_{{\overline{S}_\T},0} \cong  
\frac{\bigoplus_{\bk\in \bN\cap |\Sigma|} {\overline{S}_\T} w_\bk} 
{\langle \chi_i - \sum_{j=1}^m b_{j,i}w_j : 1\le i \le n 
\rangle} 
\]
where the product structure on $\bigoplus_{\bk \in \bN \cap |\Sigma|} 
{\overline{S}_\T} w_\bk$ is defined as in \eqref{eq:equivariant_CR_product} 
(the Stanley--Reisner presentation of $H^*_{\CR,\T}(\frX)$),  
but replacing $\phi_\bk$ with $w_\bk$, 
and $b_{j,i}$ denotes the $i$th component of 
$\overline{b}_j \in \bN_\R \cong \R^n$. 
Therefore an ${\overline{S}_\T}$-algebra homomorphism 
$\Jac(F_\chi)_{{\overline{S}_\T},0}\to {\overline{S}_\T}$ 
is specified by a tuple $(c_\bk) \in {\overline{S}_\T}^{\bN\cap |\Sigma|}$ 
satisfying the conditions: 
\[
c_\bk c_\bl = \begin{cases} 
c_{\bk+\bl}  & \text{if $\bk,\bl$ are in the same cone of $\Sigma$}; \\
0 & \text{otherwise} 
\end{cases} 
\quad \text{and} 
\quad 
\chi_i = \sum_{j=1}^m b_{i,j} c_{b_j}. 
\]
The first condition implies that the support $\{\bk : c_{\bk} \neq 0\}$ 
has to be contained in some maximal cone $\sigma$ of $\Sigma$. 
The second condition then determines $c_{b_1},\dots,c_{b_m}$ 
uniquely. 
Notice that $u_j(\sigma) = 0$ for $j \notin \sigma$ since 
the toric divisor $\{Z_j=0\}$ does not pass through 
the fixed point $z_\sigma$, and that 
$\sum_{j=1}^m b_{j,i} u_j = \chi_i$ by the description of  the 
$R_\T$-module structure of $H^*_{\CR,\T}(\frX)$ in \S \ref{subsec:inertia}. 
It follows that $c_{b_j} = 0$ for $j\notin \sigma$ and 
$c_{b_j} = u_j(\sigma)$ for $j\in \sigma$. 
Thus closed points of $\Spec(\Jac(F_\chi)_{{\overline{S}_\T},0})$ correspond 
bijectively with elements of $\bigcup_\sigma \Crit(\sigma)$. 
Since we have $\dim_{\overline{S}_\T} \Jac(F_\chi)_{{\overline{S}_\T},0} =N = 
\# \bigcup_\sigma \Crit(\sigma) $, 
$\Spec (\Jac(F_\chi)_{{\overline{S}_\T},0})$ consists only of reduced 
points and is identified with $\bigcup_\sigma \Crit(\sigma)$. 
\end{proof} 

\begin{remark} 
In the above proof, we have shown that 
\begin{equation} 
\label{eq:chi_u_sigma}
\chi_i = \sum_{j=1}^m b_{j,i} u_j(\sigma) = \sum_{j\in \sigma} b_{j,i} u_j(\sigma)
\end{equation}
for any maximal cone $\sigma$. 
\end{remark} 
We study the $Q\to 0$, $y\to y^*$ asymptotics of 
critical values of $F_\chi$. 
The problem here is that 
the critical value $F_\chi(p)$ of $F_\chi$ does not lie in the ring 
$\overline{S}_\T[\![\Laa_+]\!][\![y]\!]$ because 
of the $\log x_i$ term. 
Let $p$ be a critical point corresponding to 
$c =(c_\bk)_{\bk\in \bN\cap \sigma} \in 
\Crit(\sigma)$, where $\sigma$ is a maximal cone of $\Sigma$, 
as in Lemma \ref{lem:critical_points}.  
We extend the function $\bk \mapsto c_\bk\in \overline{S}_\T$ 
for arbitrary $\bk \in \bN$ by requiring that 
$c_\bk c_\bl = c_{\bk+\bl}$ for all $\bk,\bl \in \bN$. 
This is possible since $c_\bk \neq 0$ for all $\bk \in \bN \cap \sigma$. 
Recall that we chose a decomposition 
$\bN \cong \Z^n \times \bN_{\rm tor}$ in \S\ref{subsec:LG}. 
We write $\be_i$, $1\le i\le n$, for the element of $\bN$ 
corresponding to the $i$th basis vector of $\Z^n$. 
Let $\overline\varsigma_\sigma \colon \bN_\Q \to \Q^m$ 
denote the splitting of the fan sequence defined by 
$\overline{\varsigma}_\sigma(b_i) = e_i$ for $i\in \sigma$. 
Then we have 
\begin{equation} 
\label{eq:x_i_sigmachart} 
x_i = w^{(\overline{\varsigma}(\be_i),\be_i)} 
= Q^{(\overline{\varsigma} - \overline{\varsigma}_\sigma)(\be_i)}
w^{(\overline{\varsigma}_\sigma(\be_i),\be_i)}
\end{equation} 
and 
\[
w^{(\overline{\varsigma}_\sigma(\be_i),\be_i)}(p) =c_{\be_i} + \frakm^G 
\overline{S}_\T[\![\Laa_+]\!][\![y]\!] 
\]
by the extension of the definition of $c_\bk$ above. 
(Recall that $\frakm^G$ is the ideal corresponding to $Q=0$, $y=y^*$ 
introduced before Proposition \ref{pro:Loc0}.) 
Therefore we can decompose the critical value $F_\chi(p)$ as 
\[
F_\chi(p) = F^\cl_\chi(p) + F^\qu_\chi(p)  
\]
where the classical part $F^\cl_\chi(p)$ given by 
\[
F^\cl_\chi(p) = \sum_{j\in \sigma} c_{b_j} - \sum_{i=1}^n 
\chi_i \left( \log Q^{(\overline\varsigma-\overline\varsigma_\sigma)(\be_i)} 
+ \log c_{\be_i} \right) 
\]
and the quantum part $F^\qu_\chi(p)$ belongs to $\frakm^G 
\overline{S}_\T[\![\Laa_+]\!][\![y]\!]$: 
\begin{equation}
\label{eq:qu_crit} 
F^\qu_\chi(p) = \sum_{j\in S} y_\bk w_\bk(p) 
-\sum_{j\in \sigma} c_{b_j} 
+ \sum_{i=1}^n \chi_i 
\log(c_{\be_i}^{-1}w^{(\varsigma_\sigma(\be_i),\be_i)}(p)). 
\end{equation} 
\begin{lemma} 
\label{lem:cl_criticalvalue} 
Let $p$ be a critical point corresponding to $c\in \Crit(\sigma)$, 
where $\sigma$ is a maximal cone of $\Sigma$. 
Write $b_j = \sum_{i=1}^n b_{j,i} \be_i +\zeta_j$ with 
$\zeta_j \in \bN_{\rm tor}$.  
Then we have 
\begin{align*} 
F^\cl_\chi(p) & = \sum_{j\in \sigma} 
\left (u_j(\sigma) - u_j(\sigma) 
\log \frac{u_j(\sigma)}{c_{\zeta_j}} \right)
+ \log Q^{\X(\sigma)} 
\end{align*} 
where $\X \in H^2_\T(X,\Q) \otimes \LL 
= \Hom(\LL_\Q^\star,H^2_\T(X,\Q))$ 
denotes the element corresponding to the splitting 
$\LL_\Q^\star \to (\Q^m)^\star = H^2_\T(X,\Q)$ 
dual to $\varsigma$ (in the sense explained before Proposition \ref{pro:Loc_diffeq}), 
and $\X(\sigma)\in \bM_\Q \otimes \LL$ 
denotes the restriction of $\X$ to the $\T$-fixed point 
$z_\sigma\in \frX$. 
\end{lemma}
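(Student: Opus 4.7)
The plan is to match, piece by piece, the explicit expression for $F_\chi^\cl(p)$ displayed just before the lemma against the claimed right-hand side. That expression has three terms: $\sum_{j\in\sigma} c_{b_j}$, $-\sum_i \chi_i \log c_{\be_i}$, and $-\sum_i \chi_i \log Q^{(\overline{\varsigma}-\overline{\varsigma}_\sigma)(\be_i)}$. The first gives $\sum_{j\in\sigma} u_j(\sigma)$ at once from Lemma \ref{lem:critical_points}(3), since $c_{b_j}=u_j(\sigma)$ for $j\in\sigma$. The remaining work is to show that the second term equals $-\sum_{j\in\sigma} u_j(\sigma)\log(u_j(\sigma)/c_{\zeta_j})$ and that the third equals $\log Q^{\X(\sigma)}$.

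For the second term I would use the multiplicative extension $c_\bk c_\bl = c_{\bk+\bl}$ of $c$ to all of $\bN$, which is available because $c_\bk\neq 0$ for every $\bk \in \bN\cap\sigma$. Applying this to $b_j = \sum_i b_{j,i}\be_i + \zeta_j$ yields $\prod_i c_{\be_i}^{b_{j,i}} = c_{b_j}/c_{\zeta_j} = u_j(\sigma)/c_{\zeta_j}$, so $\sum_i b_{j,i}\log c_{\be_i} = \log(u_j(\sigma)/c_{\zeta_j})$. Substituting the relation $\chi_i = \sum_{j\in\sigma} b_{j,i} u_j(\sigma)$ from \eqref{eq:chi_u_sigma} and switching the order of summation gives the desired identity $\sum_i \chi_i \log c_{\be_i} = \sum_{j\in\sigma} u_j(\sigma)\log(u_j(\sigma)/c_{\zeta_j})$.

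For the third term I would unwind the definition of $\X$. By the convention fixed before Proposition \ref{pro:Loc_diffeq}, the splitting $\xi \mapsto \hxi$ of the divisor sequence is dual to $\overline{\varsigma}$; that is, $\hxi \in H^2_\T(X,\Q)=(\Q^m)^\star$ is the unique lift of $\xi\in \LL^\star_\Q$ vanishing on $\overline{\varsigma}(\bN_\Q)$. Since $u_j(\sigma) = \overline{\varsigma}_\sigma^\star(u_j)$, restriction of cohomology classes to $z_\sigma$ is the transpose $\overline{\varsigma}_\sigma^\star\colon(\Q^m)^\star \to \bM_\Q$. Viewing $\X(\sigma)\in \bM_\Q\otimes\LL_\Q$ as a linear map $\LL^\star_\Q \to \bM_\Q$ and using both $\hxi\circ\overline{\varsigma}=0$ and $\hxi|_\LL = \xi$, one identifies $\langle \X(\sigma),\xi\rangle = \xi\cdot(\overline{\varsigma}_\sigma-\overline{\varsigma})(v)$ when evaluated on $v\in\bN_\Q$. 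In coordinates this gives $\X(\sigma) = -\sum_i \chi_i \otimes (\overline{\varsigma}-\overline{\varsigma}_\sigma)(\be_i)$, and the claim follows from the interpretation of $\log Q^{(-)}$ as the $\bM_\Q$-valued pairing with $\log Q\in \LL^\star_\Q$.

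The main obstacle is step (iii): the notational bookkeeping needed to interpret $\log Q^{\X(\sigma)}$ as a $\bM_\Q$-valued formal logarithm and to keep straight the identifications $H^2_\T(X,\Q) \otimes \LL \cong \Hom(\LL^\star_\Q, H^2_\T(X,\Q))$ and $\bM_\Q\otimes\LL_\Q \cong \Hom(\LL^\star_\Q,\bM_\Q)$. Once the duality between the two splittings is set up carefully, the three computations assemble into the claimed formula; the torsion contribution appears cleanly because $\overline{\varsigma}$ and $\overline{\varsigma}_\sigma$ differ only by an element of $\LL_\Q$ on which $\hxi$ acts as $\xi$.
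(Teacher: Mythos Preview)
Your proposal is correct and follows essentially the same approach as the paper: both split $F^\cl_\chi(p)$ into the same three pieces, identify $\sum_{j\in\sigma} c_{b_j}=\sum_{j\in\sigma}u_j(\sigma)$, rewrite $\sum_i\chi_i\log c_{\be_i}$ via \eqref{eq:chi_u_sigma} and the multiplicativity $c_{b_j}=c_{\zeta_j}\prod_i c_{\be_i}^{b_{j,i}}$, and verify $\X(\sigma)=\sum_i\chi_i(\overline\varsigma_\sigma-\overline\varsigma)(\be_i)$ by evaluating at $\xi\in\LL_\Q^\star$ using $\hxi\circ\overline\varsigma=0$. Your identification of restriction to $z_\sigma$ with $\overline\varsigma_\sigma^\star$ is a slightly more conceptual phrasing of the paper's explicit use of \eqref{eq:chi_u_sigma} in the last step, but the substance is identical.
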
 
\begin{proof} 
Recall that $c_{b_j} = u_j(\sigma)$ for $j\in \sigma$. 
Using \eqref{eq:chi_u_sigma}, we find that 
\begin{align*} 
\sum_{i=1}^n \chi_i \log c_{\be_i} &= \sum_{i=1}^n \sum_{j\in \sigma} 
u_j(\sigma) b_{j,i} \log c_{\be_i} 
= \sum_{j\in \sigma} u_j(\sigma) 
\log ( c_{\zeta_j}^{-1} u_j(\sigma) ) 
\end{align*} 
where we used $u_j(\sigma) = c_{b_j} =  c_{\zeta_j}\prod_{i=1}^n 
c_{\be_i}^{b_{j,i}}$. It remains to show that 
\begin{equation} 
\label{eq:Xsigma}
\X(\sigma) = \sum_{i=1}^n \chi_i 
(\overline{\varsigma}_\sigma(\be_i) - \overline{\varsigma}(\be_i)). 
\end{equation} 
Evaluating the right-hand side at $\xi \in \LL_\Q^\star$, 
we obtain 
\begin{align*} 
\sum_{i=1}^n \chi_i 
\left( \xi\cdot (\overline{\varsigma}_\sigma- \overline{\varsigma})(\be_i) 
\right) 
& = 
\sum_{i=1}^n \chi_i \left( 
\hxi \cdot \overline{\varsigma}_\sigma(\be_i)  
- \hxi \cdot \overline{\varsigma}(\be_i) \right) \\ 
& = \sum_{j\in \sigma} u_j(\sigma) (\hxi\cdot e_j) = \hxi(\sigma) 
\end{align*} 
where $\hxi\in (\Q^m)^\star \cong H^2_\T(X,\Q)$ denotes the lift of $\xi$ 
with respect to the splitting $\LL_\Q^\star \to (\Q^m)^\star$ 
and 
$\hxi(\sigma)\in \bM_\Q$ denotes the restriction of $\hxi$ to 
$z_\sigma$. We also used \eqref{eq:chi_u_sigma} and 
the fact that $\hxi$ vanishes on the image of $\overline\varsigma$. 
Evaluation of the left-hand side at $\xi$ gives the same answer, 
and the conclusion follows. 
\end{proof} 

We also study the limit of the Landau--Ginzburg potential 
at the central fiber $Q=0$, $y=y^*$. 
The fiber at $Q=0$ of the total space $\Spec \C[\OO_+]$ of the mirror 
is reducible and decomposed as follows: 
\[
\Spec \C[\OO_+]\Bigr|_{Q=0} = \bigcup_{\sigma\in \Sigma(n)} 
\bigcup_{\theta \in \Hom(\bN_{\rm tor},\C^\times)} 
A_{\sigma,\theta} 
\]
where $A_{\sigma,\theta} = \Spec \C[(\bN/\bN_{\rm tor}) \cap \sigma]$ 
is an affine toric variety. 
The restriction of $F_\chi$ to the central fiber is ill-defined 
because of the logarithmic term, but we show that 
for each critical point $p$, the difference $F_\chi(x;y)- F_\chi^\cl(p)$ 
has a well-defined limit as $y$ approaches $y^*$ and 
$(x,Q)$ approaches the component 
$A_{\sigma,\theta}$ on which the critical point $p$ lies. 

\begin{lemma}
\label{lem:limit_on_component} 
Let $p$ be a critical point of $F_\chi$ corresponding to 
$c \in \Crit(\sigma)$, where $\sigma$ is a maximal cone of $\Sigma$. 
Let $A_{\sigma,\theta}$ be the component of the central fiber 
determined by the character $\theta$ defined as 
the restriction of $\bk \mapsto c_\bk$ to $\bk\in \bN_{\rm tor}$.  
Then we have 
\[
F_\chi - F^\cl_\chi(p) \Bigr|_{ Q=0, y=y^*, A_{\sigma,\theta}} 
= \sum_{j\in \sigma} 
\left(w_j - u_j(\sigma) - u_j(\sigma) \log\frac{w_j}{u_j(\sigma)} 
\right).  
\]
\end{lemma}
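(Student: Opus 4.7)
The plan is to compute both sides of the desired equality directly in coordinates adapted to the component $A_{\sigma,\theta}$, and then match them using Lemma~\ref{lem:cl_criticalvalue} together with the relation \eqref{eq:x_i_sigmachart} that expresses the torus coordinates $x_i$ in the $\sigma$-chart.

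First I would restrict the unfolded potential $F_\chi(x;y)|_{y=y^*} = \sum_{j=1}^m w_j - \sum_{i=1}^n \chi_i \log x_i$ to the component $A_{\sigma,\theta}$ at $Q=0$. For $j \notin \sigma$ the monomial $w_j = w^{(e_j,b_j)}$ restricts to zero on $A_{\sigma,\theta}$, since $b_j \notin \bN \cap \sigma$, so $\sum_j w_j$ collapses to $\sum_{j\in\sigma} w_j$. For the logarithmic piece, I would substitute \eqref{eq:x_i_sigmachart}, namely $x_i = Q^{(\overline{\varsigma}-\overline{\varsigma}_\sigma)(\be_i)} \xi_i$ with $\xi_i := w^{(\overline{\varsigma}_\sigma(\be_i),\be_i)}$, which is a well-defined coordinate on $A_{\sigma,\theta}$. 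Using identity \eqref{eq:Xsigma} from Lemma~\ref{lem:cl_criticalvalue} this yields
\[
F_\chi\bigr|_{Q=0,\,y=y^*,\,A_{\sigma,\theta}} = \sum_{j\in\sigma} w_j + \log Q^{\X(\sigma)} - \sum_{i=1}^n \chi_i \log \xi_i.
\]

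Next I would subtract $F^\cl_\chi(p)$ as computed in Lemma~\ref{lem:cl_criticalvalue}. The $\log Q^{\X(\sigma)}$ terms cancel, leaving
\[
F_\chi - F^\cl_\chi(p) = \sum_{j\in\sigma} \bigl(w_j - u_j(\sigma)\bigr) - \sum_{i=1}^n \chi_i \log \xi_i + \sum_{j\in\sigma} u_j(\sigma) \log \frac{u_j(\sigma)}{c_{\zeta_j}}.
\]
Comparing with the right-hand side of the lemma, what remains is the identity
\[
\sum_{i=1}^n \chi_i \log \xi_i + \sum_{j\in\sigma} u_j(\sigma) \log c_{\zeta_j} = \sum_{j\in\sigma} u_j(\sigma) \log w_j.
\]
To prove this I would, on the component $A_{\sigma,\theta}$, rewrite $w_j = w^{(e_j,b_j)}$ for $j\in\sigma$. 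Using the decomposition $b_j = \sum_{i=1}^n b_{j,i}\be_i + \zeta_j$, the fact that $\overline{\varsigma}_\sigma$ vanishes on torsion, and that $\overline{\varsigma}_\sigma(b_j) = e_j$ for $j\in\sigma$, I obtain $(e_j,b_j) = (0,\zeta_j) + \sum_i b_{j,i}(\overline{\varsigma}_\sigma(\be_i),\be_i)$ in $\OO$, hence $w_j = w^{(0,\zeta_j)} \prod_i \xi_i^{b_{j,i}}$. Since $\theta$ is the restriction of $\bk \mapsto c_\bk$ to $\bN_{\rm tor}$, we have $w^{(0,\zeta_j)}|_{A_{\sigma,\theta}} = c_{\zeta_j}$, so $\log w_j = \log c_{\zeta_j} + \sum_i b_{j,i}\log \xi_i$. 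Summing against $u_j(\sigma)$ and using \eqref{eq:chi_u_sigma}, i.e.\ $\chi_i = \sum_{j\in\sigma} b_{j,i}u_j(\sigma)$, delivers the desired identity.

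The main technical subtlety, and the step that requires the most care, is the bookkeeping of the torsion factors $c_{\zeta_j}$ and the simultaneous handling of the $\log Q$ divergences: neither $F_\chi$ nor $F^\cl_\chi(p)$ makes sense on its own at the central fiber, and one must verify that the non-convergent pieces cancel exactly once \eqref{eq:Xsigma} is invoked. The classical values $c_{\zeta_j}$ on $\bN_{\rm tor}$ encode the choice of component, and the consistency of the formula ultimately rests on the compatibility of the splitting $\overline{\varsigma}_\sigma$ with the identification of the characters of $\bN_{\rm tor}$ given by $\theta$.
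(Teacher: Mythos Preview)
Your argument is correct and follows essentially the same route as the paper's proof: both use \eqref{eq:x_i_sigmachart} and \eqref{eq:Xsigma} to rewrite the logarithmic term, subtract the expression for $F^\cl_\chi(p)$ from Lemma~\ref{lem:cl_criticalvalue} so that the $\log Q^{\X(\sigma)}$ cancels, and then use \eqref{eq:chi_u_sigma} together with the decomposition $w_j = w^{(0,\zeta_j)} \prod_i \xi_i^{b_{j,i}}$ on $A_{\sigma,\theta}$. The only cosmetic difference is ordering: the paper first converts $\sum_i \chi_i \log \xi_i$ into $\sum_{j\in\sigma} u_j(\sigma)\log(w_j/w^{(0,\zeta_j)})$ and then restricts, whereas you restrict first and establish that same identity at the end; also, be careful that your displayed formula for $F_\chi|_{Q=0,y=y^*,A_{\sigma,\theta}}$ still contains $\log Q^{\X(\sigma)}$, so strictly speaking that equality should be stated before setting $Q=0$ (as you note, the limit only makes sense after the subtraction).
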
 
\begin{proof} 
  Let $p$,~$c$,~$\sigma$,~$\theta$ be as given. 
We have 
\begin{align*}
F_\chi & = \sum_{\bk \in S} y_\bk w_\bk - \sum_{i=1}^n \chi_i 
\log (Q^{(\overline{\varsigma}-\overline{\varsigma}_\sigma)(\be_i)} 
w^{(\overline{\varsigma}_\sigma(\be_i),\be_i)}) \\
& = \sum_{\bk\in S} y_\bk w_\bk  + \log Q^{\X(\sigma)} 
- \sum_{j\in \sigma} u_j(\sigma) \log \frac{w_i}{w^{(0,\zeta_j)}}
\end{align*} 
where we used \eqref{eq:x_i_sigmachart}, \eqref{eq:Xsigma} 
and \eqref{eq:chi_u_sigma}.
(The quantity $\zeta_j \in \bN_{\rm tor}$ here is given in Lemma \ref{lem:cl_criticalvalue}). 
Note that $w_\bk|_{A_{\sigma,\theta}}$ equals $u_j(\sigma)$ if $\bk = b_j \in \sigma$ 
and is zero otherwise; also  
$w^{(0,\zeta_j)}|_{A_{\sigma,\theta}} = c_{\zeta_j}$. 
The conclusion follows easily from this and 
Lemma \ref{lem:cl_criticalvalue}. 
\end{proof}

\subsection{Higher residue pairing via asymptotic expansion} 
\label{subsec:HRP}
We define the higher residue pairing in our setting 
in terms of (formal) asymptotic expansion, following the method of 
Pham \cite[2\`{e}me Partie, 4]{Pham:Lefschetz}. 
We first recall asymptotic expansion in analytic setting. 
Let $f(t) = f(t^1,\dots,t^n)$ be a holomorphic function 
on $\C^n$ with a non-degenerate critical point $p$. 
Let $\Gamma(p)$ denote a 
stable manifold for the Morse function $\Re(f(t))$ associated to $p$
and consider the oscillatory integral: 
\[
\int_{\Gamma(p)} e^{f(t)/z} g(t) dt^1 \cdots dt^n 
\]
with $z<0$ and $g(t)$ a holomorphic function. 
As $z \nearrow 0$, the integral is dominated by the contribution 
near the critical point $t=p$, and 
we obtain its asymptotic expansion by expanding 
the integrand in Taylor series at $p$ (under appropriate 
assumptions on $f$ and $g$). A concrete method 
is as follows: 
we expand the functions $f(t)$, $g(t)$ in 
Taylor series at $p$ as 
\[
f(t) = \sum_{k\ge 0} \frac{1}{k!} \sum_{i_1,\dots,i_k} 
f_{i_1,\dots, i_k}(p) s^{i_1} \cdots 
s^{i_k}, 
\quad 
g(t) = \sum_{k\ge 0} \frac{1}{k!} 
\sum_{i_1,\dots,i_k} g_{i_1,\dots,i_k}(p) s^{i_1} 
\cdots s^{i_k}
\] 
with $s^i = t^i - p^i$, and make a linear change of variables 
$s^i = \sqrt{-z} \sum_{j=1}^n c^i_j v^j$ such that 
\[
\frac{1}{2z} \sum_{i,j} f_{ij}(p) s^i s^j =  - \frac{1}{2}
\sum_{i} (v^i)^2.  
\]
Then the above integral can be expanded as: 
\begin{equation} 
\label{eq:Gaussian}
e^{f(p)/z} 
\frac{(-z)^{n/2}}{\sqrt{\det(f_{ij}(p))}}
\int_{\R^n} 
e^{-\frac{1}{2} \sum_{i} (v^i)^2} 
\bigg( \sum_{\substack{ k,l\ge 0, \\ k\equiv l \bmod 2} } \frac{1}{k!} 
a_{i_1,\dots,i_k}^{(l)} v^{i_1} \cdots v^{i_k} z^{l/2} 
\bigg) 
dv^1 \cdots dv^n 
\end{equation} 
where $\sum_{k,l\ge 0} \frac{1}{k!} 
a_{i_1,\dots,i_k}^{(l)} v^{i_1} \cdots v^{i_k} z^{l/2}$ 
is the expansion of  
\begin{multline*} 
\exp\left( - 
\sum_{k \ge 3} \sum_{i_1,\dots,i_k} \sum_{j_1,\dots,j_k} 
\frac{(-z)^{k/2-1}}{k!} f_{i_1,\dots, i_k}(p) c^{i_1}_{j_1} 
\cdots c^{i_k}_{j_k} v^{j_1} \cdots v^{j_k} \right) \\ 
\times 
\sum_{k\ge 0} \sum_{i_1,\dots,i_k} 
\sum_{j_1,\dots,j_k} \frac{(-z)^{k/2}}{k!} g_{i_1,\dots,i_k}(p) 
c^{i_1}_{j_1} \cdots c^{i_k}_{j_k} v^{j_1} \cdots v^{j_k}. 
\end{multline*} 
By performing the above Gaussian integral termwise, 
we obtain a \emph{formal asymptotic expansion} of the original oscillatory 
integral (note that half-integer powers of $z$ vanish automatically). 
We denote this expansion as 
\[
e^{f(p)/z} (-2\pi z)^{n/2} \Asym_p\left( e^{f(t)/z} g(t) dt \right) 
\] 
so that $\Asym_p(e^{f(t)/z} g(t) dt)$ is of the form: 
\[
\frac{1}{\sqrt{\det(f_{ij}(p))}} \left( g(p) + a_1 z + a_2 z^2+ \cdots \right). 
\]
We remark that the formal asymptotic expansion vanishes if 
$e^{f(t)/z} g(t) dt$ is an exact form (regardless of 
the formal asymptotic expansion being the actual asymptotic 
expansion, in which case the remark is obvious), 
since the integrand of the corresponding Gaussian integral \eqref{eq:Gaussian} 
becomes also exact, and it is straightforward 
to check that the termwise Gaussian integral of a formal exact 
form is zero. 

Note that the above procedure only involves 
the Taylor expansions of the functions $f(t)$, $g(t)$ at $p$  
and an orthogonalization of the Hessian form $(f_{ij}(p))$. 
Therefore we can generalize the above procedure to 
our setting where $f(t)$ is given by $F_\chi(x;y)$ 
and $g(t) dt$ is an element of 
$\C[z]\{\OO_+\}[\![y]\!][\chi] \omega$, 
by using $(\log x_1,\dots,\log x_n)$ as co-ordinates 
$(t^1,\dots,t^n)$ here. 
To be more precise, we work over the ring ${\overline{S}_\T}[\![\Laa_+]\!][\![y]\!]$ 
and consider the asymptotic expansion at one of 
$N$ non-degenerate critical points over ${\overline{S}_\T}[\![\Laa_+]\!][\![y]\!]$ 
from Lemma \ref{lem:critical_points}. 
Let $p$ be a critical point of $F_\chi$ over ${\overline{S}_\T}[\![\Laa_+]\!][\![y]\!]$. 
Then the Taylor expansion of a function in $\C[z]\{\OO_+\}[\![y]\!][\chi]$ 
at $p$ (with respect to the co-ordinates $\log x_1,\dots,\log x_n$) 
is well-defined as formal power series with coefficients in 
${\overline{S}_\T}[z][\![\Laa_+]\!][\![y]\!]$. 
When the critical point $p$ 
corresponds to an element of $\Crit(\sigma)$ 
(under the bijection in Lemma \ref{lem:critical_points}), 
\[
\parfrac{^2 F_\chi}{\log x_i \partial \log x_j}(p)\biggr|_{Q=0,y=y^*} 
= \sum_{k\in \sigma} b_{k,i} b_{k,j} u_k(\sigma) 
\] 
is diagonalizable 
by the matrix $(b_{k,i} \sqrt{u_k(\sigma)})_{k\in \sigma, 1\le i\le n}$, 
and thus the Hessian form is diagonalizable over 
${\overline{S}_\T}[\![\Laa_+]\!][\![y]\!]$. 
Thus we obtain a well-defined map: 
\[
\Asym_p \colon e^{F_\chi/z} \C[z]\{\OO_+\}[\![y]\!][\chi] \omega 
\to {\overline{S}_\T}[\![\Laa_+]\!][\![y]\!][\![z]\!] 
\]
for each critical point $p$ over ${\overline{S}_\T}[\![\Laa_+]\!][\![y]\!]$. 
By the remark above, $\Asym_p(e^{F_\chi/z} \Omega)$ vanishes 
if $e^{F_\chi/z}\Omega$ is exact, and thus $\Asym_p$ 
descends to cohomology: 
\[
\Asym_p \colon e^{F_\chi/z} \GM(F_\chi) \to 
{\overline{S}_\T}[\![\Laa_+]\!][\![y]\!][\![z]\!]. 
\]

\begin{definition} 
\label{def:HRP} 
We define the higher residue pairing $P \colon \GM(F_\chi) 
\times \GM(F_\chi) \to {\overline{S}_\T}[\![\Laa_+]\!][\![y]\!][\![z]\!]$ by 
\[
P(\Omega_1,\Omega_2) = \sum_{p} \overline{\Asym_p 
(e^{F_\chi/z} \Omega_1)} \Asym_p( e^{F_\chi/z}\Omega_2) 
\]
for $\Omega_1,\Omega_2 \in \GM(F_\chi)$, where  $\overline{\Asym_p(e^{F_\chi/z}\Omega_1)} 
= \Asym_p(e^{F_\chi/z} \Omega_1)|_{z\to -z}$, and the sum is over critical 
points $p$ of $F_\chi$ over ${\overline{S}_\T}[\![\Laa_+]\!][\![y]\!]$ 
in Lemma \ref{lem:critical_points}. 
Note that the higher residue pairing is invariant under the Galois group 
$\Gal({\overline{S}_\T}/S_\T)$, which permutes critical points, and thus $P$ takes values in 
$S_\T[\![\Laa_+]\!][\![y]\!][\![z]\!]$. 
(In fact it takes values in $S_\T[z][\![\Laa_+]\!][\![y]\!]$ 
by Theorem \ref{thm:pairings_match}.) 
\end{definition} 

We establish standard properties of the higher residue pairing. 
We first observe that $\Asym_p$ gives another solution 
to the equivariant Gauss--Manin system (cf.~the localization 
map $\Loc$ in \S\ref{subsec:sol_free}). 
We remark that, despite the fact that $e^{F^\qu_\chi(q)/z}$ is 
a formal power series in $z^{-1}$ and that $\Asym_p(\Omega)$ 
is a formal power series in $z$, the product 
$e^{F^\qu_\chi(p)/z} \Asym_p(\Omega)$ is well-defined 
as an element of $\overline{S}_\T(\!(z)\!)[\![\Laa_+]\!][\![y]\!]$ 
because $F^\qu_\chi(p)\in \frakm^G \overline{S}_\T[\![\Laa_+]\!][\![y]\!]$.

\begin{lemma} 
\label{lem:asymp_diffeq}
Let $p$ be a critical point of $F_\chi$ over ${\overline{S}_\T}[\![\Laa_+]\!][\![y]\!]$. 
The map $\Asym_p$ is linear over $R_\T[z][\![\Laa_+]\!][\![y]\!]$  
and satisfies the following differential equations: 
\begin{align*} 
e^{F_\chi^\qu(p)/z} \Asym_p(e^{F_\chi/z}\nabla_{\xi Q\parfrac{}{Q}} \Omega) 
& = 
\left(\xi Q\parfrac{}{Q} + \frac{1}{z}\hxi(\sigma)\right)
\left( e^{F_\chi^\qu(p)/z} \Asym_p(e^{F_\chi/z}\Omega) \right), \\ 
e^{F^\qu_\chi(p)/z} \Asym_p(e^{F_\chi/z}\nabla_{\parfrac{}{y_\bk}} \Omega)
& = 
\parfrac{}{y_\bk} \left( e^{F^\qu_\chi(p)/z} 
\Asym_p(e^{F_\chi/z} \Omega) \right), \\ 
\Asym_p(e^{F_\chi/z}\Gr^{\rm B} \Omega) & = 
\left( z\parfrac{}{z} + \cE^{\rm B} + \frac{n}{2}\right) 
\Asym_p(e^{F_\chi/z}\Omega), 
\end{align*} 
where $\xi \in \LL^\star_\C$, $\hxi\in (\C^m)^\star = H^2_\T(X,\C)$ 
is the lift of $\xi$ 
introduced before Proposition \ref{pro:Loc_diffeq}, 
and $\hxi(\sigma)\in \bM_\C$ denotes the restriction of 
$\hxi$ to the fixed point $z_\sigma \in \frX$. 
Moreover the quantum part $F^\qu_\chi(p)$ of the critical 
value is homogeneous of degree one, i.e.~$\cE^{\rm B} F^\qu_\chi(p) 
= F^\qu_\chi(p)$. 
\end{lemma}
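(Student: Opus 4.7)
The plan is to treat $\Asym_p$ as the formal saddle-point asymptotic expansion of an oscillatory integral $\int_{\Gamma(p)} e^{F_\chi/z}\Omega$ over a Morse stable manifold through $p$. At this heuristic level all four assertions reduce to differentiation under the integral sign, together with the effect of the Gaussian normalization factor $(-2\pi z)^{n/2}$ from equation~\eqref{eq:Gaussian}. The main task is then to verify that the formal procedure defining $\Asym_p$ — Taylor-expand $F_\chi$ and $\Omega$ at $p$, orthogonalize the Hessian, perform termwise Gaussian integrals — commutes with base-parameter differentiation in the appropriate sense.

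The $R_\T$-linearity is immediate: on forms $\Asym_p$ is manifestly $\C[z][\chi][\![\Laa_+]\!][\![y]\!]$-linear, while by equation~\eqref{eq:image_twdR_diff} the $\chi_i$-action on $\GM(F_\chi)$ differs from scalar multiplication by $\chi_i$ only by an exact form, and such forms have vanishing formal asymptotic expansion as recorded before Definition~\ref{def:HRP}. For the two connection identities, the key step is to establish the ``differentiation-under-the-integral'' identity
\[
\Asym_p\bigl(e^{F_\chi/z}(\partial_{\vec v}\Omega + z^{-1}(\vec v F_\chi)\Omega)\bigr)
= e^{-F_\chi(p)/z}\,\vec v\bigl(e^{F_\chi(p)/z}\Asym_p(e^{F_\chi/z}\Omega)\bigr)
\]
for any base vector field $\vec v$ on $(Q,y,\chi)$. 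This is a direct verification on the topological basis $\{w_\bk\omega\}$, since every operation defining $\Asym_p$ is algebraic in the base parameters. Specializing to $\vec v = \partial/\partial y_\bk$ yields the second equation of the lemma immediately, because $F^\cl_\chi(p)$ is $y$-independent by Lemma~\ref{lem:cl_criticalvalue} so that the prefactor $e^{F^\cl_\chi(p)/z}$ cancels. For $\vec v = \xi Q\partial/\partial Q$ one obtains the extra contribution
\[
\xi Q\parfrac{}{Q}F^\cl_\chi(p) = \xi Q\parfrac{}{Q}\log Q^{\X(\sigma)} = \xi\cdot\X(\sigma) = \hxi(\sigma)
\]
by the definition of $\X$ and the calculation in~\eqref{eq:Xsigma}, which produces the $z^{-1}\hxi(\sigma)$ term of the first equation.

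For the grading identity, I would first establish the homogeneity $\cE^{\rm B}F^\qu_\chi(p) = F^\qu_\chi(p)$. A direct computation shows $\cE^{\rm B}F_\chi = F_\chi - \sum_\bk\kappa(\bk)\,y_\bk w_\bk$ where $\kappa := c_1\circ\overline\varsigma \in \bM_\Q$, so evaluating at a critical point and using the critical equations $\sum_\bk k_i y_\bk w_\bk(p) = \chi_i$ gives $\cE^{\rm B}F_\chi(p) = F_\chi(p) - \kappa$. The analogous computation applied to the formula of Lemma~\ref{lem:cl_criticalvalue} yields $\cE^{\rm B}F^\cl_\chi(p) = F^\cl_\chi(p) - \mu$ with $\mu = c_1 - \hat c_1 \in \bM$, and a short calculation using the splitting property $\hat c_1\circ\overline\varsigma = 0$ identifies $\mu$ with $\kappa$; subtracting gives the claimed homogeneity of $F^\qu_\chi(p)$. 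Granted this, the grading equation for $\Asym_p$ follows by a variant of the differentiation identity above applied to $\vec v = \cE^{\rm B}+z\partial/\partial z$, where the $n/2$ shift arises from $(z\partial/\partial z)(-2\pi z)^{n/2}/(-2\pi z)^{n/2} = n/2$. The principal technical obstacle is carrying out the orthogonalization of the Hessian $(\partial^2 F_\chi/\partial\log x_i\,\partial\log x_j)(p)$ smoothly in families over $\overline{S}_\T[\![\Laa_+]\!][\![y]\!]$ and checking that the resulting change of variables behaves compatibly with base-parameter differentiation; this amounts to Hensel-lifting a diagonalization of the central-fiber Hessian and is straightforward given the nondegeneracy guaranteed in the proof of Lemma~\ref{lem:critical_points}.
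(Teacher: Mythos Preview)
Your approach is correct and matches the paper's proof closely for linearity and the two connection identities: both argue by ``differentiation under the integral sign'' (i.e.\ the map $e^{F_\chi/z}\Omega \mapsto e^{F_\chi(p)/z}\Asym_p(e^{F_\chi/z}\Omega)$ commutes with base derivatives), and both extract the $z^{-1}\hxi(\sigma)$ term from $\xi Q\partial_Q$ applied to $\log Q^{\X(\sigma)}$ via Lemma~\ref{lem:cl_criticalvalue}. One minor gap: linearity over $R_\T[z][\![\Laa_+]\!][\![y]\!]$ is not quite the same as linearity over $\C[z][\chi][\![\Laa_+]\!][\![y]\!]$, since in the former the $\chi$-degree may be unbounded as the $(Q,y)$-degree grows; the paper fills this by invoking continuity of $\Asym_p$ in the $\frakm^G$-adic topology.

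Where you diverge from the paper is the grading identity. You establish the homogeneity $\cE^{\rm B}F^\qu_\chi(p)=F^\qu_\chi(p)$ by computing $\cE^{\rm B}F_\chi(p)$ and $\cE^{\rm B}F^\cl_\chi(p)$ separately, obtaining defect terms $\kappa$ and $\mu$ in $\bM_\Q$, and then identifying $\mu=\kappa$ via the splitting. This works, but the paper's route is shorter and more conceptual: it equips the total space $\C[z]\{\OO_+\}[\![y]\!]\omega$ with the grading induced by $e_1^\star+\cdots+e_m^\star\in\Hom(\OO,\Q)$ together with $\deg z=1$, $\deg y_\bk=1-|\bk|$, $\deg\chi_i=1$. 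Then $x_i\partial_{x_i}F_\chi$ is homogeneous of degree~$1$, so the critical equations are homogeneous and hence $\cE^{\rm B}w_\bk(p)=|\bk|\,w_\bk(p)$; the homogeneity of $F^\qu_\chi(p)$ then falls out of its explicit formula~\eqref{eq:qu_crit}. The $n/2$ shift is read off the same way: the orthogonalizing substitution makes the Gaussian variables $v^i$ degree zero, so a degree-$k$ form $\Omega$ yields a degree-$k$ expression $(-2\pi z)^{n/2}e^{F^\qu_\chi(p)/z}\Asym_p(e^{F_\chi/z}\Omega)$, whence $\Asym_p(e^{F_\chi/z}\Omega)$ has degree $k-n/2$. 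Your computation gives an explicit cross-check of the same fact; the paper's degree-tracking argument avoids having to match the defect terms.
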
 
\begin{proof} 
It is clear from the definition that $\Asym_p$ is linear 
over $\C[z][\![\Laa_+]\!][\![y]\!][\chi]$, 
and since $\Asym_p$ is continuous with respect to the $\frakm^G$-adic topology 
(see the discussion before Proposition \ref{pro:Loc0} for $\frakm^G$), it is 
linear over $R_\T[z][\![\Laa_+]\!][\![y]\!]$. 
The first two differential equations follow from:
\begin{itemize}
\item  the fact that 
$e^{F_\chi/z}\Omega \mapsto e^{F_\chi(p)/z} 
\Asym_p(e^{F_\chi/z}\Omega)$ commutes 
with differentiation in the parameters $(y,Q)$;
\item  $\partial_{\vec{v}} ( e^{F_\chi/z}\Omega ) = e^{F_\chi/z} 
\nabla_{\vec{v}} \Omega$; and
\item $\xi Q\parfrac{}{Q} e^{F^\cl_\chi(p)/z} 
= \frac{1}{z} \hxi(\sigma) e^{F^\cl_\chi(p)/z}$, 
which follows from Lemma \ref{lem:cl_criticalvalue}. 
\end{itemize}
Let us establish the third equation. 
Recall that the grading operator on $\C[z]\{\OO_+\}[\![y]\!]\omega$ 
is induced by $e_1^\star + \cdots +e_m^\star \in (\Q^m)^\star 
= \Hom(\OO,\Q)$, $\deg z =1$, $\deg y_\bk = 1-|\bk|$, 
$\deg \omega =0$. 
The potential function $F_\chi(x;y)$ is not homogeneous because 
of the $\log x_i$ term, but the logarithmic derivative $x_i\parfrac{}{x_i} F_\chi(x;y)$ 
is homogeneous of degree $1$. 
Thus the critical point $p$ is homogeneous in the sense that 
$\cE^{\rm B} w_\bk(p) = |\bk|w_\bk(p)$ for all $\bk\in \bN\cap|\Sigma|$, 
and so the quantum part $F^\qu_\chi(p)$ \eqref{eq:qu_crit}  
of the critical value is homogeneous of degree $1$. 
We can also see that the variables $v_i$ appearing 
in the Gaussian integral \eqref{eq:Gaussian} 
is of degree zero. 
Therefore, if $\Omega$ is of degree $k$, 
$(-2\pi z)^{n/2} e^{F^\qu_\chi(p)/z} \Asym_p(e^{F_\chi/z} \Omega)$ 
is of degree $k$, and $\Asym_p(e^{F_\chi/z}\Omega)$ 
is of degree $k-\frac{n}{2}$. The conclusion follows. 
\end{proof} 

In view of Definition \ref{def:HRP}, Lemma \ref{lem:asymp_diffeq} 
implies the following. 

\begin{proposition}
The higher residue pairing satisfies the following properties: 
\begin{enumerate}
\item $P(c(-z) \Omega_1,\Omega_2) = P(\Omega_1,c(z) \Omega_2) 
= c(z) P(\Omega_1, \Omega_2)$ 
for $c(z) \in R_\T[z][\![\Laa_+]\!][\![y]\!]$; 

\item $\xi Q\parfrac{}{Q} P(\Omega_1,\Omega_2) 
= P(\nabla_{\xi Q\parfrac{}{Q}} \Omega_1, \Omega_2) 
+ P(\Omega_1, \nabla_{\xi Q\parfrac{}{Q}} \Omega_2)$ 
for $\xi \in \LL_\C^\star$; 
\item $\parfrac{}{y_\bk} P(\Omega_1,\Omega_2) 
= P(\parfrac{}{y_\bk} \Omega_1, \Omega_2) 
+ P(\Omega_1, \parfrac{}{y_\bk}\Omega_2)$ 
for $\bk\in S$; 
\item $(z\parfrac{}{z} + \cE^{\rm B} + n)
P(\Omega_1,\Omega_2) = P(\Gr^{\rm B} \Omega_1,\Omega_2) 
+ P(\Omega_1, \Gr^{\rm B} \Omega_2)$. 
\end{enumerate}
\end{proposition}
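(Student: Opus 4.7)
The plan is to derive each of (1)--(4) as an immediate corollary of Lemma~\ref{lem:asymp_diffeq}, applied termwise to the definition
\[
P(\Omega_1,\Omega_2) = \sum_p \overline{\Asym_p(e^{F_\chi/z}\Omega_1)}\,\Asym_p(e^{F_\chi/z}\Omega_2).
\]
Write $A_i(p) := \Asym_p(e^{F_\chi/z}\Omega_i) \in \overline{S}_\T[\![\Laa_+]\!][\![y]\!][\![z]\!]$, so that the summand at $p$ is $\overline{A_1(p)}\, A_2(p)$, and observe that the bar operation $z\mapsto -z$ leaves $F^\qu_\chi(p)$, $\hxi(\sigma)$, all equivariant parameters and all parameters $Q$, $y$ fixed; it sends $z\partial_z$ to itself and $c(z)\mapsto c(-z)$ for polynomials in $z$.

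For (1), I would use the $R_\T[z][\![\Laa_+]\!][\![y]\!]$-linearity of $\Asym_p$ asserted in Lemma~\ref{lem:asymp_diffeq}: this gives $A_2(p)$ for the factor $c(z)\Omega_2$ scaled by $c(z)$, and $\overline{A_1(p)}$ for the factor $c(-z)\Omega_1$ scaled by $c(z)$ (since $\overline{c(-z)} = c(z)$); both then pull out of the sum.

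The substantive point is (2) and (3), which have the same shape. The first two equations of Lemma~\ref{lem:asymp_diffeq} can be rewritten, after expanding the Leibniz rule and cancelling $e^{F^\qu_\chi(p)/z}$, as
\[
\Asym_p(e^{F_\chi/z}\nabla_v \Omega) \;=\; v\!\cdot\! A(p) \;+\; \tfrac{1}{z}\,R_p(v)\, A(p),
\]
where $v = \xi Q\parfrac{}{Q}$ with $R_p(v) = \hxi(\sigma) - \xi Q\parfrac{}{Q}F^\qu_\chi(p)$, or $v = \parfrac{}{y_\bk}$ with $R_p(v) = -\parfrac{}{y_\bk}F^\qu_\chi(p)$. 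Since $R_p(v)$ is $z$-independent, the bar operation flips the sign of the correction term, giving $\overline{\Asym_p(e^{F_\chi/z}\nabla_v\Omega_1)} = v\cdot \overline{A_1(p)} - \tfrac{1}{z}R_p(v)\,\overline{A_1(p)}$. Adding the two contributions in $P(\nabla_v\Omega_1,\Omega_2) + P(\Omega_1,\nabla_v\Omega_2)$ makes the $\pm \tfrac{1}{z}R_p(v)$ terms cancel for every $p$, leaving $\sum_p\bigl[(v\overline{A_1(p)})A_2(p) + \overline{A_1(p)}(vA_2(p))\bigr] = v\cdot P(\Omega_1,\Omega_2)$ by the Leibniz rule. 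This is the one spot where attention is needed: making sure the conjugation treats $F^\qu_\chi(p)$ correctly (it is $z$-independent, since $F^\qu_\chi(p)\in \frakm^G\overline{S}_\T[\![\Laa_+]\!][\![y]\!]$), so that the residual terms indeed have opposite signs on the two sides.

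Finally for (4), the third equation of Lemma~\ref{lem:asymp_diffeq} says that conjugation by $\Asym_p\circ(e^{F_\chi/z}\,\cdot)$ intertwines $\Gr^{\rm B}$ with $z\parfrac{}{z}+\cE^{\rm B}+\tfrac{n}{2}$. The operator $z\parfrac{}{z}+\cE^{\rm B}$ commutes with the bar operation (since $z\partial_z$ does), so the same intertwining applies to $\overline{A_1(p)}$; adding the two Leibniz contributions yields $(z\parfrac{}{z}+\cE^{\rm B}+\tfrac{n}{2}) + (z\parfrac{}{z}+\cE^{\rm B}+\tfrac{n}{2})$ acting on $\overline{A_1(p)}A_2(p)$ as a derivation plus $n$ times the identity, which is exactly the statement. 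Summing over critical points $p$ then gives the desired homogeneity relation. No estimate or additional input beyond Lemma~\ref{lem:asymp_diffeq} is required.
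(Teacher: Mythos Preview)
Your proposal is correct and follows exactly the route the paper takes: the paper's entire proof is the single sentence ``In view of Definition~\ref{def:HRP}, Lemma~\ref{lem:asymp_diffeq} implies the following,'' and you have simply unpacked that implication. One cosmetic slip: in your rewriting of the differential equations the signs of the correction terms $R_p(v)$ are reversed (one should get $R_p(v)=\hxi(\sigma)+\xi Q\parfrac{}{Q}F^\qu_\chi(p)$ and $R_p(\partial_{y_\bk})=+\partial_{y_\bk}F^\qu_\chi(p)$), but since the argument only uses that $R_p(v)$ is $z$-independent so that the $\pm\tfrac{1}{z}R_p(v)$ contributions cancel, this does not affect anything.
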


\subsection{The pairings match}
\label{subsec:pairing_match}

\begin{proposition}    \label{pro:Asymp_is_Loc}
Let $p$ be the critical point of $F_\chi$ that corresponds, 
via Lemma~\ref{lem:critical_points}, to $c \in \Crit(\sigma)$ where 
$\sigma \in \Sigma$ is a maximal cone.  We have:
  \begin{equation}
    \label{eq:Asymp_is_Loc}
    e^{F^{\qu}_\chi(p)/z} \Asym_p\big( e^{F_\chi/z} \Omega \big) = 
    \sum_{v \in \Bx(\sigma)} 
\Loc( \Omega)\big|_{(\sigma,v)} \, c_v \, \Delta_{(\sigma,v)}(z) 
  \end{equation}
  for $\Omega \in \GM(F_\chi)$, 
  where $(\cdots)|_{(\sigma,v)}$ denotes the restriction to the fixed point 
  $z_\sigma \in \frX_v$ on the sector $\frX_v\subset I \frX$, and 
  \[
  \Delta_{(\sigma,v)}(z)  = 
\frac{1}{|\bN(\sigma)|}   \prod_{i \in \sigma} 
\frac{1}{\sqrt{u_i(\sigma)}}\exp \left( 
- \sum_{k=2}^\infty \frac{B_k(v_i)}{k(k-1)} 
\left(\frac{z}{u_i(\sigma)}\right)^{k-1}\right)  
  \]
  with $B_k(h)$ the Bernoulli polynomial defined by 
  $\sum_{k=0}^\infty B_k(h) \frac{t^k}{k!} = t e^{ht}/(e^t-1)$ 
  and $v_i = \Psi_i(v)$. 
\end{proposition}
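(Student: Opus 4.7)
The strategy is to view both sides of \eqref{eq:Asymp_is_Loc} as solutions to a common flat connection on the formal base $\hcM$ and to verify the identity on the central fiber $Q=0, y=y^*$. First I would check that both sides satisfy the same ODE system in $(Q, y, z)$. Lemma~\ref{lem:asymp_diffeq} tells us that $e^{F_\chi^\qu(p)/z}\Asym_p(e^{F_\chi/z}\cdot)$ converts the Gauss--Manin connections $\nabla_{\xi Q\partial_Q}$, $\nabla_{\partial_{y_\bk}}$, and $\Gr^{\rm B}$ into $\xi Q\partial_Q + z^{-1}\hxi(\sigma)$, $\partial_{y_\bk}$, and $z\partial_z + \cE^{\rm B} + n/2$ respectively. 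Proposition~\ref{pro:Loc_diffeq} shows $\Loc$ satisfies the analogous equations with $\hxi$ (cup product) in place of $\hxi(\sigma)$ and $\Gr_0$ in place of $n/2$; restricting cohomology-valued quantities to the fixed point $z_\sigma \in \frX_v$ converts cup product with $\hxi$ into multiplication by the scalar $\hxi(\sigma)$, while $\Gr_0$ acts by $|v|$ on $\unit_v$. Since $\Delta_{(\sigma,v)}(z)$ is homogeneous of degree $-n/2$ and $c_v$ is independent of $(Q,y)$, these shifts match up and both sides obey the same first-order system.

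By the freeness theorem (Theorem~\ref{thm:freeness}), a solution is determined by its restriction to $Q=0, y=y^*$, so it suffices to verify \eqref{eq:Asymp_is_Loc} on the central fiber for $\Omega$ running through the topological basis $\{w_\bk\omega\}$. There, the critical point $p$ lies on a single component $A_{\sigma,\theta}$ with $\theta = c|_{\bN_{\rm tor}}$, and by Lemma~\ref{lem:limit_on_component} the restricted potential takes the normal form $\sum_{j\in\sigma} u_j(\sigma)(e^{s_j}-1-s_j)$ under the change of variables $w_j = u_j(\sigma)e^{s_j}$ for $j\in\sigma$. The asymptotic expansion decouples into $n$ one-dimensional oscillatory integrals, each of which, after $t_j = e^{s_j}$, reduces to a shifted Gamma-function $e^{-u/z}(-z/u)^{\alpha-u/z}\Gamma(-u/z + \alpha)$ with $\alpha = \Psi_j(v) = v_j$ (arising because $w_\bk|_{A_{\sigma,\theta}}$ expands as $c_v\prod_{j\in\sigma}u_j(\sigma)^{\nu_j+v_j}e^{(\nu_j+v_j)s_j}$ for $\bk = v + \sum_j \nu_j b_j$). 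The Barnes--Stirling expansion $\log\Gamma(x+\alpha)\sim (x+\alpha-\tfrac12)\log x - x + \tfrac12\log(2\pi) + \sum_{k\ge 2}\frac{(-1)^k B_k(\alpha)}{k(k-1)}x^{-(k-1)}$ applied with $x = -u/z$ then gives, after sign bookkeeping, precisely the Bernoulli factor $\exp(-\sum_{k\ge 2}\frac{B_k(v_j)}{k(k-1)}(z/u_j(\sigma))^{k-1})$ appearing in $\Delta_{(\sigma,v)}(z)$. On the right-hand side, the explicit formula for $\Loc^{(0)}(w_\bk\omega)$ from the proof of Proposition~\ref{pro:Loc0} contributes Pochhammer-style products $\prod_{-\Psi_j(\bk)<c\le 0}(u_j(\sigma)+cz)$, matching the leading factorial part of the Gamma function on the asymptotic side.

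The main technical difficulty will be meticulous bookkeeping of global prefactors --- the $|\bN_{\rm tor}|^{-1}$ built into $\omega$, the Gaussian integration factors $\sqrt{-2\pi z/u_j(\sigma)}$ that combine with the $(-2\pi z)^{n/2}$ absorbed in the definition of $\Asym_p$ to produce the $\prod_{i\in\sigma}u_i(\sigma)^{-1/2}$ in $\Delta_{(\sigma,v)}$, and the $|\bN(\sigma)|^{-1}$ normalization --- together with the character-orthogonality identity on the torsor $\Crit(\sigma) \cong \hbN(\sigma)$ needed to convert the single-critical-point contribution on the left into the summation $\sum_{v\in\Bx(\sigma)}\Loc(\Omega)|_{(\sigma,v)} c_v (\cdots)$ on the right. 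Once the prefactors are pinned down on one test class (say $\Omega = \omega$, where $\Loc^{(0)}(\omega) = \unit_0$), equivariance under the Galois group $\Gal(\overline{S}_\T/S_\T)$ acting on critical points transports the identity to all of $\Crit(\sigma)$.
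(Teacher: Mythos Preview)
Your overall strategy is essentially the one the paper uses: show both sides satisfy the same Gauss--Manin differential equations (via Proposition~\ref{pro:Loc_diffeq} and Lemma~\ref{lem:asymp_diffeq}) and then pin down the identity by comparing at $Q=0$, $y=y^*$ via the $\Gamma$-function asymptotics. The Barnes--Stirling computation you outline is exactly what the paper does.

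However, there is a real gap in your reduction step. Invoking Theorem~\ref{thm:freeness} (``by freeness, a solution is determined by its restriction to $Q=0$, $y=y^*$'') is not enough: the Gauss--Manin connection has a \emph{logarithmic pole} at $Q=0$, so formal solutions to the system are not in general determined by their value on the central fibre. The paper handles this by composing with $\Loc^{-1}$ to get a map $C_p\colon H^*_{\CR,\T}(\frX)\to\overline{S}_\T(\!(z)\!)[\![\Laa_+]\!][\![y]\!]$, and then uses the recursion
\[
(\xi\cdot\lambda)\,C_{p;\lambda,k} \;=\; \hxi(\sigma)\,C_{p;\lambda,k+1} - C_{p;\lambda,k+1}\circ\hxi
\]
together with the \emph{semisimplicity} of the endomorphism $C\mapsto\hxi(\sigma)C - C\circ\hxi$ (an easy consequence of Atiyah--Bott localization, since cup product by $\hxi$ is diagonalizable after restriction to fixed points) to force $C_{p;\lambda,k}=0$ for $\lambda\neq 0$. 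Equivalently, in your formulation, you would need to check the non-resonance condition that the operator $(\xi\cdot\lambda)\operatorname{id} + z^{-1}\hxi(\sigma)\operatorname{id} - z^{-1}(\hxi\,\cdot)$ is invertible on $H^*_{\CR,\T}(\frX)\otimes_{R_\T}\overline{S}_\T(\!(z)\!)$ for each $\lambda\neq 0$; this again reduces to semisimplicity of cup product. Freeness alone does not supply this.

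Your final paragraph conflates this proposition with the next theorem. Character orthogonality on the torsor $\Crit(\sigma)$ and Galois equivariance play no role here: the statement is for a \emph{single} critical point $p$, and the sum over $v\in\Bx(\sigma)$ on the right arises simply because $\Loc^{(0)}(w_\bk\omega)$ is supported on the sector $v=\bk-\sum_i\lfloor\Psi_i(\bk)\rfloor b_i$, so as $\bk$ varies all box elements appear. Character orthogonality is used only later, in the proof of Theorem~\ref{thm:pairings_match}, when one sums over all critical points to assemble the higher residue pairing.
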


\begin{remark}
\label{rem:interpretation_Asymp_Loc} 
Equality \eqref{eq:Asymp_is_Loc} here should be interpreted with care.  
Each coefficient of $Q^\lambda (y-y^*)^I$ of $\Loc(\Omega)|_{(\sigma,v)}$ 
is a rational function in $z$ (Remark~\ref{rem:rational}).  
We expand these rational functions as Laurent series at $z=0$ 
and multiply by $\Delta_\sigma(z)$, 
obtaining an element of $\overline{S}_\T(\!(z)\!)[\![\Laa_+]\!][\![y]\!]$, 
and equate with the left-hand side.  
As discussed just before Lemma~\ref{lem:asymp_diffeq}, 
the left-hand side is also well-defined as an element of  
$\overline{S}_\T(\!(z)\!)[\![\Laa_+]\!][\![y]\!]$.
\end{remark}
\begin{proof}
Recall from Proposition~\ref{pro:Loc_diffeq} and Lemma~\ref{lem:asymp_diffeq} 
that $\Loc$ and $e^{F^\qu_\chi(p)/z}\Asym_p$ 
satisfy similar differential equations. 
From this we deduce that $e^{F^\qu_\chi(p)/z}\Asym_p(e^{F_\chi/z}\Omega)$ 
can be written as a linear combination 
of $\Loc(\Omega)|_{(\sigma',v')}$, with coefficients 
independent of $Q$ and $y_\bl$. 
We regard $\Loc$ as a map taking values in 
$H^*_{\CR,\T}(\frX) \otimes_{R_\T} 
S_\T(\!(z)\!)[\![\Laa_+]\!][\![y]\!]$ as discussed in 
Remark \ref{rem:interpretation_Asymp_Loc}. 
Extending the ground ring, we obtain an isomorphism 
\[
\Loc \colon 
\GM(F_\chi) \otimes_{R_\T[z][\![\Laa_+]\!][\![y]\!]} 
S_\T(\!(z)\!)[\![\Laa_+]\!][\![y]\!] 
\cong H^*_{\CR,\T}(\frX) \otimes_{R_\T} S_\T(\!(z)\!)[\![\Laa_+]\!][\![y]\!]. 
\]
We set $C_p(\alpha) := e^{F^\qu_\chi(p)/z}\Asym_p (e^{F_\chi/z}
\Loc^{-1} \alpha)$ 
for $\alpha \in H^*_{\CR,\T}(\frX)$. The differential equations 
in Proposition \ref{pro:Loc_diffeq} and Lemma \ref{lem:asymp_diffeq} 
show that 
\begin{align*} 
\left(\xi Q\parfrac{}{Q} + \frac{1}{z} \hxi(\sigma)\right) 
C_p(\alpha) & = \frac{1}{z} C_p (\hxi  \alpha), \qquad 
\parfrac{}{y_\bl} C_p(\alpha) = 0.  
\end{align*} 
Note that $C_p$ belongs to $\Hom(H^*_{\CR,\T}(\frX),R_\T) 
\otimes_{R_\T} \overline{S}_\T(\!(z)\!)[\![\Laa_+]\!]
[\![y]\!]$. The second equation shows that $C_p$ is independent 
of $y_\bl$. Expanding $C_p = \sum_{\lambda \in \Laa_+} 
\sum_{k\in \Z} 
C_{p;\lambda,k} Q^\lambda z^k$, we find from the first equation that 
\[
(\xi \cdot \lambda) C_{p;\lambda,k} = 
\hxi(\sigma) C_{p;\lambda,k+1} - C_{p;\lambda,k+1} \circ \hxi. 
\]
For a fixed $\lambda$, $C_{p;\lambda,k}$ vanishes for  sufficiently 
negative $k\in \Z$. Therefore, if $\xi \cdot \lambda \neq 0$, 
repeated applications of 
the operation $C \mapsto \hxi(\sigma) C - C \circ \hxi$ on 
$\Hom(H^*_{\CR,\T}(\frX),R_\T)\otimes_{R_\T} \overline{S}_\T$ 
to the coefficients $C_{p;\lambda,k}$ yield zero. 
On the other hand, it follows easily from the Atiyah-Bott localization 
theorem that this operation is a semisimple endomorphism, 
and thus we must have 
$\hxi(\sigma) C_{p;\lambda,k+1} - C_{p;\lambda,k+1}\circ \xi =0$. 
This implies that $C_{p;\lambda,k} =0$ for non-zero $\lambda$
and that $C_p$ is independent of $Q$. 
Therefore we have shown that 
\[
e^{F^\qu_\chi(p)/z} \Asym_p(e^{F_\chi/z} \Omega)
= \sum_{(\sigma',v')} C_{p,(\sigma',v')} 
\Loc(\Omega)\bigr|_{(\sigma',v')}
\] for some 
$C_{p,(\sigma',v')}\in \overline{S}_\T(\!(z)\!)$ independent of $Q$ and $y_\bl$. 

Let us take $\Omega = w_\bk \omega$ and 
evaluate the coefficients $C_{p,(\sigma',v')}$ 
by taking the limit as $Q \to 0$,~$y \to y^*$. 
In that limit, $\Loc(w_\bk \omega)$ becomes
  \[
  \Loc^{(0)}\big(w_\bk \omega\big)\Bigr|_{(\sigma',v')}= 
  \begin{cases}
   \prod_{i \in \sigma'} \prod_{\substack{0 \leq c < \Psi_i(\bk) 
\\ \fract{c} = v_i}} (u_i(\sigma') - cz)  
& \text{if $v=v' \in \Bx(\sigma')$} \\
    0 & \text{otherwise}
  \end{cases}
  \]
  where $v := \bk - \sum_{i=1}^m \floor{\Psi_i(\bk)} b_i$.
  
It remains to compute the limit as $Q \to 0$,~$y \to y^*$ 
of $e^{F^{\qu}_\chi(p)/z} \Asym_p\big( e^{F_\chi/z} w_\bk \omega\big)$.  
The limit of $e^{F^{\qu}_\chi(p)/z}$ is $1$.  
We have seen that $\Asym_p\big( e^{F_\chi/z} w_\bk \omega\big)$ 
has a well-defined limit, because 
$\Asym_p\big( e^{F_\chi/z} w_\bk \omega\big)$ 
lies in $\overline{S}_\T(\!(z)\!)[\![\Laa_+]\!][\![y]\!]$.  
Also 
\[
\Asym_p\big( e^{F_\chi/z} w_\bk \omega\big) = \Asym_p\big( e^{(F_\chi - F^{\cl}_\chi(p))/z} w_\bk \omega\big),
\]
and we saw in the discussion before Lemma~\ref{lem:limit_on_component} 
that $F_\chi(x;y)- F_\chi^\cl(p)$ has a well-defined limit as $y$ 
approaches $y^*$ and $(x,Q)$ approaches the component $A_{\sigma,\theta}$ 
of the central fiber on which $p$ lies. 
When $v \notin \Bx(\sigma)$, the restriction of $w_\bk$ to $A_{\sigma,\theta}$ 
is zero, and thus $\Asym_p(e^{F_\chi/z} w_\bk \omega)|_{Q=0,y=y^*}=0$. 
We can therefore assume that $v\in \Bx(\sigma)$. 
On the component $A_{\sigma,\theta}$ we have
  \[
  \int_{\Gamma(p)} e^{(F_\chi - F^{\cl}_\chi(p))/z} w_\bk \omega = 
  \int_{\Gamma(p)} \exp\left( \sum_{i \in \sigma} \textstyle \frac{w_i - u_i(\sigma) - u_i(\sigma) \log \big({\textstyle \frac{w_i}{u_i(\sigma)}}\big)}{z} \right) w_\bk \frac{\det (b^{\alpha,\beta})}{|\bN_{\rm tor}|} \bigwedge _{i \in \sigma} \frac{dw_i}{w_i}.
  \]
  Here $\Gamma(p)$ is an appropriate cycle in $A_{\sigma,\theta}$ through $p$, the precise choice of which is irrelevant as we calculate the formal asymptotic expansion of this integral at $p$, and $(b^{\alpha,\beta})$ are the entries of the matrix inverse to $(b_{\alpha,\beta})$.
  Proceeding as in \S\ref{subsec:HRP}, we set $w_i = u_i(\sigma) e^{T_i}$, so that $w_\bk = c_\bk \exp \big( {\sum_{i \in \sigma} \Psi_i(k) T_i}\big)$.  Thus the integral becomes
  \[
  \frac{c_\bk}{|\bN(\sigma)|} \int_{\R^n} \exp\left( \sum_{i \in \sigma} {\textstyle \frac{u_i(\sigma)}{z}} \big(e^{T_i}-1-T_i\big)\right) \exp\left(\sum_{i \in \sigma} \Psi_i(k) T_i \right)\bigwedge _{i \in \sigma} dT_i
  \]
  where we used $\det(b_{\alpha,\beta}) = |\bN(\sigma)|/|\bN_{\rm tor}|$.  This is essentially a $\Gamma$-function.  To see this, assume that $z<0$ and $u_i(\sigma)>0$, and make the change of variables $T_i \mapsto T_i + \log\big(\frac{-z}{u_i(\sigma)}\big)$.  The integral becomes
  \[
  \frac{c_\bk}{|\bN(\sigma)|} \prod_{i \in \sigma} e^{\frac{u_i(\sigma)}{-z}}  \left( {\textstyle \frac{u_i(\sigma)}{-z}} \right)^{\frac{u_i(\sigma)}{z}  - \Psi_i(\bk) } 
  \Gamma\Big( \textstyle \Psi_i(\bk) - \frac{u_i(\sigma)}{z} \Big).
  \]
  Using the functional equation for the $\Gamma$ function (which is also satisfied by its asymptotic expansion) and the fact that $c_\bk = c_v \prod_{i \in \sigma} u_i(\sigma)^{\floor{\Psi_i(\bk)}}$, this is
  \[
  \frac{c_v}{|\bN(\sigma)|} \prod_{i \in \sigma} e^{\frac{u_i(\sigma)}{-z}} \left( {\textstyle \frac{u_i(\sigma)}{-z}} \right)^{\frac{u_i(\sigma)}{z}  - v_i } \left(\prod_{\substack{0 \leq c < \Psi_i(\bk)\\ \fract{c} = v_i}} u_i(\sigma) - cz \right)\Gamma\big(v_i - \textstyle \frac{u_i(\sigma)}{z}\big)
  \]
  Replacing the $\Gamma$ function by its asymptotic expansion, using~\cite[\href{http://dlmf.nist.gov/5.11.E8}{5.11.8}]{NIST:DLMF} 
  \[
  \log \Gamma(z+h) \sim
  (z+h-{\textstyle\frac{1}{2}}) \log z- z + {\textstyle \frac{1}{2} \log(2 \pi)} + \sum_{k=2}^\infty \frac{(-1)^k B_k(h)}{k(k-1) z^{k-1}}
  \]
  where $B_k(\cdot)$ are the Bernoulli polynomials, yields:
  \begin{multline*}
    ({-2 \pi z})^{n/2} \frac{c_v}{|\bN(\sigma)|} \prod_{i \in \sigma} \left( \prod_{\substack{0 \leq c < \Psi_i(\bk) \\ \fract{c} = v_i}} u_i(\sigma) - cz \right) \\
   \times \prod_{i \in \sigma}
    \frac{1}{\sqrt{u_i(\sigma)}}
    \exp \left( - \sum_{k=2}^\infty \frac{B_k(v_i)}{k(k-1)} \left(\frac{z}{u_i(\sigma)}\right)^{k-1}\right)  
  \end{multline*}
  Thus the limit as $Q \to 0$, $y \to y^*$ of $\Asym_p\big( e^{F_\chi/z} w_\bk \omega\big)$ is
  \[
  \frac{c_v}{|\bN(\sigma)|} \prod_{i \in \sigma} \left( \prod_{\substack{0 \leq c < \Psi_i(\bk) \\ \fract{c} = v_i}} u_i(\sigma) - cz \right) 
    \prod_{i \in \sigma}
    \frac{1}{\sqrt{u_i(\sigma)}}
    \exp \left( - \sum_{k=2}^\infty \frac{B_k(v_i)}{k(k-1)} \left(\frac{z}{u_i(\sigma)}\right)^{k-1}\right)  
    \]
 if $v\in \Bx(\sigma)$ and zero otherwise. 
The result follows.
\end{proof}

\begin{theorem} 
\label{thm:pairings_match}
{\rm (1)} The mirror isomorphism $\Theta$ from Theorem~\ref{thm:mirror_isom} intertwines the the higher residue pairing on $\GM(F_\chi)$ with the Poincar\'e pairing on $H^*_{\CR,\T}(\frX) \otimes_{R_\T} R_\T[z][\![\Laa_+]\!][\![y]\!]$:
    \[
    P(\Omega_1,\Omega_2) = \Big(\Theta(\Omega_1)\big|_{z \mapsto -z}, \Theta(\Omega_2) \Big). 
    \]
{\rm (2)} The localization map intertwines the higher residue pairing on $\GM(F_\chi)$ with the Poincar\'e pairing on the Givental space:
  \[
  P(\Omega_1,\Omega_2) = \Big(\Loc(\Omega_1)\big|_{z \mapsto -z}, \Loc(\Omega_2) \Big). 
  \]
\end{theorem}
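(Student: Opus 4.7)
The plan is to prove part (2) first and then derive (1) immediately from the factorization $\Loc = M(\tau(y),z)\circ\Theta$ of Remark \ref{rem:Theta_Loc} together with the fact, from Proposition \ref{pro:fundsol}, that $M(\tau,z)$ preserves the Poincar\'e pairing. For (2), I would substitute Proposition \ref{pro:Asymp_is_Loc} into the definition of $P$ and collapse the result until one recognizes the Atiyah--Bott localization formula for the orbifold Poincar\'e pairing of $\Loc(\Omega_1)|_{z\mapsto -z}$ and $\Loc(\Omega_2)$. Using $e^{F^{\qu}_\chi(p)/z}|_{z\mapsto -z}\cdot e^{F^{\qu}_\chi(p)/z}=1$, Definition \ref{def:HRP} rewrites $P$ as $\sum_p \overline{[e^{F^{\qu}_\chi(p)/z}\Asym_p(e^{F_\chi/z}\Omega_1)]}\cdot[e^{F^{\qu}_\chi(p)/z}\Asym_p(e^{F_\chi/z}\Omega_2)]$. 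Proposition \ref{pro:Asymp_is_Loc}, combined with the parametrization of critical points $p\leftrightarrow(\sigma,c)$ from Lemma \ref{lem:critical_points}, then turns this into a triple sum over $(\sigma,v_1,v_2)$ with $v_j\in\Bx(\sigma)$ containing the inner factor $\sum_{c\in\Crit(\sigma)}c_{v_1}c_{v_2}$.

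To evaluate this inner sum, use that $\Crit(\sigma)$ is a torsor over $\hbN(\sigma)=\Hom(\bN(\sigma),\C^\times)$: fixing a base point $c_0\in\Crit(\sigma)$ and writing $c=c_0\cdot\theta$, one has $c_v=c_{0,v}\theta(\pi(v))$ where $\pi\colon\bN\cap\sigma\to\bN(\sigma)$, and character orthogonality gives $\sum_{c}c_{v_1}c_{v_2}=|\bN(\sigma)|\,\delta_{\pi(v_1+v_2),0}\,c_{0,v_1+v_2}$. Since $\pi\colon\Bx(\sigma)\to\bN(\sigma)$ is a bijection, this forces $v_2=\inv(v_1)$ in $\Bx(\sigma)$, and then $c_{0,v_1+\inv(v_1)}=\prod_{i\in\sigma,\,\Psi_i(v_1)\neq 0}u_i(\sigma)$ by the relation $c_{0,b_i}=u_i(\sigma)$ and $v_1+\inv(v_1)=\sum_{i:\Psi_i(v_1)\neq 0}b_i$. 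Next, the Bernoulli polynomial identity $B_k(1-h)=(-1)^k B_k(h)$, together with the vanishing $B_{2\ell+1}(0)=0$ for $\ell\geq 1$, shows that the exponent $(-1)^{k-1}B_k(\Psi_i(v))+B_k(\Psi_i(\inv(v)))$ at each $i\in\sigma$ vanishes (either by the identity when $\Psi_i(v)\neq 0$, or else because the coefficient $(-1)^{k-1}+1$ or the factor $B_k(0)$ is zero), so $\overline{\Delta_{(\sigma,v)}(z)}\Delta_{(\sigma,\inv(v))}(z)=|\bN(\sigma)|^{-2}\prod_{i\in\sigma}u_i(\sigma)^{-1}$. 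Combining these,
\[
P(\Omega_1,\Omega_2)=\sum_{\sigma\in\Sigma(n)}\sum_{v\in\Bx(\sigma)}\frac{\overline{\Loc(\Omega_1)|_{(\sigma,v)}}\,\Loc(\Omega_2)|_{(\sigma,\inv(v))}}{|\bN(\sigma)|\prod_{i\in\sigma,\,\Psi_i(v)=0}u_i(\sigma)},
\]
and the denominator equals $|\bN(\sigma)|$ times the equivariant Euler class $e_\T(T_{(\sigma,v)}\frX_v)$. This is the Atiyah--Bott localization of $\int_{I\frX}\Loc(\Omega_1)|_{z\mapsto -z}\cup\inv^{\star}\Loc(\Omega_2)$, proving (2).

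The main obstacle in this plan is the careful bookkeeping of the numerical factors: verifying the Bernoulli cancellation at indices $i$ with $\Psi_i(v)=0$ as well as $\Psi_i(v)\neq 0$, and matching the orbifold factors $|\bN(\sigma)|$ together with the normalization $|\bN_{\rm tor}|^{-1}$ hidden in the definition of $\omega$ to the orbifold Atiyah--Bott formula. A secondary subtlety (Remark \ref{rem:interpretation_Asymp_Loc}) is that each coefficient of $Q^\lambda(y-y^*)^I$ in $\Loc(\Omega)|_{(\sigma,v)}$ is a rational function of $z$, which must be Laurent expanded at $z=0$; one must check that the formal series produced by the higher residue pairing matches the Laurent-expanded orbifold Atiyah--Bott sum term by term.
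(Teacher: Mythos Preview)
Your proposal is correct and follows essentially the same route as the paper's proof: prove (2) first by substituting Proposition~\ref{pro:Asymp_is_Loc} into Definition~\ref{def:HRP}, collapse the sum over $c\in\Crit(\sigma)$ via character orthogonality on the $\hbN(\sigma)$-torsor, use the Bernoulli identity $B_k(1-h)=(-1)^kB_k(h)$ to cancel the $\Delta$-factors, and recognize the Atiyah--Bott localization formula; then deduce (1) from Remark~\ref{rem:Theta_Loc} and the pairing-preservation of $M(\tau,z)$ in Proposition~\ref{pro:fundsol}. Your bookkeeping of the $\Psi_i(v)=0$ case in the Bernoulli cancellation and of the orbifold factors is in fact slightly more explicit than the paper's.
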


\begin{proof}
First we prove (2). Using Definition~\ref{def:HRP} 
and Proposition~\ref{pro:Asymp_is_Loc}, 
we find that $P(\Omega_1,\Omega_2)$ equals 
\begin{equation}
\label{eq:sum_over_fixed_points} 
\sum_{\sigma \in \Sigma(n)} 
\sum_{c\in \Crit(\sigma)} 
\sum_{v,w\in \Bx(\sigma)} 
c_{v}c_{w} \Delta_{(\sigma,v)}(-z) 
\Delta_{(\sigma,w)}(z) 
\overline{\Loc(\Omega_1)}\bigr|_{(\sigma,v)} 
\Loc(\Omega_2)\bigr|_{(\sigma,w)} 
\end{equation} 
where $\overline{\Loc(\Omega_1)} = \Loc(\Omega_1)|_{z\to -z}$. 
Recall that $\Crit(\sigma)$ is a torsor over $\hbN(\sigma)$. 
Choose a base point $c^*\in \Crit(\sigma)$ and write 
a general element $c\in\Crit(\sigma)$ as 
$c = \theta \cdot c^*$ for $\theta \in \hbN(\sigma)$. 
Orthogonality of characters implies that
\[
\sum_{c\in \Crit(\sigma)} c_{v} c_{w} 
= \sum_{\theta \in \hbN(\sigma)} c^*_v c^*_w \theta(v) \theta(w) 
= c^*_{v+w}  |\bN(\sigma)| \delta_{v,-w} 
\]
where $\delta_{v,-w}$ equals $1$ if $v \equiv -w$ in $\bN(\sigma)$ 
and zero otherwise. 
Let $\sigma(v)\subset \sigma$ be the minimal cone of $\Sigma$ 
containing $v$. We have 
\[
c^*_{v+w} = \prod_{j\in \sigma(v)} u_j(\sigma) 
\]
whenever $v \equiv - w$ in $\bN(\sigma)$. 
Using the fact~\cite[\href{http://dlmf.nist.gov/24.4.E3}{24.4.3}]{NIST:DLMF} 
that $B_k(1-h) = (-1)^k B_k(h)$, 
we find that the quantity \eqref{eq:sum_over_fixed_points} equals 
\[
\sum_{\sigma\in \Sigma(n)} 
\sum_{v \in \Bx(\sigma)} 
\frac{1}{|\bN(\sigma)|} 
\frac{1}{\prod_{j\in \sigma\setminus \sigma(v)}u_j(\sigma)}
\overline{\Loc(\Omega_1)}
\bigr|_{(\sigma,v)} \Loc(\Omega_2)\bigr|_{(\sigma,-v)}.  
\]
The Atiyah--Bott localization formula now yields part (2) of the Theorem. 
In view of Remark~\ref{rem:Theta_Loc}, and the fact that 
$M(\tau,z)$ in Remark~\ref{rem:Theta_Loc} is pairing-preserving 
(see Proposition~\ref{pro:fundsol}), we have that (2) implies (1).  
\end{proof}

\section{Convergence}  
\label{sec:convergence}
In this section, we discuss convergence of the mirror isomorphism 
and the mirror map from Theorem \ref{thm:mirror_isom}. Recall that $Q$ is a Novikov variable, $y=\{y_\bk : \bk\in S\}$ 
is a deformation parameter, and $\chi$ is an equivariant parameter. 
The main result in this section says that the mirror map is analytic 
in all the parameters $(Q,y,\chi)$ and that the mirror isomorphism 
is a formal power series in $z$ with coefficients in analytic functions in $(Q,y,\chi)$. 
This implies the convergence of the big equivariant 
quantum product, and thus generalizes the convergence result \cite{Iritani:coLef} 
for compact toric varieties to arbitrary semi-projective 
toric Deligne--Mumford stacks in the \emph{big} and \emph{equivariant} 
setting. 

\subsection{Result} 
In order to state the convergence result, we introduce a co-ordinate 
system. We choose a $\Z$-basis of $\Laa$ such that $\Laa_+$ 
is contained in the cone spanned by the basis. 
This basis defines a co-ordinatization $Q=(Q_1,\dots,Q_r)$ 
of the variable $Q$ (where $r = \rank \Laa$). 
Note that any power series in $Q$ whose exponents are supported in 
$\Laa_+$ can be expressed as a nonnegative power series in $Q_1,\dots,Q_r$. 
Choosing a basis of $\bM$, we have co-ordinates 
$\chi=(\chi_1,\dots,\chi_n)$ on $\Lie \T =\bN_\C$ as before 
(so that $R_\T = \C[\chi_1,\dots,\chi_n]$). 
We write 
\[
q := (q_1,\dots,q_s) := (Q_1,\dots,Q_r, \log y_1,\dots,\log y_m, \{y_\bk : \bk\in G\}) 
\]
with $s = r + m + |G|$. 
Note that $q=0$ corresponds to $Q=0$ and $y=y^*$ 
 -- see \eqref{eq:specialization_y} for $y^*$. 
We also choose a homogeneous basis $\{T_i\}_{i=1}^N$ 
of $H_{\CR,\T}(\frX)$ over $R_\T$
and homogeneous algebraic differential forms 
$\Omega_i\in \bigoplus_{\bk \in \bN \cap |\Sigma|} 
\C[z]w_\bk \omega$ on the Landau--Ginzburg model 
(see \S\ref{subsec:GM}) 
such that 
\[
\Loc^{(0)}(\Omega_i)= T_i, \qquad 1\le i\le N.  
\]
Here $\Loc^{(0)}$ is the restriction of the localization map 
(see \S\ref{subsec:sol_free}) 
to the origin $Q=0,y=y^*$; such $\Omega_i$ exist by Proposition \ref{pro:Loc0}. 
By Theorem \ref{thm:freeness} (and its proof), 
$\{\Omega_i\}_{i=1}^N$ form a basis of the equivariant Gauss--Manin system 
$\GM(F_\chi)$ over $R_\T[z][\![\Laa_+]\!][\![y]\!]$. 
Let $\N = \{0,1,2,\dots\}$ denote the set of nonnegative integers 
and write $q^d = q_1^{d_1} \cdots q_s^{d_s}$ for 
$d =(d_1,\dots,d_s) \in \N^s$. 
\begin{definition} 
\label{def:Oz} 
We define $\cO^z$ to be the space of (possibly divergent) 
formal power series in $q=(q_1,\dots,q_s)$ and $z$ of the form 
\[
\sum_{d\in \N^s} \sum_{k=0}^\infty a_{d,k}(\chi) q^d z^k
\] 
where the coefficients $a_{d,k}(\chi)$ are holomorphic functions of 
$\chi =(\chi_1,\dots,\chi_n)$ 
defined in a uniform neighbourhood of $\chi=0$ 
and satisfying the following estimate: 
there exist positive constants $C_1$,~$C_2$,~$\epsilon>0$ such that we have 
\begin{equation} 
\label{eq:basic_estimate} 
|a_{d,k}(\chi)|\le C_1 C_2^{|d|+k} |d|^k  
\quad \text{for}\quad |\chi|\le \epsilon,  
\end{equation} 
where $|\chi| = (\sum_{i=1}^n |\chi_i|^2)^{1/2}$ 
and $|d| =\sum_{i=1}^s |d_i|$. 
We adopt the convention that $|d|^k =1$ if $|d|=k=0$. 
The constants $C_1$,~$C_2$,~$\epsilon$ here are allowed to depend on the element 
of $\cO^z$. 
Note that the condition \eqref{eq:basic_estimate} implies 
$a_{0,k}(\chi) = 0$ if $k>0$. 
Note also that the subseries $\sum_{d\in \N^s} a_{d,k}(\chi) q^d$ 
coverges on $|q_a|<1/C_2$, $|\chi|\le \epsilon$ 
for each $k\in \N$. 
\end{definition} 
It is easy to check that $\cO^z$ is a ring; moreover 
it is a \emph{local ring} 
-- see Lemma \ref{lem:Oz_local_ring}. 

\begin{theorem}
\label{thm:convergence}
Expand the mirror map $\tau(y)$ and 
the mirror isomorphism $\Theta$ in Theorem \ref{thm:mirror_isom} 
with respect to the bases $\{T_i\}$, $\{\Omega_i\}$ as follows: 
\[
\tau(y) = \sum_{i=1}^N \tau^i(Q,y,\chi) T_i, \qquad 
\Theta(\Omega_i) = \sum_{j=1}^N \Theta_i^j(Q,y,\chi,z) T_j.  
\]
Then:
\begin{itemize} 
\item[(1)] the coefficients $\tau^i(Q,y,\chi)\in R_\T[\![\Laa_+]\!][\![y]\!]$ 
are convergent and analytic in a neighbourhood 
of $Q=0$, $y=y^*$ and $\chi =0$; 
\item[(2)] the coefficients $\Theta^i_j(Q,y,\chi,z)\in R_\T[z][\![\Laa_+]\!][\![y]\!]$ 
lie in the ring $\cO^z$; in particular they are formal power series in $z$ with 
coefficients in analytic functions of $(Q,y,\chi)$ defined in a uniform neighbourhood of 
$Q=0$, $y=y^*$ and $\chi=0$. 
\end{itemize} 
\end{theorem}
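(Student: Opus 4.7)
The plan is to combine two main ingredients: a convergence/Gevrey estimate on the localization map $\Loc$ (derived in part from the oscillatory-integral interpretation of Proposition \ref{pro:Asymp_is_Loc}), together with the gauge fixing lemma of \cite[Proposition 4.8]{Iritani:coLef} applied to the Birkhoff factorization \eqref{eq:Birkhoff}, $\Loc = M(\tau(y),z) \cdot \Theta$. Because $\tau(y)$ is recovered as the coefficient of $z^{-1}$ in $J(\tau(y),z) = zM(\tau(y),z)\unit$ and $\Theta(\Omega_i) = \Upsilon_i(y,z)$ is the positive-power-of-$z$ factor, both (1) and (2) reduce to showing that the Birkhoff factorization of a matrix whose entries are ``analytic times $e^{\sum u_i \log y_i/z}$ times a Gevrey-$1$ tail in $z$'' produces factors with the same type of structure.

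First I would establish direct convergence of the localization map. Writing
\[
\Loc(w_\bk \omega) = e^{\sum_{i=1}^m u_i \log y_i / z} \cdot \widetilde{\Loc}(w_\bk \omega),
\]
where $\widetilde{\Loc}$ strips off the logarithmic prefactor, termwise estimates on the hypergeometric series \eqref{eq:def_Loc} show that $\widetilde{\Loc}(w_\bk \omega)$ is analytic in $(Q,y,\chi)$ on a uniform polydisc around the origin $(0,y^*,0)$: the coefficient of $Q^{d(\lambda)} y^\lambda$ is a rational function in $(z,\chi)$ whose residual factors are controlled combinatorially by Lemma \ref{lem:Loc_well-defined}, and whose growth in $|\lambda|$ is dominated by a factorial-in-$z^{-1}$ bound compatible with the $\cO^z$-estimate $|a_{d,k}(\chi)| \le C_1 C_2^{|d|+k}|d|^k$ of Definition \ref{def:Oz}.

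Second, I would upgrade this to a genuine Gevrey-$1$-in-$z$ structure by the oscillatory integral interpretation. Proposition \ref{pro:Asymp_is_Loc} identifies the fixed-point restrictions of $\Loc(\Omega_i)$ (up to the explicit local factor $\Delta_{(\sigma,v)}(z)$ built from Bernoulli polynomials) with the formal asymptotic expansions $\Asym_p\bigl(e^{F_\chi/z}\Omega_i\bigr)$ at the $N$ non-degenerate critical points of $F_\chi$ over $\overline{S}_\T[\![\Laa_+]\!][\![y]\!]$ provided by Lemma \ref{lem:critical_points}. Classical results on formal asymptotic expansions of oscillatory integrals -- e.g.\ Pham \cite{Pham:Lefschetz} -- then imply that these expansions are Gevrey series of order $1$ in $z$, with estimates uniform in $(Q,y,\chi)$ on the relevant polydisc. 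Together with the analyticity estimate of the previous paragraph, this places $\widetilde{\Loc}(\Omega_i)$ in $\cO^z \otimes_{\C} H^*_{\CR,\T}(\frX)$.

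Finally, I would adapt the gauge fixing lemma \cite[Proposition 4.8]{Iritani:coLef} to the present setting, to conclude that the Birkhoff factorization preserves the class $\cO^z$: starting from input in $e^{\sum u_i \log y_i/z} \cdot \cO^z$ which is an analytic isomorphism at $(Q,y) = (0,y^*)$, one extracts $M(\tau(y),z) \in I + z^{-1}\cO^{z^{-1}}$ and $\Theta(\Omega_i) \in \cO^z$ with the mirror map $\tau(y)$ analytic, settling both parts of the theorem. The hard part will be precisely this final adaptation: \cite[Proposition 4.8]{Iritani:coLef} is stated for the small quantum cohomology of compact toric manifolds, and here three extensions must be carried out in parallel -- the additional bulk deformation parameters $y$; the orbifold twisted sectors, which introduce fractional shifts and additional gamma-function factors into the hypergeometric expansion; and above all uniform control in the equivariant parameter $\chi$ as $\chi \to 0$, where the critical points of $F_\chi$ degenerate even though $\Loc$ and the mirror map remain analytic. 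Once these issues are resolved by a careful recursive analysis of the Birkhoff factorization respecting the $\cO^z$-estimate, the assembly of (1) and (2) is immediate.
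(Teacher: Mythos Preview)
Your proposal identifies the two essential inputs correctly --- the Gevrey-$1$ nature of the stationary-phase expansions $\Asym_p$ and the gauge-fixing result of \cite[Proposition~4.8]{Iritani:coLef} --- but the way you assemble them has a structural gap.

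The central problem is the claim that $\widetilde{\Loc}(\Omega_i)$ lies in $\cO^z\otimes_\C H^*_{\CR,\T}(\frX)$. By Definition~\ref{def:Oz}, elements of $\cO^z$ are power series in \emph{non-negative} powers of $z$. But $\Loc$ naturally takes values in $H^*_{\CR,\T}(\frX)\otimes_{R_\T} R_\T(\!(z^{-1})\!)[\![\Laa_+]\!][\![y]\!]$: the hypergeometric summands of \eqref{eq:def_Loc} contain genuine $z^{-1}$-poles (indeed the very Birkhoff factor $M(\tau,z)=I+O(z^{-1})$ you are trying to split off). So $\widetilde{\Loc}$ simply is not an element of $\cO^z$, and there is no ``Birkhoff factorization preserving $\cO^z$'' to perform on it. Relatedly, \cite[Proposition~4.8]{Iritani:coLef} (and its refinement here, Theorem~\ref{thm:gauge_fixing}) is a statement about \emph{connection matrices}, not about Birkhoff factorization of a fundamental solution: given a flat connection $d+\frac{1}{z}\sum A_i\,dq_i/q_i$ with $A_i\in\Mat_N(\cO^z)$ and nilpotent residues, one produces a gauge transformation in $\GL_N(\cO^z)$ rendering the connection $z$-independent. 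To invoke it you must first land the \emph{connection matrices} $A_a$ in $\cO^z$, not $\Loc$.

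The paper's route does exactly this, and in doing so bypasses the $z^{-1}$ obstruction. One uses $\Asym_p$ --- which, unlike $\Loc$, genuinely lives in $\overline{S}_\T[\![z]\!][\![\Laa_+]\!][\![y]\!]$ --- as a fundamental solution to write $A_a=\xi_aI+\bR^{-1}zq_a\partial_{q_a}\bR+\bR^{-1}(q_a\partial_{q_a}\bU)\bR$, where $\bR=(\Asym_p(e^{F_\chi/z}\Omega_i))_{p,i}$. The Gevrey-$1$ estimate then transfers to $A_a$. Two further points your outline omits are essential: first, a separate homogeneity argument (Lemma~\ref{lem:GM_conn_Oz}) is needed to convert the Gevrey bound $|a_k|\le C_1C_2^k k!$ into the $\cO^z$ bound $|a_{d,k}|\le C_1C_2^{|d|+k}|d|^k$, using that the grading forces $k\lesssim |d|$; second, the $\chi\to 0$ degeneration is handled not by a recursive analysis of the factorization but by working over a compact $K\subset(\C^n\setminus\cD)\sptilde$ away from the discriminant and then using that the $A_a$-coefficients are \emph{polynomial} in $\chi$ to extend across $\chi=0$ via the polynomially convex hull. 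With $A_a\in\Mat_N(\cO^z)$ in hand, the gauge-fixing theorem gives $\Theta\in\GL_N(\cO^z)$, and the analyticity of $\tau$ follows by reading off $q_a\partial_{q_a}\tau=(\Theta\circ z\nabla_{q_a\partial_{q_a}}\circ\Theta^{-1})(1)$.
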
 

\begin{corollary} 
\label{cor:convergence} 
The structure constants $(\alpha\star\beta,\gamma)$ 
of the big and equivariant quantum product \eqref{eq:qprod} 
of a semi-projective toric Deligne--Mumford stack are convergent 
power series in $\tau$, $Q$ and $\chi$. 
\end{corollary}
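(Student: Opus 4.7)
The plan is to combine Theorem~\ref{thm:convergence} with the Jacobian presentation of Theorem~\ref{thm:qcoh_presentation}, and then invert the mirror map to transport analyticity from the $(Q,y,\chi)$-parametrization to the $(\tau,Q,\chi)$-parametrization. By Theorem~\ref{thm:qcoh_presentation}, the pull-back $\tau^*(\star)$ of the big equivariant quantum product by the mirror map $\tau=\tau(y)$ is isomorphic, via the $z=0$ restriction of the mirror isomorphism $\Theta$, to the Jacobian ring $\Jac(F_\chi) \cong \hbigoplus_{\bk \in \bN\cap|\Sigma|} \C[\![\Laa_+]\!][\![y]\!] \bunit_\bk$. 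In the monomial basis $\{\bunit_\bk\}$, the structure constants of $\Jac(F_\chi)$ are manifestly analytic in $(Q,y,\chi)$: the product is $\bunit_\bk \star \bunit_\bl = Q^{d(\bk,\bl)} \bunit_{\bk+\bl}$ and the $R_\T$-module structure is a linear expression in the $y_\bl$. By Theorem~\ref{thm:convergence}(2), the matrix entries of $\Theta|_{z=0}$ relative to the bases $\{\Omega_i\}$ and $\{T_j\}$ are $z=0$ specializations of elements of $\cO^z$, hence analytic in $(Q,y,\chi)$ in a uniform neighborhood of the base point. Combining these, I obtain that the pulled-back structure constants $(\alpha \star \beta, \gamma)$ are analytic functions of $(Q,y,\chi)$.

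The remaining task is to convert this into analyticity in the original parameters $(\tau,Q,\chi)$. I shall enlarge the finite set $G \subset \bN \cap |\Sigma|$ so that the mirror map becomes a local biholomorphism modulo the trivial identity direction. Using the initial condition $\Theta|_{Q=0,y=y^*} = \Loc^{(0)}$ together with Proposition~\ref{pro:Loc0}, a direct computation from the Jacobian isomorphism sending $w_\bk \mapsto \partial\tau/\partial y_\bk$ gives $\partial \tau/\partial y_\bk|_{Q=0,\,y=y^*} = \phi_\bk$ for each $\bk \in G$, where $\{\phi_\bk\}$ is the standard spanning set of $H^*_{\CR,\T}(\frX)$ over $R_\T$. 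Choosing $G$ large enough that $\{\phi_\bk : \bk \in G\} \cup \{\phi_{b_i}\}$ generates $H^*_{\CR,\T}(\frX)$ as an $R_\T$-module modulo $\C\unit$ (always possible since $H^*_{\CR,\T}(\frX)$ is a free $R_\T$-module of finite rank), the differential of the reduced mirror map surjects onto the tangent space of $H^*_{\CR,\T}(\frX)/\C\unit$. The fundamental class axiom ensures that the quantum structure constants do not depend on the $\unit$-component of $\tau$, so this suffices.

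The holomorphic inverse function theorem, applicable because the mirror map is analytic by Theorem~\ref{thm:convergence}(1), then produces an analytic local section $y = y(\tau, Q, \chi)$ in a uniform neighborhood of the origin, and substitution yields the desired convergent expansions of the structure constants in $(\tau,Q,\chi)$. The hard part will be the combinatorial bookkeeping in the middle step: verifying rigorously that the enlargement of $G$ achieves full rank for the differential of the mirror map, and disentangling the redundancy of the parameters $y_1,\dots,y_m$ noted in Remark~\ref{rem:redundancy} by passing to the reduced fan picture so that the effective mirror map $(Q,\{y_\bk\}_{\bk\in G},\chi)\mapsto (Q,\tau,\chi)$ is genuinely a local isomorphism onto a transverse slice to $\C\unit$.
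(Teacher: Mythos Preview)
Your proposal follows essentially the same strategy as the paper: establish that the pulled-back product is analytic in $(Q,y,\chi)$, verify $\partial\tau/\partial y_\bk|_{Q=0,y=y^*}=\phi_\bk$ so that the mirror map is a submersion for $G$ large, and then invert via the holomorphic inverse function theorem. The paper's execution of the first step is more direct than yours: rather than routing through the Jacobian presentation, it simply quotes that the gauge-fixed connection matrices $\hA_a$---which are precisely the matrices of $q_a\partial\tau/\partial q_a \star$ in the basis $\{T_i\}$---were already shown to be analytic in the proof of Theorem~\ref{thm:convergence} via Theorem~\ref{thm:gauge_fixing}. Your Jacobian-ring argument has a small gap at the ``combining these'' step: the product is analytic in the infinite $\C$-basis $\{\bunit_\bk\}$, while $\Theta|_{z=0}$ is analytic in the finite $R_\T$-basis $\{\Omega_i\}$, and bridging the two requires knowing that multiplication by each relevant $w_\bk$ has an analytic matrix in the $\{\Omega_i\}$ basis---but that matrix is exactly $A_\bk|_{z=0}$, so you end up using the connection matrices anyway. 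Finally, your care with the $\C\unit$-direction and the reduced picture is unnecessary: since $0\in\bN\cap|\Sigma|$ is allowed in $G$, one may enlarge $G$ so that $\{\phi_\bk:\bk\in S\}$ spans all of $H^*_{\CR,\T}(\frX)$ over $R_\T$, and the paper simply asserts submersivity without quotienting.
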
 

\begin{remark} 
To motivate the definition of $\cO^z$, we give the following example. 
Suppose for simplicity that $q$ is one variable. 
If $\sum_{k=0}^\infty a_k t^k$ is a convergent power series, 
the differential operator $\sum_{k=0}^\infty a_k (zq \parfrac{}{q})^k$ 
applied to $\sum_{d=0}^\infty q^d=1/(1-q)$ yields a power series 
$\sum_{d=0}^\infty \sum_{k=0}^\infty a_k d^k q^d z^k$ 
that belongs to $\cO^z$. 
\end{remark} 

\begin{remark} 
The coefficients $\Theta^i_j$ of the mirror isomorphism are 
in general divergent power series in $z$. 
If we restrict ourselves to the ``extended weak Fano'' situation 
\cite[\S 3.1.4]{Iritani:integral}, 
that is, if $\frX$ is weak Fano and the extension $G$ is contained in 
$\Bx^{\le 1} := \Bx \cap \{v \in \bN\cap |\Sigma|: |v| \le 1\}$, 
then the mirror isomorphism $\Theta$ becomes fully 
convergent \cite[Proposition 4.8]{Iritani:integral}. 
On the other hand, if $\frX$ is not weak Fano or $G$ is not contained 
in $\Bx^{\le 1}$, the $I$-function and the mirror isomorphism 
are typically divergent (see \cite[Proposition 5.13]{Iritani:coLef}). 
The convergence issue is also related to ``good asymptotics'' 
of the $I$-function, see \cite[\S 2.6]{CCIT:applications}. 
\end{remark} 

\begin{lemma}
\label{lem:Oz_local_ring} 
The space $\cO^z$ is a local ring. 
\end{lemma}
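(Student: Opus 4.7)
The plan is to identify $\frakm := \{f \in \cO^z : a_{0,0}(0) = 0\}$ as the unique maximal ideal of $\cO^z$. This is the kernel of the evaluation map $f \mapsto a_{0,0}(0)$ to $\C$, which is a ring homomorphism once $\cO^z$ is known to be a ring; since the quotient is a field, $\frakm$ is automatically maximal. To prove uniqueness, I need to show that every $f \notin \frakm$ is invertible in $\cO^z$. So the proof splits into (i) verifying the ring structure, and (ii) constructing inverses of elements with $a_{0,0}(0) \neq 0$.

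For (i) only multiplication is nontrivial. Given $f,g \in \cO^z$ with bounds $|a_{d,k}(\chi)|\le A_1 A_2^{|d|+k}|d|^k$ and $|b_{d,k}(\chi)|\le B_1 B_2^{|d|+k}|d|^k$ on a common polydisc, I will estimate the Cauchy product coefficient $c_{d,k}=\sum_{d_1+d_2=d,\,k_1+k_2=k} a_{d_1,k_1}b_{d_2,k_2}$ using two elementary inequalities: $|d_1|^{k_1}|d_2|^{k_2}\le |d|^k$ and $(k+1)\prod_{j=1}^s(d_j+1)\le 2^{|d|+k}$ (the latter follows from $m+1\le 2^m$ for $m\ge 0$). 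This gives $|c_{d,k}|\le A_1B_1(2\max(A_2,B_2))^{|d|+k}|d|^k$, as required. A structural observation worth recording is that $|d|^k=0$ for $|d|=0$, $k\ge 1$, so the defining estimate \emph{forces} $a_{0,k}(\chi)\equiv 0$ for every $f\in\cO^z$ and every $k\ge 1$; this will be used below.

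For (ii), let $f\in \cO^z$ with $a_{0,0}(0)\ne 0$. Shrinking the $\chi$-neighbourhood, $a_{0,0}(\chi)$ is holomorphic and nonvanishing on $|\chi|\le\epsilon$, so $a_{0,0}(\chi)^{-1}$ defines an element of $\cO^z$ supported only at $(d,k)=(0,0)$. Multiplying by this element reduces the problem to inverting $1-h$ where $h\in\cO^z$ has $h_{0,0}(\chi)\equiv 0$; combined with the structural observation, $h_{0,k}(\chi)\equiv 0$ for all $k$, so $h$ lies in $(q_1,\dots,q_s)\,\cO^z$. Since every nonzero term of $h$ has $d\neq 0$, the $(d,k)$-coefficient of $h^n$ vanishes as soon as $n>|d|$, and the formal geometric series $\sum_{n\ge 0}h^n$ is well-defined coefficient-by-coefficient. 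I will then check that $(1-h)\sum_n h^n=1$ in the ring.

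The main obstacle will be verifying that this formal inverse actually lies in $\cO^z$, which requires a uniform estimate on $(h^n)_{d,k}$. I will expand $(h^n)_{d,k}$ as a sum over compositions $d=d^{(1)}+\cdots+d^{(n)}$ (each $d^{(i)}\neq 0$) and $k=k^{(1)}+\cdots+k^{(n)}$, bound $\prod_i |d^{(i)}|^{k^{(i)}}\le |d|^k$, and estimate the composition count crudely by $\binom{k+n-1}{n-1}\prod_j\binom{d_j+n-1}{n-1}\le 2^{|d|+k+(s+1)(n-1)}$. This yields a bound of the form $(C_1 2^{s+1})^n\cdot 2^{-(s+1)}(2C_2)^{|d|+k}|d|^k$. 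The hard part is the balancing act between the combinatorial growth $2^{(s+1)(n-1)}$ and convergence: here the crucial input is the structural vanishing $h_{0,k}\equiv 0$, which caps $n\le |d|$, so that summing over $n$ contributes at most a factor $(C_1 2^{s+1})^{|d|+1}$. Absorbing this into the exponential base gives the required estimate $|(\sum_n h^n)_{d,k}|\le C_1'(C_2')^{|d|+k}|d|^k$, placing the inverse in $\cO^z$ and completing the proof.
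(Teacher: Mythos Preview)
Your proposal is correct and takes essentially the same approach as the paper: reduce to inverting $1-h$ with $h$ supported on $d\neq 0$, expand as a geometric series, bound $\prod_i |d^{(i)}|^{k^{(i)}}\le |d|^k$, and control the composition count by an exponential in $|d|$, using crucially that the series terminates at $n\le |d|$. Your version is more detailed (you spell out the ring-closure estimate and the composition-counting bound explicitly, whereas the paper simply asserts the count is ``of exponential growth in $|d|$''), but the argument is the same.
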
 
\begin{proof} 
We claim that if $x= \sum_{d\in \N^s} \sum_{k\ge 0} a_{d,k}(\chi) q^d z^k 
\in \cO^z$ satisfies $a_{0,0}(0) \neq 0$, then it is invertible in $\cO^z$. 
Without loss of generality we may assume that $a_{0,0}(\chi)=1$. 
There exist constants $C_1$,~$C_2$,~$\epsilon>0$ such that 
$|a_{d,k}(\chi)|\le C_1 C_2^{|d|+k} |d|^k$ for $|\chi|\le \epsilon$. 
We have 
\[
x^{-1} = 1 + \sum_{l=1}^\infty (-1)^l 
\sum_{\substack{d(1)\neq 0,\dots,d(l)\neq 0 \\ k_1,\dots,k(l)\ge 0}}  
a_{d(1),k(1)} \cdots a_{d(l),k(l)} q^{d(1)+\cdots + d(l)} 
z^{k(1)+\cdots + k(l)} 
\]
The coefficient of $q^dz^k$ can be estimated as: 
\begin{align*} 
\left| \sum_{l=1}^{|d|} 
\sideset{}{^{(l)}}{\sum} a_{d(1),k(1)} \cdots a_{d(l),k(l)}\right|
& \le \sum_{l=1}^{|d|} \sideset{}{^{(l)}}{\sum} 
|d(1)|^{k(1)} \cdots |d(l)|^{k(l)} C_1 C_2^{|d|+k} \\ 
& \le |d|^k C_1 C_2^{|d|+k} 
\sum_{l=1}^{|d|} \sum_{\substack{
d(1)+\cdots + d(l)=d\\ d(1)\neq 0,\dots,d(l)\neq 0}} 1 
\end{align*}
where $\sum^{(l)}$ means the sum over all $d(1),\dots,d(l) \in \N^s$ 
and $k(1),\dots,k(l) \in \N$ such that $d(1)\neq 0,\dots,d(l)\neq 0$,  
$d(1)+\cdots + d(l) =d$ and $k(1) + \cdots + k(l) = k$. 
It is easy to check that the sum in the second line is of exponential 
growth in $|d|$. 
\end{proof} 
The rest of this section (\S\ref{sec:convergence}) is devoted to the proof of 
Theorem \ref{thm:convergence} and Corollary \ref{cor:convergence}. 
The proof of Theorem \ref{thm:convergence} 
consists of two steps: first we give an estimate for the 
connection matrices of the Gauss--Manin connection in the 
basis $\{\Omega_i\}$, and then we use a theorem on gauge fixing 
to show that a gauge transformation that transforms the Gauss--Manin 
connection into the quantum connection is defined over $\cO^z$. 

\subsection{Estimates for the Gauss--Manin connection} 
Let $A_a=({{A_a}^j}_i(q,\chi,z))$ denote the connection matrix of the 
equivariant Gauss--Manin system 
with respect to the basis $\{\Omega_i\}_{i=1}^N$ 
defined by 
\[
z \nabla_{q_a \parfrac{}{q_a}} \Omega_i = 
\sum_{j=1}^N {{A_a}^j}_i(q,\chi,z) \Omega_j 
\]
where ${{A_a}^j}_i(q,\chi,z) \in R_\T[z][\![\Laa_+]\!][\![y]\!] 
\subset \C[z,\chi][\![q_1,\dots,q_s]\!]$. 
In this section we prove the following proposition.
\begin{proposition} 
\label{pro:estimate_GM_conn} 
There exist open neighbourhoods $U \subset \C^s$, 
$V \subset \C^n$ of the origin such that the 
following hold. 
Entries of the connection matrix $A_a$ can be 
expanded in power series 
$\sum_{k=0}^\infty a_k(q,\chi) z^k$ that satisfy 
\begin{itemize} 
\item[(1)] for every $k$, $a_k(q,\chi)$ is convergent and holomorphic 
for $(q,\chi) \in U\times V$;  
\item[(2)] there exist contants $C_1,C_2>0$ such that 
\begin{equation} 
\label{eq:Gevrey}
|a_k(q,\chi)|\le C_1 C_2^k k! \qquad \text{for all $(q,\chi) \in U\times V$.}
\end{equation} 
\end{itemize} 
\end{proposition}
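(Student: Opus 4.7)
The plan is to transfer Gevrey-$1$ estimates from oscillatory-integral asymptotics on the mirror side to the Gauss--Manin connection matrix, using the asymptotic-expansion machinery of Section \ref{sec:pairing} as a bridge.

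First, I would build a formal fundamental solution matrix $\Pi(q,\chi,z)$ of the Gauss--Manin system by collecting the expansions
\[
\Pi_{ij}(q,\chi,z) := e^{F^{\qu}_\chi(p_j)/z}\,\Asym_{p_j}\bigl(e^{F_\chi/z}\Omega_i\bigr)
\]
over the $N$ critical points $p_1,\dots,p_N$ of $F_\chi$ produced by Lemma \ref{lem:critical_points}. Fix a generic base point $\chi^{*} \in \C^n$ at which the critical values are distinct and the Hessians of $F_{\chi^{*}}$ are non-degenerate; the analytic implicit function theorem then supplies locally holomorphic branches $p_j(q,\chi)$ on some polydisc $U_0 \times V^{*}$ around $(0,\chi^{*})$. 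By Lemma \ref{lem:asymp_diffeq}, $\Pi$ satisfies the relation $z\partial_{q_a}\Pi = A_a \Pi + \Pi\, D_a$, where the diagonal correction $D_a$ encodes the derivatives of the classical critical values $F^{\cl}_\chi(p_j)$ (cf.\ Lemma \ref{lem:cl_criticalvalue}) and has entries that are polynomials of degree at most one in $z$.

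The key analytic input is the classical Gevrey-$1$ theorem for oscillatory integrals: the formal stationary-phase expansion of $\int_{\Gamma(p_j)} e^{F_\chi/z}\Omega_i$ at a parameter-dependent non-degenerate critical point is a Gevrey series of order $1$ in $z$, whose coefficients are holomorphic in the parameters on the common domain where the critical point varies holomorphically (see for instance the treatment in \cite{Pham:Lefschetz}; the logarithmic terms $\chi_i \log x_i$ cause no trouble once absorbed via the substitution $w_j = u_j(\sigma)\, e^{T_j}$ used in the proof of Proposition \ref{pro:Asymp_is_Loc}). Applied termwise, this yields a Gevrey-$1$ bound with holomorphic coefficients on $U_0 \times V^{*}$ for each entry of $\Pi$. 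Next, Proposition \ref{pro:Asymp_is_Loc} combined with Proposition \ref{pro:Loc0} identifies the $z=0$ limit of $\Pi$, modulo the diagonal $e^{F^\cl_\chi(p_j)/z}$ factors, with the Atiyah--Bott fixed-point restriction matrix of the basis $\{T_i\}$; this matrix is invertible at the generic reference point, and Cramer's rule propagates the Gevrey-$1$ and analyticity properties to $\Pi^{-1}$. Then $A_a = z(\partial_{q_a}\Pi)\Pi^{-1} + \Pi D_a \Pi^{-1}$ satisfies the desired estimate on $U_0 \times V^{*}$.

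The main obstacle is extending the estimate across the equivariant discriminant to a full neighborhood of $\chi=0$: as $\chi \to 0$ the critical points of $F_\chi$ coalesce and the Hessian degenerates, so the oscillatory-integral construction of $\Pi$ breaks down there. The rescue comes from an a priori algebraic structure: by equations \eqref{eq:action_chi_i} and \eqref{eq:GM_conn_explicit}, the entries of $A_a$ lie in $R_\T[z][\![\Laa_+]\!][\![y]\!] = \C[\chi,z][\![\Laa_+]\!][\![y]\!]$, so each Taylor coefficient in $q$ is a polynomial in $(\chi,z)$ whose degree in $\chi$ can be bounded inductively from the change of basis between the generating set $\{w_\bk \omega\}_{\bk \in \bN \cap |\Sigma|}$ and $\{\Omega_i\}$. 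With such a polynomial-degree bound in hand, a Cauchy-type interpolation argument converts the uniform Gevrey-$1$ bound at the single generic parameter $\chi^{*}$ into a uniform bound on a full polydisc $V$ around $\chi=0$, at the cost of shrinking $U_0$ to a smaller $U$. This final reconciliation between the transcendental asymptotic analysis (valid only off the discriminant) and the algebraic polynomiality in $\chi$ is where I expect the real work to lie.
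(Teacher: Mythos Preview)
Your overall strategy matches the paper's: build a matrix of asymptotic expansions $\Asym_{p}(e^{F_\chi/z}\Omega_i)$, use the differential equation from Lemma~\ref{lem:asymp_diffeq} to solve for $A_a$, invoke the Gevrey-$1$ estimate for stationary phase (this is the content of Proposition~\ref{pro:estimate_asymp}), and finally exploit the polynomiality in $\chi$ to push the estimate across the discriminant to a neighbourhood of $\chi=0$. Two points of execution differ from the paper, and the second one is a genuine gap.

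First, a minor point: you fold the factor $e^{F^{\qu}_\chi(p_j)/z}$ into $\Pi$. Since $F^{\qu}_\chi(p_j)\in\frakm^G$, this factor is, for each fixed $q\neq 0$, a transcendental function of $z$ rather than a formal power series, so ``Gevrey-$1$ in $z$'' is not well-posed for $\Pi$ as stated. The paper avoids this by keeping the two ingredients separate: it sets $\bR_{p,i}=\Asym_p(e^{F_\chi/z}\Omega_i)$ and $\bU=\diag(F^{\qu}_\chi(p))$, and from the differential equation for $e^{\bU/z}\bR$ obtains the clean identity
\[
A_a=\xi_a I+\bR^{-1}\,zq_a\partial_{q_a}\bR+\bR^{-1}\,q_a\partial_{q_a}\bU\,\bR,
\]
in which every factor is a genuine power series in $z$ with Gevrey-$1$ coefficients (the inverse $\bR^{-1}$ inherits this because $\C\{\!\{z\}\!\}$ is a local ring). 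Your $\Pi D_a\Pi^{-1}$ formulation can be made to agree with this after the exponentials cancel, but as written the Gevrey claim for $\Pi$ itself is not correct.

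Second, and more seriously: your extension from a neighbourhood $V^*$ of a single generic $\chi^*$ to a neighbourhood of $\chi=0$ is incomplete. A bound on a polynomial over a small ball around $\chi^*$ controls its values near $\chi=0$ only up to a factor exponential in the degree, so your ``Cauchy-type interpolation'' requires a degree bound on $a_{d,k}(\chi)$ that is at most linear in $|d|+k$; you assert this can be done ``inductively'' but give no argument. The paper sidesteps the degree issue entirely. It observes that the whole construction of $\bR$, $\bU$ works over \emph{any} compact $K\subset(\C^n\setminus\cD)\sptilde$, giving the estimate $|a_{d,k}(\chi)|\le C_1 C_2^{|d|+k}k!$ uniformly for $\chi$ in the image of $K$. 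One then chooses $K$ so that the polynomially convex hull of its image contains $\chi=0$ in its interior; since each $a_{d,k}$ is a polynomial in $\chi$, the maximum principle on the hull transports the estimate to a neighbourhood of the origin with no reference to degrees. This is both simpler and more robust than the interpolation you propose.
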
 

\begin{remark} 
\label{rem:Gevrey} 
Power series $\sum_{k=0}^\infty a_k z^k$ satisfying the estimate 
\eqref{eq:Gevrey} 
are called \emph{Gevrey series of order $1$}. 
The same estimate also appears as a convergence condition 
for microdifferential operators (see \cite[\S 2]{Kashiwara:microlocal}). 
We write 
\[
\C\{\!\{z\}\!\} = \left\{\sum_{k=0}^\infty a_k z^k : \text{$a_k\in \C$,  
$\exists C_1$,~$C_2>0$ s.t.~$|a_k| \le C_1 C_2^k k!$} \right\}. 
\]
It is well-known that $\C\{\!\{z\}\!\}$ is a local ring. 
\end{remark} 

We start by noting that the Landau--Ginzburg potential $F_\chi$ 
is a globally defined (multi-valued) analytic function in the arguments 
$x$,~$y$,~$Q$,~$\chi$. Therefore the (finitely many) critical points of $F_\chi$ over 
$\overline{S}_\T[\![\Laa_+]\!][\![y]\!]$ described 
in Lemma \ref{lem:critical_points} depend 
analytically on the parameters $(Q,y,\chi)$. 
Recall from Lemma \ref{lem:critical_points}(2--3) that 
the co-ordinates $w_\bk(p)$ of a critical point at $Q=0$, $y=y^*$ are 
analytic functions of $\chi$ defined on $(\C^n \setminus \cD)\sptilde$, 
where $\sptilde$ means the universal cover and $\cD 
= \bigcup_{i=1}^m \bigcup_{\sigma \in \Sigma(n)} \{ u_i(\sigma) = 0\}$ 
(see Notation \ref{nota:fixed_points}). 
Thus co-ordinates of each critical point are power series in $q=(Q,y)$ 
with coefficients in analytic functions on $(\C^n\setminus \cD)\sptilde$. 
We fix an arbitrary compact subset 
$K\subset (\C^n\setminus \cD)\sptilde$, and then choose 
 a sufficiently small neighbourhood $U\subset \C^s$ of 
the origin such that co-ordinates of these critical points are 
convergent on $(\chi,q) \in K\times U$. 

Let $p$ be an analytic branch of the critical scheme of $F_\chi$ 
defined over $K\times U$ as above. 
Recall from \S\ref{subsec:HRP} 
that $\Asym_p(e^{F_\chi/z} \Omega_i) \in \overline{S}_\T[z][\![\Laa_+]\!]
[\![y]\!]$ is defined as a formal asymptotic 
expansion of an oscillatory integral at the critical branch $p$:  
\[
\int_{\Gamma(p)} e^{F_\chi/z} \Omega_i \sim e^{F_\chi(p)/z} (-2\pi z)^{n/2} 
\Asym_p(e^{F_\chi/z} \Omega_i). 
\]
By the definition of the formal asymptotic expansion in \S\ref{subsec:HRP} 
and the analyticity of $F_\chi$, it follows that $\Asym_p(e^{F_\chi/z}\Omega_i)$ 
is a formal power series in $z$ with coefficients in analytic functions in 
$(\chi,q)\in K\times U$ (where we also used the fact that $\Omega_i$ is 
an algebraic differential form). 
Moreover, these coefficients satisfy the following: 
\begin{proposition} 
\label{pro:estimate_asymp}
After shrinking $U$ if necessary, we can find constants $C_1$,~$C_2>0$ 
such that the power series expansion $\sum_{k=0}^\infty a_k(q,\chi) z^k$ 
of $\Asym_p(e^{F_\chi/z}\Omega_i)$ satisfies the estimate 
$|a_k(q,\chi)| \le C_1 C_2^k k!$ for all $k\ge 0$ and $(q,\chi) \in U\times K$. 
\end{proposition}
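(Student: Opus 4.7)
The strategy is to treat this as a uniform (in parameters) version of the classical fact that the stationary-phase expansion of an oscillatory integral at an isolated non-degenerate critical point is Gevrey of order one. The bulk of the work is local, near the critical branch $p$, and the task is to control the local estimates uniformly for $(q,\chi)$ in the compact set $\overline{U}\times K$.

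First I would make the problem genuinely local. Using that $F_\chi$ and $\Omega_i$ are globally analytic in their arguments and that the critical branch $p(q,\chi)$ depends analytically on $(q,\chi)$, I would translate so that the critical point sits at the origin and expand
\begin{equation*}
F_\chi(x;y) - F_\chi(p) = \tfrac{1}{2}\langle H(q,\chi)\, s, s\rangle + R(s;q,\chi),\qquad s^i := \log x_i - \log p_i,
\end{equation*}
where the Hessian $H=H(q,\chi)$ is analytic and invertible on $\overline{U}\times K$ (this is where the hypothesis $K\subset (\C^n\setminus \cD)$ is used — the Hessian at $Q=0$, $y=y^*$ is diagonalizable by $(b_{k,i}\sqrt{u_k(\sigma)})$ as in the proof of Lemma~\ref{lem:critical_points}), and $R$ vanishes to third order in $s$. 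Writing the amplitude as $g(s;q,\chi)\,\omega$ with $g$ analytic in all of its arguments, Cauchy estimates on a fixed polydisc $\{|s|\le \rho\}$ yield uniform bounds
\begin{equation*}
|R_{i_1\cdots i_k}(q,\chi)| \le C_R\,\rho^{-k},\qquad |g_{i_1\cdots i_k}(q,\chi)|\le C_g\,\rho^{-k}
\end{equation*}
on Taylor coefficients, with constants independent of $(q,\chi)\in \overline{U}\times K$.

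Second, I would invoke the explicit Gaussian-integral recipe reviewed just before Lemma \ref{lem:asymp_diffeq}. After the linear change of variables $s = \sqrt{-z}\, C v$ diagonalizing $H$, with $\|C^{\pm 1}\|$ bounded on $\overline{U}\times K$, the coefficient $a_k(q,\chi)$ of $z^k$ becomes a finite $\Z$-linear combination of Gaussian moments $(2\pi)^{-n/2}\int_{\R^n} e^{-|v|^2/2}\, v^{i_1}\cdots v^{i_{2m}}\,dv$ weighted by products of Taylor coefficients of $R$ and $g$ and entries of $C^{\pm 1}$. By Wick's theorem each moment is at most $(2m-1)!!$, and the terms contributing to $z^k$ are indexed by Feynman diagrams with total loop order $k$, whose number grows only exponentially in $k$. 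Combining the bound $(2m-1)!!\le (2m)!/(2^m m!) \lesssim C^k k!$ for the Gaussian moments with the uniform Cauchy bounds on the Taylor coefficients of $R$ and $g$ produces the desired estimate $|a_k(q,\chi)|\le C_1 C_2^k k!$ for $(q,\chi)\in U\times K$, after possibly shrinking $U$ once more.

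The main obstacle is the combinatorial bookkeeping: one must verify that Feynman diagrams contributing to the $z^k$ coefficient number only exponentially in $k$, so that the Gaussian moments (which on their own already contribute like $k!$) yield a genuine Gevrey-$1$ bound rather than a $(2k)!$ bound. This is standard in formal saddle-point asymptotics (see e.g.~\cite{Pham:Lefschetz}), but the parameter dependence requires care; uniformity in $(q,\chi)$ rests entirely on the uniform nondegeneracy of $H(q,\chi)$ on the compact set $\overline{U}\times K$ together with the Cauchy bounds above, both of which are secured by the preliminary reduction step.
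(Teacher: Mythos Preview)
Your overall strategy is sound and is the standard route to the Gevrey-$1$ estimate, but there is a gap in the combinatorial accounting, and the paper's proof avoids it by a different reduction.

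The issue is your claim that the Gaussian moment $(2m-1)!!$ contributes ``like $C^k k!$''. In your setup the total half-edge count is $2m=2E$ with $E=k+n$, where $n$ is the number of $R$-vertices. Since each $R$-vertex has valence at least $3$, one only gets $n\le 2k$ and hence $E\le 3k$; the moment $(2E-1)!!$ can therefore be as large as $(6k-1)!!\sim (3k)!$, not $k!$. The estimate is rescued by the symmetry factor $1/n!$ coming from the exponential expansion, since $(2(k+n)-1)!!/n!$ is indeed bounded by $C^k k!$ uniformly in $0\le n\le 2k$, but this step is missing from your sketch and is exactly the ``combinatorial bookkeeping'' you flag as the main obstacle.

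The paper bypasses this difficulty entirely. Instead of diagonalizing the Hessian by a \emph{linear} change of variables and carrying the cubic-and-higher remainder $R$ in the exponent, it applies the holomorphic Morse lemma to find coordinates $v$ in which $f(t)=f(p)+\tfrac{1}{2}\sum_i (v^i)^2$ exactly. The entire amplitude (including the Jacobian) is then a single analytic function $h(v)$, and the asymptotic expansion becomes
\[
\sum_{I\in(2\N)^n} e_I\,(-z)^{|I|/2}\prod_{a=1}^n (i_a-1)!!,
\]
so the coefficient of $z^k$ involves only multi-indices with $|I|=2k$. Now $\prod_a(2i_a-1)!!\le 2^k\prod_a i_a!\le 2^k k!$, the number of such $I$ is $\binom{k+n-1}{k}\le 2^{k+n-1}$, and Cauchy bounds on $e_I$ finish the job. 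The Morse lemma thus forces $E=k$ (no $R$-vertices at all), eliminating the delicate cancellation between $(2E-1)!!$ and $1/n!$ that your approach requires. Uniformity in $(q,\chi)$ is handled, as in your sketch, by noting that the constants in the Morse lemma and in the Cauchy bounds depend only on uniform bounds $B_1,B_2$ for the Taylor coefficients of $f$ and $g$ on $\overline{U}\times K$.
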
 
This proposition follows immediately from:
\begin{lemma} 
Let $f$, $g$ be holomorphic functions on $\C^n$ 
and let $p\in \C^n$ be a non-degenerate critical point of $f$. 
(It suffices that $f$,~$g$ are defined near $p$.) 
The formal asymptotic 
expansion at $p$, as defined in \S\ref{subsec:HRP}:
\[
\int e^{f(t)/z} g(t) dt^1 \cdots dt^n 
\sim e^{f(p)/z} (-2\pi z)^{n/2} (a_0 + a_1 z + a_2 z^2 + \cdots ) 
\]
satisfies the estimate $|a_k|\le C_1 C_2^k k!$ 
for some constants $C_1$,~$C_2>0$. 
If $B_1$,~$B_2>0$ are constants such that the Taylor expansions 
$f(p+t) = \sum_{I} c_I t^I$, $g(p+t) = \sum_I d_I t^I$ 
at $p$ satisfy $\max(|c_I|,|d_I|)\le B_1 B_2^{|I|}$, then the constants $C_1,C_2$ 
here depend only on $B_1$ and $B_2$. 
Here $I\in \N^n$ denotes a multi-index and we write 
$t^I = (t^1)^{i_1} \cdots (t^n)^{i_n}$ and $|I| =i_1+\cdots +i_n$ 
for $I = (i_1,\dots,i_n)$. 
\end{lemma}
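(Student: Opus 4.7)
The approach is to make the formal asymptotic expansion explicit as a formal Gaussian integral, expand it via Wick's theorem, and bound the resulting combinatorial sum by a Gevrey-$1$ estimate. First I would translate $p$ to the origin and apply a linear change of coordinates that diagonalizes and normalizes the Hessian $f_{ij}(p)$, reducing to the canonical form
\[
f(t) \;=\; f(p) - \tfrac12\|t\|^2 + r(t), \qquad r(t) = O(|t|^3),
\]
with transformed Taylor coefficients of $r$ and $g$ satisfying $|\tilde c_I|,\,|\tilde d_I|\le B'_1(B'_2)^{|I|}$, where $B'_1,B'_2$ depend only on $B_1,B_2$ (and on the Hessian data at $p$, controlled by the nondegeneracy hypothesis). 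Following the procedure of \S\ref{subsec:HRP}, the rescaling $t=\sqrt{-z}\,v$ then converts the formal asymptotic expansion into a formal Gaussian integral:
\[
a_k \;=\; \frac{1}{(2\pi)^{n/2}}\int_{\R^n} e^{-\|v\|^2/2}\,\bigl[z^k\bigr]\!\Bigl(\exp\!\bigl(r(\sqrt{-z}\,v)/z\bigr)\,g(\sqrt{-z}\,v)\Bigr)\,dv.
\]

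Expanding $\exp(r/z) = \sum_{m\ge0}(m!)^{-1}(r/z)^m$ together with the Taylor series of $g$, the coefficient of $z^k$ becomes a finite sum over tuples $(m;I_1,\ldots,I_m;J)$ of multi-indices with $|I_j|\ge 3$ and $\sum_{j=1}^m(|I_j|-2)+|J|=2k$, of terms of the form $\tfrac{(-1)^k}{m!}\tilde c_{I_1}\cdots\tilde c_{I_m}\tilde d_J\,v^{I_1+\cdots+I_m+J}$. The total $v$-degree is $2d:=2(k+m)$, and Wick's theorem bounds the Gaussian moment of any such monomial by $(2d-1)!!\le 2^d\,d!$. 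Packaging the sum over $(I_j,J)$ as the coefficient of $s^{2(k+m)}$ in the product $R(s)^m G(s)$, where $R(s):=\sum_{|I|\ge3}|\tilde c_I|\,s^{|I|}$ and $G(s):=\sum_J|\tilde d_J|\,s^{|J|}$ are both analytic in the disc $|s|<1/B'_2$, Cauchy's estimate gives $[s^{2(k+m)}]R^m G \le M_R(r)^m\,M_G(r)\,r^{-2(k+m)}$ for any $r<1/B'_2$; since $R(s)=O(s^3)$ near $s=0$, one has $M_R(r)=O(r^3)$ as $r\to 0$.

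Combining these bounds and using $(k+m)!/m!=k!\binom{k+m}{k}$ gives
\[
|a_k| \;\le\; M_G(r)\,(2/r^2)^k\,k!\,\sum_{m\ge0}\binom{k+m}{k}\,\beta^m, \qquad \beta:=2\,M_R(r)/r^2.
\]
Choosing $r$ small enough (which is possible because $M_R(r)=O(r^3)$ forces $\beta\to 0$ as $r\to 0$), the geometric ratio $\beta$ is made $<1$, and the negative-binomial identity $\sum_m\binom{k+m}{k}\beta^m=(1-\beta)^{-k-1}$ yields the desired Gevrey-$1$ bound $|a_k|\le C_1 C_2^k\,k!$, with constants depending only on $B_1,B_2$ (and on the Hessian data as above). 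The main obstacle will be the careful combinatorial bookkeeping in the Wick expansion and in the Cauchy-estimate step: one must verify that the combination of the Gaussian-moment factorial $d!$, the $1/m!$ damping, and the exponential Cauchy bound on the generating-function coefficients produces overall factorial growth only in $k$ (not in $k+m$). This is ultimately ensured by the $O(s^3)$ vanishing of $R$ at the origin, which is precisely the statement that $p$ is a critical point.
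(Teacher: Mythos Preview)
Your proof is correct but takes a genuinely different route from the paper's.  The paper invokes the \emph{holomorphic Morse lemma} to find local coordinates $v$ in which $f$ is \emph{exactly} quadratic, $f(t)=f(p)+\tfrac12\sum_i (v^i)^2$; after this nonlinear change of variables there is no remainder term $r$, and the asymptotic expansion reduces to the single sum
\[
\sum_{I\in(2\N)^n} e_I\,(-z)^{|I|/2}\prod_{a}(i_a-1)!!,
\]
where $\sum_I e_I v^I$ is the Taylor expansion of $g$ times the Jacobian.  Bounding this by $|e_I|\le C_1 C_2^{|I|}$ and $\prod_a(2i_a-1)!!\le 2^k k!$ for $|I|=2k$ gives the Gevrey estimate in one line.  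By contrast you stay with the \emph{linear} diagonalization prescribed in the definition of $\Asym_p$, keep the cubic-and-higher remainder $r$, expand $\exp(r/z)$ as a Feynman-type sum over $m$, and control the resulting double sum over $(m,k)$ via Wick's theorem, Cauchy estimates on the one-variable generating functions $R(s),G(s)$, and the negative-binomial identity.  The key observation that makes your sum converge---that $R(s)=O(s^3)$, so $\beta=2M_R(r)/r^2\to 0$ as $r\to 0$---is exactly the criticality of $p$, as you note.

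Both arguments need some control on the inverse Hessian (the paper implicitly, through the Morse coordinate change; you explicitly, in your parenthetical about ``Hessian data''); this is not strictly provided by $B_1,B_2$ alone, but is harmless for the application to Proposition~\ref{pro:estimate_asymp}.  The paper's route is shorter and avoids the sum over $m$, at the price of invoking the Morse lemma as a black box; your route is more elementary and hews closer to the actual definition of the formal asymptotic expansion in \S\ref{subsec:HRP}, at the cost of more combinatorics.
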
 
\begin{proof} 
By the holomorphic Morse lemma, we can find local co-ordinates 
$v=(v^1,\dots,v^n)$ centered at $p$ such that $f(t) = f(p) 
+\frac{1}{2}((v^1)^2 + \cdots + (v^n)^2)$. 
Changing co-ordinates, we get 
\[
\int e^{f(t)/z} g(t) dt^1 \cdots dt^n 
= e^{f(p)/z}\int e^{\sum_{i=1}^n (v^i)^2/(2z)} h(v) 
\parfrac{(t^i)}{(v^j)} 
dv^1\cdots dv^n 
\]
where $h(v)$ is a holomorphic function near $v=0$ 
and $\partial(t^i)/\partial(v^j)$ denotes the Jacobian of the co-ordinate change. 
Let $\sum_I e_I v^I$ denote the Taylor expansion 
of $h(v) (\partial (t^i)/\partial (v^j))$; then we have an estimate 
$|e_I| \le C_1 C_2^{|I|}$ with constants $C_1$,~$C_2>0$ depending 
only on $B_1$ and $B_2$. 
The asymptotic expansion of the above integral gives 
\begin{equation} 
\label{eq:after_expansion}
\sum_{\substack{
I=(i_1,\dots,i_n)\in (2\N)^n}}  
e_I  (-z)^{|I|/2} \prod_{a=1}^n (i_a-1)!!  
\end{equation} 
multiplied by $e^{f(p)/z} (-2\pi z)^{n/2}$, 
where $(2k-1)!! = (2k-1) (2k-3) \cdots 3\cdot 1$ 
(we set $(-1)!! =1$). 
The coefficient in front of $z^k$ in \eqref{eq:after_expansion} 
has the estimate 
\begin{align*} 
\left| 
\sum_{I\in \N^n: |I|=k} e_{2I} (-1)^{|I|} \prod_{a=1}^n (2i_a-1)!!
\right| 
& \le C_1 C_2^{2k} 2^k \sum_{I\in \N^n: |I| = k} i_a! \\ 
& \le C_1 C_2^{2k} 2^k \binom{k+n-1}{k} 
k! 
\end{align*} 
which gives the desired estimate since $\binom{a}{b}\le 2^a$. 
\end{proof} 

\begin{proof}[Proof of Proposition \ref{pro:estimate_GM_conn}] 
We use the fact from Lemma \ref{lem:asymp_diffeq} 
that $\Asym_p$ is a solution to the Gauss--Manin system.  
Let $\bR$ be the square matrix with entries 
$\bR_{p,i} = \Asym_p(e^{F_\chi/z} \Omega_i)$ 
where $p$ ranges over all critical points in Lemma \ref{lem:critical_points} 
and $1\le i\le N$. 
Let $\bU$ be the diagonal matrix with entries $F_\chi^\qu(p)$ 
with $p$ ranging over the same set. 
The differential equation in Lemma \ref{lem:asymp_diffeq} shows that 
\[
\left(z q_a\parfrac{}{q_a} +\xi_a\right) 
\left( e^{\bU/z} \bR\right) = e^{\bU/z} \bR A_a 
\]
for some $\xi_a \in H^2_\T(\pt,\C)=\bigoplus_{i=1}^n \C \chi_i$.  
This is equivalent to 
\[
A_a = \xi_a I + \bR^{-1} z q_a\parfrac{\bR}{q_a} 
+ \bR^{-1} q_a \parfrac{\bU}{q_a} \bR  
\]
where $I$ denotes the identity matrix. 
Proposition \ref{pro:Asymp_is_Loc} together with the fact that 
$\Loc^{(0)}(\Omega_i) =T_i$ shows that $\bR|_{q=0}$ 
is invertible for $\chi \in (\C^n\setminus \cD)\sptilde$. 
Recall from Proposition \ref{pro:estimate_asymp} 
that $\bR_{p,i}$ is a Gevrey series of order 1 as a power series in $z$
(see Remark \ref{rem:Gevrey}). 
Since $\C\{\!\{z\}\!\}$ is a local ring, we conclude that 
the entries of $\bR^{-1}$ satisfy estimates similar to 
Proposition \ref{pro:estimate_asymp}, after shrinking $U$ if necessary. 
Also, the entries of $z q_a \parfrac{\bR}{q_a}$ satisfy estimates similar 
to Proposition \ref{pro:estimate_asymp} after shrinking $U$ if necessary; 
this follows from the Cauchy integral formula for derivatives. 
The matrix $\bU$ is convergent and analytic on $(q,\chi) \in U\times K$. 
Therefore each entry of $A_a$ can be expanded in a series 
$\sum_{k=0}^\infty a_k(q,\chi) z^k$ that satisfies 
the estimate $|a_k(q,\chi)|\le C_1 C_2^k k!$ for 
$(q,\chi) \in U \times K$. On the other hand, 
we know that $a_k(q,\chi)$ here lies in $\C[\chi][\![q_1,\dots,q_s]\!]$, 
i.e.~coefficients of the Taylor expansion of $a_k(q,\chi)$ with respect to $q$ 
are polynomials in $\chi$. 
Expand $a_k(q,\chi) = \sum_{d\in \N^s} a_{k,d}(\chi) q^d$. 
Then we have the estimate 
\[
|a_{k,d}(\chi)| \le C_1C_2^{k+|d|} k! \qquad \forall \chi \in K 
\]
for possibly bigger constants $C_1$,~$C_2>0$. 
Recall that $K$ can be taken to be an arbitrary compact subset 
in $(\C^n\setminus \cD)\sptilde$ (the constants 
$C_1$,~$C_2$ depend on the choice of $K$); we can choose $K$ so that 
the holomorphically convex hull (or polynomially convex hull) 
of the image of $K$ in $\C^n \setminus \cD$ 
contains the origin in its interior. Then the above estimate holds 
in a neighbourhood of the origin $\chi=0$. 
This shows that each entry of $A_a$ satisfies the estimate in 
Proposition~\ref{pro:estimate_GM_conn}. 
\end{proof} 

\begin{remark} 
The consideration of critical points in this section together with 
the study of the Jacobi ring in the non-equivariant limit 
in \cite[Proposition 3.10(ii)]{Iritani:integral} shows that 
\emph{the non-equivariant quantum cohomology of a compact toric stack is generically 
semisimple}. See also \cite[\S 5.4]{Iritani:coLef}, 
\cite[Corollary 4.9]{Iritani:integral}. 
\end{remark}

\subsection{Gauge fixing} 
We begin with the following lemma. 

\begin{lemma} 
\label{lem:GM_conn_Oz} 
Matrix entries of $A_a$ belong to $\cO^z$. 
\end{lemma}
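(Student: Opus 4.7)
The plan is to upgrade the Gevrey-$1$ estimate of Proposition \ref{pro:estimate_GM_conn} to the stronger $\cO^z$-bound of Definition \ref{def:Oz} by combining it with a linear upper bound on the $z$-polynomial degree of each Taylor coefficient in $q$. The Gevrey bound alone gives Cauchy estimates involving $k!\,\rho^{-|d|}$, which overshoots the required $|d|^k$ growth whenever $k$ substantially exceeds $|d|$; a linear $z$-degree bound is precisely what allows one to replace $k!$ by $k^k\le (c|d|)^k$ in the range where the coefficients may be nonzero.

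First I would invoke Proposition \ref{pro:estimate_GM_conn} to fix open neighbourhoods $U\subset \C^s$ of $q=0$ and $V\subset \C^n$ of $\chi=0$ together with constants $C_1,C_2>0$, so that $A_a=\sum_{k\ge 0} a_k(q,\chi)z^k$ with each $a_k$ holomorphic on $U\times V$ and $|a_k(q,\chi)|\le C_1 C_2^k k!$. Cauchy's integral formula on a polydisc $\{|q_a|\le\rho\}^s\subset U$ then yields the Taylor bound
\[
|a_{d,k}(\chi)| \le C_1 C_2^k k!\,\rho^{-|d|} \quad \text{for } \chi\in V.
\]
Next I would observe that $a_{0,k}(\chi)\equiv 0$ for every $k\ge 0$: the vector field $q_a\partial/\partial q_a$ vanishes identically along $\{q_a=0\}$, so $z\nabla_{q_a\partial/\partial q_a}=zq_a\partial_{q_a}+q_a\partial_{q_a}F_\chi$ restricts to zero on $\{q=0\}$, whence $A_a|_{q=0}=0$.

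The central step, and the main obstacle, is to establish a linear bound $\deg_z p_d(\chi,z)\le c_0+c_1|d|$ on the $z$-polynomial $p_d(\chi,z):=\sum_k a_{d,k}(\chi)z^k\in\C[\chi][z]$ encoding the $q^d$-Taylor coefficient of $A_a$. My approach would be to use the fan $D$-module presentation of $\GM(F_\chi)$ from Section \ref{subsec:fanD-mod}: in this presentation the topological basis $\{\bunit_\bk=w_\bk\omega:\bk\in\bN\cap|\Sigma|\}$ is acted on by $z\vartheta_i$, $z\partial_\bl$, and the equivariant parameters $\chi$ via the explicit formulas of Definition \ref{def:fanD-mod}, each application raising the $z$-degree by at most one. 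Since $q_a\partial_{q_a}F_\chi$ is a polynomial in the $w_\bk$ with coefficients polynomial in $y$ and the splitting-dependent $Q$-weights, multiplication of $\Omega_i\in\bigoplus\C[z]w_\bk\omega$ by this polynomial produces an element of $\GM(F_\chi)$ of bounded $z$-degree in the topological basis; higher powers of $z$ can only arise when one re-expresses the result in the fixed $R_\T[z][\![\Laa_+]\!][\![y]\!]$-basis $\{\Omega_j\}$. A careful tracking of the joint $(z,q)$-bifiltration through this change of basis, together with the identification $\chi=z\vartheta_\chi$ governing the $R_\T$-module structure, should yield the required linear bound; making this bookkeeping precise is the principal difficulty, as the passage from the completed topological basis $\{w_\bk\omega\}$ to the finite-rank basis $\{\Omega_j\}$ over $R_\T[z][\![\Laa_+]\!][\![y]\!]$ couples $z$ and $\chi$.

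Granted the $z$-degree bound, the lemma follows at once. For $|d|\ge 1$ and $k\le c_0+c_1|d|$ we have $k!\le k^k\le\big((c_0+c_1)|d|\big)^k$ (absorbing $c_0$ into the linear term using $|d|\ge 1$), so
\[
|a_{d,k}(\chi)| \le C_1 C_2^k (c_0+c_1)^k |d|^k \rho^{-|d|} \le C_1\,\bigl(\max(C_2(c_0+c_1),\rho^{-1})\bigr)^{|d|+k}|d|^k,
\]
while the vanishing $a_{0,k}\equiv 0$ from the second paragraph handles the remaining case. This is precisely the estimate required by Definition \ref{def:Oz} for $A_a\in\cO^z$.
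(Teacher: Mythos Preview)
Your overall strategy---combining the Gevrey estimate from Proposition~\ref{pro:estimate_GM_conn} with a linear bound $k\le c_0+c_1|d|$ on the $z$-degree of each $q$-Taylor coefficient, and handling $d=0$ separately---is exactly the paper's approach. However, two of your three steps contain genuine gaps.

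First, your claim that $A_a|_{q=0}=0$ is false, and the justification is mistaken. You argue that the vector field $q_a\partial_{q_a}$ vanishes along $\{q_a=0\}$, hence so does $z\nabla_{q_a\partial_{q_a}}=zq_a\partial_{q_a}+(q_a\partial_{q_a}F_\chi)$. But $\Omega_i$ is a $\C[z]$-combination of the $w_\bk\omega$, and the $w_\bk$ themselves carry $Q$-weight: from \eqref{eq:GM_conn_explicit}, $z\nabla_{\xi Q\partial_Q}w_\bk\omega = z(\xi\cdot\lambda(\bk))w_\bk\omega+\cdots$, and this first term survives at $Q=0$, $y=y^*$. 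Thus for the Novikov directions the connection is genuinely logarithmic along $q_a=0$, with nonzero residue. What you actually need, and what the paper proves, is only that $A_a|_{q=0}$ is \emph{$z$-independent} (i.e.\ $a_{0,k}=0$ for $k>0$): since $\Loc^{(0)}$ is $R_\T[z]$-linear, sends $\Omega_i$ to $T_i$, and by Proposition~\ref{pro:Loc_diffeq} intertwines $z\nabla_{\xi Q\partial_Q}|_{q=0}$ with cup product by $\hat\xi$, the matrix $A_a|_{q=0}$ is conjugate to a $z$-independent matrix.

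Second, and more seriously, your linear $z$-degree bound is not established---you describe a ``bookkeeping'' argument through the fan $D$-module and the change of basis from $\{w_\bk\omega\}$ to $\{\Omega_j\}$, and concede this is the principal difficulty. The paper bypasses all of this with a one-line homogeneity argument: the basis $\{\Omega_i\}$ was chosen $\Gr^{\rm B}$-homogeneous (Theorem~\ref{thm:freeness}), and the Gauss--Manin connection is compatible with $\Gr^{\rm B}$ (Proposition~\ref{pro:GM_conn_flat}), so each entry of $A_a$ is homogeneous of bounded degree for the grading with $\deg z=\deg\chi_i=1$, $\deg Q^d=c_1(\frX)\cdot d$, $\deg y_\bk=1-|\bk|$. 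Since $a_{d,k}(\chi)\in\C[\chi]$ has nonnegative degree, homogeneity forces $\deg q^d+k\le C$, and as $|\deg q^d|\le C'|d|$ for a uniform $C'$, this gives $k\le C+C'|d|$. Your fan $D$-module grading (the remark after Definition~\ref{def:fanD-mod}) is literally the same grading, so what you were reaching for is precisely this, but stated without the detour through explicit basis changes.
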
 
\begin{proof} 
Since the basis $\{\Omega_i\}$ is homogeneous 
with respect to $\Gr^{\rm B}$ -- see \eqref{eq:Gr_B} --
it follows from Proposition \ref{pro:GM_conn_flat} that 
the connection matrices $A_a$ 
have homogeneous entries with respect to the grading 
$\deg Q^d = c_1(\frX) \cdot d$, $\deg y_\bk = 1-|\bk|$, 
$\deg \chi_i  = \deg z = 1$. 
Since there are finitely many matrix entries of $A_a$'s, 
we have a uniform constant $C>0$ such that every 
entry $\sum_{k=0}^\infty a_k(q,\chi) z^k$ 
of $A_1,\dots,A_s$ satisfies 
$\deg a_k(q,\chi) + k\le C$ for all $k \in \N$. 
Expanding $a_k(q,\chi) = \sum_{d\in \N^s} a_{d,k}(\chi) q^d$, 
we obtain $\deg q^d + k \le C$ since $a_{d,k}(\chi) \in \C[\chi]$ 
has non-negative degree. 
This implies $k\le C+ C' |d|$ for a uniform constant $C'>0$. 
On the other hand, the estimate in Proposition \ref{pro:estimate_GM_conn} 
gives $|a_{d,k}(\chi)| \le C_1 C_2^{k+|d|} k!$ for $\chi \in V$. 
Combining the two inequalities we obtain, whenever $d\neq 0$, 
\begin{equation} 
\label{eq:a_dk_estimate}
|a_{d,k}(\chi)| \le C_1 C_2^{k+|d|} (C+C' |d|)^k 
\le C_1 C_3^{k+|d|} |d|^k \qquad 
\text{with $\chi \in V$} 
\end{equation} 
for $C_3 = 2 \max(C,C',1) C_2$. 
We claim that $a_{0,k}(\chi) =0$ for $k>0$. 
Since $\Loc^{(0)}(\Omega_i) = T_i$, Proposition \ref{pro:Loc_diffeq} 
implies that $A_a|_{q=0}$ is conjugate to the multiplication by 
$\hxi_a$ on $H_{\CR,\T}^*(\frX)$ for some $\hxi_a \in H^2_\T(\frX)$ 
($\hxi_a=0$ if $q_a$ corresponds to $\log y_1,\dots,
\log y_m$ or to $y_\bk$ with $\bk\in G$). 
Therefore $A_a|_{q=0}$ is independent of $z$, and the claim follows. 
The claim implies that the estimate \eqref{eq:a_dk_estimate} 
also holds for $d=0$. 
The lemma is proved. 
\end{proof} 

We now give a result on gauge fixing, which says that a logarithmic flat connection 
defined over $\cO^z$ with nilpotent residue at $q=\chi=0$ 
can be made $z$-independent by a unique gauge transformation defined over $\cO^z$. 
This result is a refinement of 
\cite[Proposition 4.8]{Iritani:coLef} which proved a similar result
in the absence of the parameter $\chi$. 
The proof is almost parallel to \cite[Proposition 4.8]{Iritani:coLef}, 
but is different from it in a subtle way. We repeat the argument  
for the convenience of the reader. 

\begin{theorem}[gauge fixing] 
\label{thm:gauge_fixing}
Let $\nabla$ be a logarithmic flat connection of the form 
\[
\nabla = d + \frac{1}{z} \sum_{i=1}^s A_i \frac{dq_i}{q_i} 
\]
where $A_i=A_i(q,\chi,z)\in \Mat_N(\cO^z)$ is a square matrix with entries in $\cO^z$ 
(see Definition \ref{def:Oz}). Note from the definition of $\cO^z$ 
that $A_i(0,\chi,z)$ is independent of $z$. 
Suppose that the residue matrices $A_i(0,0,z)$ are nilpotent. 
Then there exists a unique gauge transformation 
$G=G(q,\chi,z)\in \GL_N(\cO^z)$ with entries in $\cO^z$ such that 
$G(0,\chi,z) = I$ and $\hA_i := G^{-1} z q_i \parfrac{G}{q_i} 
+ G^{-1} A_i G$ is independent of $z$ 
for all $1\le i\le s$. In particular $\hA_i$ is convergent and analytic 
near $q = \chi =0$. 
\end{theorem}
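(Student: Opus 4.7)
The plan is to build $G$ and the $\hat A_i$ by a step-by-step procedure in the $q$-adic filtration, then verify the $\cO^z$-growth estimates by induction. Expanding $G = I + \sum_{d\neq 0} G_d(\chi,z)\, q^d$ and $\hat A_i = A_{i,0}(\chi) + \sum_{d\neq 0} \hat A_{i,d}(\chi)\, q^d$, where $A_{i,0}(\chi) := A_i(0,\chi,z)$ is automatically $z$-independent by the definition of $\cO^z$, the gauge equation $z q_i \partial_{q_i} G + A_i G = G \hat A_i$ at order $q^d$ ($d \ne 0$) reads
\begin{equation*}
\bigl(zd_i + \ad_{A_{i,0}(\chi)}\bigr) G_d(\chi,z) = \hat A_{i,d}(\chi) - A_{i,d}(\chi,z) + M_{i,d}(\chi,z),
\end{equation*}
where $M_{i,d}$ is a universal polynomial expression in the previously-constructed $G_{d'}$, $\hat A_{i,d''}$ and the coefficients $A_{i,d''}$ with $d',d''<d$ and $d'+d''=d$.

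For each $d \ne 0$ I would pick an index $i_0 = i_0(d)$ with $d_{i_0} \ge |d|/s$ (pigeonhole). The key structural input is that $A_{i_0,0}(0) = A_{i_0}(0,0,z)$ is nilpotent, so $\ad_{A_{i_0,0}(0)}$ is a nilpotent endomorphism of $\Mat_N(\C)$; hence $\Mat_N(\C) = K_{i_0} \oplus \Image \ad_{A_{i_0,0}(0)}$ for some complement $K_{i_0}$, and this extends to an analytic family of splittings $\Mat_N(\C) = K_{i_0}(\chi) \oplus \Image \ad_{A_{i_0,0}(\chi)}$ for $\chi$ in a neighbourhood of $0$. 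Define $\hat A_{i_0,d}(\chi)$ as the $K_{i_0}(\chi)$-component of $A_{i_0,d}(\chi,0) - M_{i_0,d}(\chi,0)$: with this choice the right-hand side of the displayed equation (for $i = i_0$), restricted to $z=0$, lies in $\Image \ad_{A_{i_0,0}(\chi)}$, and one can invert $zd_{i_0} + \ad_{A_{i_0,0}(\chi)}$ using the formal Neumann expansion
\begin{equation*}
(zd_{i_0})^{-1} \sum_{k \ge 0} \bigl(-(zd_{i_0})^{-1} \ad_{A_{i_0,0}(\chi)}\bigr)^k,
\end{equation*}
with the principal-part $z$-singularities cancelling order-by-order: the leading $z^{-1}$ singularity cancels by the $K_{i_0}(\chi)$-projection choice, and the higher-order cancellations are forced by the flatness identity $[\nabla_i, \nabla_j] = 0$ applied to the recursive step. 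The remaining $\hat A_{j,d}(\chi)$ for $j \ne i_0$ are read off from the equations with $i = j$; their $z$-independence and consistency with the single $G_d$ follow again from flatness, which propagates to $[\hat\nabla_i,\hat\nabla_j]=0$. Uniqueness at each order then follows from the normalization $G(0,\chi,z) = I$ and the canonical splitting onto $K_{i_0}(\chi)$.

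The main obstacle is the inductive $\cO^z$-estimate. Writing $G_d(\chi,z) = \sum_k g_{d,k}(\chi) z^k$, one must show by induction on $|d|$ that there are constants $C_1, C_2 > 0$ and a polydisc $V \subset \C^n$ with $|g_{d,k}(\chi)| \le C_1 C_2^{|d|+k} |d|^k$ for all $\chi \in V$ and $k \in \N$, along with an analogous polynomial bound on $\hat A_{i,d}(\chi)$. The induction consumes the Gevrey-$1$ bounds on entries of $A_{i,d}$ provided by Proposition \ref{pro:estimate_GM_conn}, the induction hypothesis on lower-order $G_{d'}$, and Cauchy-product estimates for the nonlinear term $M_{i_0,d}$; the pigeonhole bound $d_{i_0} \ge |d|/s$ is precisely what is needed to absorb the factor $d_{i_0}^{-(k+1)}$ arising from the Neumann inversion into the tolerated $|d|^k$ growth. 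The delicate combinatorial balancing between the Gevrey-$1$ factorials $k!$ and the $|d|^k$ polynomial growth permitted by $\cO^z$ is the technical heart of the argument, and parallels the proof of \cite[Proposition 4.8]{Iritani:coLef}; the additional equivariant parameter $\chi$ is essentially inert since Proposition \ref{pro:estimate_GM_conn} already supplies uniform estimates over a fixed $\chi$-polydisc. Invertibility of the resulting $G$ in $\GL_N(\cO^z)$ finally follows from $G(0,\chi,0) = I$ together with the fact that $\cO^z$ is a local ring (Lemma \ref{lem:Oz_local_ring}).
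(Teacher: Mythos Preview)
Your overall strategy (recursive construction in the $q$-adic filtration, pigeonhole choice of $i_0$, then inductive $\cO^z$-estimates) matches the paper's, but the mechanism you propose for the recursive step is wrong, and this is where the actual content of the argument lies.

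The key error is the order in which you determine $\hat A_{i_0,d}$ and $G_d$. Expanding $G_d=\sum_k G_{d,k}z^k$, the $z^k$-component of your displayed equation for $k\ge 1$ reads
\[
d_{i_0}\,G_{d,k-1}+\ad(N_{i_0})\,G_{d,k}=H_{i_0;d,k},\qquad N_{i_0}:=A_{i_0,0}(\chi),
\]
and does \emph{not} involve $\hat A_{i_0,d}$. Telescoping from $k\to\infty$ gives
\[
G_{d,k}=\sum_{l\ge 0}\frac{(-\ad(N_{i_0}))^l}{d_{i_0}^{\,l+1}}\,H_{i_0;d,k+1+l},
\]
which converges in the $\chi$-adic topology because $\ad(N_{i_0})$ is nilpotent at $\chi=0$; this determines \emph{all} $G_{d,k}$, including $G_{d,0}$, uniquely. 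Only then is $\hat A_{i;d}$ read off from the $z^0$-equation as $\hat A_{i;d}=\ad(N_i)G_{d,0}-H_{i;d,0}$. Your scheme of first defining $\hat A_{i_0,d}$ as the $K_{i_0}(\chi)$-projection of $A_{i_0,d}(\chi,0)-M_{i_0,d}(\chi,0)$ discards the term $\ad(N_{i_0})G_{d,0}$ and gives the wrong answer in general; the complement $K_{i_0}$ is an extraneous choice that the unique solution does not depend on. Correspondingly, your Neumann inverse $(zd_{i_0})^{-1}\sum_{k\ge 0}\bigl(-(zd_{i_0})^{-1}\ad\bigr)^k$ applied to the right-hand side has $z^{-1}$-coefficient $d_{i_0}^{-1}(\hat A_{i_0,d}-A_{i_0,d,0}+M_{i_0,d,0})$, which lies in $\Image\ad$ under your projection choice but is not zero: the principal part does \emph{not} cancel, and flatness does not rescue this. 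The correct ``Neumann'' series is the one above, in powers of $\ad(N_{i_0})/d_{i_0}$ rather than $\ad(N_{i_0})/(zd_{i_0})$.

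A second, smaller issue: the parameter $\chi$ is not inert. The convergence of the telescoping sum in the classical topology hinges on an estimate of the form $\|\ad(N_{i_0}(\chi))^l\|\le C_3(C_4|\chi|)^{\lfloor l/(2l_0)\rfloor}$, valid because $N_{i_0}(0)$ is nilpotent; this is precisely what forces one to shrink the $\chi$-neighbourhood and is the step where the equivariant version genuinely differs from \cite[Proposition~4.8]{Iritani:coLef}. Your sketch of the estimate (pigeonhole bound $d_{i_0}\ge |d|/s$ absorbing the factor $d_{i_0}^{-(l+1)}$, Cauchy-product control of the nonlinear term) is otherwise on the right track once the recursion is set up correctly.
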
 
\begin{proof} 
It is easy to see that such a gauge transformation $G$ exists 
uniquely over the formal power series ring $\C[\![\chi,z,q]\!]$; $G$ is given 
as a positive Birkhoff factor of a fundamental solution\footnote
{We use the nilpotence of $A_i|_{q=\chi=0}$ to construct 
a fundamental solution in the formal setting.} 
of the connection $\nabla$.  (See \cite{Coates-Givental}, 
\cite[Proposition 3.2]{Guest:D-mod} and  
\cite[Theorem 4.6]{Iritani:genmir} for the discussion in 
the context of quantum cohomology.) 
Note that the following argument also gives 
a recursive construction of $G$. 

We expand 
$A_i = \sum_{d\in \N^s} \sum_{k=0}^\infty 
A_{i;d,k}(\chi) q^d z^k$, 
$\hA_i = \sum_{d\in \N^s} \hA_{i;d}(\chi) q^d$ and 
$G=\sum_{d\in\N^s} \sum_{k=0}^\infty G_{d,k}(\chi) q^d z^k$. 
Note that we have $G_{0,k} = \delta_{k,0} I$, $A_{i;0,k} = \delta_{k,0} 
A_{i;0,0}$ and $\hA_{i;0} = \hA_i|_{q=0} = A_{i;0,0}$. 
Expanding the relation $zq_i\parfrac{G}{q_i} + A_i G =G \hA_i$, 
we obtain  
\[
 d_i G_{d,k-1}+  \sum_{d'+d''=d}\sum_{k'+k''=k} A_{i;d',k'} G_{d'',k''} 
=\sum_{d'+d''=d} G_{d',k} \hA_{i;d''}. 
\]  
This can be rewritten as follows: 
\begin{subequations} 
\begin{align} 
\label{eq:G_dk_recursion}
& d_i G_{d,k-1} + \ad(N_i) G_{d,k} = H_{i;d,k}  \qquad \text{for $k\ge 1$}  
\\ 
\label{eq:hA_d_recursion} 
& \hA_{i;d}  = \ad(N_i)G_{d,0}- H_{i;d,0} 
\end{align} 
\end{subequations} 
where we set $N_i = N_i(\chi):=A_{i;0,0}(\chi)$, $\ad(X) Y =XY-YX$, and 
\begin{equation} 
\label{eq:def_Hdk}
H_{i;d,k} := 
\sum_{\substack{d'+d'' = d \\ |d'|>0, |d''|>0}} G_{d',k} \hA_{i;d''} 
- \sum_{\substack{d'+d'' =d \\ |d'|>0}} \sum_{k'+k'' = k}A_{i;d',k'} G_{d'',k''}.  
\end{equation} 
Suppose that we know $G_{d,k}$ and $\hA_{i;d}$ 
for all $(i,d,k)$ with $|d|< e$ (for some $e$). 
This information determines $H_{i;d,k}$ for all $(i,d,k)$ with $|d|= e$. 
Then we can determine $G_{d,k}$ and $\hA_{i;d}$ recursively 
as follows: 
\begin{itemize} 
\item[(1)] we can solve for $G_{d,k}$ with $|d| = e$ 
for all $k$ using \eqref{eq:G_dk_recursion} 
-- see \eqref{eq:solve_G_dk} below; 

\item[(2)] next we can solve for $\hA_{i;d}$ with $|d|=e$ 
using \eqref{eq:hA_d_recursion}.  
\end{itemize} 
We need to give estimates for $G_{d,k}$, $\hA_{i,d}$. We set, 
for $(e,k) \in \N^2$, 
\begin{align*} 
a_{e,k}(\chi) &:= 
\max_{1\le i\le s} \sum_{|d|=e} \|A_{i;d,k}(\chi)\|,  
\quad  
g_{e,k}(\chi):= \sum_{|d| = e} \|G_{d,k}(\chi)\|,  
\\
h_{e,k}(\chi) &:=  \max_{1\le i\le s}
\sum_{|d|=e} \|H_{i;d,k}(\chi)\|, 
\quad 
\ha_{e}(\chi) := \max_{1\le i \le s} \sum_{|d|=e} \|\hA_{i;d}(\chi)\|, 
\end{align*} 
where $\|\cdot \|$ denotes the operator norm. 
By assumption there exist constants $C_1$,~$C_2$,~$\epsilon>0$ such that 
$a_{e,k}(\chi) \le e^k C_1 C_2^{e+k}$ for $|\chi|\le \epsilon$ 
(we set $e^k=1$ for $e=k=0$ as before). 
Let $C>0$ be a constant such that $\|N_i(\chi)\| =\|A_{i;0,0}(\chi)\| 
= \|\hA_{i;0}(\chi)\| \le C$ for all $1 \le i\le s$ and $|\chi| \le \epsilon$. 
Suppose by induction that 
\[
g_{e,k}(\chi) \le \frac{e^k}{(e+1)^M} B_1^e B_2^k \quad \text{and} \quad 
\ha_e(\chi) \le \frac{1}{(e+1)^M} C B_1^e 
\quad \text{whenever $|\chi|\le \delta$}
\]
for all $(e,k)$ with $e<e_0$. 
Here we set $B_2 := 2 C_2$ and the other positive constants 
$B_1$,~$M$,~$\delta>0$ are specified later; they depend only 
on $C$, $C_1$, $C_2$ and $N_i(\chi)$. 
We will choose $\delta$ so that $0< \delta\le \epsilon$. 
Note that this induction hypothesis holds for $e_0=1$ since 
$g_{0,k} = \|G_{0,k}(\chi)\| = \delta_{k,0}$ and 
$\ha_0 = \max_{1\le i\le s} \|N_i(\chi)\|\le C$. 
Under the induction hypothesis, we have 
from \eqref{eq:def_Hdk} that, whenever $|\chi|\le \delta$, 
\begin{align}
\label{eq:estimate_h}  
\begin{split} 
h_{e_0,k} & \le \sum_{0 < e < e_0} g_{e,k}  \ha_{e_0-e} 
+ \sum_{0<e\le e_0} \sum_{l=0}^k 
a_{e,l} g_{e_0-e,k-l} \\
& \le  \sum_{0<e<e_0} e^k \frac{B_1^e B_2^k}{(1+e)^M} 
\frac{CB_1^{e_0-e}}{(1+e_0-e)^M} 
+ \sum_{0<e\le e_0} \sum_{l=0}^k 
e^l C_1C_2^{e+l} 
\frac{(e_0-e)^{k-l}}{(1+e_0-e)^M}B_1^{e_0-e} B_2^{k-l} \\
& \le \left(C \epsilon_1(M)  +2C_1 
\epsilon_2(B_1,M) \right)\frac{e_0^k}{(1+e_0)^M}  
B_1^{e_0} B_2^k 
\end{split} 
\end{align}
where we omit $\chi$ from the notation and set 
\begin{align*} 
\epsilon_1(M) & := \sup_{e_0\ge 1} \sum_{0<e<e_0} 
\frac{(1+e_0)^M}{(1+e)^M(1+e_0-e)^M}, \\ 
\epsilon_2(B_1,M) & := \sup_{e_0\ge 1} 
\sum_{0<e\le e_0} \frac{(1+e_0)^M}{(1+e_0-e)^M} 
\left(\frac{C_2}{B_1}\right)^e.  
\end{align*} 
Next we estimate $g_{e_0,k}$. 
For $d\in \N^s$ with $|d|=e_0$, we choose $1\le i\le s$ 
such that $d_i = \max\{d_1,\dots, d_s\}$. 
Then we have from 
\eqref{eq:G_dk_recursion} that 
\begin{align} 
\label{eq:solve_G_dk}
\begin{split} 
G_{d,k} & = \frac{1}{d_i} H_{i;d,k} - 
\frac{\ad(N_i)}{d_i} G_{d,k+1} \\ 
& = \frac{1}{d_i} H_{i;d,k} - \frac{\ad(N_i)}{d_i^2} H_{i;d,k+1} 
+ \cdots + \frac{(-\ad(N_i))^{l}}{d_i^{l+1}} H_{i;d, k+l} 
+\cdots.  
\end{split} 
\end{align} 
Note that this infinite sum converges in the $\chi$-adic topology 
since $\ad(N_i|_{\chi=0})$ is nilpotent, and hence 
$G_{d,k}$'s are uniquely determined by \eqref{eq:G_dk_recursion}. 
We will see that this sum is convergent 
in the classical topology. 
Using $d_i\ge e_0/s$, we have 
\[
g_{e_0,k} \le \sum_{l=0}^\infty \left( \max_{1\le i\le s} \|\ad(N_i)^l\| \right) 
\left( \frac{s}{e_0} \right)^{l+1} h_{e_0,k+l}.  
\]
Since $N_i(0)=N_i|_{\chi=0}$ is nilpotent, we have $N_i(0)^{l_0}=0$ for some $l_0>0$; 
then we have $\ad(N_i(0))^{2l_0}=0$. 
This implies the following estimate: 
\[
\max_{1\le i\le s} \|\ad(N_i(\chi))^l\| \le 
C_3 (C_4|\chi|)^{\floor{l/(2l_0)}}  \quad 
\text{if $|\chi|\le \epsilon$} 
\] 
for some $C_3$,~$C_4>0$. Thus we get 
\[  
g_{e_0,k}  \le \sum_{l=0}^{2l_0-1} 
\sum_{j=0}^\infty C_3 C_4^j |\chi|^j 
\left(\frac{s}{e_0}\right)^{2l_0j +l +1} h_{e_0,k+2 l_0 j+l}.  
\] 
Using the estimate \eqref{eq:estimate_h} for $h_{e_0,k}$ 
and after some computations, we find that the infinite 
sum converges if $s^{2l_0} B_2^{2l_0} C_4 |\chi|<1$ and 
that 
\begin{equation} 
\label{eq:estimate_g} 
g_{e_0,k} \le \epsilon_3(B_1,M,\chi) \frac{e_0^k}{(1+e_0)^M} 
B_1^{e_0} B_2^k 
\end{equation} 
with 
\[
\epsilon_3(B_1,M,\chi) := 
\left(C \epsilon_1(M)  +2 C_1  
\epsilon_2(B_1,M) \right) 
\frac{C_5}{1- s^{2l_0}B_2^{2l_0} C_4 |\chi|}
\]
and $C_5 := s C_3 \sum_{l=0}^{2l_0-1} (sB_2)^l$. 
Finally we estimate $\ha_{e_0}$ by \eqref{eq:hA_d_recursion}. 
We have for $|\chi|\le\delta$, 
\begin{equation}
\label{eq:estimate_ha} 
\ha_{e_0} \le 2 C g_{e_0,0} + h_{e_0,0} 
\le \frac{ 
2C \epsilon_3(B_1,M,\chi) + C \epsilon_1(M) + 2 C_1 \epsilon_2(B_1,M)}
{(1+e_0)^M} B_1^{e_0} 
\end{equation} 
where we used \eqref{eq:estimate_h} and \eqref{eq:estimate_g} 
in the second inequality. 

We now specify the constants $B_1$,~$M$,~$\delta>0$ to complete the induction 
step. We use the following fact \cite[Lemma 4.9]{Iritani:coLef}: 
\[
\lim_{M\to \infty} \epsilon_1(M) = 0, \qquad 
\lim_{B_1 \to \infty} \epsilon_2(B_1,M) =0.  
\]
We can choose $B_1,M,\delta>0$ in the following way. 
\begin{itemize}
\item[(1)] Choose $\delta>0$ so that $\delta<\epsilon$ and 
$s^{2l_0}B_2^{2l_0} C_4 \delta<1/2$. 

\item[(2)] Choose $M>0$ so that $\epsilon_1(M)\le 1/3$ 
and $2C_5 C \epsilon_1(M)\le 1/12$.  
\item[(3)] Choose $B_1>0$ so that 
$2C_1 \epsilon_2(B_1,M) \le C/3$ 
and $4C_1C_5 \epsilon_2(B_1,M) \le 1/12$. 
\end{itemize} 
Then we have, when $|\chi|\le \delta$, 
\begin{gather*} 
\epsilon_3(B_1,M,\chi)  \le 2 C_5 C \epsilon_1(M) + 4 C_5 C_1 \epsilon_2(B_1,M) 
\le \frac{1}{6},  \\ 
2C \epsilon_3(B_1,M,\chi) + C \epsilon_1(M) + 2 C_1 \epsilon_2(B_1,M) 
\le \frac{1}{3} C + \frac{1}{3}C + \frac{1}{3} C \le C. 
\end{gather*} 
These inequalities together with the estimates \eqref{eq:estimate_g}, 
\eqref{eq:estimate_ha} complete the induction step. 
The theorem is proved. 
\end{proof} 

\begin{remark} 
Theorem \ref{thm:gauge_fixing} could be viewed as an analogue of 
Malgrange's theorem \cite[Theorem 4.3]{Malgrange:deformation}.  There a  
 similar analytification of a flat connection is discussed when the connection 
has no singularities at $q=0$. 
\end{remark} 

\subsection{Proof of Theorem \ref{thm:convergence} and 
Corollary \ref{cor:convergence}} 

First we prove Theorem \ref{thm:convergence}. 
Recall that the connection matrices $A_a$ of the Gauss--Manin connection 
are defined over $\cO^z$ (Lemma \ref{lem:GM_conn_Oz}) and 
the connection matrices of the quantum connection with respect to 
the basis $\{T_i\}$ 
are independent of $z$. 
The matrix $(\Theta_i^j(Q,y,\chi,z))$ gives a gauge transformation 
which transforms the Gauss--Manin connection into the quantum connection; 
it also satisfies $\Theta_i^j|_{q=0} = \delta_i^j$ since 
$\Theta(\Omega_i)|_{q=0} =\Loc^{(0)}(\Omega_i) =  T_i$. 
As remarked in the proof of Theorem \ref{thm:gauge_fixing}, 
the uniqueness of such a gauge transformation holds over 
the formal power series ring $\C[\![z,\chi,q]\!]$, and thus 
Theorem \ref{thm:gauge_fixing} shows that $\Theta_i^j(Q,y,\chi,z)$ 
belongs to $\cO^z$. Theorem \ref{thm:gauge_fixing} also shows 
that the pull-back 
$\tau^*\nabla$ of the quantum connection via $\tau$ is 
analytic in $(Q,y,\chi)$. 

Since $\Theta$ intertwines the Gauss--Manin connection with the 
quantum connection, we have 
\begin{equation} 
\label{eq:mirrormap_derivative} 
(\Theta \circ \nabla_{z q_a \parfrac{}{q_a}} \circ \Theta^{-1}) (1) 
= \nabla_{z q_a \parfrac{\tau}{q_a}} 1 
= q_a \parfrac{\tau}{q_a} 
\end{equation} 
where $1$ is the identity class in $H^*_{\CR,\T}(\frX)$. 
Since $\cO^z$ is a local ring (Lemma \ref{lem:Oz_local_ring}), the inverse matrix of $(\Theta_i^j)$ 
has coefficients in $\cO^z$. Thus the left-hand side can be 
written as an $\cO^z$-linear combination of $\{T_i\}$. 
Since the right-hand side is independent of $z$, this shows that 
$q_a \parfrac{\tau^i}{q_a}$ is analytic in $(q,\chi)$. 
Therefore the mirror map $\tau^i(Q,y,\chi)$ is analytic in 
all the arguments. 

Next we prove Corollary \ref{cor:convergence}. 
We claim that the mirror map $\tau$ is submersive 
when the extension $G$ is sufficiently large. 
Indeed, using the formula 
\eqref{eq:mirrormap_derivative}, we have for $\bk\in S$, 
\begin{align*} 
\left. \parfrac{\tau}{y_\bk} \right|_{Q=0,y=y^*} & = 
\left. (\Theta \circ \nabla_{z \parfrac{}{y_\bk}} \circ \Theta^{-1}) (1) 
\right|_{q=0} 
= \left. \Theta\left( \nabla_{z\parfrac{}{y_\bk}} (\omega + O(q) ) \right) 
\right|_{q=0} \\ 
& = \left. \Theta( w_\bk \omega +O(z) + O(q)) \right|_{q=0} 
=\phi_\bk + O(z) 
\end{align*} 
where we used $\Theta|_{q=0}=\Loc^{(0)}$ and the computation 
in Proposition \ref{pro:Loc0}. Since the left-hand side is independent of 
$z$, it is equal to $\phi_\bk$. 
Since finitely many $\phi_\bk$'s span $H_{\CR,\T}^*(\frX)$ over $R_\T$, 
the claim follows. 
We already showed that the pull-back $\tau^* \nabla$ of the quantum 
connection and the mirror map $\tau$ are analytic 
in a neighbourhood of $Q=0$,~$y=y^*$,~$\chi=0$. 
This immediately implies 
the analyticity of the big equivariant quantum product.

\bibliographystyle{amsplain}
\bibliography{toric_stacks_MS}

\def\cprime{$'$}
\providecommand{\bysame}{\leavevmode\hbox to3em{\hrulefill}\thinspace}
\providecommand{\MR}{\relax\ifhmode\unskip\space\fi MR }
\providecommand{\MRhref}[2]{%
  \href{http://www.ams.org/mathscinet-getitem?mr=#1}{#2}
}
\providecommand{\href}[2]{#2}
\begin{thebibliography}{10}

\bibitem{AGV:GW}
Dan Abramovich, Tom Graber, and Angelo Vistoli, \emph{Gromov-{W}itten theory of
  {D}eligne-{M}umford stacks}, Amer. J. Math. \textbf{130} (2008), no.~5,
  1337--1398. \MR{2450211 (2009k:14108)}

\bibitem{Acosta-Shoemaker:toric}
Pedro Acosta and Mark Shoemaker, \emph{Gromov-{W}itten theory of toric
  birational transformations},  (2016),
  \href{http://arxiv.org/abs/1604.03491}{\texttt{arXiv:1604.03491 [math.AG]}}.

\bibitem{Barannikov:projective}
Serguei Barannikov, \emph{Semi-infinite {H}odge structure and mirror symmetry
  for projective spaces},
  \href{http://arxiv.org/abs/math.AG/0010157}{\texttt{arXiv:math.AG/0010157}},
  2001.

\bibitem{Batyrev:qcoh_toric}
Victor~V. Batyrev, \emph{Quantum cohomology rings of toric manifolds},
  Ast\'erisque (1993), no.~218, 9--34, Journ{\'e}es de G{\'e}om{\'e}trie
  Alg{\'e}brique d'Orsay (Orsay, 1992).

\bibitem{BCS}
Lev~A. Borisov, Linda Chen, and Gregory~G. Smith, \emph{The orbifold {C}how
  ring of toric {D}eligne-{M}umford stacks}, J. Amer. Math. Soc. \textbf{18}
  (2005), no.~1, 193--215 (electronic). \MR{2114820 (2006a:14091)}

\bibitem{Borisov-Horja:bbGKZ}
Lev~A. Borisov and R.~Paul~Horja, \emph{On the better behaved version of the
  {GKZ} hypergeometric system}, Math. Ann. \textbf{357} (2013), no.~2,
  585--603. \MR{3096518}

\bibitem{BMO:Springer}
Alexander Braverman, Davesh Maulik, and Andrei Okounkov, \emph{Quantum
  cohomology of the {S}pringer resolution}, Adv. Math. \textbf{227} (2011),
  no.~1, 421--458. \MR{2782198 (2012h:14133)}

\bibitem{Brown:toric_fibration}
Jeff Brown, \emph{Gromov-{W}itten invariants of toric fibrations}, Int. Math.
  Res. Not. IMRN (2014), no.~19, 5437--5482,
  \href{http://arxiv.org/abs/0901.1290}{\texttt{arXiv:0901.1290 [math.AG]}}.
  \MR{3267376}

\bibitem{CLLT:Seidel}
Kwokwai Chan, Siu-Cheong Lau, Naichung-Conan Leung, and Hsian-Hua Tseng,
  \emph{Open {G}romov-{W}itten invariants, mirror maps, and {S}eidel
  representations for toric manifolds},
  \href{http://arxiv.org/abs/1209.6119}{\texttt{arXiv:1209.6119}}, 2012.

\bibitem{Chen-Ruan:orbGW}
Weimin Chen and Yongbin Ruan, \emph{Orbifold {G}romov-{W}itten theory},
  Orbifolds in mathematics and physics ({M}adison, {WI}, 2001), Contemp. Math.,
  vol. 310, Amer. Math. Soc., Providence, RI, 2002, pp.~25--85. \MR{1950941
  (2004k:53145)}

\bibitem{Chen-Ruan:new_coh}
\bysame, \emph{A new cohomology theory of orbifold}, Comm. Math. Phys.
  \textbf{248} (2004), no.~1, 1--31. \MR{2104605 (2005j:57036)}

\bibitem{Cheong-CF-Kim}
Daewoong Cheong, Ionu{\c{t}} Ciocan-Fontanine, and Bumsig Kim, \emph{Orbifold
  quasimap theory}, Math. Ann. \textbf{363} (2015), no.~3-4, 777--816,
  \href{http://arxiv.org/abs/1405.7160}{\texttt{arXiv:1405.7160 [math.AG]}}.
  \MR{3412343}

\bibitem{CCIT:computing}
Tom Coates, Alessio Corti, Hiroshi Iritani, and Hsian-Hua Tseng,
  \emph{Computing genus-zero twisted {G}romov-{W}itten invariants}, Duke Math.
  J. \textbf{147} (2009), no.~3, 377--438. \MR{2510741 (2010a:14090)}

\bibitem{CCIT:applications}
\bysame, \emph{Some applications of the mirror theorem for toric stacks},
  (2014), \href{http://arxiv.org/abs/1401.2611}{\texttt{arXiv:1401.2611
  [math.AG]}}.

\bibitem{CCIT:mirrorthm}
\bysame, \emph{A mirror theorem for toric stacks}, Compos. Math. \textbf{151}
  (2015), no.~10, 1878--1912,
  \href{http://arxiv.org/abs/1310.4163}{\texttt{arXiv:1310.4163 [math.AG]}}.
  \MR{3414388}

\bibitem{CCLT:wp}
Tom Coates, Alessio Corti, Yuan-Pin Lee, and Hsian-Hua Tseng, \emph{The quantum
  orbifold cohomology of weighted projective spaces}, Acta Math. \textbf{202}
  (2009), no.~2, 139--193. \MR{2506749 (2010f:53155)}

\bibitem{Coates-Givental}
Tom Coates and Alexander Givental, \emph{Quantum {R}iemann-{R}och, {L}efschetz
  and {S}erre}, Ann. of Math. (2) \textbf{165} (2007), no.~1, 15--53.
  \MR{2276766 (2007k:14113)}

\bibitem{Coates-Iritani:Fock}
Tom Coates and Hiroshi Iritani, \emph{A {F}ock sheaf for {G}ivental
  quantization},  (2014),
  \href{http://arxiv.org/abs/1411.7039}{\texttt{arXiv:1411.7039 [math.AG]}}.

\bibitem{CIJ}
Tom Coates, Hiroshi Iritani, and Yunfeng Jiang, \emph{The crepant
  transformation conjecture for toric complete intersections},  (2014),
  \href{http://arxiv.org/abs/1410.0024}{\texttt{arXiv:1410.0024 [math.AG]}}.

\bibitem{CIT:wallcrossing}
Tom Coates, Hiroshi Iritani, and Hsian-Hua Tseng, \emph{Wall-crossings in toric
  {G}romov-{W}itten theory. {I}. {C}repant examples}, Geom. Topol. \textbf{13}
  (2009), no.~5, 2675--2744. \MR{2529944 (2010i:53173)}

\bibitem{CLS}
David~A. Cox, John~B. Little, and Henry~K. Schenck, \emph{Toric varieties},
  Graduate Studies in Mathematics, vol. 124, American Mathematical Society,
  Providence, RI, 2011. \MR{2810322 (2012g:14094)}

\bibitem{deGregorio-Mann}
Ignacio de~Gregorio and {\'E}tienne Mann, \emph{Mirror fibrations and root
  stacks of weighted projective spaces}, Manuscripta Math. \textbf{127} (2008),
  no.~1, 69--80. \MR{2429914}

\bibitem{DKK:symplectomorphism}
Colin Diemer, Ludmil Katzarkov, and Gabriel Kerr, \emph{Symplectomorphism group
  relations and degenerations of {L}andau-{G}inzburg models},
  \href{http://arxiv.org/abs/1204.2233}{\texttt{arXiv:1204.2233 [math.AG]}},
  2012.

\bibitem{NIST:DLMF}
\emph{{NIST Digital Library of Mathematical Functions}}, http://dlmf.nist.gov/,
  Release 1.0.10 of 2015-08-07, Online companion to \cite{Olver:2010:NHMF}.

\bibitem{Douai-Mann:wp}
Antoine Douai and Etienne Mann, \emph{The small quantum cohomology of a
  weighted projective space, a mirror {$D$}-module and their classical limits},
  Geom. Dedicata \textbf{164} (2013), 187--226. \MR{3054624}

\bibitem{Douai-Sabbah:I}
Antoine Douai and Claude Sabbah, \emph{Gauss-{M}anin systems, {B}rieskorn
  lattices and {F}robenius structures. {I}}, Proceedings of the {I}nternational
  {C}onference in {H}onor of {F}r\'ed\'eric {P}ham ({N}ice, 2002), vol.~53,
  2003, pp.~1055--1116. \MR{2033510}

\bibitem{Douai-Sabbah:II}
\bysame, \emph{Gauss-{M}anin systems, {B}rieskorn lattices and {F}robenius
  structures. {II}}, Frobenius manifolds, Aspects Math., E36, Vieweg,
  Wiesbaden, 2004, pp.~1--18. \MR{2115764 (2006e:32037)}

\bibitem{Dubrovin:2DTFT}
Boris Dubrovin, \emph{Geometry of {$2$}{D} topological field theories},
  Integrable systems and quantum groups ({M}ontecatini {T}erme, 1993), Lecture
  Notes in Math., vol. 1620, Springer, Berlin, 1996, pp.~120--348. \MR{1397274
  (97d:58038)}

\bibitem{FMN}
Barbara Fantechi, Etienne Mann, and Fabio Nironi, \emph{Smooth toric
  {D}eligne-{M}umford stacks}, J. Reine Angew. Math. \textbf{648} (2010),
  201--244. \MR{2774310 (2012b:14097)}

\bibitem{FOOO:toricI}
Kenji Fukaya, Yong-Geun Oh, Hiroshi Ohta, and Kaoru Ono, \emph{Lagrangian
  {F}loer theory on compact toric manifolds. {I}}, Duke Math. J. \textbf{151}
  (2010), no.~1, 23--174. \MR{2573826 (2011d:53220)}

\bibitem{GKZ:hypergeom}
I.~M. Gel{\cprime}fand, A.~V. Zelevinski{\u\i}, and M.~M. Kapranov,
  \emph{Hypergeometric functions and toric varieties}, Funktsional. Anal. i
  Prilozhen. \textbf{23} (1989), no.~2, 12--26. \MR{1011353}

\bibitem{Givental:ICM}
Alexander Givental, \emph{Homological geometry and mirror symmetry},
  Proceedings of the {I}nternational {C}ongress of {M}athematicians, {V}ol.\ 1,
  2 ({Z}\"urich, 1994), Birkh\"auser, Basel, 1995, pp.~472--480. \MR{1403947
  (97j:58013)}

\bibitem{Givental:homological}
\bysame, \emph{Homological geometry. {I}. {P}rojective hypersurfaces}, Selecta
  Math. (N.S.) \textbf{1} (1995), no.~2, 325--345. \MR{1354600 (97c:14052)}

\bibitem{Givental:fixedpoint_toric}
\bysame, \emph{A symplectic fixed point theorem for toric manifolds}, The
  {F}loer memorial volume, Progr. Math., vol. 133, Birkh\"auser, Basel, 1995,
  pp.~445--481. \MR{1362837}

\bibitem{Givental:equivariant}
\bysame, \emph{Equivariant {G}romov-{W}itten invariants}, Internat. Math. Res.
  Notices (1996), no.~13, 613--663. \MR{1408320 (97e:14015)}

\bibitem{Givental:elliptic}
\bysame, \emph{Elliptic {G}romov-{W}itten invariants and the generalized mirror
  conjecture}, Integrable systems and algebraic geometry ({K}obe/{K}yoto,
  1997), World Sci. Publ., River Edge, NJ, 1998, pp.~107--155. \MR{1672116
  (2000b:14074)}

\bibitem{Givental:toric_mirrorthm}
\bysame, \emph{A mirror theorem for toric complete intersections}, Topological
  field theory, primitive forms and related topics ({K}yoto, 1996), Progr.
  Math., vol. 160, Birkh\"auser Boston, Boston, MA, 1998, pp.~141--175.
  \MR{1653024 (2000a:14063)}

\bibitem{Givental:quadratic}
\bysame, \emph{Gromov-{W}itten invariants and quantization of quadratic
  {H}amiltonians}, Mosc. Math. J. \textbf{1} (2001), no.~4, 551--568, 645,
  Dedicated to the memory of I. G. Petrovskii on the occasion of his 100th
  anniversary. \MR{1901075 (2003j:53138)}

\bibitem{Givental:symplectic}
\bysame, \emph{Symplectic geometry of {F}robenius structures}, Frobenius
  manifolds, Aspects Math., E36, Friedr. Vieweg, Wiesbaden, 2004, pp.~91--112.
  \MR{2115767 (2005m:53172)}

\bibitem{Gonzalez-Iritani:Selecta}
Eduardo Gonz{\'a}lez and Hiroshi Iritani, \emph{Seidel elements and mirror
  transformations}, Selecta Math. (N.S.) \textbf{18} (2012), no.~3, 557--590.
  \MR{2960027}

\bibitem{Gonzalez-Woodward:tmmp}
Eduardo Gonz{\'a}lez and Chris Woodward, \emph{Quantum cohomology and toric
  minimal model programs},  (2012),
  \href{http://arxiv.org/abs/1207.3253}{\texttt{arXiv:1010.2118}}.

\bibitem{Graber-Pandharipande}
Tom Graber and Rahul Pandharipande, \emph{Localization of virtual classes},
  Invent. Math. \textbf{135} (1999), no.~2, 487--518. \MR{1666787
  (2000h:14005)}

\bibitem{Gross:tropical_P2}
Mark Gross, \emph{Mirror symmetry for {$\Bbb P^2$} and tropical geometry}, Adv.
  Math. \textbf{224} (2010), no.~1, 169--245. \MR{2600995 (2011j:14089)}

\bibitem{Gross:tropical_book}
\bysame, \emph{Tropical geometry and mirror symmetry}, CBMS Regional Conference
  Series in Mathematics, vol. 114, Published for the Conference Board of the
  Mathematical Sciences, Washington, DC; by the American Mathematical Society,
  Providence, RI, 2011. \MR{2722115}

\bibitem{Guest:D-mod}
Martin~A. Guest, \emph{Quantum cohomology via {$D$}-modules}, Topology
  \textbf{44} (2005), no.~2, 263--281. \MR{2114708 (2005j:53105)}

\bibitem{Hori-Vafa}
Kentaro Hori and Cumrum Vafa, \emph{Mirror symmetry},
  \href{http://arxiv.org/abs/hep-th/0002222}{\texttt{arXiv:hep-th/0002222}},
  2000.

\bibitem{Iritani:efc}
Hiroshi Iritani, \emph{Quantum {$D$}-modules and equivariant {F}loer theory for
  free loop spaces}, Math. Z. \textbf{252} (2006), no.~3, 577--622. \MR{2207760
  (2007e:53118)}

\bibitem{Iritani:coLef}
\bysame, \emph{Convergence of quantum cohomology by quantum {L}efschetz}, J.
  Reine Angew. Math. \textbf{610} (2007), 29--69.

\bibitem{Iritani:genmir}
\bysame, \emph{Quantum {$D$}-modules and generalized mirror transformations},
  Topology \textbf{47} (2008), no.~4, 225--276. \MR{2416770 (2009a:53153)}

\bibitem{Iritani:integral}
\bysame, \emph{An integral structure in quantum cohomology and mirror symmetry
  for toric orbifolds}, Adv. Math. \textbf{222} (2009), no.~3, 1016--1079.
  \MR{2553377 (2010j:53182)}

\bibitem{Iritani:periods}
\bysame, \emph{Quantum cohomology and periods}, Ann. Inst. Fourier (Grenoble)
  \textbf{61} (2011), no.~7, 2909--2958. \MR{3112512}

\bibitem{Iritani:shift_mirror}
\bysame, \emph{A mirror construction for the big equivariant quantum cohomology
  of toric manifolds},  (2015),
  \href{http://arxiv.org/abs/1503.02919}{\texttt{arXiv:1503.02919 [math.AG]}}.

\bibitem{Iwanari1}
Isamu Iwanari, \emph{The category of toric stacks}, Compos. Math. \textbf{145}
  (2009), no.~3, 718--746. \MR{2507746 (2011b:14113)}

\bibitem{Iwanari2}
\bysame, \emph{Logarithmic geometry, minimal free resolutions and toric
  algebraic stacks}, Publ. Res. Inst. Math. Sci. \textbf{45} (2009), no.~4,
  1095--1140. \MR{2597130 (2011f:14084)}

\bibitem{Jiang-Tseng}
Yunfeng Jiang and Hsian-Hua Tseng, \emph{Note on orbifold {C}how ring of
  semi-projective toric {D}eligne-{M}umford stacks}, Comm. Anal. Geom.
  \textbf{16} (2008), no.~1, 231--250. \MR{2411474 (2009h:14092)}

\bibitem{Kashiwara:microlocal}
Masaki Kashiwara, \emph{Microlocal analysis}, Proceedings of the
  {I}nternational {C}ongress of {M}athematicians ({H}elsinki, 1978), Acad. Sci.
  Fennica, Helsinki, 1980, pp.~139--150. \MR{562603}

\bibitem{KKP}
Ludmil Katzarkov, Maxim Kontsevich, and Tony Pantev, \emph{Hodge theoretic
  aspects of mirror symmetry}, From {H}odge theory to integrability and {TQFT}
  tt*-geometry, Proc. Sympos. Pure Math., vol.~78, Amer. Math. Soc.,
  Providence, RI, 2008, pp.~87--174. \MR{2483750 (2009j:14052)}

\bibitem{CCLiu:localization}
Chiu-Chu~Melissa Liu, \emph{Localization in {G}romov-{W}itten theory and
  orbifold {G}romov-{W}itten theory}, Handbook of moduli. {V}ol. {II}, Adv.
  Lect. Math. (ALM), vol.~25, Int. Press, Somerville, MA, 2013, pp.~353--425.
  \MR{3184181}

\bibitem{Malgrange:deformation}
B.~Malgrange, \emph{D\'eformations de syst\`emes diff\'erentiels et
  microdiff\'erentiels}, Mathematics and physics ({P}aris, 1979/1982), Progr.
  Math., vol.~37, Birkh\"auser Boston, Boston, MA, 1983, pp.~353--379.
  \MR{728429}

\bibitem{Mann:wP}
Etienne Mann, \emph{Orbifold quantum cohomology of weighted projective spaces},
  J. Algebraic Geom. \textbf{17} (2008), no.~1, 137--166. \MR{2357682}

\bibitem{Mann-Reichelt}
Etienne Mann and Thomas Reichelt, \emph{Logarithmic degenerations of
  landau-ginzburg models for toric orbifolds and global $tt^*$ geometry},
  \href{http://arxiv.org/abs/1605.08937}{\texttt{arXiv:1605.08937 [math.AG]}},
  2016.

\bibitem{McDuff-Tolman}
Dusa McDuff and Susan Tolman, \emph{Topological properties of {H}amiltonian
  circle actions}, IMRP Int. Math. Res. Pap. (2006), 72826, 1--77.

\bibitem{Mochizuki_T:twistor_GKZ}
Takuro Mochizuki, \emph{Twistor property of {GKZ}-hypergeometric systems},
  (2015), \href{http://arxiv.org/abs/1501.04146}{\texttt{arXiv:1501.04146}}.

\bibitem{Olver:2010:NHMF}
F.~W.~J. Olver, D.~W. Lozier, R.~F. Boisvert, and C.~W. Clark (eds.),
  \emph{{NIST Handbook of Mathematical Functions}}, Cambridge University Press,
  New York, NY, 2010, Print companion to \cite{NIST:DLMF}.

\bibitem{Pandharipande:afterGivental}
Rahul Pandharipande, \emph{Rational curves on hypersurfaces (after {A}.
  {G}ivental)}, Ast\'erisque (1998), no.~252, Exp.\ No.\ 848, 5, 307--340,
  S{\'e}minaire Bourbaki. Vol. 1997/98. \MR{1685628 (2000e:14094)}

\bibitem{Pham:Lefschetz}
Fr{\'e}d{\'e}ric Pham, \emph{La descente des cols par les onglets de
  {L}efschetz, avec vues sur {G}auss-{M}anin}, Ast\'erisque (1985), no.~130,
  11--47, Differential systems and singularities (Luminy, 1983). \MR{804048
  (87h:32017)}

\bibitem{Pressley-Segal}
Andrew Pressley and Graeme Segal, \emph{Loop groups}, Oxford Mathematical
  Monographs, The Clarendon Press, Oxford University Press, New York, 1986,
  Oxford Science Publications. \MR{900587 (88i:22049)}

\bibitem{Reichelt-Sevenheck:logFrob}
Thomas Reichelt and Christian Sevenheck, \emph{Logarithmic {F}robenius
  manifolds, hypergeometric systems and quantum d-modules},  (2010),
  \href{http://arxiv.org/abs/1010.2118}{\texttt{arXiv:1010.2118}}.

\bibitem{Reichelt-Sevenheck:nonaffine}
\bysame, \emph{Non-affine {L}andau-{G}inzburg models and intersection
  cohomology}, \href{http://arxiv.org/abs/1210.6527}{\texttt{arXiv:1210.6527
  [math.AG]}}, 2012.

\bibitem{Sabbah:tame}
Claude Sabbah, \emph{Hypergeometric period for a tame polynomial}, C. R. Acad.
  Sci. Paris S\'er. I Math. \textbf{328} (1999), no.~7, 603--608, A longer
  version published in: Port.\ Math.\ (N.S.) 63 (2006), no.2, 173--226.
  \MR{1679978 (2000b:32053)}

\bibitem{SaitoK:higherresidue}
Kyoji Saito, \emph{The higher residue pairings {$K_{F}^{(k)}$} for a family of
  hypersurface singular points},  \textbf{40} (1983), 441--463. \MR{713270
  (85d:32043)}

\bibitem{SaitoK:primitiveform}
\bysame, \emph{Period mapping associated to a primitive form}, Publ. Res. Inst.
  Math. Sci. \textbf{19} (1983), no.~3, 1231--1264. \MR{723468 (85h:32034)}

\bibitem{SaitoM:Brieskorn}
Morihiko Saito, \emph{On the structure of {B}rieskorn lattice}, Ann. Inst.
  Fourier (Grenoble) \textbf{39} (1989), no.~1, 27--72. \MR{1011977
  (91i:32035)}

\bibitem{Seidel:pi1}
Paul Seidel, \emph{{$\pi_1$} of symplectic automorphism groups and invertibles
  in quantum homology rings}, Geom. Funct. Anal. \textbf{7} (1997), no.~6,
  1046--1095,
  \href{http://arxiv.org/abs/dg-ga/9511011}{\texttt{arXiv:dg-ga/9511011}}.
  \MR{1487754 (99b:57068)}

\bibitem{Tseng:QRR}
Hsian-Hua Tseng, \emph{Orbifold quantum {R}iemann-{R}och, {L}efschetz and
  {S}erre}, Geom. Topol. \textbf{14} (2010), no.~1, 1--81. \MR{2578300
  (2011c:14147)}

\bibitem{Zariski-Samuel}
Oscar Zariski and Pierre Samuel, \emph{Commutative algebra. {V}ol. {II}},
  Springer-Verlag, New York-Heidelberg, 1975, Reprint of the 1960 edition,
  Graduate Texts in Mathematics, Vol. 29. \MR{0389876 (52 \#10706)}

\end{thebibliography}

\end{document}